\numberwithin{equation}{section}
\newtheorem{theoreme}[equation]{Theorem}
\newtheorem{lemme}[equation]{Lemma}
\newtheorem{proposition}[equation]{Proposition}
\newtheorem{corollaire}[equation]{Corollary}
\newtheorem*{theoreme:orbite_1}{Theorem \ref*{theoreme:orbite_1}}
\theoremstyle{definition}
\newtheorem{definition}[equation]{Definition}
\theoremstyle{remark}
\newtheorem{remarque}[equation]{Remark}
\newtheorem{exemple}[equation]{Example}
\newcommand{\tuple}{\boldsymbol}
\newcommand{\tuplex}{} 
\renewcommand{\P}{\mathcal{P}}
\newcommand{\Y}{\mathcal{Y}}
\newcommand{\res}{\mathrm{res}}
\newcommand{\compat}{\models}
\newcommand{\T}{\mathcal{T}}
\renewcommand{\H}{\mathcal{H}}
\newcommand{\TT}{\mathfrak{T}}
\newcommand{\PP}{\mathfrak{P}}
\author{Salim \textsc{Rostam}\thanks{Laboratoire de Mathématiques de Versailles, UVSQ, CNRS, Université Paris-Saclay, 78035 Versailles, France.}}
\title{Stuttering blocks of Ariki--Koike algebras}
\date{}
\newcounter{pos}
\newcommand{\scaletikz}{.6}
\newcommand{\scalelozenge}{.7}
\newcommand{\abacus}[3]{
	\begin{center}
	\begin{tikzpicture}[scale=\scaletikz, baseline=(current bounding box.center)]
		\draw[very thick] (0, 0) -- (0, -#1 - 1);
		\setcounter{pos}{0}
		\foreach \b in {#2}
			{
			\stepcounter{pos}
			\ifnum \value{pos} = 1
				\pgfmathparse{(\b - \value{pos} - Mod(\b - \value{pos}, #1)) / #1}
				\foreach \i in {-1,...,-#1}
					{
					\draw (\pgfmathresult + 3, \i) -- (0, \i);
					\draw (\pgfmathresult + 3.6, \i - .03) node {$\dots$};
					\foreach \j in {-2,...,\pgfmathresult}
						\draw[very thin] (\j + 2, \i - .1) -- ++ (0, .2);
					}
			\fi
			\pgfmathparse{Mod(\b - \value{pos}, #1)}
			\fill (
					{(\b - \value{pos} - \pgfmathresult) / #1}
					,
					{- \pgfmathresult - 1}
					) circle (.2);
			}
			\foreach \i in {1,...,#1}
			{
			\stepcounter{pos}
			\pgfmathparse{Mod(-\value{pos}, #1) == #1 - 1}
			\ifnum \pgfmathresult = 0
				\pgfmathparse{Mod(-\value{pos}, #1)}
				\fill (
					{(-\value{pos} - \pgfmathresult) / #1}
					,
					{-\pgfmathresult - 1}
					) circle (.2);
			\else
				\breakforeach
			\fi
			}
			\foreach \k in {1,2}
			\foreach \i in {1,...,#1}
				{
				\pgfmathparse{Mod(-\value{pos}, #1)}
				\fill (
					{(-\value{pos} - \pgfmathresult) / #1}
					,
					{-\pgfmathresult - 1}
					) circle (.2);
				\stepcounter{pos}
				}

			\pgfmathparse{(-\value{pos} - Mod(-\value{pos}, #1)) / #1 + 1}
			\foreach \i in {-1,...,-#1}
				{
				\draw (\pgfmathresult - 1, \i) -- (0, \i);
				\draw (\pgfmathresult - 1.5, \i - .03) node {$\dots$};
			
				\foreach \j in {-\value{pos},...,0}
					{
					\ifnum \j > \pgfmathresult
						\draw[very thin] (\j, \i - .1) -- ++ (0, .2);
					\fi;
					}
				}
		
	\end{tikzpicture} #3
	\end{center}
}
\newcommand{\abacuscore}[3]{
	\begin{center}
	\begin{tikzpicture}[scale=\scaletikz, baseline=(current bounding box.center)]
		\draw[very thick] (0, 0) -- (0, -#1 - 1);
		\setcounter{pos}{0}
		\foreach \b in {#2}
			{
			\stepcounter{pos}
			\ifnum \value{pos} = 1
				\pgfmathparse{(\b - \value{pos} - Mod(\b - \value{pos}, #1)) / #1}
				\foreach \i in {-1,...,-#1}
					{
					\draw (\pgfmathresult + 3, \i) -- (0, \i);
					\draw (\pgfmathresult + 3.6, \i - .03) node {$\dots$};
					\foreach \j in {-2,...,\pgfmathresult}
						\draw[very thin] (\j + 2, \i - .1) -- ++ (0, .2);
					}
			\fi
			\pgfmathparse{Mod(\b - \value{pos}, #1)}
			\fill (
					{(\b - \value{pos} - \pgfmathresult) / #1}
					,
					{- \pgfmathresult - 1}
					) circle (.2);
			}
			\foreach \i in {1,...,#1}
			{
			\stepcounter{pos}
			\pgfmathparse{Mod(-\value{pos}, #1) == #1 - 1}
			\ifnum \pgfmathresult = 0
				\pgfmathparse{Mod(-\value{pos}, #1)}
				\fill (
					{(-\value{pos} - \pgfmathresult) / #1}
					,
					{-\pgfmathresult - 1}
					) circle (.2);
			\else
				\breakforeach
			\fi
			}
			\foreach \k in {1,2}
			\foreach \i in {1,...,#1}
				{
				\pgfmathparse{Mod(-\value{pos}, #1)}
				\fill (
					{(-\value{pos} - \pgfmathresult) / #1}
					,
					{-\pgfmathresult - 1}
					) circle (.2);
				\stepcounter{pos}
				}

			\pgfmathparse{(-\value{pos} - Mod(-\value{pos}, #1)) / #1 + 1}
			\foreach \i in {-1,...,-#1}
				{
				\draw (\pgfmathresult - 1, \i) -- (0, \i);
				\draw (\pgfmathresult - 1.5, \i - .03) node {$\dots$};
			
				\foreach \j in {-\value{pos},...,0}
					{
					\ifnum \j > \pgfmathresult
						\draw[very thin] (\j, \i - .1) -- ++ (0, .2);
					\fi;
					}
				}
				
		\setcounter{pos}{0}
		\foreach \b in {#2}
			{
			\stepcounter{pos}
			\pgfmathparse{Mod(\b - \value{pos}, #1)}
			\draw[red] (
					{(\b - \value{pos} - \pgfmathresult) / #1 + 1}
					,
					{- \pgfmathresult - 1}
					) node[scale=\scalelozenge] {$\blacklozenge$};
			}
			\foreach \i in {1,...,#1}
			{
			\stepcounter{pos}
			\pgfmathparse{Mod(-\value{pos}, #1) == #1 - 1}
			\ifnum \pgfmathresult = 0
				\pgfmathparse{Mod(-\value{pos}, #1)}
				\draw[red] (
					{(-\value{pos} - \pgfmathresult) / #1 + 1}
					,
					{-\pgfmathresult - 1}
					) node[scale=\scalelozenge] {$\blacklozenge$};
			\else
				\breakforeach
			\fi
			}
			\foreach \k in {1,2}
			\foreach \i in {1,...,#1}
				{
				\pgfmathparse{Mod(-\value{pos}, #1)}
				\draw[red] (
					{(-\value{pos} - \pgfmathresult) / #1 + 1}
					,
					{-\pgfmathresult - 1}
					) node[scale=\scalelozenge] {$\blacklozenge$};
				\stepcounter{pos}
				}

		\setcounter{pos}{0}
		\foreach \b in {#2}
			{
			\stepcounter{pos}
			\pgfmathparse{Mod(\b - \value{pos}, #1)}
			\fill (
					{(\b - \value{pos} - \pgfmathresult) / #1}
					,
					{- \pgfmathresult - 1}
					) circle (.2);
			}
			\foreach \i in {1,...,#1}
			{
			\stepcounter{pos}
			\pgfmathparse{Mod(-\value{pos}, #1) == #1 - 1}
			\ifnum \pgfmathresult = 0
				\pgfmathparse{Mod(-\value{pos}, #1)}
				\fill (
					{(-\value{pos} - \pgfmathresult) / #1}
					,
					{-\pgfmathresult - 1}
					) circle (.2);
			\else
				\breakforeach
			\fi
			}
			\foreach \k in {1,2}
			\foreach \i in {1,...,#1}
				{
				\pgfmathparse{Mod(-\value{pos}, #1)}
				\fill (
					{(-\value{pos} - \pgfmathresult) / #1}
					,
					{-\pgfmathresult - 1}
					) circle (.2);
				\stepcounter{pos}
				}
	\end{tikzpicture} #3
	\end{center}
}
\begin{document}
\maketitle

\begin{abstract}
We study a shift action defined on multipartitions and on residue multisets of their Young diagrams. We prove that the minimal orbit cardinality among all multipartitions associated with a given multiset depends only on the orbit cardinality of the multiset. Using abaci, this problem reduces to a convex optimisation problem over the integers with linear constraints. We solve it by proving an existence theorem for binary matrices with prescribed row, column and block sums. Finally, we give some applications to the representation theory of the Hecke algebra of the complex reflection group $G(r,p,n)$.
\end{abstract}

\section{Introduction}

It is known since Frobenius that the irreducible representations $\{\mathcal{D}^\lambda\}_{\lambda}$ of the symmetric group on $n$ letters $\mathfrak{S}_n$ over a field of characteristic $0$ are parametrised by the \emph{partitions} of $n$, that is, sequences $\lambda = (\lambda_0 \geq \dots \geq \lambda_{h-1} > 0)$ of positive integers with $\lvert \lambda \rvert \coloneqq \lambda_0 + \dots + \lambda_{h-1} = n$. When the ground field is of prime characteristic $p$, the irreducible representations $\{\mathcal{D}^\lambda\}_\lambda$ are now indexed by the \emph{$p$-regular} partitions of $n$. However, in this case some representations may not be written as a direct sum of irreducible ones. Hence, we are also interested in the \emph{blocks} of the group algebra, that is, indecomposable two-sided ideals. Blocks also partition both sets of irreducible and indecomposable representations. Brauer and Robinson proved that these blocks are parametrised by the partitions of $n$ that are \emph{$p$-cores} (see~\textsection\ref{subsection:partitions}), proving  the so-called ``Nakayama's Conjecture''. We refer to~\cite{james-kerber} for more details about the representation theory of the symmetric group.

More generally, we can consider a \emph{Hecke algebra} of the complex reflection group $G(r,1,n) \simeq (\mathbb{Z}/r\mathbb{Z})\wr \mathfrak{S}_n$. Let $F$ be a field and let $q \in F \setminus \{0, 1\}$ be of finite order $e \in \mathbb{N}_{\geq 2}$, the ``quantum characteristic''. Let $r \in \mathbb{N}^*$ and $\kappa = (\kappa_0, \dots, \kappa_{r-1}) \in (\mathbb{Z}/e\mathbb{Z})^r$. The \emph{Ariki--Koike algebra} $\H^\kappa_n$, or \emph{cyclotomic Hecke algebra} of  $G(r,1,n)$, is the unitary associative $F$-algebra defined by the generators $S, T_1, \dots, T_{n-1}$ and the relations
\begin{align*}
\prod_{k = 0}^{r-1} (S - q^{\kappa_k}) &= 0,
\\
S T_1 S T_1 &= T_1 S T_1 S,
\\
S T_a &= T_a S, & &\text{for all } a \in \{2, \dots, n-1\},
\\
(T_a + 1)(T_a - q) &= 0, &&\text{for all } a \in \{1, \dots, n-1\},
\\
T_a T_b &= T_b T_a, & &\text{for all } a, b \in \{1, \dots, n-1\} \text{ with } \lvert a - b \rvert > 1,
\\
T_a T_{a+1} T_a &= T_{a+1} T_a T_{a+1}, & &\text{for all } a \in \{1, \dots, n-2\}
\end{align*}
(see~\cite{ariki-koike,broue-malle-rouquier}). Note that we may consider a more general version of the first relation (the \emph{cyclotomic} relation for the generator~$S$), but a Morita equivalence of Dipper and Mathas~\cite{dipper-mathas} ensures that it suffices to understand  this particular case where only powers of~$q$ are involved.

The algebra $\H_n^\kappa$ is a natural deformation of the group algebra of $G(r,1,n)$ and their representation theories are deeply linked. If $r = 1$, we recover the Hecke algebra $\H_n$ of $G(1,1,n) \simeq \mathfrak{S}_n$, also known as the Hecke algebra of type $A_{n-1}$. In this case, the situation is similar to the one of the symmetric group: if $\H_n$ is semisimple then its irreducible representations $\{\mathcal{D}^\lambda\}_\lambda$ are parametrised by the partitions of $n$, otherwise they are parametrised by the $e$-regular partitions of $n$ while the blocks of $\H_n$ are parametrised by the $e$-cores of $n$.
In general, if $\H_n^\kappa$ is semisimple then its irreducible representations are parametrised by the \emph{$r$-partitions} of $n$, that is, by the $r$-tuples $\tuple{\lambda} = (\lambda^{(0)}, \dots, \lambda^{(r-1)})$ of partitions with $\lvert \tuple{\lambda}\rvert \coloneqq \lvert \lambda^{(0)} \rvert + \dots + \lvert \lambda^{(r-1)} \rvert = n$.
If $\H_n^\kappa$ is not semisimple, its irreducible representations $\{\mathcal{D}^{\tuple{\lambda}}\}_{\tuple{\lambda}}$ can be indexed by a non-trivial generalisation of $e$-regular partitions, known as \emph{Kleshchev} $r$-partitions (see~\cite{ariki-mathas, ariki_classification}). Similarly, the naive generalisation of $e$-cores to $r$-partition, the \emph{$e$-multicores}, do not  parametrise in general  the blocks  of $\H_n^\kappa$. In fact, Lyle and Mathas~\cite{lyle-mathas} proved that the blocks of $\H_n^\kappa$ are parametrised by the multisets of $\kappa$-residues modulo $e$ of the $r$-partitions of $n$ (see~\textsection\ref{subsection:multipartitions}). We can identify this parametrising set with a subset $Q^\kappa_n$ of $Q^+ \coloneqq \mathbb{N}^{\mathbb{Z}/e\mathbb{Z}}$ and we denote by $\H_\alpha^\kappa$ the block corresponding to $\alpha \in Q^\kappa_n$. Moreover, to each $r$-partition $\tuple{\lambda}$ of $n$ we can associate an element $\alpha_\kappa(\tuple{\lambda}) \in Q^\kappa_n$. The blocks of $\H_n^\kappa$ partition the set of $r$-partitions of $n$ via the map $\tuple{\lambda} \mapsto \alpha_\kappa(\tuple{\lambda})$. We say that the block indexed by $\alpha \in Q^\kappa_n$  \emph{contains} the $r$-partition $\tuple{\lambda}$ if $\alpha_\kappa(\tuple{\lambda}) = \alpha$.

Now let $p \in \mathbb{N}^*$ dividing both $r$ and $e$, let $d \coloneqq \frac{r}{p}$ and $\eta \coloneqq \frac{e}{p}$ and assume that $\kappa$ is \emph{compatible} with $(d, \eta, p)$ (cf.~\eqref{equation:kappa_compatible}). The algebra $\H_n^\kappa$ has a natural subalgebra $\H_{p, n}^\kappa$ that is a Hecke algebra of the complex reflection group $G(r,p,n)$ (see~\cite{ariki_representation,broue-malle-rouquier} and also~\cite{rostam}, where we emphasise the connection between these two papers). The subalgebra $\H_{p, n}^\kappa \subseteq \H_n^\kappa$ is the subalgebra of fixed points of the automorphism $\sigma$ of order $p$ defined on the generators of $\H_n^\kappa$ by
\begin{equation}
\label{equation:introduction-sigma}
\begin{aligned}
\sigma(S) &= \zeta S,
\\
\sigma(T_a) &= T_a, &&\text{for all } a \in \{1, \dots, n-1\},
\end{aligned}
\end{equation}
where $\zeta \in F^\times$ has order $p$.
 The representation theory of $\H_{p, n}^\kappa$ can be studied using Clifford theory, see for instance \cite{ariki_representation,chlouveraki-jacon,genet-jacon,hu-mathas_decomposition}. Let $\tuple{\lambda}$ be a Kleshchev $r$-partition and let $\mathcal{D}^{\tuple{\lambda}}$ be the irreducible $\H_n^\kappa$-module indexed by $\tuple{\lambda}$. The restriction $\left.\mathcal{D}^{\tuple{\lambda}}\right\downarrow^{\H_n^\kappa}_{\H_{p, n}^\kappa}$ is isomorphic to a sum of irreducible $\H_{p, n}^\kappa$ modules. The number of irreducible $\H_{p, n}^\kappa$-modules that appear depends on the cardinality of the orbit $[\tuple{\lambda}]$ of $\tuple{\lambda} = \bigl(\lambda^{(0)}, \dots, \lambda^{(r-1)}\bigr)$ under the shift action defined by
\[
\prescript{\sigma}{}{\tuple{\lambda}} \coloneqq (\lambda^{(r-d)}, \dots, \lambda^{(r-1)}, \lambda^{(0)}, \dots, \lambda^{(r-d-1)}).
\]
A natural question is to determine the extreme cardinalities of the orbits under this action, and thus the extremal number of irreducible $\H_{p, n}^\kappa$-module that appear during the restriction process. The answer is an easy exercise when considering all $r$-partitions of $n$.

\begin{proposition}
\label{proposition:introduction_minmax}
Let  $\mathcal{C} \coloneqq \{\#[\tuple{\lambda}] : \tuple{\lambda} \text{ is an } r\text{-partition of } n\} \subseteq \mathbb{N}^*$. We have $\max \mathcal{C} = p$ and $\min \mathcal{C} = \frac{p}{\gcd(p, n)}$.
\end{proposition}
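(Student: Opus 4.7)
The plan is to exploit that $\sigma$ generates a cyclic group of order $p$ acting on the set of $r$-partitions of $n$, so every orbit size divides $p$ and $\mathcal{C} \subseteq \{k \in \mathbb{N}^* : k \mid p\}$. I would then prove the two equalities by separately bounding the orbit sizes and producing explicit $r$-partitions attaining the extremes.

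For $\max \mathcal{C} = p$, the upper bound $\max \mathcal{C} \leq p$ is automatic; for the reverse I plan to use the elementary example $\tuple{\lambda} = ((n), \emptyset, \dots, \emptyset)$, observing that $\sigma^j$ places the unique nonempty component in position $jd \bmod r$, and that the residues $0, d, 2d, \dots, (p-1)d$ are pairwise distinct modulo $r = pd$. Hence this $\tuple{\lambda}$ has an orbit of size exactly $p$.

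For $\min \mathcal{C} = p/\gcd(p, n)$, my approach is first to establish the lower bound. If $\tuple{\lambda}$ has stabiliser of order $k$ in $\langle \sigma \rangle$, then $\sigma^{p/k}(\tuple{\lambda}) = \tuple{\lambda}$ translates into $\lambda^{(i)} = \lambda^{(i + r/k \bmod r)}$ for every $i$, so the components consist of $k$ identical copies of a block of length $r/k$. Summing sizes gives $k \mid n$, and combined with $k \mid p$ this forces $k \mid \gcd(p, n)$; hence any orbit size $p/k$ is at least $p/\gcd(p, n)$. To achieve equality I set $k \coloneqq \gcd(p, n)$ and exhibit $\tuple{\lambda}$ defined by $\lambda^{(i)} = (n/k)$ when $i \equiv 0 \pmod{r/k}$ and $\lambda^{(i)} = \emptyset$ otherwise: its total size is $n$, it is fixed by $\sigma^{p/k}$, so its stabiliser has order at least $k$, and the previous divisibility argument forces equality, yielding orbit size $p/\gcd(p,n)$. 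No step looks genuinely difficult; the only delicate point is the bookkeeping with the sign convention of the shift and the trivial edge case $n = 0$, handled by the empty $r$-partition with $\gcd(p, 0) = p$.
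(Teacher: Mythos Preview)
Your argument is correct and is precisely the kind of elementary counting the paper has in mind: the paper does not actually supply a proof of this proposition, describing it instead as ``an easy exercise'' in the paragraph preceding the statement. Your orbit--stabiliser analysis (stabiliser order $k$ forces $k \mid p$ and $k \mid n$, hence $k \mid \gcd(p,n)$) together with the explicit witnesses $((n),\emptyset,\dots,\emptyset)$ for the maximum and the $k$-fold repetition of $(n/k)$ at positions $0, r/k, 2r/k, \dots$ for the minimum is exactly the expected proof.

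One small caveat: your remark that the edge case $n=0$ is ``handled by the empty $r$-partition with $\gcd(p,0)=p$'' only covers the $\min$ assertion. For $n=0$ the unique $r$-partition is fixed by $\sigma$, so $\max\mathcal{C}=1$, and the statement $\max\mathcal{C}=p$ fails unless $p=1$. This is a defect of the proposition as stated rather than of your argument; the paper implicitly works with $n\geq 1$ throughout (it is discussing Hecke algebras of $G(r,p,n)$), so you can simply assume $n\geq 1$ and drop the $n=0$ remark.
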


Already with this Proposition~\ref{proposition:introduction_minmax}, we can give some results about the representation theory of $\H_{p, n}^\kappa$, such as the number of ``Specht modules'' that appear in the restriction of Specht modules of $\H_n^\kappa$ to $\H_{p, n}^\kappa$ (as defined in \cite{hu-mathas_graded}). We can also prove that a natural basis of $\H_{p, n}^\kappa$ is not ``adapted'' cellular (cf. \textsection\ref{subsubsection:adapted_cellularity}). In order to give block-analogue answers, we  introduce a shift action on $Q^+$. More precisely, for any $\alpha \in Q^+$ we define $\sigma \cdot \alpha$ by shifting coordinates by $\eta$ and we write $[\alpha]$ for the orbit of $\alpha$. The subalgebra $\H_{[\alpha]}^\kappa \coloneqq \oplus_{\beta \in [\alpha]} \H_\beta^\kappa$ of $\H_n^\kappa$ is stable under $\sigma : \H_n^\kappa \to \H_n^\kappa$, and we denote by  $\H_{p, [\alpha]}^\kappa$ the subalgebra of fixed points.  The two shift actions that we have defined are compatible in the following way: if $\tuple{\lambda}$ is an $r$-partition then
\[
\alpha_\kappa(\prescript{\sigma}{}{\tuple{\lambda}}) = \sigma \cdot \alpha_\kappa(\tuple{\lambda})
\]
(see Lemma~\ref{lemme:sigma_alpha_lambda}).
Hence, if $\alpha \coloneqq \alpha_\kappa(\tuple{\lambda})$ we always have $\#[\tuple{\lambda}] \geq \#[\alpha]$. It is easy to see in small examples that  we may have a strict inequality. However, the main results of this paper, Theorem~\ref{theoreme:orbite_1} and Corollary~\ref{corollaire:orbite_gnrl}, prove that equality holds if we allow us to choose among all $r$-partitions $\tuple{\mu}$ with $\alpha_\kappa(\tuple{\mu}) = \alpha_\kappa(\tuple{\lambda})$. It leads to a more precise version of the ``$\min$ part'' of Proposition~\ref{proposition:introduction_minmax}.

\begin{theoreme}
\label{theorem:introduction_min_block}
Let $\tuple{\lambda}$ be an $r$-partition and let $\alpha \coloneqq \alpha_\kappa(\tuple{\lambda})$. There exists an $r$-partition $\tuple{\mu}$ with $\alpha_\kappa(\tuple{\mu}) = \alpha$ and $\#[\tuple{\mu}] = \#[\alpha]$.
\end{theoreme}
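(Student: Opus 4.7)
The plan is the following. Set $p' := \#[\alpha]$, which is a divisor of $p$. It suffices to exhibit an $r$-partition $\tuple{\mu}$ with $\alpha_\kappa(\tuple{\mu}) = \alpha$ and $\sigma^{p'} \cdot \tuple{\mu} = \tuple{\mu}$. Indeed, such a $\tuple{\mu}$ has $\#[\tuple{\mu}]$ dividing $p'$, while the compatibility $\alpha_\kappa(\prescript{\sigma}{}{\tuple{\mu}}) = \sigma \cdot \alpha_\kappa(\tuple{\mu})$ noted just before the statement forces $\#[\tuple{\mu}] \geq \#[\alpha_\kappa(\tuple{\mu})] = \#[\alpha] = p'$, whence equality.

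First, I would translate the construction into the language of abaci. Each component $\mu^{(s)}$ is encoded by an $e$-runner abacus whose charge is dictated by $\kappa_s$, and its contribution to $\alpha_\kappa(\tuple{\mu})$ depends only on the number of beads on each runner (relative to the empty charge). The shift $\sigma$ acts on abaci by cyclically permuting the $r$ components by $d$ and shifting runners by $\eta$ within each component, so the condition ``$\sigma^{p'}$-fixed'' becomes an explicit equivariance relation among the bead counts, while the condition $\alpha_\kappa(\tuple{\mu}) = \alpha$ becomes a system of $e$ linear equations on those counts.

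Second, I would recast the existence of such bead counts as an integer feasibility problem. Non-negativity of the bead counts, together with the requirement that the abacus data actually come from an honest partition, yield box constraints; the residue-matching conditions yield linear equations; and $\sigma^{p'}$-invariance identifies bead counts across orbits of the combined runner/component action. Aggregating reduces the problem to finding a $0/1$ matrix with prescribed row, column, and block sums, where the marginals are dictated by $\alpha$ and by the $\sigma$-orbit structure of $\alpha$. The hypothesis $\#[\alpha] = p'$ is precisely what guarantees that the compatibility relations between these marginals are satisfied.

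The technical core is therefore the existence theorem for binary matrices with prescribed row, column and block sums announced in the abstract, an upgrade of the classical Gale--Ryser theorem that accommodates the extra block-sum constraints. The main obstacle will be isolating the correct necessary conditions on the marginals, checking that they are sufficient, and verifying that the marginals coming from $\alpha$ and $p'$ automatically satisfy them; once this combinatorial result is in place, reading off the bead counts from the resulting $0/1$ matrix and reconstructing $\tuple{\mu}$ by $\langle \sigma^{p'} \rangle$-extension is routine bookkeeping, and gives the desired $\tuple{\mu}$.
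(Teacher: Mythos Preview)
Your reduction in the first paragraph is correct and matches the paper's derivation of Corollary~\ref{corollaire:orbite_gnrl} from Theorem~\ref{theoreme:orbite_1}: it suffices to produce a $\sigma^{p'}$-fixed $\tuple{\mu}$, and Lemma~\ref{lemme:sigma_puiss} lets one rephrase this as the case $\#[\alpha]=1$ for the data $(p'd, p'\eta, p/p')$.

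The gap is in the second and third paragraphs. You assert that ``the condition $\alpha_\kappa(\tuple{\mu}) = \alpha$ becomes a system of $e$ linear equations on those counts,'' and that the problem then reduces directly to a $0/1$-matrix existence theorem. But the natural abacus coordinates for an $e$-multicore are the parameters $x^{(k)} \in \mathbb{Z}^e_0$, and while the differences $n^i - n^{i+1}$ are indeed linear in these (Lemma~\ref{lemme:difference_ni_xi_multipart}), the value $n^0$ itself is \emph{quadratic}: $n^0 = \frac12 \sum_k \lVert x^{(k)}\rVert^2 + (\text{linear})$, by Proposition~\ref{proposition:residu_0_chapeau} and~\eqref{equation:multipartitions-L_i}. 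So matching $\alpha$ exactly is a quadratic integer feasibility problem, and no Gale--Ryser-type statement handles it directly.

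The paper circumvents this in two steps that your sketch does not contain. First, the key Lemma~\ref{lemme:inegalite_implique_conj} replaces the quadratic equality by an \emph{inequality}: it is enough to find $z^{(0)},\dots,z^{(d-1)}\in\mathbb{Z}^e_0$ satisfying the $\eta$ linear equations~\eqref{key_lemma:eq} together with the inequality~\eqref{key_lemma:leq}, because any shortfall in $n^0$ can be repaired by wrapping on $\eta$-rim hooks (Lemma~\ref{lemme:ajout_rimhook}). Second, one solves this relaxed problem by taking the rational barycentre $\widetilde z^{(\ell)} = \frac1p\sum_j x^{(\ell+jd)}$, which satisfies the linear constraints and, by a strong-convexity inequality (Proposition~\ref{proposition:inegalite_forte_convexite}), undershoots the quadratic target with a slack term $\frac12\sum_{\ell,j} \lVert x^{(\ell+jd)} - \widetilde z^{(\ell)}\rVert^2$; then one rounds $\widetilde z^{(\ell)}$ to integers using the binary-matrix result (Proposition~\ref{proposition:elt_base_canonique_bloc}), and Lemma~\ref{lemme:erreur_compensent} shows that the rounding error is exactly absorbed by that slack. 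The binary-matrix theorem is thus a tool inside the rounding step, not a direct encoding of the whole problem; without the inequality relaxation and the convexity argument, your plan has no mechanism for handling the quadratic constraint.
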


Wada~\cite{wada} proved a more precise version of the ``$\max$ part'' of Proposition~\ref{proposition:introduction_minmax}.
In order to classify the blocks of $\H_{p, n}^\kappa$,  Wada proved that there (almost) always exists an $r$-partition $\tuple{\mu}$ with $\alpha_\kappa(\tuple{\mu}) = \alpha$ and $\#[\tuple{\mu}] = p$. His proof uses the classification result of~\cite{lyle-mathas} and is very short. In contrast, the proof of Theorem~\ref{theorem:introduction_min_block} that we present here is quite long and we did not find a way to use~\cite{lyle-mathas}. At least, as in~\cite{lyle-mathas}, we use the \emph{abacus representation} of partitions.

Theorem~\ref{theorem:introduction_min_block} allows us to  give the block-analogues of the results for $\H_{p, n}^\kappa$ that we deduced from Proposition~\ref{proposition:introduction_minmax}, that is, the same results but for $\H_{p, [\alpha]}^\kappa$ instead of $\H_{p, n}^\kappa$. We can also deduce from Theorem~\ref{theorem:introduction_min_block} some consequences about the blocks of  $\H_n^\kappa$. We say that an $r$-partition $\tuple{\lambda}$ (respectively an element $\alpha \in Q^+$) is \emph{stuttering} if $\#[\tuple{\lambda}] = 1$ (resp. $\#[\alpha] = 1$).
By Theorem~\ref{theorem:introduction_min_block}, we know that the block indexed by a stuttering $\alpha \in Q^\kappa_n$ always contains a stuttering $r$-partition.

The paper is organised as follows. Section~\ref{section:combinatorics} is devoted to combinatorics. More specifically, in~\textsection\ref{subsection:partitions} we define partitions of integers and to each partition $\lambda$ we associate an element $\alpha(\lambda) \in Q^+ = \mathbb{N}^{\mathbb{Z}/e\mathbb{Z}}$. In~\textsection\ref{subsection:abaci} we recall the abacus representation of partitions. In~\textsection\ref{subsection:parametrisation}, to an $e$-core $\lambda$ we associate the \emph{$e$-abacus variable} $x = (x_0, \dots, x_{e-1}) \in \mathbb{Z}^e$. The main fact of this subsection is the equality
\[
\alpha(\lambda)_0 = \frac{1}{2}\sum_{i = 0}^{e-1} x_i^2
\]
(cf. Proposition~\ref{proposition:residu_0_chapeau}), which we recall from~\cite{fayers_weight}. In~\textsection\ref{subsection:multipartitions} we extend the previous definitions to multipartitions, so we can in~\textsection\ref{subsection:shifts} define the two shift maps $\tuple{\lambda} \mapsto \prescript{\sigma}{}{\tuple{\lambda}}$ and $\alpha \mapsto \sigma \cdot \alpha$ involved in the statement of our main results, Theorem~\ref{theoreme:orbite_1} and Corollary~\ref{corollaire:orbite_gnrl}. Theorem~\ref{theoreme:orbite_1} is the case $\#[\alpha] = 1$ of Theorem~\ref{theorem:introduction_min_block} and Corollary~\ref{corollaire:orbite_gnrl} is the general case.

Section~\ref{section:approx} is devoted to technical tools that we need to prove Theorem~\ref{theoreme:orbite_1}. The reader who wants to focus on the proof of Theorem~\ref{theoreme:orbite_1} may, in the first instance, skip this section. In~\textsection\ref{subsection:binary_matrices}, we study the existence of a chain of interchanges $\bigl(\begin{smallmatrix} 1&0\\0&1\end{smallmatrix}\bigr) \leftrightarrow \bigl(\begin{smallmatrix} 0&1\\1&0\end{smallmatrix}\bigr)$ in a family of binary matrices (Corollary~\ref{corollaire:interversions_somme_blocs}).
In~\textsection\ref{subsection:binary_averaging}, we recall a special case of a general theorem of Gale~\cite{gale} and Ryser~\cite{ryser} about the existence of a binary matrix with prescribed row and column sums. We apply the results of~\textsection\ref{subsection:binary_matrices} to impose extra conditions on block sums (Proposition~\ref{proposition:elt_base_canonique_bloc}).
Finally, we gathered in \textsection\ref{subsection:inegalites} some inequalities; in particular, Lemma~\ref{lemme:forte_convexite_general} is a special case of a Jensen's inequality for strongly convex functions and Lemma~\ref{lemme:erreur_compensent} is an application to an inequality involving the fractional part map.

In Section~\ref{section:demo}, we prove the main result, Theorem~\ref{theoreme:orbite_1}. After a preliminary step in \textsection\ref{subsection:using_shift_invariance}, we give in \textsection\ref{subsection:key_lemma} a key lemma (Lemma~\ref{lemme:inegalite_implique_conj}), which reduces the proof of Theorem~\ref{theoreme:orbite_1} to a (strongly) convex optimisation problem over the integers with linear constraints. We find in~\textsection\ref{subsection:tentative_naive} a partial solution, in~\textsection\ref{subsection:rectification} we use Proposition~\ref{proposition:elt_base_canonique_bloc} to find a solution and eventually in \textsection\ref{subsection:proof_main_theorem} we prove Theorem~\ref{theoreme:orbite_1}.

Finally, we give in Section~\ref{section:applications} two applications of Corollary~\ref{corollaire:orbite_gnrl}. The general idea is that we will have more precise results with Corollary~\ref{corollaire:orbite_gnrl} than with Proposition~\ref{proposition:introduction_minmax}.
We quickly recall in~\textsection\ref{subsection:cellular_algebras} the theory of cellular algebras of Graham and Lehrer~\cite{graham-lehrer}, the Ariki--Koike algebra $\H_n^\kappa$ and its blocks being particular cases. In \textsection\ref{subsection:cellularity_fixed_points_subalgebra} we use the map $\mu \coloneqq \sum_{j = 0}^{p-1} \sigma^j$ to construct a family of bases for $\H_{p, [\alpha]}^\kappa = \mu(\H_{[\alpha]}^\kappa)$ (Proposition~\ref{proposition:basis_stable}). We deduce in \textsection\ref{subsubsection:full_orbit} that $\H_{p, [\alpha]}^\kappa$ is a cellular algebra if $\#[\alpha] = p$, and  $\H_{p, n}^\kappa$ is cellular if $p$ and $n$ are coprime. Then, using Corollary~\ref{corollaire:orbite_gnrl}, we show that if $\#[\alpha] < p$ and $p$ is odd then the bases that we constructed for $\H_{p, [\alpha]}^\kappa$ are not \emph{adapted cellular} (see \textsection\ref{subsubsection:adapted_cellularity}).
Finally, in~\textsection\ref{subsection:restriction_specht}, we study the maximal number of ``Specht modules of $\H_{p, [\alpha]}^\kappa$'' (see~\cite{hu-mathas_graded}) that appear when restricting the Specht modules of $\H_{[\alpha]}^\kappa$ to $\H_{p, [\alpha]}^\kappa$.

\section{Combinatorics}
\label{section:combinatorics}

In this section, we recall standard definitions of combinatorics such as (multi)partitions and their associated abaci. We also introduce two \emph{shift} actions and then state our main result, Theorem~\ref{theoreme:orbite_1}.

Let $e \geq 2$ be an integer. We identify $\mathbb{Z}/e\mathbb{Z}$  with the set $\{0, \dots, e-1\}$. We will use $\mathbb{N} = \mathbb{Z}_{\geq 0}$ to denote the set of non-negative integers and $\mathbb{N}^* = \mathbb{Z}_{> 0}$.

\subsection{Partitions}
\label{subsection:partitions}

A \emph{partition} of $n$ is a non-increasing sequence of positive integers $\lambda = (\lambda_0, \dots, \lambda_{h-1})$ of sum~$n$.  We will write $\lvert \lambda\rvert \coloneqq n$ and $h(\lambda) \coloneqq h$. If $\lambda$ is a partition, we denote by $\Y(\lambda)$ its Young diagram, defined by:
\[
\Y(\lambda) \coloneqq \left\{(a, b) \in \mathbb{N}^2 : 0 \leq a \leq h(\lambda)-1 \text{ and }  0 \leq b \leq \lambda_a - 1\right\}.
\]

\begin{exemple}
We represent the Young diagram associated with the partition $(4,3,3,1)$ by
\[
\ydiagram{4,3,3,1}\, .
\]
\end{exemple}

We refer to the elements of $\mathbb{N} \times \mathbb{N}$ as \emph{nodes}. For instance, the elements of $\Y(\lambda)$ are nodes.  A node $\gamma \in \Y(\lambda)$ is \emph{removable} if $\Y(\lambda)\setminus \{\gamma\}$ is the Young diagram of a partition. Similarly, a node $\gamma \notin \Y(\lambda)$ is \emph{addable} if $\Y(\lambda)\cup \{\gamma\}$ is the Young diagram of a partition. A \emph{rim hook} of $\lambda$ is a subset of $\Y(\lambda)$ of the following form:
\[
r^{\lambda}_{(a, b)} \coloneqq  \left\{(a', b') \in \Y(\lambda) : a' \geq a,\, b' \geq b \text{ and } (a'+1, b'+1) \notin \Y(\lambda) \right\},
\]
where $(a, b) \in \Y(\lambda)$. We say that $r^{\lambda}_{(a, b)}$ is an \emph{$l$-rim hook} if it has cardinality $l$. Note that $1$-rim hooks are exactly removable nodes.
The \emph{hand} of a rim hook $r^{\lambda}_{(a, b)}$ is the node $(a, b') \in r^{\lambda}_{(a, b)}$ with maximal $b'$. 
The set $\Y(\lambda) \setminus r^{\lambda}_{(a, b)}$ is the Young diagram of a certain partition $\mu$, obtained by \emph{unwrapping} or \emph{removing} the rim hook $r^{\lambda}_{(a, b)}$ from $\lambda$.
Conversely, we say that $\lambda$ is obtained from~$\mu$ by \emph{wrapping on} or \emph{adding} the rim hook $r^{\lambda}_{(a, b)}$. We say that a partition $\lambda$ is an \emph{$e$-core}  if~$\lambda$ has no $e$-rim hook.

\begin{exemple}
\label{exemple:ruban}
We consider the partition $\lambda \coloneqq (3,2,2,1)$. An example of a $3$-rim hook is
\[
r^\lambda_{(2,0)} =
\begin{ytableau}
{}&{}&{}
\\
{}&{}
\\
\times&\times
\\
\times
\end{ytableau}\, ,
\]
and a $4$-rim hook is for instance
\[
r^\lambda_{(1,0)} = 
\begin{ytableau}
{} &{} & {}
\\
 {}&\times
\\
 \times& \times
\\ 
\times
\end{ytableau}\, .
\]
We can check that $\lambda$ has no $5$-rim hook so it is a $5$-core. We will see in \textsection\ref{subsection:parametrisation} how to use abaci to easily know whether a partition is an $e$-core.
\end{exemple}

 The \emph{residue}  of a node $\gamma = (a, b)$ is $\res(\gamma) \coloneqq b - a \pmod{e}$. For any $i \in \mathbb{Z}/e\mathbb{Z}$, an $i$-node is a node with residue $i$. If~$\lambda$ is a partition, we denote by $n^i(\lambda)$ the  multiplicity of $i$ in the multiset of  residues of all elements of $\Y(\lambda)$. Note that $\sum_{i = 0}^{e-1} n^i(\lambda) = \lvert\lambda\rvert$.

Let $Q$ be a free $\mathbb{Z}$-module of rank $e$ and let $\{\alpha_i\}_{i \in \mathbb{Z}/e\mathbb{Z}}$ be a basis. We have $Q = \oplus_{i=0}^{e-1} \mathbb{Z}\alpha_i$ and we define $Q^+ \coloneqq \oplus_{i=0}^{e-1} \mathbb{N}\alpha_i$. For any $\alpha \in Q$, we denote by $\lvert \alpha\rvert \in \mathbb{Z}$ the sum of its coordinates in the basis $\{\alpha_i\}_{i \in \mathbb{Z}/e\mathbb{Z}}$.
 If $\lambda$ is a partition we define
\[
\alpha(\lambda) \coloneqq \sum_{\gamma \in \Y(\lambda)} \alpha_{\res(\gamma)} = \sum_{i =0}^{e-1} n^i(\lambda) \alpha_i \in Q^+.
\]
Note that $\lvert \alpha(\lambda)\rvert = \lvert\lambda\rvert$.
More generally, if $\Gamma$ is any finite subset of $\mathbb{N}^2$  we will write $\alpha(\Gamma) \coloneqq \sum_{\gamma \in \Gamma} \alpha_{\res(\gamma)}$.
\begin{remarque}
\label{remarque:racine_ruban}
If $r^{\lambda}$ is an $l$-rim hook then $\alpha(r^{\lambda}) = \sum_{i = 0}^{l-1} \alpha_{i_0 + i}$ for some $i_0 \in \mathbb{Z}/e\mathbb{Z}$. In particular, if $r^{\lambda}$ is an $e$-rim hook then $\alpha(r^{\lambda}) = \sum_{i = 0}^{e-1} \alpha_i$.
\end{remarque}

Finally, if for $\alpha \in Q^+$ there exists a partition $\lambda$  such that $\alpha = \alpha(\lambda)$, we say that $\alpha \in Q^+$ is \emph{associated} with $\lambda$. For an arbitrary $\alpha \in Q^+$, there can exist two different partitions $\lambda \neq \mu$ such that $\alpha = \alpha(\lambda) = \alpha(\mu)$. However, if we restrict to $e$-cores then the map $\lambda \mapsto \alpha(\lambda)$ is one-to-one (see~\cite[2.7.41 Theorem]{james-kerber} or~\cite{lyle-mathas}). Hence, the following subset of $Q^+$:
\[
Q^* \coloneqq \left\{\alpha\in Q^+ : \alpha \text{ is associated with some } e \text{-core}\right\},
\]
is in bijection with the set of $e$-cores. The aim of \textsection\ref{subsection:parametrisation} is to explicitly construct a bijection between~$Q^*$ and~$\mathbb{Z}^{e-1}$.

\subsection{Abaci}
\label{subsection:abaci}

The abacus representation of a partition has been first introduced by 
James~\cite{james}. Here, we follow  the construction of \cite{lyle-mathas}. To a partition $\lambda = (\lambda_0, \dots, \lambda_{h-1})$ we associate the \emph{$\beta$-number}~$\beta(\lambda)$ defined as the sequence $(\lambda_{a-1} - a)_{a \geq 1}$, where $\lambda_{a-1} \coloneqq 0$ for any $a > h$. 

\begin{lemme}
\label{lemma:beta_number}
The $\beta$-number of a partition~$\lambda$ is strictly decreasing and satisfies $\beta(\lambda)_a = -a$ for all $a > h(\lambda)$. Conversely, if~$\beta = (\beta_a)_{a \geq 1}$ is a strictly decreasing sequence of integers with $\beta_a = -a$ for all $a \gg 1$ then~$\beta$ is the $\beta$-number of some partition.
\end{lemme}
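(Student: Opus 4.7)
The plan is to unpack the definition $\beta(\lambda)_a = \lambda_{a-1} - a$ and verify both directions directly, the whole thing being a routine bookkeeping exercise with indices.

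For the forward direction, I would first observe that strict decrease is equivalent to $\beta(\lambda)_a - \beta(\lambda)_{a+1} \geq 1$, and a one-line computation gives
\[
\beta(\lambda)_a - \beta(\lambda)_{a+1} = (\lambda_{a-1} - a) - (\lambda_a - (a+1)) = \lambda_{a-1} - \lambda_a + 1 \geq 1,
\]
using the partition convention $\lambda_{a-1} \geq \lambda_a$ (which holds also for $a \geq h(\lambda)$ since we set $\lambda_j = 0$ for $j \geq h(\lambda)$). The stabilisation $\beta(\lambda)_a = -a$ for $a > h(\lambda)$ is immediate from the same convention.

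For the converse, given such a sequence $\beta$, I would define $\lambda_{a-1} \coloneqq \beta_a + a$ for $a \geq 1$ and check three things: (i) $\lambda_{a-1} \geq \lambda_a$ for all $a$, which follows from $\beta_a \geq \beta_{a+1} + 1$ by the same one-line rearrangement as above; (ii) $\lambda_{a-1} = 0$ for $a$ large, which is the hypothesis $\beta_a = -a$ for $a \gg 1$; and (iii) $\lambda_{a-1} \geq 0$ for every $a \geq 1$. The last point is the only one that takes a moment of thought: let $N$ be such that $\beta_a = -a$ for all $a \geq N$; strict decrease and a backwards induction on $a$ give $\beta_a \geq -a$ for $a < N$ as well (because $\beta_{a} > \beta_{a+1} \geq -(a+1)$ forces $\beta_a \geq -a$), hence $\lambda_{a-1} \geq 0$. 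Truncating to the largest index $h$ with $\lambda_{h-1} > 0$ then yields a genuine partition whose $\beta$-number is $\beta$.

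There is no real obstacle here; the only step requiring a hint of argument is the nonnegativity $\lambda_{a-1} \geq 0$ in the converse, which I would handle by the backwards induction sketched above rather than by estimating directly from the tail.
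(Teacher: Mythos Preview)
Your proof is correct. The paper actually states this lemma without proof, treating it as elementary; your argument is exactly the standard verification one would supply, and both directions are handled correctly, including the backwards induction for nonnegativity in the converse.
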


The following result is well-known (see for instance~\cite[2.7.13 Lemma]{james-kerber}). 

\begin{lemme}
\label{lemme:transfert_bille}
Let $l \in \mathbb{N}^*$. A partition $\lambda$ has an $l$-rim hook if and only if there is an element $b \in \beta(\lambda)$ such that $b - l \notin \beta(\lambda)$. In that case, if $\mu$ is the partition that we obtain by removing this $l$-rim hook, then $\beta(\mu)$ is obtained by replacing $b$ by $b - l$ in $\beta(\lambda)$ and then sorting in decreasing order.
\end{lemme}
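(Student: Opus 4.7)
The plan is to establish an explicit bijection between the $l$-rim hooks of $\lambda$ and the bead moves on the $\beta$-abacus that the lemma describes. For the forward direction, I parametrise an $l$-rim hook $r^{\lambda}_{(a,b)}$ by its top-left corner $(a,b) \in \Y(\lambda)$, and I let $a^*$ be the largest row appearing in it, characterised by $\lambda_{a^*} > b \geq \lambda_{a^*+1}$. A row-by-row count, using that the rim condition $(a'+1, b'+1) \notin \Y(\lambda)$ forces the cells of row $a'$ to lie in $[\max(b, \lambda_{a'+1}-1), \lambda_{a'}-1]$, combines with a telescoping sum to yield the length formula
\[
l = \lambda_a - b + (a^* - a).
\]

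Then, reading off $\mu = \lambda \setminus r^{\lambda}_{(a,b)}$ explicitly ($\mu_{a'} = \lambda_{a'+1}-1$ for $a \leq a' < a^*$, $\mu_{a^*} = b$, unchanged otherwise) and substituting into $\beta(\mu)_{a'+1} = \mu_{a'} - (a'+1)$, I find that $\beta(\mu)$ coincides with $\beta(\lambda)$ except that the entry $\beta(\lambda)_{a+1}$ is deleted and the value $b - (a^*+1)$ is inserted; by the length formula this new value equals $\beta(\lambda)_{a+1} - l$. The bounding inequalities $\lambda_{a^*+1} \leq b < \lambda_{a^*}$ rephrase as $\beta(\lambda)_{a^*+1} > \beta(\lambda)_{a+1} - l > \beta(\lambda)_{a^*+2}$, confirming in particular that $\beta(\lambda)_{a+1} - l \notin \beta(\lambda)$. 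The converse is symmetric: given $B = \beta(\lambda)_{a+1}$ with $B - l \notin \beta(\lambda)$, I let $a^* \geq a$ be the unique index placing $B-l$ between $\beta(\lambda)_{a^*+1}$ and $\beta(\lambda)_{a^*+2}$, set $b := B - l + a^* + 1$, and reverse the inequalities to obtain $\lambda_{a^*+1} \leq b < \lambda_{a^*}$, which makes $r^{\lambda}_{(a,b)}$ a valid $l$-rim hook of length $l$; Lemma~\ref{lemma:beta_number} then guarantees that the claimed $\beta$-number of $\mu$ really does come from a partition.

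The main obstacle is pure book-keeping: rows are $0$-indexed while $\beta$-entries are $1$-indexed, and the rim condition $(a'+1, b'+1) \notin \Y(\lambda)$ is $\lambda_{a'+1} \leq b'+1$ rather than $\leq b'$. These off-by-one shifts have to be tracked carefully through the length formula and through the insertion position of the new $\beta$-entry, but once the conventions are fixed both directions reduce to routine computation.
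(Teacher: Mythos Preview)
The paper does not give its own proof of this lemma; it simply records it as well-known and cites \cite[2.7.13 Lemma]{james-kerber}. Your argument is correct and self-contained, and it is essentially the standard combinatorial proof one finds in that reference: identify the bottom row $a^*$ of the rim hook via $\lambda_{a^*} > b \geq \lambda_{a^*+1}$, compute the length by a telescoping sum over rows, and observe that passing to $\mu$ shifts the $\beta$-entries $\beta(\lambda)_{a+2}, \dots, \beta(\lambda)_{a^*+1}$ up by one position while inserting the single new value $\beta(\lambda)_{a+1} - l$ in the resulting gap. One small point you leave implicit: in the converse, the hypothesis $B - l \notin \beta(\lambda)$ automatically forces $a^* < h(\lambda)$ (since $\beta(\lambda)$ contains every integer $\leq -(h(\lambda)+1)$), which is what guarantees $\lambda_{a^*} > 0$ and hence that $(a,b)$ is a genuine node of $\Y(\lambda)$; this is routine but worth a sentence if you write it out in full.
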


In particular, if $\mu$ is a partition and if $b \in \beta(\mu)$ and $l \in \mathbb{N}^*$ are such that $b + l \notin \beta(\mu)$, then replacing $b$ by $b+l$ in $\beta(\mu)$ and sorting in decreasing order is equivalent to adding an $l$-rim hook to $\mu$. Indeed,  by Lemma~\ref{lemma:beta_number} the sequence that we obtain from $\beta(\mu)$ is the $\beta$-number of a certain partition $\lambda$, and by Lemma~\ref{lemme:transfert_bille} the partition $\mu$ is obtained from $\lambda$ by unwrapping an $l$-rim hook. That is, the partition $\lambda$ is obtained from $\mu$ by wrapping on an $l$-rim hook (see also \cite[Lemma 5.26]{mathas}).

Lemma~\ref{lemme:transfert_bille} ensures that for any partition $\lambda$, there is a unique $e$-core $\overline{\lambda}$ that can be obtained by successively removing $e$-rim hooks.
We say that $\overline{\lambda}$ is the \emph{$e$-core of $\lambda$}, and the number of $e$-rim hooks that we remove to obtain~$\overline{\lambda}$ from~$\lambda$ is the \emph{$e$-weight} of~$\lambda$.
We now consider an abacus with $e$-runners, each runner being a horizontal copy of $\mathbb{Z}$ and displayed in the following way: the $0$th runner is on top and the origins of each copy of $\mathbb{Z}$ are aligned with respect to a vertical line.
We record the elements of $\beta(\lambda)$ on this abacus according to the following rule: there is a bead at position $j \in \mathbb{Z}$ on the runner $i \in \{0, \dots, e-1\}$ if and only if there exists $a \geq 1$ such that $\beta(\lambda)_a = i + je$. 
We say that this abacus is the \emph{$e$-abacus associated with $\lambda$}.  Note that, for any $i \in \{0, \dots, e-1\}$ and $j \gg 0$, on runner~$i$ there is a bead at position $-j$ (by Lemma~\ref{lemma:beta_number}) and a \emph{gap} (that is, no bead) at position $j$.

\begin{exemple}
\label{exemple:abaque}
We consider the partition $\lambda = (3,2,2,1)$ from Example~\ref{exemple:ruban}. Its $\beta$-number is $\beta(\lambda) = (2, 0, -1, -3, -5, \dots)$.
The associated $3$-abacus is
\abacus{3}{3,2,2,1}{,}
the associated $4$-abacus is
\abacus{4}{3,2,2,1}{,}
and the associated $5$-abacus is
\abacus{5}{3,2,2,1}{.}
Recall that counting the number of gaps up each bead (continuing counting on the left starting from the $(e-1)$th runner when reaching the $0$th one) recovers the underlying partition.
\end{exemple}

Let $\lambda$ be a partition and let us consider its associated $e$-abacus. We give the abacus interpretation of Lemma~\ref{lemme:transfert_bille} in the two particular cases $l = 1$ and $l = e$. Note that for any $a \in \{1, \dots, h(\lambda)\}$, we have $\beta(\lambda)_a = i \pmod{e}$ if and only if $(a-1, \lambda_{a-1} - 1) \in \Y(\lambda)$ is an $i$-node.

\begin{corollaire}
\label{corollaire:transfert_bille_1}
\begin{itemize}
\item
We can move a bead on position $j \in \mathbb{Z}$ on runner $i \in \{0, \dots, e-1\}$ to the previously free position $j$ on runner $i-1$ (to the previously free position $j-1$ on runner $e-1$ if $i = 0$) if and only if $\lambda$ has a removable $i$-node.
\item We can move a bead on position $j$ on runner $i$ to the previously free position $j$ on runner $i+1$ (to the previously free position $j+1$ on runner $0$ if $i = e-1$) if and only if $\lambda$ has an addable $(i+1)$-node.
\end{itemize}
\end{corollaire}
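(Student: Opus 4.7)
The plan is to reduce the statement to Lemma~\ref{lemme:transfert_bille} with $l = 1$ via a direct translation between abacus positions and values of $\beta(\lambda)$. Recall that a bead at position $j$ on runner $i \in \{0, \dots, e-1\}$ records that the value $b \coloneqq i + je$ appears in $\beta(\lambda)$. The key observation is that the ``previous'' position in the statement (position $j$ on runner $i-1$ if $i \geq 1$, or position $j-1$ on runner $e-1$ if $i = 0$) records precisely the value $b-1$: for $i \geq 1$ this is $(i-1) + je = b-1$, and for $i = 0$ it is $(e-1) + (j-1)e = je - 1 = b - 1$. Symmetrically, the ``next'' position records $b + 1$.

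For the first bullet, moving a bead from position $j$ on runner $i$ to the previous position corresponds exactly to replacing $b = i + je$ by $b - 1$ in $\beta(\lambda)$, and the condition that the target position be ``previously free'' is the condition $b - 1 \notin \beta(\lambda)$. By Lemma~\ref{lemme:transfert_bille} applied with $l = 1$, this operation is legal if and only if $\lambda$ has a $1$-rim hook, i.e.\ a removable node, associated with the bead $b$. Writing $b = \beta(\lambda)_a$, the removable node is $(a-1, \lambda_{a-1} - 1)$, whose residue equals $\lambda_{a-1} - 1 - (a-1) = \beta(\lambda)_a \equiv i \pmod{e}$. Hence this is precisely a removable $i$-node, as claimed.

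The second bullet is symmetric: moving the bead to the ``next'' position replaces $b$ by $b+1$ in $\beta(\lambda)$, and the target being previously free is $b+1 \notin \beta(\lambda)$. By the discussion immediately following Lemma~\ref{lemme:transfert_bille} (applying it to the partition obtained after the move), this is equivalent to wrapping a $1$-rim hook onto $\lambda$, i.e.\ to $\lambda$ having an addable node. The residue of this addable node is $b + 1 \equiv i + 1 \pmod{e}$ by the same residue computation performed on the partition after the move, yielding an addable $(i+1)$-node. The only point requiring care is the wrap-around behaviour at $i = 0$ and $i = e-1$, but this was already handled by the modular computations above, so there is no real obstacle beyond bookkeeping.
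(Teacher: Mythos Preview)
Your proof is correct and follows exactly the approach intended by the paper: the corollary is stated there without proof, merely as the case $l = 1$ of Lemma~\ref{lemme:transfert_bille}, together with the residue observation $\beta(\lambda)_a \equiv i \pmod{e} \iff (a-1,\lambda_{a-1}-1)$ is an $i$-node, which the paper records immediately before the statement. Your write-up just makes this unpacking explicit.
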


\begin{corollaire}
\label{corollaire:transfert_bille_e}
\begin{itemize}
\item We can slide a bead on position $j$ on runner $i$ to the previously free position $j-1$ on the same runner if and only if $\lambda$ has an $e$-rim hook of hand residue $i$. Hence, the partition $\lambda$ is an $e$-core if and only if its associated $e$-abacus has no gap, that is, no bead has a gap on its left.
\item We can slide a bead on position $j$ on runner $i$ to the previously free position $j+1$ on the same runner if and only if $\lambda$ has an addable $e$-rim hook of hand residue $i$. Hence, we can always add an $e$-rim hook of hand residue $i$ to $\lambda$.
\end{itemize}
\end{corollaire}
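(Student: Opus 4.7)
The plan is to specialise Lemma~\ref{lemme:transfert_bille} to $l = e$ and translate both its hypothesis and its conclusion into the $e$-abacus language. A bead at position $j$ on runner $i$ corresponds exactly to the entry $b = i + je$ in $\beta(\lambda)$, and then $b - e = i + (j-1)e$ corresponds to position $j-1$ on the same runner. Hence the condition ``$b \in \beta(\lambda)$ with $b - e \notin \beta(\lambda)$'' from Lemma~\ref{lemme:transfert_bille} is exactly ``runner $i$ carries a bead at position $j$ and a gap at position $j-1$'', and the resulting substitution $b \mapsto b - e$ in $\beta(\lambda)$ (followed by reordering) is precisely the sliding move described in the corollary.

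The one piece of bookkeeping that requires care is matching the runner index $i$ with the hand residue of the removed $e$-rim hook. The hand of $r^{\lambda}_{(a,b)}$ is by definition the rightmost node $(a, \lambda_a - 1)$ of row $a$, whose residue equals
\[
(\lambda_a - 1) - a = \lambda_a - (a+1) = \beta(\lambda)_{a+1} \pmod{e}
\]
by the defining relation of $\beta(\lambda)$ (the same observation already invoked before Corollary~\ref{corollaire:transfert_bille_1}). This is exactly the runner index of the bead $\beta(\lambda)_{a+1}$ that Lemma~\ref{lemme:transfert_bille} identifies as the one being shifted, so the first bullet follows. The $e$-core characterisation is then immediate, since $\lambda$ is an $e$-core iff it has no $e$-rim hook at all, iff no bead on the $e$-abacus has a gap immediately to its left on its own runner.

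The second bullet is handled symmetrically: if runner $i$ carries a bead at position $j$ and a gap at position $j+1$, then replacing $b = i + je$ by $b + e$ in $\beta(\lambda)$ yields, after reordering, a sequence satisfying the conditions of Lemma~\ref{lemma:beta_number}, hence the $\beta$-number of a partition $\mu$. Applying Lemma~\ref{lemme:transfert_bille} to $\mu$ then shows that $\lambda$ is obtained from $\mu$ by removing an $e$-rim hook of hand residue $i$, equivalently that $\mu$ is obtained from $\lambda$ by adding one. For the final ``always'' assertion, Lemma~\ref{lemma:beta_number} forces only finitely many positive entries in $\beta(\lambda)$, so each runner $i$ admits a rightmost bead whose successor is free, and sliding it rightward adds the required $e$-rim hook with hand residue $i$. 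The only real obstacle is the residue computation above; once that is in place, both statements are direct translations of (the two directions of) Lemma~\ref{lemme:transfert_bille}.
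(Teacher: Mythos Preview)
Your proof is correct and follows exactly the approach implicit in the paper, which states this result as an unproven corollary of Lemma~\ref{lemme:transfert_bille} together with the residue observation made just before Corollary~\ref{corollaire:transfert_bille_1}. The only point one might be picky about is that Lemma~\ref{lemme:transfert_bille} as stated does not spell out \emph{which} bead corresponds to \emph{which} rim hook, so your identification of the hand with row~$a$ when the moved bead is $\beta(\lambda)_{a+1}$ tacitly uses the explicit form of the bijection in \cite[2.7.13]{james-kerber}; but this is standard and clearly what the paper intends.
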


\begin{exemple}
We consider the partition $\lambda = (3, 2, 2, 1)$ of Example~\ref{exemple:ruban}. Recall that we gave in Example~\ref{exemple:abaque} the $e$-abaci for $e \in \{3,4,5\}$. The $3$-abacus of $\lambda$ has only one gap thus $r^\lambda_{(2,0)}$ is the only $3$-rim hook that we can remove. The $4$-abacus of $\lambda$ has two gaps, corresponding to the two lonely beads on runners $0$ and $2$. Sliding left the bead on runner $2$ (respectively $0$) corresponds to removing the $4$-rim hook $r^\lambda_{(0,1)} = $ {\scriptsize\begin{ytableau}
{}&\times&{\color{blue}{\times}}
\\
{}&\times
\\
{}&\times
\\
{}
\end{ytableau}} (resp. $r^\lambda_{(1,0)} =$ {\scriptsize\begin{ytableau}
{}&{}&{}
\\
{}&{\color{red}{\times}}
\\
\times&\times
\\
\times
\end{ytableau}}). The hand residue, in blue (resp. red), matches since the multiset of residues is given by {\scriptsize \ytableaushort{01{\color{blue}{2}},2{\color{red}0},12,0}}. The $5$-abacus of $\lambda$ has no gap thus $\lambda$ is a $5$-core, as we saw in Example~\ref{exemple:ruban}.
\end{exemple}


\subsection{Parametrisation of \texorpdfstring{$Q^*$}{Q*}}
\label{subsection:parametrisation}

In this subsection, we will parametrise by $\mathbb{Z}^{e-1}$ the  set $Q^*$ of all $\alpha \in Q^+$ that are associated with $e$-cores.
Given an $e$-abacus associated with an $e$-core $\lambda$ and $i \in \{0, \dots, e-1\}$, let us write $x_i(\lambda) \in \mathbb{Z}$ for the position of the first gap on the runner $i$. We say that $x_0(\lambda), \dots, x_{e-1}(\lambda)$ are the \emph{parameters} of the $e$-abacus, or the \emph{$e$-abacus variables} of $\lambda$. We will also use the notation $x(\lambda) = \bigl(x_0(\lambda), \dots, x_{e-1}(\lambda)\bigr) \in \mathbb{Z}^e$. 

\begin{exemple}
We use \scalebox{\scalelozenge}{$\color{red}{\blacklozenge}$} to denote the position of each $x_i(\lambda)$. The $3$-abacus associated with the empty partition (which is a $3$-core indeed), of associated  $\beta$-number $(-1,-2,\dots)$, is
\abacuscore{3}{0}{,}
thus the associated parameters are $x_0(\emptyset) = x_1(\emptyset) = x_2(\emptyset) = 0$. As we saw in Example~\ref{exemple:abaque}, the $5$-abacus associated with the $5$-core $\lambda = (3,2,2,1)$ is
\abacuscore{5}{3,2,2,1}{,}
thus the associated parameters are
\[
x_0(\lambda) = x_2(\lambda) = 1, \qquad x_1(\lambda) = x_3(\lambda) = -1, \qquad x_4(\lambda) = 0.
\]
\end{exemple}

We have the following consequences of Corollary~\ref{corollaire:transfert_bille_1}.

\begin{lemme}
\label{lemme:difference_ni_xi}
Let $\lambda$ be an $e$-core. For all $i \in \{0, \dots, e-1\}$ we have $x_i(\lambda) = n^i(\lambda) - n^{i+1}(\lambda)$.
\end{lemme}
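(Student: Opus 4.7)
The plan is to induct on $|\lambda|$. The base case $\lambda = \emptyset$ is immediate since all $x_i$ and all $n^i$ vanish. For the inductive step, given a non-empty $e$-core $\lambda$, I would first pick any removable node (corners always exist for a non-empty partition), say an $i$-node. By Corollary~\ref{corollaire:transfert_bille_1}, removing it corresponds to a bead move from runner $i$ to runner $i-1$ (with a cyclic position shift when $i = 0$), and I would specifically take the move of the topmost bead on runner $i$, which sits at position $x_i(\lambda) - 1$. The resulting abacus is in general no longer flat---it has a gap on runner $i-1$ underneath the displaced bead---so I would then apply Corollary~\ref{corollaire:transfert_bille_e} and slide that bead leftward along runner $i-1$ until the gap closes, each slide deleting an $e$-rim hook of hand residue $i-1$. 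This produces an $e$-core $\mu$ with $|\mu| < |\lambda|$ satisfying
\[
x_{i-1}(\mu) = x_{i-1}(\lambda) + 1, \qquad x_i(\mu) = x_i(\lambda) - 1, \qquad x_j(\mu) = x_j(\lambda) \text{ for } j \notin \{i-1, i\},
\]
where the number of $e$-rim hooks deleted is $k = x_i(\lambda) - x_{i-1}(\lambda) - 1$ when $i \geq 1$ (respectively $k = x_0(\lambda) - x_{e-1}(\lambda) - 2$ when $i = 0$); both are non-negative precisely because an $i$-node is removable.

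Next, I would compare the two sides of the claimed identity under this reduction. By Remark~\ref{remarque:racine_ruban}, each $e$-rim hook contributes exactly one node to every residue class, so $n^j(\mu) = n^j(\lambda) - k$ for $j \neq i$ and $n^i(\mu) = n^i(\lambda) - k - 1$. A direct calculation then gives
\[
(n^j - n^{j+1})(\mu) - (n^j - n^{j+1})(\lambda) = \begin{cases} +1 & \text{if } j = i - 1, \\ -1 & \text{if } j = i, \\ 0 & \text{otherwise,} \end{cases}
\]
which matches exactly the changes $x_j(\mu) - x_j(\lambda)$ read off the abacus. The inductive hypothesis applied to $\mu$ then transfers the identity to $\lambda$.

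The main obstacle, modest as it is, is notational rather than conceptual: the cyclic wrap-around when $i = 0$ shifts the target position on runner $e-1$ by $-1$ instead of keeping the same column, so one must check that the expression for $k$ and the parameter changes genuinely take the same cyclic form in this boundary case. Once that bookkeeping is done uniformly, the entire argument becomes a straightforward comparison of effects of a single bead-transfer-plus-slide on the two sides of the formula.
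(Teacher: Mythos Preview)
Your argument is correct. The induction is sound, the bead-move bookkeeping (including the $i=0$ wrap-around) checks out, and the comparison of the changes in $x_j$ versus $n^j - n^{j+1}$ is accurate. One cosmetic point: when you write ``pick any removable node \ldots\ say an $i$-node \ldots\ and I would specifically take the move of the topmost bead'', you are implicitly switching from the initially chosen removable node to the one corresponding to the topmost bead on runner~$i$; this is fine because the existence of any removable $i$-node forces $x_i > x_{i-1}$ (respectively $x_0 > x_{e-1}+1$), so the topmost bead move is always available, but it would be cleaner to say this directly.

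The paper's proof is different and considerably shorter. Instead of reducing $\lambda$ to a smaller $e$-core (which forces you to pass through a non-$e$-core and then repair it with rim-hook removals), the paper builds $\lambda$ up from $\emptyset$ by adding nodes one at a time and simply tracks the \emph{number} of beads on runner~$i$ throughout. Adding a $j$-node moves one bead from runner $j-1$ to runner $j$; hence the net change in the bead count on runner~$i$ over the whole construction is exactly $n^i(\lambda) - n^{i+1}(\lambda)$, and this equals $x_i(\lambda) - x_i(\emptyset) = x_i(\lambda)$. The intermediate partitions need not be $e$-cores because only the bead \emph{count} on each runner matters, not its configuration. Your approach works but pays for staying inside the class of $e$-cores with the extra rim-hook step and the case split at $i=0$; the paper's approach avoids both by tracking an invariant that is insensitive to gaps.
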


\begin{proof}
We construct the partition~$\lambda$ adding nodes one by one.  By Corollary~\ref{corollaire:transfert_bille_1}, the runner~$i$ is involved exactly when adding an $i$-node or an $(i+1)$-node, which corresponds to a bead going from runner~$i-1$ to~$i$ and runner~$i$ to~$i+1$ respectively. Thus, once the $e$-core~$\lambda$ is constructed exactly~$n^i(\lambda)$ (respectively~$n^{i+1}(\lambda)$) beads were added to (resp. removed from)  runner~$i$. We conclude since $x_i(\emptyset) = 0$.
\end{proof}

\begin{corollaire}
\label{corollaire:ni_no_sum_xj}
Let $\lambda$ be an $e$-core.
For all $i \in \{1, \dots, e-1\}$ we have $n^i(\lambda) = n^0(\lambda) - x_0(\lambda) - \dots - x_{i-1}(\lambda)$.
\end{corollaire}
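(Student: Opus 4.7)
The plan is to prove this by a straightforward telescoping argument using Lemma~\ref{lemme:difference_ni_xi}. That lemma gives, for each $j \in \{0, \dots, e-1\}$, the identity
\[
x_j(\lambda) = n^j(\lambda) - n^{j+1}(\lambda),
\]
where indices are taken modulo $e$ but for $j \leq e-2$ this involves only the ``honest'' values $n^0(\lambda), \dots, n^{e-1}(\lambda)$.

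First I would fix $i \in \{1, \dots, e-1\}$ and sum the displayed identity for $j = 0, 1, \dots, i-1$. The right-hand side telescopes to $n^0(\lambda) - n^i(\lambda)$, which yields
\[
x_0(\lambda) + \dots + x_{i-1}(\lambda) = n^0(\lambda) - n^i(\lambda).
\]
Rearranging gives the claimed formula $n^i(\lambda) = n^0(\lambda) - x_0(\lambda) - \dots - x_{i-1}(\lambda)$.

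There is no serious obstacle here; the only point to notice is that, because we restrict to $i \leq e-1$, the telescoping never wraps around modulo $e$, and hence the identity from Lemma~\ref{lemme:difference_ni_xi} can be applied directly without any modular ambiguity in the indices.
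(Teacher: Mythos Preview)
Your proof is correct and is exactly the intended argument: the paper states this as an immediate corollary of Lemma~\ref{lemme:difference_ni_xi} without further proof, and the telescoping sum you describe is precisely how one derives it.
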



\begin{proposition}
\label{proposition:xi_somme_nulle_correspondent_abaque}
Let $x_0, \dots, x_{e-1} \in \mathbb{Z}$. Then $x_0 + \dots + x_{e-1} = 0$ if and only if there is an $e$-core $\lambda$ such that $x_i = x_i(\lambda)$ for all $i \in \{0, \dots, e-1\}$.
\end{proposition}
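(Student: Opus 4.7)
The plan is to handle the two implications separately. For the forward direction ($\Rightarrow$), if $\lambda$ is an $e$-core then Lemma~\ref{lemme:difference_ni_xi} gives $x_i(\lambda) = n^i(\lambda) - n^{i+1}(\lambda)$ with indices taken modulo $e$, and summing over $i \in \{0, \dots, e-1\}$ telescopes to $\sum_{i=0}^{e-1} x_i(\lambda) = 0$.

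For the reverse direction, given $(x_0, \dots, x_{e-1}) \in \mathbb{Z}^e$ with $\sum_i x_i = 0$, my plan is to exhibit a bead configuration on the $e$-abacus in which runner $i$ is entirely filled strictly below position $x_i$ and entirely empty from $x_i$ onward, verify that it records the $\beta$-number of some partition $\lambda$ via Lemma~\ref{lemma:beta_number}, and then conclude via Corollary~\ref{corollaire:transfert_bille_e} that $\lambda$ is an $e$-core with the prescribed parameters. Concretely, I set
\[
B \coloneqq \bigcup_{i=0}^{e-1} \{i + je : j \in \mathbb{Z},\, j < x_i\} \subseteq \mathbb{Z},
\]
enumerate $B$ in strictly decreasing order as $(b_a)_{a \geq 1}$, and must check that $b_a = -a$ for $a \gg 1$ in order to invoke Lemma~\ref{lemma:beta_number}.

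The crux is a short counting reformulation: the tail condition on $(b_a)$ is equivalent to the balance equation $|B \cap \mathbb{Z}_{\geq 0}| = |\mathbb{Z}_{< 0} \setminus B|$, since for $N$ larger than $\max_i |x_i|$ one has $|B \cap \mathbb{Z}_{\geq -N}| = |B \cap \mathbb{Z}_{\geq 0}| + N - |\mathbb{Z}_{< 0} \setminus B|$, and this count equals $N$ for all such $N$ precisely when the decreasing enumeration of $B$ eventually lists $-N{-}1, -N{-}2, \dots$ in order. Now for each $i \in \{0, \dots, e-1\}$ the integers of $B$ that are congruent to $i$ modulo $e$ and non-negative correspond to $0 \leq j < x_i$ and thus number $\max(x_i, 0)$; dually, the negative integers congruent to $i$ and not in $B$ correspond to $x_i \leq j < 0$ and number $\max(-x_i, 0)$. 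Summing over $i$ and using $x_i = \max(x_i, 0) - \max(-x_i, 0)$ yields
\[
|B \cap \mathbb{Z}_{\geq 0}| - |\mathbb{Z}_{< 0} \setminus B| = \sum_{i=0}^{e-1} x_i = 0,
\]
so Lemma~\ref{lemma:beta_number} produces the desired partition $\lambda$ with $\beta(\lambda) = (b_a)_{a \geq 1}$. By construction, on each runner of the associated $e$-abacus no bead lies to the right of any gap, so $\lambda$ is an $e$-core by Corollary~\ref{corollaire:transfert_bille_e}, and the first gap on runner $i$ sits at position $x_i$, i.e.\ $x_i(\lambda) = x_i$. The only nontrivial step is the counting reformulation converting the eventually-$-a$ condition of Lemma~\ref{lemma:beta_number} into the balance equation above; once this bookkeeping is in place the zero-sum hypothesis plugs in directly.
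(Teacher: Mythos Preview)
Your proof is correct. The forward implication matches the paper's exactly; the reverse implication takes a genuinely different route.

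For the converse, the paper argues by induction: it defines operations $t_{i,j}$ that slide the rightmost bead of runner $i$ to the leftmost gap of runner $j$, observes that these preserve both the zero-sum condition and the property of being the $e$-abacus of an $e$-core (via Corollaries~\ref{corollaire:transfert_bille_1} and~\ref{corollaire:transfert_bille_e}), and uses them to connect the given abacus to the empty partition. Your argument is instead a direct verification: you write down the bead set $B$ explicitly, reformulate the tail condition of Lemma~\ref{lemma:beta_number} as the balance equation $|B \cap \mathbb{Z}_{\geq 0}| = |\mathbb{Z}_{<0} \setminus B|$, and read off both sides runner by runner as $\sum_i \max(x_i,0)$ and $\sum_i \max(-x_i,0)$, whose difference is $\sum_i x_i = 0$. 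Your approach is shorter and avoids induction, isolating exactly where the zero-sum hypothesis enters; the paper's approach, on the other hand, exhibits the operations $t_{i,j}$ and makes the bijection between $e$-cores and $\mathbb{Z}^e_0$ constructive in a way that foreshadows how beads are manipulated later in the paper.
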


\begin{proof}
If $\lambda$ is an $e$-core then Lemma~\ref{lemme:difference_ni_xi} ensures that $x_0(\lambda) + \dots + x_{e-1}(\lambda) = 0$. Now let $x_0, \dots, x_{e-1} \in \mathbb{Z}$ such that $x_0 + \dots + x_{e-1} = 0$ and consider the $e$-abacus~$A$ of parameters $x_0, \dots, x_{e-1}$. That is, the runner~$i$ is full of beads before position~$x_i - 1$ and has only gaps after position~$x_i$. For any $i, j \in \{0, \dots, e-1\}$, we consider the operation $t_{i,j}$ that moves the rightmost bead on runner~$i$ to the leftmost gap on runner~$j$. We have $t_{i,j}^{-1} = t_{j, i}$, and by Corollaries~\ref{corollaire:transfert_bille_1} and~\ref{corollaire:transfert_bille_e} the operation~$t_{i, j}$ preserves the set of $e$-abaci associated with $e$-cores. After the operation $t_{i, j}$, the parameter~$x_i$ (respectively~$x_j$) decreases (resp. increases) by~$1$ and thus the new parameters that we obtain still sum to~$0$. By induction on $\max_i x_i$, we can find a sequence $t_{i_1,j_1}, \dots, t_{i_k, j_k}$ of operations so that the parameters of the $e$-abacus $\widetilde{A} \coloneqq t_{i_k, j_k} \cdots t_{i_1, j_1}(A)$ are all zero. Hence, the $e$-abacus~$\widetilde{A}$ corresponds to the empty $e$-core, and we conclude since $A = t_{j_1, i_1}\cdots t_{j_k, i_k}(\widetilde{A})$.
\end{proof}


We thus have a bijection
\[
\{e\text{-cores}\} \overset{1:1}{\longleftrightarrow} \{(x_0, \dots, x_{e-1}) \in \mathbb{Z}^e : x_0 + \dots + x_{e-1} = 0\} \eqqcolon \mathbb{Z}^e_0.
\]

The function $n^0$ defined on the set of $e$-cores is a symmetric polynomial in $x_0, \dots, x_{e-1}$. Indeed, exchanging the runners $i$ and $i+1$ for any $i \in \{0, \dots, e-2\}$ only modifies the number of $(i+1)$-nodes (by Corollary~\ref{corollaire:transfert_bille_1}) and we conclude since the symmetric group $\mathfrak{S}(\{0, \dots, e-1\})$ is generated by the transpositions $(i, i+1)$ for all $i \in \{0, \dots, e-2\}$. We explicitly give this symmetric polynomial in the following proposition, where~$\lVert \cdot \rVert$ denotes the Euclidean norm on tuples of integers.

\begin{proposition}
\label{proposition:residu_0_chapeau}
Let $\lambda$ be an $e$-core. We have:
\[
n^0(\lambda) = \frac{1}{2}\lVert\tuplex{x}(\lambda)\rVert^2 = \frac{1}{2}\sum_{i = 0}^{e-1} x_i(\lambda)^2.
\]
\end{proposition}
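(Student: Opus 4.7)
The plan is to compute $|\lambda|$ in two independent ways and extract $n^0(\lambda)$ by comparing the resulting expressions. First, summing the relation of Corollary~\ref{corollaire:ni_no_sum_xj} over $i \in \{0, \dots, e-1\}$ (with the convention that the relation holds trivially when $i = 0$, the right-hand side being an empty sum) gives
\[
|\lambda| = \sum_{i=0}^{e-1} n^i(\lambda) = e \cdot n^0(\lambda) - \sum_{i=0}^{e-1} \sum_{j=0}^{i-1} x_j(\lambda) = e \cdot n^0(\lambda) - \sum_{j=0}^{e-1} (e-1-j) \, x_j(\lambda).
\]
Hence any independent formula for $|\lambda|$ in terms of the $e$-abacus variables will let us solve for $n^0(\lambda)$.

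For such a formula, I would use the $\beta$-number. Since $\beta_a(\lambda) - \beta_a(\emptyset) = \lambda_{a-1}$ is eventually zero, I have $|\lambda| = \sum_{a \geq 1}(\beta_a(\lambda) - \beta_a(\emptyset))$, and I would reorganise this sum runner by runner. On runner $i$, the $e$-abacus of $\emptyset$ has beads exactly at positions $k \leq -1$ while that of the $e$-core $\lambda$ has beads exactly at positions $k \leq x_i(\lambda) - 1$; a bead at position $k$ on runner $i$ contributes the $\beta$-value $i + ke$. The net contribution of runner $i$ is therefore
\[
\sum_{k=0}^{x_i(\lambda)-1}(i + ke) = i \, x_i(\lambda) + \frac{e}{2} \, x_i(\lambda)\bigl(x_i(\lambda) - 1\bigr),
\]
with the signed-sum convention $\sum_{k=0}^{x_i - 1} = -\sum_{k=x_i}^{-1}$ when $x_i < 0$ (the closed-form expression being valid in both sign cases). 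Summing over $i$ and applying $\sum_i x_i(\lambda) = 0$ from Proposition~\ref{proposition:xi_somme_nulle_correspondent_abaque} yields
\[
|\lambda| = \sum_{i=0}^{e-1} i \, x_i(\lambda) + \frac{e}{2} \sum_{i=0}^{e-1} x_i(\lambda)^2.
\]

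Substituting this into the first formula for $|\lambda|$ gives
\[
e \cdot n^0(\lambda) = \sum_{i} i \, x_i(\lambda) + \frac{e}{2} \sum_{i} x_i(\lambda)^2 + \sum_{j} (e-1-j) \, x_j(\lambda) = (e-1) \sum_{j} x_j(\lambda) + \frac{e}{2} \sum_{i} x_i(\lambda)^2,
\]
and the remaining linear sum vanishes again by Proposition~\ref{proposition:xi_somme_nulle_correspondent_abaque}, so dividing by $e$ yields the claim. The one slightly delicate step is the runner-by-runner bookkeeping, where negative $x_i(\lambda)$ require the signed-sum convention above; beyond that, everything is an algebraic manipulation driven by the constraint $\sum_i x_i(\lambda) = 0$.
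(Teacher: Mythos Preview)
Your proof is correct. The one place worth double-checking is the reorganisation of $\sum_{a\geq 1}(\beta_a(\lambda)-\beta_a(\emptyset))$ runner by runner: this is legitimate because the sum has only finitely many nonzero terms (so rearrangement is free), and your signed-sum convention for negative $x_i$ does give the same polynomial expression $i\,x_i + \tfrac{e}{2}x_i(x_i-1)$ in both sign cases. Everything else is routine algebra plus two applications of $\sum_i x_i(\lambda)=0$.

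The paper itself does not prove the statement: it simply observes that an $e$-core has $e$-weight zero and cites \cite[Proposition~2.1]{fayers_weight} (with pointers also to \cite{gks,olsson}). Your argument is therefore genuinely different in that it is self-contained, using only Corollary~\ref{corollaire:ni_no_sum_xj} and Proposition~\ref{proposition:xi_somme_nulle_correspondent_abaque} from the paper, together with the elementary bead-counting formula for $|\lambda|$ via the abacus. The upside of your route is that no external reference is needed and the mechanism (the linear terms in $x_i$ cancel thanks to $\sum_i x_i=0$, leaving only the quadratic part) is made completely explicit; the upside of the paper's citation is brevity and the connection to Fayers's more general weight formula for multipartitions, which the paper uses in spirit later on.
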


\begin{proof}
Since $\lambda$ is an $e$-core, it has $e$-weight~$0$ and thus the result immediately follows from Lemma~\ref{lemme:difference_ni_xi} and~\cite[Proposition 2.1]{fayers_weight} (see also \cite[Bijection 2]{gks} and \cite[top of page 24]{olsson}).
\end{proof}

\begin{remarque}
Let $\lambda$ be an $e$-core. Using Corollary~\ref{corollaire:ni_no_sum_xj} and Proposition~\ref{proposition:residu_0_chapeau} we obtain
\[
n^i(\lambda) = \frac{1}{2}\lVert x(\lambda)\rVert^2 - x_0(\lambda) - \dots - x_{i-1}(\lambda),
\]
for all $i \in \{1, \dots, e-1\}$.
\end{remarque}

\begin{exemple}
We take $p = 2$ and $e = 4$. We consider the parameter $\tuplex{x} \coloneqq (2,-1,-1, 0) \in \mathbb{Z}^4_0$. The corresponding $4$-abacus is
\abacuscore{4}{5,2,2}{,}
the $\beta$-number is then $(4,0,-1,-4,-5,\dots)$ and this corresponds to the $4$-core $\lambda = (5,2,2)$.
The multiset of residues is \ytableaushort{01230,30,23}{} and the number of $0$-nodes is  $3 = \frac{1}{2}(2^2+1^2+1^2+0^2)$.
\end{exemple}

\begin{exemple}
We take $p = e = 3$. We consider the parameter $\tuplex{x} \coloneqq (1,2, -3) \in \mathbb{Z}^3_0$. The corresponding $3$-abacus is:
\abacuscore{3}{5,3,3,2,2,1,1}{,}
the $\beta$-number is then $(4,1,0,-2,-3,-5,-6,-8,-9,\dots)$ and this corresponds to the $4$-core $\lambda = (5,3,3,2,2,1,1)$. The multiset of residues is \ytableaushort{01201,201,120,01,20,1,0}{} and the number of $0$-nodes is  $7 = \frac{1}{2}(1^2+2^2+3^2)$.
\end{exemple}

\subsection{Multipartitions}
\label{subsection:multipartitions}

Let $d, \eta, p \in \mathbb{N}^*$ and assume that $e = \eta p$. We define $r \coloneqq dp$ and we identify $\mathbb{Z}/r\mathbb{Z}$ with the set $\{0, \dots, r-1\}$. Let $\kappa = (\kappa_0, \dots, \kappa_{r-1}) \in (\mathbb{Z}/e\mathbb{Z})^r$ be a multicharge.
An \emph{$r$-partition} (or \emph{multipartition}) of $n$ is an $r$-tuple $\tuple{\lambda} = (\lambda^{(0)}, \dots, \lambda^{(r-1)})$ of partitions such that $\lvert \tuple{\lambda} \rvert \coloneqq \lvert \lambda^{(0)}\rvert + \dots + \lvert \lambda^{(r-1)} \rvert = n$. We write $\tuple{\lambda} \in \P_n^\kappa$ if $\tuple{\lambda}$ is an $r$-partition of $n$.
We say that $\kappa$ is \emph{compatible} with $(d, \eta, p)$ when
\begin{equation}
\label{equation:kappa_compatible}
\kappa_{k + d} = \kappa_k + \eta, \qquad \text{for all } k  \in \mathbb{Z}/r\mathbb{Z}.
\end{equation}
Thus, the multicharge $\kappa$ is compatible with $(d, \eta, p)$ if and only if
\begin{equation}
\label{equation:forme_kappa}
\kappa = \bigl(\kappa_0, \dots, \kappa_{d-1}, \kappa_0 + \eta, \dots, \kappa_{d-1} + \eta, \dots\dots, \kappa_0 + (p-1)\eta, \dots, \kappa_{d-1} + (p-1)\eta\bigr).
\end{equation}

\begin{exemple}
\label{exemple:kappa_compatible}
If $d = 1$ and $\eta = p = 2$ (thus $e = 4$ and $r = 2$), the multicharge $\kappa \coloneqq (0, 2) \in (\mathbb{Z}/4\mathbb{Z})^2$ is compatible with $(d, \eta, p)$.
\end{exemple}

The Young diagram of an $r$-partition $\tuple{\lambda} = (\lambda^{(0)}, \dots, \lambda^{(r-1)})$ is the subset of $\mathbb{N}^3$ defined by
\[
\Y(\tuple{\lambda}) \coloneqq \bigcup_{c = 0}^{r-1} \left(\Y(\lambda^{(c)}) \times \{c\}\right).
\]
A \emph{node} is any element of $\mathbb{N} \times \mathbb{N} \times \{0, \dots, r-1\}$, for instance, any element of $\Y(\tuple{\lambda})$ is a node.
The \emph{$\kappa$-residue} of a node $\gamma = (a, b, c)$ is $\res_\kappa(\gamma) \coloneqq b - a + \kappa_c \pmod{e}$. For any $i \in \mathbb{Z}/e\mathbb{Z}$, we denote by $n^i_\kappa(\tuple{\lambda})$ its multiplicity in the multiset of $\kappa$-residues of all elements of $\Y(\tuple{\lambda})$. 
We also define
\[
\alpha_\kappa(\tuple{\lambda}) \coloneqq \sum_{\gamma \in \Y(\tuple{\lambda})} \alpha_{\res_\kappa(\gamma)} = \sum_{i = 0}^{e-1} n^i_\kappa(\tuple{\lambda})\alpha_i \in Q^+.
\]
We have $\lvert \alpha_\kappa(\tuple{\lambda})\rvert = \lvert\tuple{\lambda}\rvert$. These quantities $\alpha_\kappa(\tuple{\lambda})$ were used by Lyle and Mathas~\cite{lyle-mathas} to parametrise the blocks of $\H_n^\kappa$. More precisely, the \emph{cellularity} of the algebra $\H_n^\kappa$ allows to associate to each $r$-partition~$\tuple{\lambda}$ of~$n$ a block of $\H_n^\kappa$ (see Lemma~\ref{lemma:composition_factors} and~\textsection\ref{subsubsection:cellularity_AK}): the main result of~\cite{lyle-mathas} is that an $r$-partition~$\tuple{\mu}$ of~$n$ has the same associated block as~$\tuple{\lambda}$ if and only if $\alpha_\kappa(\tuple{\mu}) = \alpha_\kappa(\tuple{\lambda})$. In this case, we say that $\tuple{\lambda}$ and $\tuple{\mu}$ \emph{belong} to the same block of $\H_n^\kappa$.

Finally, if $\tuple{\lambda} = (\lambda^{(0)}, \dots, \lambda^{(r-1)})$ is an $r$-partition, its \emph{$e$-multicore} is the $r$-partition $\overline{\tuple{\lambda}} \coloneqq \bigl(\overline{\lambda^{(0)}}, \dots, \overline{\lambda^{(r-1)}}\bigr)$. We say that $\tuple{\lambda}$ is an \emph{$e$-multicore} if $\tuple{\lambda} = \overline{\tuple{\lambda}}$, that is, if each $\lambda^{(k)}$ for $k \in \{0, \dots, r-1\}$ is an $e$-core, in which case we write
\[
x^{(k)}(\tuple{\lambda}) \coloneqq x\bigl(\lambda^{(k)}\bigr) \in \mathbb{Z}^e_0,
\]
for the parameter of the $e$-abacus associated with the $e$-core $\lambda^{(k)}$. We write $x^{(k)}(\tuple{\lambda}) = \bigl(x^{(k)}_0(\tuple{\lambda}), \dots, x^{(k)}_{e-1}(\tuple{\lambda})\bigr)$, so that $x^{(k)}_i(\tuple{\lambda}) \coloneqq x_i\bigl(\lambda^{(k)}\bigr)$ for any $i \in \{0, \dots, e-1\}$.

\begin{remarque}
For ordinary partitions, which are $1$-partitions, we recover the definitions of \textsection\ref{subsection:partitions} if $\kappa = 0$. In particular, if $\lambda$ is a partition then $n^i(\lambda) = n^i_0(\lambda)$ for all $i \in \{0, \dots, e-1\}$ and $\alpha(\lambda) = \alpha_0(\lambda)$. Moreover, if $\lambda$ is an $e$-core then $x^{(0)}(\lambda) = x(\lambda)$.
\end{remarque}

\begin{remarque}
Contrary to~\cite{lyle-mathas}, we do not shift by $\kappa_k$ the definition of the $\beta$-number (and thus the $e$-abacus) for $\lambda^{(k)}$. While the convention used by~\cite{lyle-mathas} behaves well towards adding and removing $i$-nodes (more generally, concerning Lemma~\ref{lemme:transfert_bille}), the one we use is more adapted to detect when two components of $\tuple{\lambda}$ are equal (see~\textsection\ref{subsection:shifts}).
\end{remarque}

The next lemma is straightforward. 
\begin{lemme}
\label{lemma:partition_comb-n^i-delta}
Let $\lambda$ be a partition and $i, \delta, \kappa_* \in \mathbb{Z}/e\mathbb{Z}$.  We have
\[
n^i_{\kappa_* + \delta}(\lambda) = n^{i-\delta}_{\kappa_*}(\lambda).
\]
\end{lemme}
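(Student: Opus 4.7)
The plan is to unwind the definition of $n^i_\kappa$ directly. Recall that for a partition $\lambda$, viewed as a $1$-partition with multicharge $\kappa_* \in \mathbb{Z}/e\mathbb{Z}$, the residue of a node $\gamma = (a,b,0) \in \Y(\lambda)$ is $\res_{\kappa_*}(\gamma) = b - a + \kappa_* \pmod{e}$, and $n^i_{\kappa_*}(\lambda)$ counts the nodes of residue $i$.

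The key observation is the translation identity
\[
\res_{\kappa_* + \delta}(\gamma) = b - a + \kappa_* + \delta \equiv \res_{\kappa_*}(\gamma) + \delta \pmod{e}
\]
for every node $\gamma \in \Y(\lambda)$. Hence the multiset of $(\kappa_*+\delta)$-residues of $\Y(\lambda)$ is obtained from the multiset of $\kappa_*$-residues by adding $\delta$ to every element (as a bijection on $\mathbb{Z}/e\mathbb{Z}$). Counting multiplicities, the number of nodes with $(\kappa_*+\delta)$-residue equal to $i$ equals the number of nodes with $\kappa_*$-residue equal to $i-\delta$, which is precisely the claimed identity $n^i_{\kappa_*+\delta}(\lambda) = n^{i-\delta}_{\kappa_*}(\lambda)$.

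There is no substantive obstacle here: the statement is a tautology coming from the affine dependence of $\res_\kappa$ on $\kappa$, and could equivalently be phrased as saying that the map $\kappa \mapsto \alpha_\kappa(\lambda)$ intertwines translation on $\mathbb{Z}/e\mathbb{Z}$ with the corresponding cyclic permutation of the basis $\{\alpha_i\}_{i \in \mathbb{Z}/e\mathbb{Z}}$ of $Q$.
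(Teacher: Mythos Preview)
Your proof is correct and is exactly the direct verification the paper has in mind; indeed the paper gives no proof at all, merely calling the lemma ``straightforward''.
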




We now give a generalisation of Lemma~\ref{lemme:difference_ni_xi} and Proposition~\ref{proposition:residu_0_chapeau} in the setting of multipartitions.
Recall that we identify $\{0, \dots, e-1\}$ (respectively $\{0, \dots, r-1\}$) with $\mathbb{Z}/e\mathbb{Z}$ (resp. $\mathbb{Z}/r\mathbb{Z}$).
\begin{lemme}
\label{lemme:difference_ni_xi_multipart}
Let $\tuple{\lambda}$ be an $e$-multicore.  For all $i \in \{0, \dots, e-1\}$ we have
\[
n^i_\kappa(\tuple{\lambda}) - n^{i+1}_\kappa(\tuple{\lambda}) = \sum_{k = 0}^{r-1} x^{(k)}_{i-\kappa_k}(\tuple{\lambda})
.\]
\end{lemme}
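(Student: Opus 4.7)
The plan is to reduce the multipartition statement to the single-partition identity of Lemma~\ref{lemme:difference_ni_xi} component by component, after first unshifting each residue by $\kappa_k$ via Lemma~\ref{lemma:partition_comb-n^i-delta}.

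First, I would unpack the definition of $n^i_\kappa(\tuple{\lambda})$. Since $\Y(\tuple{\lambda})$ is the disjoint union of the $\Y(\lambda^{(k)}) \times \{k\}$ and since for a node $\gamma = (a, b, k)$ one has $\res_\kappa(\gamma) = b - a + \kappa_k \pmod{e}$, the multiplicities split as
\[
n^i_\kappa(\tuple{\lambda}) = \sum_{k=0}^{r-1} n^i_{\kappa_k}\bigl(\lambda^{(k)}\bigr).
\]
Applying Lemma~\ref{lemma:partition_comb-n^i-delta} with $\kappa_* = 0$ and $\delta = \kappa_k$, each summand becomes $n^{i - \kappa_k}\bigl(\lambda^{(k)}\bigr)$, so that
\[
n^i_\kappa(\tuple{\lambda}) - n^{i+1}_\kappa(\tuple{\lambda}) = \sum_{k=0}^{r-1} \Bigl[n^{i - \kappa_k}\bigl(\lambda^{(k)}\bigr) - n^{(i+1) - \kappa_k}\bigl(\lambda^{(k)}\bigr)\Bigr].
\]

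Next, since $\tuple{\lambda}$ is an $e$-multicore, each component $\lambda^{(k)}$ is an $e$-core and Lemma~\ref{lemme:difference_ni_xi} applies: for every $j \in \{0, \dots, e-1\}$,
\[
n^j\bigl(\lambda^{(k)}\bigr) - n^{j+1}\bigl(\lambda^{(k)}\bigr) = x_j\bigl(\lambda^{(k)}\bigr) = x^{(k)}_j(\tuple{\lambda}).
\]
Specialising to $j = i - \kappa_k$ and summing over $k$ yields the claimed identity. The whole argument is a direct composition of known lemmas, and I do not anticipate any real obstacle; the only point requiring mild care is to interpret the index $i - \kappa_k$ modulo $e$ consistently, which is automatic once $\{0, \dots, e-1\}$ is identified with $\mathbb{Z}/e\mathbb{Z}$ as already stipulated in the paper.
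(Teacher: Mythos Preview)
Your proof is correct and follows essentially the same approach as the paper: decompose $n^i_\kappa(\tuple{\lambda})$ componentwise, apply Lemma~\ref{lemma:partition_comb-n^i-delta} to shift residues by $\kappa_k$, and then invoke Lemma~\ref{lemme:difference_ni_xi} on each $e$-core $\lambda^{(k)}$. There is nothing to add.
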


\begin{proof}
Write $\tuple{\lambda} = \bigl(\lambda^{(0)}, \dots, \lambda^{(r-1)}\bigr)$ and let $i \in \{0, \dots, e-1\}$.
By Lemmas~\ref{lemme:difference_ni_xi} and~\ref{lemma:partition_comb-n^i-delta} we have
\begin{align*}
 n^i_\kappa(\tuple{\lambda}) - n^{i+1}_\kappa(\tuple{\lambda})
&=
\sum_{k = 0}^{r-1}
\left[n^i_{\kappa_k}(\lambda^{(k)}) - n^{i+1}_{\kappa_k}(\lambda^{(k)})\right]
\\
&=
\sum_{k = 0}^{r-1}\left[n^{i-\kappa_k}(\lambda^{(k)}) - n^{i+1-\kappa_k}(\lambda^{(k)})\right]
\\
&=
\sum_{k = 0}^{r-1} x_{i - \kappa_k}(\lambda^{(k)}).
\\
&=
\sum_{k = 0}^{r-1} x^{(k)}_{i - \kappa_k}(\tuple{\lambda}).
\end{align*}
\end{proof}

Finally, for any $i \in \{0, \dots, e-1\}$  define $L_i (\tuplex{x}) \coloneqq \sum_{i' = 0}^{i-1} x_{i'}$ for all $\tuplex{x} \in \mathbb{Z}^e$. By Corollary~\ref{corollaire:ni_no_sum_xj}, if $\tuple{\lambda} = \bigl(\lambda^{(0)}, \dots, \lambda^{(r-1)}\bigr)$ is an $e$-multicore we have
\[
n^0_\kappa(\tuple{\lambda})
=
\sum_{k = 0}^{r-1} n^0_{\kappa_k}(\lambda^{(k)})
=
\sum_{k= 0}^{r-1} n^{-\kappa_k}(\lambda^{(k)})
=
\sum_{k = 0}^{r-1} \left[n^0(\lambda^{(k)}) - L_{-\kappa_k}\bigl(\tuplex{x}^{(k)}(\tuple{\lambda})\bigr)\right].
\]
Hence, by Proposition~\ref{proposition:residu_0_chapeau},
\begin{equation}
\label{equation:multipartitions-L_i}
n^0_\kappa(\tuple{\lambda}) = \sum_{k = 0}^{r-1} \left[\frac{1}{2}\lVert\tuplex{x}^{(k)}(\tuple{\lambda})\rVert^2  - L_{-\kappa_k}\bigl(\tuplex{x}^{(k)}(\tuple{\lambda})\bigr)\right].
\end{equation}

\subsection{Shifts}
\label{subsection:shifts}

We are now ready to define our two shift maps.

\begin{definition}
\label{definition:shift_alpha}
Recall that $e$ is determined by $\eta$ and $p$.
For any $i \in \mathbb{Z}/e\mathbb{Z}$ we define $\sigma_{\eta, p}\cdot \alpha_i \coloneqq \alpha_{i + \eta} \in Q^+$, and we extend $\sigma_{\eta,p}$ to a $\mathbb{Z}$-linear map $Q \to Q$.
\end{definition}

\begin{definition}
\label{definition:shift_lambda}
Recall that $r$ is determined by $d$ and $p$.
If $\tuple{\lambda} = (\lambda^{(0)}, \dots, \lambda^{(r-1)})$ is an $r$-partition, we define
\[
\prescript{\sigma_{d, p}}{}{\tuple{\lambda}} \coloneqq (\lambda^{(r-d)}, \dots,  \lambda^{(r-1)}, \lambda^{(0)}, \dots, \lambda^{(r-d-1)}).
\]
\end{definition}

Note that $\sigma_{\eta, p}$ and $\sigma_{d, p}$ are the identity maps when $p = 1$ (thus $\eta = e$ and $d = r$).
For any $\alpha \in Q^+$, we denote by $\P_\alpha^\kappa$ the subset of $\P_n^\kappa$ given by $r$-partitions $\tuple{\lambda}$ such that $\alpha_\kappa(\tuple{\lambda}) = \alpha$.
The two shifts of Definitions~\ref{definition:shift_alpha} and~\ref{definition:shift_lambda} are compatible in the following way.

\begin{lemme}
\label{lemme:sigma_alpha_lambda}
Assume that the multicharge $\kappa$ is compatible with $(d, \eta, p)$.
If $\tuple{\lambda}$ is an $r$-partition then
$\alpha_\kappa(\prescript{\sigma_{d,p}}{}{\tuple{\lambda}}) = \sigma_{\eta, p} \cdot \alpha_\kappa(\tuple{\lambda})$. In other words, the map $\sigma_{d, p}$ induces a bijection between $\P_\alpha^\kappa$ and $\P_{\sigma_{\eta, p} \cdot\alpha}^\kappa$.
\end{lemme}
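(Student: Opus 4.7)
The strategy is a direct reindexing argument that converts the component shift on $\tuple{\lambda}$ into a residue shift on $\alpha_\kappa(\tuple{\lambda})$, with the compatibility relation \eqref{equation:kappa_compatible} as the only substantive input.

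First I would unfold the definition of $\alpha_\kappa$ applied to $\tuple{\mu} \coloneqq \prescript{\sigma_{d,p}}{}{\tuple{\lambda}}$: by the defining formula for the shift, the $k$-th component of $\tuple{\mu}$ is $\mu^{(k)} = \lambda^{(k - d)}$, with the upper index read modulo $r$, so
\[
\alpha_\kappa(\tuple{\mu}) = \sum_{k = 0}^{r-1} \sum_{(a, b) \in \Y(\lambda^{(k - d)})} \alpha_{b - a + \kappa_k}.
\]
Then I would reindex the outer sum by $k' \coloneqq k - d$ in $\mathbb{Z}/r\mathbb{Z}$, which is a bijection of $\{0, \dots, r-1\}$, to rewrite this as
\[
\alpha_\kappa(\tuple{\mu}) = \sum_{k' = 0}^{r-1} \sum_{(a, b) \in \Y(\lambda^{(k')})} \alpha_{b - a + \kappa_{k' + d}}.
\]

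At this point the compatibility hypothesis $\kappa_{k' + d} = \kappa_{k'} + \eta$ in $\mathbb{Z}/e\mathbb{Z}$ turns every residue subscript into $(b - a + \kappa_{k'}) + \eta$. Since $\sigma_{\eta, p}$ is $\mathbb{Z}$-linear and sends $\alpha_i$ to $\alpha_{i + \eta}$, I can pull it outside the double sum and recognise what remains as $\alpha_\kappa(\tuple{\lambda})$, yielding the claimed equality $\alpha_\kappa(\tuple{\mu}) = \sigma_{\eta, p} \cdot \alpha_\kappa(\tuple{\lambda})$. The bijection statement is then immediate: $\sigma_{d, p}$ is a permutation of $\P_n^\kappa$ (a cyclic shift of components, of order dividing $p$), and the identity just proved shows that it sends $\P_\alpha^\kappa$ into $\P_{\sigma_{\eta, p} \cdot \alpha}^\kappa$; applying the same identity to the inverse shift gives bijectivity.

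There is no real obstacle here: the whole content of the lemma is the interplay between the mod-$r$ relabelling of components and the mod-$e$ relabelling of residues, and this is exactly what \eqref{equation:kappa_compatible} was set up to encode. The only book-keeping point worth noting is that the reindexing $k \mapsto k - d$ lives in $\mathbb{Z}/r\mathbb{Z}$ while the shift $\kappa_{k' + d} \mapsto \kappa_{k'} + \eta$ lives in $\mathbb{Z}/e\mathbb{Z}$; these two manipulations sit cleanly inside the same equality because $\kappa \colon \mathbb{Z}/r\mathbb{Z} \to \mathbb{Z}/e\mathbb{Z}$ is precisely the data that intertwines them.
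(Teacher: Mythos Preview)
Your proof is correct and follows essentially the same reindexing argument as the paper: both shift the component index by $d$ in $\mathbb{Z}/r\mathbb{Z}$, invoke the compatibility relation $\kappa_{k+d} = \kappa_k + \eta$, and absorb the resulting residue shift as $\sigma_{\eta,p}$. The only cosmetic difference is that you work directly with nodes $(a,b) \in \Y(\lambda^{(k)})$, whereas the paper first groups nodes by residue via the multiplicities $n^i_{\kappa_k}$ and invokes Lemma~\ref{lemma:partition_comb-n^i-delta}; your version is if anything slightly more direct.
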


\begin{proof}
Recall that we are identifying $\mathbb{Z}/e\mathbb{Z}$ (respectively $\mathbb{Z}/r\mathbb{Z}$) with $\{0, \dots, e-1\}$ (resp. $\{0, \dots, r-1\}$). We write $\tuple{\lambda} = \bigl(\lambda^{(0)}, \dots, \lambda^{(r-1)}\bigr)$. Using the compatibility equation \eqref{equation:kappa_compatible} for the multicharge $\kappa$ and Lemma~\ref{lemma:partition_comb-n^i-delta} we have
\begin{align*}
\alpha_\kappa(\prescript{\sigma_{d, p}}{}{\tuple{\lambda}})
&= \alpha_\kappa\bigl(\lambda^{(r-d)}, \dots,  \lambda^{(r-1)}, \lambda^{(0)}, \dots, \lambda^{(r-d-1)}\bigr)
\\
&= \sum_{i = 0}^{e-1} n^i_\kappa\bigl(\lambda^{(r-d)}, \dots,  \lambda^{(r-1)}, \lambda^{(0)}, \dots, \lambda^{(r-d-1)}\bigr) \alpha_i
\\
&= \sum_{i = 0}^{e-1} \sum_{k = 0}^{r-1} n^i_{\kappa_k} \bigl(\lambda^{(r-d + k)}\bigr) \alpha_i
\\
&= \sum_{i = 0}^{e-1} \sum_{k = 0}^{r-1} n^i_{\kappa_k} \bigl(\lambda^{(k-d)}\bigr) \alpha_i
\\
&=\sum_{i = 0}^{e-1} \sum_{k = 0}^{r-1} n^i_{\kappa_{k + d}}\bigl(\lambda^{(k)}\bigr) \alpha_i
\\
&= \sum_{i= 0}^{e-1} \sum_{k = 0}^{r-1} n^i_{\kappa_k +\eta}\bigl(\lambda^{(k)}\bigr)\alpha_i
\\
&= \sum_{i= 0}^{e-1} \sum_{k = 0}^{r-1} n^{i-\eta}_{\kappa_k}\bigl(\lambda^{(k)}\bigr)\alpha_i
\\
&=  \sum_{i= 0}^{e-1} \sum_{k = 0}^{r-1} n^i_{\kappa_k}\bigl(\lambda^{(k)}\bigr)\alpha_{i +\eta}
\\
&= \sigma_{\eta, p} \cdot \sum_{i= 0}^{e-1} \sum_{k = 0}^{r-1} n^i_{\kappa_k}\bigl(\lambda^{(k)}\bigr)\alpha_i
\\
&= \sigma_{\eta, p} \cdot \sum_{i= 0}^{e-1}  n^i_\kappa\bigl(\lambda^{(0)}, \dots, \lambda^{(r-1)}\bigr)\alpha_i
\\
&= \sigma_{\eta, p}\cdot \alpha(\tuple{\lambda}),
\end{align*}
as desired. The second statement follows.
\end{proof}

\begin{lemme}
\label{lemme:sigma_puiss}
Let $p'$ be an integer that divides $p$ and assume that the multicharge~$\kappa$ is compatible with $(d, \eta, p)$. Then~$\kappa$ is compatible with $\bigl(p'd, p'\eta, \frac{p}{p'}\bigr)$ and
\begin{align*}
\sigma_{\eta, p}^{p'} &= \sigma_{p'\eta, \frac{p}{p'}}, &&\text{in } Q^+,
\\
\sigma_{d, p}^{p'} &= \sigma_{p'd, \frac{p}{p'}}, &&\text{on } r\text{-partitions}.
\end{align*}
\end{lemme}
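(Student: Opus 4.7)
The plan is to unwind each assertion from the definitions; no new idea is needed, only a careful iteration of the compatibility relation~\eqref{equation:kappa_compatible} and of the two shift maps.

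First I would check that the triple $\bigl(p'd, p'\eta, \frac{p}{p'}\bigr)$ is admissible: since $e = \eta p = (p'\eta) \cdot \frac{p}{p'}$ and $r = dp = (p'd) \cdot \frac{p}{p'}$, the parameters satisfy the conventions of~\textsection\ref{subsection:multipartitions}. Then I would prove the compatibility of $\kappa$ with $\bigl(p'd, p'\eta, \frac{p}{p'}\bigr)$ by a straightforward induction on $p'$: the hypothesis $\kappa_{k+d} = \kappa_k + \eta$ for all $k \in \mathbb{Z}/r\mathbb{Z}$ yields, by telescoping,
\[
\kappa_{k + p'd} = \kappa_{k + (p'-1)d} + \eta = \cdots = \kappa_k + p'\eta,
\]
for all $k \in \mathbb{Z}/r\mathbb{Z}$, which is exactly the compatibility condition with the new triple.

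For the identity in $Q^+$, both $\sigma_{\eta, p}$ and $\sigma_{p'\eta, p/p'}$ are $\mathbb{Z}$-linear on $Q$, so by Definition~\ref{definition:shift_alpha} it suffices to check them on the basis $\{\alpha_i\}_{i \in \mathbb{Z}/e\mathbb{Z}}$: a trivial induction on $p'$ gives $\sigma_{\eta, p}^{p'}\cdot \alpha_i = \alpha_{i + p'\eta}$, while directly $\sigma_{p'\eta, p/p'}\cdot\alpha_i = \alpha_{i + p'\eta}$, and these two expressions coincide in $\mathbb{Z}/e\mathbb{Z}$.

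For the identity on $r$-partitions, Definition~\ref{definition:shift_lambda} shows that $\tuple{\lambda} \mapsto \prescript{\sigma_{d, p}}{}{\tuple{\lambda}}$ is the cyclic right-shift of the components by $d$ positions modulo $r$; in other words, the $k$-th component of $\prescript{\sigma_{d, p}}{}{\tuple{\lambda}}$ is $\lambda^{(k - d \bmod r)}$. Iterating this $p'$ times produces the cyclic right-shift by $p'd$ positions modulo $r$, which is exactly $\prescript{\sigma_{p'd, p/p'}}{}{\tuple{\lambda}}$. The only mild point to be careful about is that the indices $p'd$ and $p/p'$ really do define a cyclic shift on $r = (p'd)(p/p')$ components, which was checked at the beginning. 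There is no real obstacle here: the statement is essentially bookkeeping for the iteration of the two shifts, and the main thing to keep straight is the correspondence between the two factorisations $e = \eta p = (p'\eta)(p/p')$ and $r = dp = (p'd)(p/p')$.
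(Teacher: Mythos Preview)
Your proof is correct and follows essentially the same approach as the paper: check the two factorisations $e = (p'\eta)(p/p')$ and $r = (p'd)(p/p')$, iterate the compatibility relation to obtain $\kappa_{k+p'd} = \kappa_k + p'\eta$, and then read off the two shift identities directly from Definitions~\ref{definition:shift_alpha} and~\ref{definition:shift_lambda}. The only difference is that you spell out the verification on the basis $\{\alpha_i\}$ and the cyclic-shift interpretation explicitly, whereas the paper simply says ``we conclude applying Definitions~\ref{definition:shift_alpha} and~\ref{definition:shift_lambda}''.
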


\begin{proof}
 We have $r = dp = (p'd)\frac{p}{p'}$ and $e = p\eta = \frac{p}{p'}(p'\eta)$. Since~$\kappa$ is compatible with $(d, \eta, p)$ we have $\kappa_{i + d} = \kappa_i + \eta$ for all $i \in \mathbb{Z}/r\mathbb{Z}$ thus $\kappa_{i+p'd} = \kappa_i+p'\eta$, which means that~$\kappa$ is compatible with $\bigl(p'd, p'\eta, \frac{p}{p'}\bigr)$. We conclude applying Definitions~\ref{definition:shift_alpha} and~\ref{definition:shift_lambda}.
\end{proof}

We can now state the main theorem of the paper, which will be proved in Section~\ref{section:demo}.

\begin{theoreme}
\label{theoreme:orbite_1}
Let $\tuple{\lambda}$ be an $r$-partition and let $\alpha \coloneqq \alpha_\kappa(\tuple{\lambda}) \in Q^+$. Assume that $\kappa$ is compatible with  $(d, \eta, p)$.
If $\sigma_{\eta, p} \cdot \alpha = \alpha$ then there is an $r$-partition $\tuple{\mu} \in \P_\alpha^\kappa$ with $\prescript{\sigma_{d, p}}{}{\tuple{\mu}} = \tuple{\mu}$.
\end{theoreme}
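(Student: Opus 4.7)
My plan is to seek $\tuple{\mu}$ in \emph{stuttering} form, that is $\tuple{\mu} = (\underbrace{\tuple{\nu}, \dots, \tuple{\nu}}_{p\text{ times}})$ for a $d$-partition $\tuple{\nu} = (\mu^{(0)}, \dots, \mu^{(d-1)})$, since every $\sigma_{d,p}$-fixed $r$-partition has this form. Setting $\kappa' \coloneqq (\kappa_0, \dots, \kappa_{d-1})$, the compatibility relation~\eqref{equation:kappa_compatible} together with Lemma~\ref{lemma:partition_comb-n^i-delta} yields (by the same computation as in the proof of Lemma~\ref{lemme:sigma_alpha_lambda})
\[
\alpha_\kappa(\tuple{\mu}) = \sum_{j=0}^{p-1} \sigma_{\eta,p}^{\,j} \cdot \alpha_{\kappa'}(\tuple{\nu}).
\]
The hypothesis $\sigma_{\eta,p} \cdot \alpha = \alpha$ means that $\alpha$ is constant on each $\sigma_{\eta,p}$-orbit of $\mathbb{Z}/e\mathbb{Z}$ (each of cardinality $p$, giving $\eta$ orbits in total). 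Summing the equality above componentwise over any such orbit $O \ni i_0$ reveals that $\alpha_\kappa(\tuple{\mu}) = \alpha$ is equivalent to the $\eta$ equations $\sum_{i \in O} n^i_{\kappa'}(\tuple{\nu}) = \alpha_{i_0}$. Hence the theorem reduces to producing a $d$-partition $\tuple{\nu}$ with these prescribed orbital residue sums.

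Next I would reduce to $d$-multicores. By Remark~\ref{remarque:racine_ruban} and Corollary~\ref{corollaire:transfert_bille_e}, we can freely add $e$-rim hooks to any component of $\tuple{\nu}$, and each addition increases every orbital sum by $|O|=p$. Thus it suffices to construct a $d$-multicore $\tuple{\nu}_0$ and an integer $W \geq 0$ with
\[
\sum_{i \in O} n^i_{\kappa'}(\tuple{\nu}_0) = \alpha_{i_0} - Wp \qquad (\text{for every orbit } O \ni i_0)
\]
and then add $W$ rim hooks. Parametrising $\tuple{\nu}_0$ by its abacus variables $y^{(0)}, \dots, y^{(d-1)} \in \mathbb{Z}_0^e$ and combining~\eqref{equation:multipartitions-L_i} with Corollary~\ref{corollaire:ni_no_sum_xj}, each orbital sum is a strongly convex quadratic function of $(y^{(k)})$ whose leading quadratic part $\tfrac{p}{2}\sum_{k}\lVert y^{(k)}\rVert^2$ is \emph{the same} for every orbit. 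Consequently the ``equal deficits $Wp$'' requirement reduces to $\eta-1$ purely linear equations in $(y^{(k)})$, while the condition $W \geq 0$ becomes a single scalar inequality asserting that this common quadratic value does not exceed a threshold determined by $\alpha_0$.

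The heart of the proof is then to verify this scalar inequality --- this is where I expect the key lemma (Lemma~\ref{lemme:inegalite_implique_conj}) to enter. The given $\tuple{\lambda}$ provides the needed data: its $e$-multicore $\overline{\tuple{\lambda}}$ has abacus parameters $(z^{(k)})_{k=0}^{r-1}$, and since $\alpha_\kappa(\overline{\tuple{\lambda}}) = \alpha - w \sum_i \alpha_i$ for $w$ the $e$-weight of $\tuple{\lambda}$ (a $\sigma_{\eta,p}$-invariant correction), the analogous ``full-size'' linear constraints on the $(z^{(k)})$ hold automatically. The natural idea is to let $y^{(k)}$ be the $\mathbb{Q}^e$-average of the $p$ vectors $z^{(k)}, z^{(k+d)}, \dots, z^{(k+(p-1)d)}$: this real average satisfies the linear constraints by linearity, and strong convexity of $\lVert\cdot\rVert^2$ (Lemma~\ref{lemme:forte_convexite_general}), combined with Proposition~\ref{proposition:residu_0_chapeau}, would give exactly the scalar bound required. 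The obstacle is integrality: the real average need not lie in $\mathbb{Z}_0^e$. I would therefore invoke Proposition~\ref{proposition:elt_base_canonique_bloc} on binary matrices with prescribed row, column \emph{and} block sums to produce integer candidates $y^{(k)} \in \mathbb{Z}_0^e$ realising an integer averaging compatible with the exact linear constraints, and use Lemma~\ref{lemme:erreur_compensent} to control the induced rounding error so that the scalar bound survives.

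The main obstacle is precisely this integer averaging step. Over the reals the bound is immediate from strong convexity; but rounding to integers while preserving the exact integer linear constraints on orbital residue differences is a subtle combinatorial existence problem, which is why the paper devotes Section~\ref{section:approx} to refining the Gale--Ryser theorem with prescribed block sums before feeding it into the convex optimisation argument of Section~\ref{section:demo}.
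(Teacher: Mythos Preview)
Your outline is essentially the paper's own strategy: reduce to the stuttering form, translate the condition $\alpha_\kappa(\tuple{\mu})=\alpha$ into constraints on a $d$-multicore via abacus variables, use the rational average of the $p$ blocks of~$\tuple{\lambda}$'s abacus parameters as a first approximation (Proposition~\ref{proposition:inegalite_forte_convexite}), then repair integrality with Proposition~\ref{proposition:elt_base_canonique_bloc} and bound the rounding error with Lemma~\ref{lemme:erreur_compensent}. All of the main ingredients you name are the ones the paper uses, in the same order.

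There is one slip worth flagging. You propose adjusting the orbital sums by adding $e$-rim hooks to a component of $\tuple{\nu}$, which changes each orbital sum by $p$. But the deficit you must absorb is $n^0 - m^0$ (in the paper's notation, the gap in the key inequality~\eqref{key_lemma:leq}), and this integer has no reason to be divisible by~$p$. The paper instead adds $\eta$-rim hooks (Lemma~\ref{lemme:ajout_rimhook}, and the construction inside the proof of Lemma~\ref{lemme:inegalite_implique_conj}): adding one $\eta$-rim hook to a single component of~$\tuple{\nu}$ increases every orbital sum by exactly~$1$, since each $\sigma_{\eta,p}$-orbit meets any window of $\eta$ consecutive residues exactly once. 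With $\eta$-rim hooks in place of $e$-rim hooks, your reduction goes through and your ``$W\geq 0$'' condition becomes precisely the scalar inequality $m^0 \leq n^0$, which is what the convexity argument delivers.
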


We say that an $r$-partition $\tuple{\mu}$ as in Theorem~\ref{theoreme:orbite_1} is  \emph{stuttering}. Note that when $p = 1$, all $r$-partitions are stuttering. We will often drop the subscripts and only write $\sigma$ for $\sigma_{d, p}$ and $\sigma_{\eta, p}$ when the meaning is clear from the context.

\begin{exemple}
We consider the setting of Example~\ref{exemple:kappa_compatible} and the bipartition $\tuple{\lambda} \coloneqq \bigl((5,2,1),(1,1)\bigr)$. The multiset of $\kappa$-residues is
\[
\ytableaushort{01230,30,2}
\quad
\ytableaushort{2,1}\, ,
\]
thus $\alpha_\kappa(\tuple{\lambda}) = 3(\alpha_0 + \alpha_2) + 2(\alpha_1 + \alpha_3) \eqqcolon \alpha$. Hence, we have $\sigma \cdot \alpha = \alpha$ but $\prescript{\sigma}{}{\tuple{\lambda}} = \bigl((1,1),(5,2,1)\bigr) \neq \tuple{\lambda}$. We now consider  the partition $\mu \coloneqq (3,1,1)$. The residue multiset of the bipartition $(\mu, \mu)$ is
\[
\ytableaushort{012,3,2} \quad \ytableaushort{230,1,0}\, ,
\]
thus $\alpha_\kappa(\mu, \mu) = 3(\alpha_0 + \alpha_2) + 2(\alpha_1 + \alpha_3) = \alpha$. Hence, the stuttering bipartition $(\mu, \mu)$ is as in Theorem~\ref{theoreme:orbite_1}.
\end{exemple}

\begin{remarque}
\label{remarque:particular_cases}
Two particular cases of Theorem~\ref{theoreme:orbite_1} easily follow  from Lemma~\ref{lemme:sigma_alpha_lambda}. Let $\tuple{\lambda}$ be an $r$-partition and let $\alpha \coloneqq \alpha_\kappa(\tuple{\lambda})$.
\begin{enumerate}[$(i)$]
\item
\label{item-remarque:sigma_alpha(nu,nu)}
If $\prescript{\sigma}{}{\tuple{\lambda}} = \tuple{\lambda}$ then $\sigma \cdot \alpha = \alpha$ and there is nothing to prove.

\item If $\sigma \cdot \alpha = \alpha$ and if $\tuple{\lambda}$ is the only $r$-partition in $\P_\alpha^\kappa$ (\textit{e.g.} when the associated Ariki--Koike algebra is semisimple, see \cite{Ar_ssimple}) then $\prescript{\sigma}{}{\tuple{\lambda}} \in \P_{\sigma\cdot \alpha}^\kappa = \P_\alpha^\kappa$ thus $\prescript{\sigma}{}{\tuple{\lambda}} = \tuple{\lambda}$.
\end{enumerate}
\end{remarque}

Let us denote by $[\tuple{\lambda}]$ (respectively by  $[\alpha]$) the orbit of an $r$-partition $\tuple{\lambda}$ (resp. of $\alpha \in Q^+$) under the action of $\sigma$. We now state Theorem~\ref{theorem:introduction_min_block} from the introduction.

\begin{corollaire}
\label{corollaire:orbite_gnrl}
Assume that $\kappa$ is compatible with $(d, \eta, p$) and let $\alpha \in Q^+$ such that $\P_\alpha^\kappa$ is not empty. Then $\#[\alpha]$ is the smallest element of the set $\left\{\#[\tuple{\lambda}] : \tuple{\lambda} \in \P_\alpha^\kappa\right\}$. In other words, if $\tuple{\lambda}$ is an $r$-partition and $\alpha \coloneqq \alpha_\kappa(\tuple{\lambda})$, if $\sigma^j \cdot \alpha = \alpha$ for some $j \in \{0, \dots, p-1\}$ then there exists an $r$-partition $\tuple{\mu}$ such that $\alpha_\kappa(\tuple{\mu}) = \alpha$ and $\prescript{\sigma^j}{}{\tuple{\mu}} = \tuple{\mu}$.
\end{corollaire}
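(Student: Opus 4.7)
The plan is to derive this corollary as a direct reduction to Theorem~\ref{theoreme:orbite_1}, via the ``power'' compatibility provided by Lemma~\ref{lemme:sigma_puiss}. No new combinatorial ideas should be required; the only content is a bookkeeping of orbits.

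I would first record the easy inequality $\#[\tuple{\lambda}] \geq \#[\alpha]$ for every $\tuple{\lambda} \in \P_\alpha^\kappa$. Indeed, Lemma~\ref{lemme:sigma_alpha_lambda} gives $\alpha_\kappa(\prescript{\sigma^k}{}{\tuple{\lambda}}) = \sigma^k \cdot \alpha_\kappa(\tuple{\lambda})$ for all $k$, so the stabiliser of $\tuple{\lambda}$ in the cyclic group $\langle \sigma \rangle \cong \mathbb{Z}/p\mathbb{Z}$ is contained in the stabiliser of $\alpha$. Passing to indices gives the stated inequality, and in particular $\#[\alpha]$ divides $\#[\tuple{\lambda}]$.

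Next, let $p' \coloneqq \#[\alpha]$, so that $p'$ is the smallest positive integer with $\sigma_{\eta,p}^{p'} \cdot \alpha = \alpha$ and $p'$ divides $p$. By Lemma~\ref{lemme:sigma_puiss}, the multicharge $\kappa$ is compatible with the triple $(p'd, p'\eta, p/p')$, and moreover $\sigma_{\eta,p}^{p'} = \sigma_{p'\eta,\, p/p'}$ on $Q^+$ and $\sigma_{d,p}^{p'} = \sigma_{p'd,\, p/p'}$ on $r$-partitions. Thus $\sigma_{p'\eta,\, p/p'} \cdot \alpha = \alpha$, and Theorem~\ref{theoreme:orbite_1} applied to the triple $(p'd, p'\eta, p/p')$ in place of $(d,\eta,p)$ produces some $\tuple{\mu} \in \P_\alpha^\kappa$ with $\prescript{\sigma_{p'd,\, p/p'}}{}{\tuple{\mu}} = \tuple{\mu}$, that is, $\prescript{\sigma_{d,p}^{p'}}{}{\tuple{\mu}} = \tuple{\mu}$. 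This forces $\#[\tuple{\mu}]$ to divide $p'$, and combined with the first step we obtain $\#[\tuple{\mu}] = p' = \#[\alpha]$, which establishes that $\#[\alpha]$ is the minimum of $\{\#[\tuple{\lambda}] : \tuple{\lambda}\in\P_\alpha^\kappa\}$.

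For the reformulation at the end of the statement, if $j = 0$ we take $\tuple{\mu} \coloneqq \tuple{\lambda}$; otherwise $\sigma^j \cdot \alpha = \alpha$ implies $p' \mid j$ (by definition of $p'$ as the order of $\sigma$-action on $\alpha$), and the $\tuple{\mu}$ just produced satisfies $\prescript{\sigma^{p'}}{}{\tuple{\mu}} = \tuple{\mu}$, hence $\prescript{\sigma^j}{}{\tuple{\mu}} = \tuple{\mu}$. The only potentially subtle point is verifying that Theorem~\ref{theoreme:orbite_1} may genuinely be invoked with the new parameters, but this is exactly the content of Lemma~\ref{lemme:sigma_puiss}; when $p' = p$ (the case $\#[\alpha] = p$) the theorem is vacuous because $\sigma_{p\eta,1}$ and $\sigma_{pd,1}$ are both the identity, and in that case the inequality $\#[\tuple{\lambda}] \leq p$ combined with step~1 already forces $\#[\tuple{\lambda}] = p = \#[\alpha]$ for every $\tuple{\lambda} \in \P_\alpha^\kappa$. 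In short, the entire argument is a formal consequence of Theorem~\ref{theoreme:orbite_1} once the cyclic-group orbit arithmetic is written down carefully.
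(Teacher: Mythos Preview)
Your proposal is correct and follows essentially the same route as the paper's own proof: both establish the lower bound $\#[\alpha] \le \#[\tuple{\lambda}]$ via Lemma~\ref{lemme:sigma_alpha_lambda}, then set $p' \coloneqq \#[\alpha]$, invoke Lemma~\ref{lemme:sigma_puiss} to pass to the parameters $(p'd, p'\eta, p/p')$, and apply Theorem~\ref{theoreme:orbite_1} at that level to produce the required $\tuple{\mu}$. Your treatment is slightly more explicit than the paper's (you spell out the stabiliser argument, the $j=0$ case, and the degenerate case $p'=p$), but the substance is identical.
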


\begin{proof}
The second part of the statement is clear.
Let $\mathcal{C}$ be the set $\{\#[\tuple{\lambda}] : \tuple{\lambda} \in \P_\alpha^\kappa\}$ and let us prove that $\#[\alpha]$ is the smallest element of $\mathcal{C}$.
For each $\tuple{\lambda} \in \P_\alpha^\kappa$, by Lemma~\ref{lemme:sigma_alpha_lambda} we have $\alpha_\kappa(\prescript{\sigma}{}{\tuple{\lambda}}) = \sigma \cdot \alpha_\kappa(\tuple{\lambda})$ thus $\#[\tuple{\lambda}] \geq \#[\alpha]$, hence $\#[\alpha]$ is a lower bound of $\mathcal{C}$. To prove that it is the smallest element, it suffices to prove that there is an $r$-partition $\tuple{\mu} \in \P_\alpha^\kappa$ such that $\#[\tuple{\mu}] \leq \#[\alpha]$. Write $p' \coloneqq \#[\alpha]$. The integer $p'$ divides $p$ since $\sigma$ has order $p$. By Lemma~\ref{lemme:sigma_puiss}, we know that $\kappa$ is compatible with $(p'd, p'\eta, \frac{p}{p'})$. Moreover, we have $\sigma_{\eta, p}^{p'} \cdot \alpha = \alpha$ thus Lemma~\ref{lemme:sigma_puiss} also gives
\[
\sigma_{p' \eta, \frac{p}{p'}} \cdot \alpha = \alpha.
\]
Hence, by Theorem~\ref{theoreme:orbite_1} applied with $(p'd, p'\eta, \frac{p}{p'})$ we know that there is an $r$-partition $\tuple{\mu} \in \P_\alpha^\kappa$ such that
\[
\prescript{\sigma_{dp', \frac{p}{p'}}}{}{\tuple{\mu}} = \tuple{\mu},
\]
that is, by another application of Lemma~\ref{lemme:sigma_puiss},
\[
\prescript{\sigma^{p'}_{d, p}}{}{\tuple{\mu}} = \tuple{\mu}.
\]
Hence, we have $\#[\tuple{\mu}] \leq p'$ and we conclude since $p' = \#[\alpha]$.
\end{proof}

\begin{remarque}
\label{remark:main_thm}
We saw that Lyle and Mathas~\cite{lyle-mathas} proved that two $r$-partitions are in a same $\P^\kappa_\alpha$ if and only if they belong to the same block of $\H_n^\kappa$. Thus, Corollary~\ref{corollaire:orbite_gnrl} gives a little information about the $r$-partitions that belong to each block.
As we mentioned in the introduction, Wada~\cite{wada} proved that the maximum of the set $\{\#[\tuple{\lambda}] : \tuple{\lambda} \in \P_\alpha^\kappa\}$ of Corollary~\ref{corollaire:orbite_gnrl} is always $p$, provided that this set has at least two elements. His proof is very short but relies on the (non trivial) fact that if $\tuple{\lambda}$ and $\tuple{\mu}$ are in~$\P_\alpha^\kappa$ then they are \emph{Jantzen equivalent} (cf. \cite{lyle-mathas}). On the contrary, we did not find a way to use~\cite{lyle-mathas} to prove Theorem~\ref{theoreme:orbite_1}.
\end{remarque}

We conclude this section by a reduction step for our main theorem.  We assume that the multicharge $\kappa$ is compatible with $(d, \eta, p)$.
For any $\ell \in \{0, \dots, d-1\}$, we define the multicharge $\kappa^{(\ell)} \in (\mathbb{Z}/e\mathbb{Z})^p$ by
\begin{equation}
\label{equation:definition_kappa_ell}
\kappa^{(\ell)} \coloneqq
(\kappa_\ell, \kappa_{\ell+d}, \dots, \kappa_{\ell + (p-1)d}) = (\kappa_\ell, \kappa_\ell + \eta, \dots, \kappa_\ell + (p-1)\eta).
\end{equation}
We first need the following lemma.

\begin{lemme}
\label{lemme:ajout_rimhook}
Let $\ell \in \{0, \dots, d-1\}$, let $\lambda$ be a partition and let $\mu$ be a partition obtained from $\lambda$ by wrapping on an $\eta$-rim hook. We define the two $p$-partitions $\lambda^p$ and $\mu^p$ by $\lambda^p \coloneqq (\lambda, \dots, \lambda)$ and $\mu^p \coloneqq (\mu, \dots, \mu)$. If $\alpha \coloneqq \alpha_{\kappa^{(\ell)}}(\lambda^p)$ and $\beta \coloneqq \alpha_{\kappa^{(\ell)}}(\mu^p)$ then  $\beta = \alpha + \alpha_0 + \dots + \alpha_{e-1}$.
\end{lemme}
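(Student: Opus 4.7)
The plan is to compute $\beta - \alpha$ directly by summing the residue contributions of the added $\eta$-rim hook across all $p$ components, and then observe that shifting by $\eta$ modulo $e = p\eta$ is exactly what is needed to traverse every residue once.

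First I would split componentwise. Since $\lambda^p$ and $\mu^p$ have all components equal to $\lambda$ and $\mu$ respectively, the definition of $\alpha_{\kappa^{(\ell)}}$ gives
\[
\beta - \alpha = \sum_{k=0}^{p-1} \Bigl[\alpha_{\kappa_\ell + k\eta}(\mu) - \alpha_{\kappa_\ell + k\eta}(\lambda)\Bigr],
\]
where each summand records the $\kappa^{(\ell)}_k$-residues of the single added $\eta$-rim hook, viewed as a subset of $\Y(\mu)$.

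Next I would invoke Remark~\ref{remarque:racine_ruban} applied to the ambient shift. Fix the $\eta$-rim hook $r = \Y(\mu) \setminus \Y(\lambda)$; its (unshifted) residues form the consecutive set $\{i_0, i_0+1, \dots, i_0 + \eta - 1\}$ in $\mathbb{Z}/e\mathbb{Z}$ for some $i_0$. Using Lemma~\ref{lemma:partition_comb-n^i-delta} with $\delta = k\eta$, the $k$-th summand becomes
\[
\alpha_{\kappa_\ell + k\eta}(\mu) - \alpha_{\kappa_\ell + k\eta}(\lambda) = \sum_{j=0}^{\eta-1} \alpha_{i_0 + k\eta + j}.
\]

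Finally I would sum over $k$ and use $e = p\eta$. The set of exponents $\{k\eta + j : 0 \leq k \leq p-1, \, 0 \leq j \leq \eta - 1\}$ equals $\{0, 1, \dots, e-1\}$, so in $\mathbb{Z}/e\mathbb{Z}$ the elements $i_0 + k\eta + j$ run exactly once through every residue class. Therefore
\[
\beta - \alpha = \sum_{k=0}^{p-1}\sum_{j=0}^{\eta-1} \alpha_{i_0 + k\eta + j} = \sum_{i=0}^{e-1} \alpha_i,
\]
which is the claim. There is no real obstacle here: the only substantive input is Remark~\ref{remarque:racine_ruban} (residues of a rim hook are consecutive) together with the arithmetic identity $e = p\eta$ that makes the $p$ shifted blocks of $\eta$ consecutive residues tile $\mathbb{Z}/e\mathbb{Z}$ exactly once.
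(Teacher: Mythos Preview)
Your proof is correct and follows essentially the same route as the paper's: both compute $\beta-\alpha$ componentwise, invoke Remark~\ref{remarque:racine_ruban} to see that the rim-hook residues are $\eta$ consecutive classes, shift by $k\eta$ (you via Lemma~\ref{lemma:partition_comb-n^i-delta}, the paper via the operator $\sigma^j$), and then observe that the $p$ shifted blocks tile $\mathbb{Z}/e\mathbb{Z}$. The only cosmetic slip is that your displayed $k$-th summand should carry a $+\kappa_\ell$ in the subscript (or you should absorb $\kappa_\ell$ into $i_0$ from the start), but this constant offset does not affect the tiling argument.
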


\begin{proof}
By Remark~\ref{remarque:racine_ruban}, we have $\alpha_{\kappa_\ell}(\mu) = \alpha_{\kappa_\ell}(\lambda) + \alpha_{i_0} + \dots + \alpha_{i_0 + \eta - 1}$ for some $i_0 \in \mathbb{Z}/e\mathbb{Z}$. Thus, for any $j \in \{0, \dots, p-1\}$ we have
\begin{align*}
\alpha_{\kappa_\ell + j\eta}(\mu)
&= \sigma^j \cdot \alpha_{\kappa_\ell}(\mu)
\\
&= \sigma^j \cdot \alpha_{\kappa_\ell}(\lambda) +  \sum_{i = 0}^{\eta-1} \sigma^j \cdot \alpha_{i_0 + i}
\\
&= \alpha_{\kappa_\ell + j\eta}(\lambda) +  \sum_{i = 0}^{\eta-1} \alpha_{i_0 + i+j\eta}.
\end{align*}
We obtain
\begin{align*}
\beta &= \alpha_{\kappa^{(\ell)}}(\mu^p)
\\
&= \sum_{j = 0}^{p-1} \alpha_{\kappa_\ell + j\eta}(\mu)
\\
& = \sum_{j = 0}^{p-1} \alpha_{\kappa_\ell + j\eta}(\lambda) + \sum_{j = 0}^{p-1} \sum_{i = 0}^{\eta-1} \alpha_{i_0 + i + j\eta}
\\
&= \alpha_{\kappa^{(\ell)}}(\lambda^p) + \alpha_0 + \dots + \alpha_{e-1}
\\
&= \alpha + \alpha_0 + \dots + \alpha_{e-1}.
\end{align*}
\end{proof}


\begin{proposition}
\label{proposition:reduction}
It suffices to prove Theorem~\ref{theoreme:orbite_1} for the $e$-multicores.
\end{proposition}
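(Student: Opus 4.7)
The plan is to reduce to the case of an $e$-multicore by stripping off all $e$-rim hooks and then rebuilding, using Lemma~\ref{lemme:ajout_rimhook} to re-inject the residue. Given a general $r$-partition $\tuple{\lambda}$ with $\alpha \coloneqq \alpha_\kappa(\tuple{\lambda})$ satisfying $\sigma \cdot \alpha = \alpha$, let $w$ denote the total $e$-weight of $\tuple{\lambda}$ (the sum of the $e$-weights of its components) and set $\delta \coloneqq \alpha_0 + \dots + \alpha_{e-1}$. By Remark~\ref{remarque:racine_ruban}, each unwrapped $e$-rim hook removes $\delta$ from the residue sum, so $\alpha_\kappa(\overline{\tuple{\lambda}}) = \alpha - w\delta$. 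Since $\sigma_{\eta, p}$ only permutes the basis $\{\alpha_0, \dots, \alpha_{e-1}\}$, it fixes $\delta$, and therefore $\sigma \cdot \alpha_\kappa(\overline{\tuple{\lambda}}) = \alpha_\kappa(\overline{\tuple{\lambda}})$.

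By hypothesis, Theorem~\ref{theoreme:orbite_1} applied to the $e$-multicore $\overline{\tuple{\lambda}}$ yields a stuttering $r$-partition $\tuple{\nu}$ with $\alpha_\kappa(\tuple{\nu}) = \alpha_\kappa(\overline{\tuple{\lambda}})$. Because $\tuple{\nu}$ is stuttering, $\nu^{(\ell + jd)} = \nu^{(\ell)}$ for every $\ell \in \{0, \dots, d-1\}$ and $j \in \{0, \dots, p-1\}$, so $\tuple{\nu}$ is determined by $\nu^{(0)}, \dots, \nu^{(d-1)}$. I then pick any $w_0, \dots, w_{d-1} \in \mathbb{N}$ with $w_0 + \dots + w_{d-1} = w$ (for instance $w_0 = w$ and the others zero), let $\mu^{(\ell)}$ be any partition obtained from $\nu^{(\ell)}$ by successively wrapping on $w_\ell$ $\eta$-rim hooks, and set $\mu^{(\ell + jd)} \coloneqq \mu^{(\ell)}$. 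The resulting $\tuple{\mu}$ is stuttering by construction.

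It remains to check that $\alpha_\kappa(\tuple{\mu}) = \alpha$. The compatibility of $\kappa$ with $(d, \eta, p)$, via~\eqref{equation:forme_kappa}, implies that the residue sum of any stuttering $r$-partition splits along the $d$ orbits of the shift as $\sum_{\ell = 0}^{d-1} \alpha_{\kappa^{(\ell)}}\bigl((\mu^{(\ell)})^p\bigr)$, where the $p$-partition $(\mu^{(\ell)})^p = (\mu^{(\ell)}, \dots, \mu^{(\ell)})$ carries the sub-multicharge $\kappa^{(\ell)}$ of~\eqref{equation:definition_kappa_ell}. Iterating Lemma~\ref{lemme:ajout_rimhook} $w_\ell$ times then increases the $\ell$-th summand by exactly $w_\ell \delta$, and summing over $\ell$ gives $\alpha_\kappa(\tuple{\mu}) = \alpha_\kappa(\tuple{\nu}) + w\delta = \alpha$, as wanted. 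The only real technical point is the residue-sum decomposition along the $d$ orbits, which is a short direct computation from the definitions of $\alpha_\kappa$ and $\kappa^{(\ell)}$; once this is in hand, the rest is a mechanical chain of applications of Remark~\ref{remarque:racine_ruban} and Lemma~\ref{lemme:ajout_rimhook}.
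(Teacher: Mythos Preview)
Your proof is correct and follows essentially the same approach as the paper: strip $e$-rim hooks to pass to the $e$-multicore $\overline{\tuple{\lambda}}$, apply the theorem there to get a stuttering $\tuple{\nu}$, then wrap on $\eta$-rim hooks along one (or several) orbits and invoke Lemma~\ref{lemme:ajout_rimhook} to recover $\alpha$. The paper simply makes the specific choice $w_0 = w$, $w_1 = \dots = w_{d-1} = 0$ that you mention as an example, and unfolds the orbit decomposition of $\alpha_\kappa$ explicitly in the final chain of equalities.
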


\begin{proof}
Let $\tuple{\lambda}$ be an $r$-partition such that $\sigma \cdot \alpha_\kappa(\tuple{\lambda}) = \alpha_\kappa(\tuple{\lambda})$ and let $\overline{\tuple{\lambda}}$ be its $e$-multicore.
By definition of the $e$-multicore and by Remark~\ref{remarque:racine_ruban}, we have $\alpha_\kappa(\tuple{\lambda}) = \alpha_\kappa(\overline{\tuple{\lambda}}) + w\sum_{i = 0}^{e-1} \alpha_i$ where $w \in \mathbb{N}$ is the number of $e$-rim hooks that we need to wrap on to obtain $\tuple{\lambda}$ from $\overline{\tuple{\lambda}}$.
Since $\alpha_\kappa(\tuple{\lambda})$ and $\sum_{i = 0}^{e-1} \alpha_i$ are both stable by $\sigma$, we have $ \sigma \cdot \alpha_\kappa(\overline{\tuple{\lambda}}) = \alpha_\kappa(\overline{\tuple{\lambda}})$. If Theorem~\ref{theoreme:orbite_1} is true for the $e$-multicore $\overline{\tuple{\lambda}}$, we can find a stuttering $r$-partition $\widetilde{\tuple{\mu}} = \prescript{\sigma}{}{\widetilde{\tuple{\mu}}}$ with $\alpha_\kappa(\widetilde{\tuple{\mu}}) = \alpha_\kappa(\overline{\tuple{\lambda}})$. Write $\widetilde{\tuple{\mu}} = (\widetilde{\mu}^{(0)}, \dots, \widetilde{\mu}^{(r-1)})$ and let $\mu^{(0)}$ be a partition obtained by wrapping on $w$ times an $\eta$-rim hook to $\widetilde{\mu}^{(0)}$. We define
\begin{align*}
\mu^{(jd)} &\coloneqq \mu^{(0)}, \qquad \text{for all } j \in \{1, \dots, p-1\},
\\
\mu^{(k)} &\coloneqq \widetilde{\mu}^{(k)}, \qquad \text{for all } k \in \{0, \dots, r-1\} \setminus \{0, d, \dots, (p-1)d\}.
\end{align*}
The $r$-partition $\tuple{\mu} \coloneqq (\mu^{(0)}, \dots, \mu^{(r-1)})$ satisfies $\tuple{\mu} = \prescript{\sigma}{}{\tuple{\mu}}$. Moreover, since $\widetilde{\mu}^{(0)} = \widetilde{\mu}^{(jd)}$ for all $j \in \{1, \dots, p-1\}$, we can apply $w$ times Lemma~\ref{lemme:ajout_rimhook} with $\ell \coloneqq 0$ starting from the $p$-partition
$\bigl(\widetilde{\mu}^{(0)}, \dots, \widetilde{\mu}^{(0)}\bigr)$. We obtain
\begin{align*}
\alpha_\kappa(\tuple{\mu})
&=
\alpha_{\kappa^{(0)}}\bigl(\mu^{(0)}, \dots, \mu^{(0)}\bigr) + \sum_{\ell = 1}^{d-1} \alpha_{\kappa^{(\ell)}}\bigl(\mu^{(\ell)}, \dots, \mu^{(\ell)}\bigr)
\\
&=
\alpha_{\kappa^{(0)}}\bigl(\widetilde{\mu}^{(0)}, \dots, \widetilde{\mu}^{(0)}\bigr) + w\sum_{i = 0}^{e-1}\alpha_i + \sum_{\ell = 1}^{d-1} \alpha_{\kappa^{(\ell)}}\bigl(\widetilde{\mu}^{(\ell)}, \dots, \widetilde{\mu}^{(\ell)}\bigr)
\\
&= 
\alpha_\kappa(\widetilde{\tuple{\mu}}) + w\sum_{i = 0}^{e-1}\alpha_i
\\
&=
\alpha_\kappa(\overline{\tuple{\lambda}}) + w\sum_{i = 0}^{e-1}\alpha_i
\\
&= \alpha_\kappa(\tuple{\lambda}).
\end{align*}
Hence, Theorem~\ref{theoreme:orbite_1} is proved for $\tuple{\lambda}$.
\end{proof}

\begin{remarque}
\label{remark:add_rim_hooks}
Since the $\eta$-rim hooks that we wrap on are arbitrary, the stuttering $r$-partition in Theorem~\ref{theoreme:orbite_1} is not unique in general. Moreover, using the same idea of wrapping on $\eta$-rim hooks we can easily prove Theorem~\ref{theoreme:orbite_1} in the particular case $\eta = 1$ (that is, $p = e$). Finally, if $\tuple{\lambda}$ and $\tuple{\mu}$ are as in Theorem~\ref{theoreme:orbite_1} and if $\tuple{\lambda}$ is an $e$-multicore, then $\tuple{\mu}$ is not necessarily an $e$-multicore.
\end{remarque}

\section{Binary tools and inequalities}
\label{section:approx}

In this section, we introduce two technical tools that we will need to prove Theorem~\ref{theoreme:orbite_1}. In~\textsection\ref{subsection:binary_matrices}, given a family of binary matrices satisfying some conditions, our aim is to prove that we can find a series of \emph{compatible} submatrices $\bigl(\begin{smallmatrix} 1&0\\0&1\end{smallmatrix}\bigr)$. We will need to study some particular cases (Lemma~\ref{lemme:interversion_bloc2} and Proposition~\ref{proposition:interversions_somme_blocs}) before stating the main result, Corollary~\ref{corollaire:interversions_somme_blocs}. We use this result to prove in~\textsection\ref{subsection:binary_averaging} the existence of a binary matrix with prescribed row, (partial) column and block sums. Finally, we will give \textsection\ref{subsection:inegalites} some inequalities. The first one will be reminiscent of the binary setting, and the others will use convexity.

We use $\lvert \cdot \rvert : \mathbb{R}^n \to \mathbb{R}$ to denote the sum of the coordinates (we warn the reader that we do not take the sum of the absolute values) and we write $\lVert\cdot\rVert$ for the Euclidean norm.

\subsection{Binary matrices}
\label{subsection:binary_matrices}

Given two matrices with entries in $\{0, 1\}$ whose row sums (respectively column sums) are pairwise equal, we can get from the one to the other by replacing submatrices $\bigl(\begin{smallmatrix}
1 & 0 \\ 0 & 1
\end{smallmatrix}\bigr)$ by $\bigl(\begin{smallmatrix}
0 & 1 \\ 1 & 0
\end{smallmatrix}\bigr)$ (cf. \cite{ryser}).
These interchanges do not change the row or column sums, however they may change block sums.  The results of this section, particularly Corollary~\ref{corollaire:interversion}, will be used to prove  Proposition~\ref{proposition:elt_base_canonique_bloc} in \textsection\ref{subsection:binary_averaging}, where we show the existence of a binary matrix with prescribed row, column and block sums. Note that Chernyak and Chernyak~\cite{chch} considered matrices with prescribed row, column and block sums, but they did not study the existence problem.

We call \emph{binary matrix} a matrix with entries in $\{0, 1\}$. If $M$ is an $m \times n$ binary matrix, we write $M_{\ell k}$ for its entry at $(\ell, k) \in \{1, \dots, m\} \times \{1, \dots, n\}$. We denote by $\gamma_{\ell, k}(M)$ the binary matrix that we obtain from $M$ by changing the entry $(\ell, k)$ to $1 - M_{\ell k}$.
 We write $R_\ell(M)$ for the $\ell$th row of $M$.   Note that if $\lvert M \rvert$ denotes the sum of the entries of $M$ then $\lvert M \rvert = \sum_\ell \lvert R_\ell(M)\rvert$. Finally, if the number of rows of $M$ is even, we will systematically write $M = \begin{pmatrix}
M^+ \\ M^-
\end{pmatrix}$ where $M^+$ and $M^-$ have the same size, and we define $\gamma_{\ell, k}^\pm(M) \coloneqq \begin{pmatrix}
\gamma_{\ell, k}(M^+)
\\
\gamma_{\ell, k}(M^-)
\end{pmatrix}
$.


\begin{definition}
\label{definition:interchange}
Let $A = \begin{pmatrix}
A^+ \\ A^-
\end{pmatrix}$ and $B = \begin{pmatrix}
B^+ \\ B^-
\end{pmatrix}$ be two binary matrices with the same even number of rows.
We say that the matrix $\bigl(\begin{smallmatrix}
1 & 0 \\ 0 & 1
\end{smallmatrix}\bigr)$ is a \emph{compatible} submatrix of $\bigl(\begin{array}{c|c}A & B\end{array}\bigr)$  if there exist $\ell, k, k'$ such that
\begin{align*}
A^+_{\ell k} &= 1, & B^+_{\ell k'} &= 0,
\\
A^-_{\ell k} &= 0, & B^-_{\ell k'} &= 1.
\end{align*}
In that case, we will write $A \compat_{\ell, k, k'} B$. We denote by $\gamma_{\ell, k, k'}(A, B) \coloneqq \bigl(\gamma_{\ell, k}^\pm(A), \gamma^\pm_{\ell, k'}(B)\bigr)$ the pair of binary matrices that we obtain if we replace the submatrix $\bigl(\begin{smallmatrix}
1 & 0 \\ 0 & 1
\end{smallmatrix}\bigr)$ by $\bigl(\begin{smallmatrix}
0 & 1 \\ 1 & 0
\end{smallmatrix}\bigr)$.
\end{definition}


%

\begin{exemple}
We consider the binary matrices $A = \begin{pmatrix}
A^+ \\ A^-
\end{pmatrix}$ and $B = \begin{pmatrix}
B^+ \\ B^-
\end{pmatrix}$ defined by
\begin{align*}
A^+ &\coloneqq \begin{pmatrix}
1 & \color{red}{1} \\ 0 & 0
\end{pmatrix},
&
B^+ &\coloneqq \begin{pmatrix}
1 & 0&\color{red}{0}
\\
0 & 1&0
\end{pmatrix},
\\
A^- &\coloneqq \begin{pmatrix}
1 & \color{red}{0}
\\
0 & 1
\end{pmatrix},
&
B^- &\coloneqq \begin{pmatrix}
1 & 0&\color{red}{1} \\ 1 &0& 1
\end{pmatrix}.
\end{align*}
The red entries prove that $A \compat_{1,2,3} B$. With $(\widetilde{A} , \widetilde{B}) \coloneqq \gamma_{1,2,3}(A, B)$, we have
\begin{align*}
\widetilde{A}^+ &\coloneqq \begin{pmatrix}
1 & \color{red}{0} \\ 0 & 0
\end{pmatrix},
&
\widetilde{B}^+ &\coloneqq \begin{pmatrix}
1 & 0&\color{red}{1}
\\
0 & 1&0
\end{pmatrix},
\\
\widetilde{A}^- &\coloneqq \begin{pmatrix}
1 & \color{red}{1}
\\
0 & 1
\end{pmatrix},
&
\widetilde{B}^- &\coloneqq \begin{pmatrix}
1 & 0&\color{red}{0} \\ 1 &0& 1
\end{pmatrix}.
\end{align*}
\end{exemple}

If $A$ and $B$ are two binary matrices with the same even number of rows,  the set of all pairs $\gamma_{\ell, k, k'}(A, B)$ where $\ell, k, k'$ are such that $A \compat_{\ell, k, k'} B$ is denoted by $\Gamma(A, B)$. Moreover, we will write $A \compat B$ if the set $\Gamma(A, B)$ is non-empty, that is, if there exist $\ell, k, k'$ such that $A \compat_{\ell, k, k'} B$. 

We can generalise these notations to a family $(A_i)_{1 \leq i \leq n}$  of binary matrices with the same even number of rows. Let $((\ell_i, k_i, k'_i))_{1 \leq i \leq n-1}$ be a family of triples such that
\[
A_i \compat_{\ell_i, k_i, k'_i} A_{i+1},
\]
for all $i \in \{1, \dots, n-1\}$. For any $i \in \{2, \dots, n-1\}$ we have
\[
A_{i-1} \compat_{\ell_{i-1}, k_{i-1}, k'_{i-1}} A_i \compat_{\ell_i, k_i, k'_i} A_{i+1},
\]
thus, according to Definition~\ref{definition:interchange},
\[
(\ell_{i-1}, k'_{i-1}) \neq (\ell_i, k_i).
\]
Hence, for all $i \in \{2, \dots, n-1\}$ we have
\begin{align*}
A_{i-1} &\compat_{\ell_{i-1}, k_{i-1}, k'_{i-1}} \gamma^\pm_{\ell_i, k_i}(A_i), 
\\
\gamma^\pm_{\ell_{i-1}, k'_{i-1}}(A_i) &\compat_{\ell_i, k_i, k'_i} A_{i+1},
\end{align*}
and
\begin{equation}
\label{equation:perform}
\gamma^\pm_{\ell_i, k_i}\bigl(\gamma^\pm_{\ell_{i-1}, k'_{i-1}}(A_i)\bigr) = \gamma^\pm_{\ell_{i-1}, k'_{i-1}}\bigl(\gamma^\pm_{\ell_i, k_i}(A_i)\bigr).
\end{equation}
We denote by $\gamma_{((\ell_i, k_i, k'_i))_{1 \leq i \leq n-1}}\left((A_i)_{1 \leq i \leq n}\right)$ the family $(\widetilde{A}_i)_{1 \leq i \leq n}$ defined by
\begin{align*}
\widetilde{A}_1 &\coloneqq \gamma^\pm_{\ell_1, k_1}(A_1),
\\
\widetilde{A}_i &\coloneqq \gamma^\pm_{\ell_i, k_i}\bigl(\gamma^\pm_{\ell_{i-1}, k'_{i-1}}(A_i)\bigr), \qquad \text{for all } i \in \{2, \dots, n-1\},
\\
\widetilde{A}_n &\coloneqq \gamma^\pm_{\ell_{n-1}, k'_{n-1}}(A_n).
\end{align*}
By~\eqref{equation:perform}, no choice has been made to define  $\widetilde{A}_i$ for $i \in \{2, \dots, n-1\}$.
Finally, we denote by $\Gamma(A_1, \dots, A_n)$ the set of all families $\gamma_{((\ell_i, k_i, k'_i))_{1 \leq i \leq n-1}}\bigl((A_i)_{1 \leq i \leq n}\bigr)$ where $\bigl((\ell_i, k_i, k'_i)\bigr)_{1 \leq i \leq n-1}$ is such that $A_1 \compat_{\ell_1, k_1, k'_1} \cdots \compat_{\ell_{n-1}, k_{n-1}, k'_{n-1}} A_n$, and we will write $A_1 \compat \cdots \compat A_n$ if $\Gamma(A_1, \dots, A_n)$ is non-empty.

The following properties are straightforward from the definition.

\begin{proposition}
\label{proposition:interversion}
Let $A$ and $B$ be two binary matrices with the same even number of rows such that $A \compat_{\ell, k, k'} B$. If $(\widetilde{A}, \widetilde{B}) \coloneqq \gamma_{\ell, k, k'}(A, B)$ then
\begin{align*}
\widetilde{A}^+_{\ell k} &= A^+_{\ell k} - 1,
&
\widetilde{B}^+_{\ell k'} &= B^+_{\ell k'} + 1,
\\
\widetilde{A}^-_{\ell k} &= A^-_{\ell k} + 1,
&
\widetilde{B}^-_{\ell k'} &= B^-_{\ell k'} - 1,
\end{align*}
the other entries being unchanged. Hence, the following equalities are satisfied:
\begin{align*}
 \widetilde{A}^+_{\ell k} + \widetilde{A}^-_{\ell k} &= A^+_{\ell k} + A^-_{\ell k},
&
 \widetilde{B}^+_{\ell k'} + \widetilde{B}^-_{\ell k'} &= B^+_{\ell k'} + B^-_{\ell k'},
\\
R_\ell(\widetilde{A}^+) + R_\ell(\widetilde{B}^+) &= R_\ell(A^+) + R_\ell(B^+),
&
R_\ell(\widetilde{A}^-) + R_\ell(\widetilde{B}^-) &= R_\ell(A^-) + R_\ell(B^-),
\\
\intertext{and}
\lvert \widetilde{A}^+\rvert  &= \lvert A^+\rvert  - 1,
&
\lvert \widetilde{B}^+\rvert  &= \lvert B^+\rvert  + 1,
\\
\lvert \widetilde{A}^-\rvert  &= \lvert A^-\rvert  + 1,
&
\lvert \widetilde{B}^-\rvert  &= \lvert B^-\rvert  - 1.
\end{align*}
As a consequence, if  $A \compat B \compat C$ and $(\widehat{A}, \widehat{B}, \widehat{C}) \in \Gamma(A, B, C)$ then $\lvert \widehat{B}^+\rvert  = \lvert B^+\rvert $ and $\lvert \widehat{B}^-\rvert  = \lvert B^-\rvert $.
\end{proposition}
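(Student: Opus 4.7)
The proposition is essentially a bookkeeping statement, and the plan is to verify each assertion directly from the definition of $\compat_{\ell, k, k'}$ (which pins down the four entries $A^\pm_{\ell k}$ and $B^\pm_{\ell k'}$) and from the definition of $\gamma_{\ell, k, k'}$ (which flips exactly those four entries, leaving every other entry untouched).

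First I would handle the four entry-level equalities. By Definition~\ref{definition:interchange}, the hypothesis $A \compat_{\ell, k, k'} B$ forces $A^+_{\ell k}=1$, $A^-_{\ell k}=0$, $B^+_{\ell k'}=0$, $B^-_{\ell k'}=1$. The map $\gamma^\pm_{\ell,k}$ replaces each entry at $(\ell,k)$ by its complement, so $\widetilde{A}^+_{\ell k}=0=A^+_{\ell k}-1$ and $\widetilde{A}^-_{\ell k}=1=A^-_{\ell k}+1$; the same computation for $B$ at column $k'$ gives $\widetilde{B}^+_{\ell k'}=B^+_{\ell k'}+1$ and $\widetilde{B}^-_{\ell k'}=B^-_{\ell k'}-1$. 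All other entries of $\widetilde{A}^\pm$ and $\widetilde{B}^\pm$ coincide with those of $A^\pm$ and $B^\pm$ by the definition of $\gamma^\pm$.

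Next I would derive the aggregate equalities by simply adding the entry-level ones. The pointwise sum $\widetilde{A}^+_{\ell k}+\widetilde{A}^-_{\ell k}=(A^+_{\ell k}-1)+(A^-_{\ell k}+1)=A^+_{\ell k}+A^-_{\ell k}$ is immediate, and likewise at $(\ell,k')$ for $B$. For the row identities, the $\ell$th row of $\widetilde{A}^+$ differs from that of $A^+$ only at the $k$th entry (by $-1$), while the $\ell$th row of $\widetilde{B}^+$ differs from that of $B^+$ only at the $k'$th entry (by $+1$); summing the two total row sums yields the claimed equality, and the same argument treats the minus parts. The identities on the global sums $\lvert\widetilde{A}^\pm\rvert$ and $\lvert\widetilde{B}^\pm\rvert$ follow by the same $\pm 1$ bookkeeping, since only one entry of each matrix is modified.

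For the closing consequence, given $A \compat_{\ell_1, k_1, k'_1} B \compat_{\ell_2, k_2, k'_2} C$ with image $(\widehat{A},\widehat{B},\widehat{C})\in\Gamma(A,B,C)$, I would observe that $\widehat{B}^+$ is obtained from $B^+$ by applying $\gamma_{\ell_1,k'_1}$ and $\gamma_{\ell_2,k_2}$, which by the already-established identities increases $\lvert B^+\rvert$ by $1$ at the first step and decreases it by $1$ at the second, leaving $\lvert\widehat{B}^+\rvert=\lvert B^+\rvert$; the same cancellation gives $\lvert\widehat{B}^-\rvert=\lvert B^-\rvert$. The two modifications commute by~\eqref{equation:perform}, so there is no ambiguity in $\widehat{B}$. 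There is no genuine obstacle here — the whole proposition is a direct translation of Definitions~\ref{definition:interchange} and the surrounding formulas, and the only point requiring any attention is keeping track of which of the four positions is incremented and which is decremented.
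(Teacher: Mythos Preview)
Your proposal is correct and matches the paper's approach: the paper does not give a proof at all, simply stating that ``the following properties are straightforward from the definition,'' which is precisely the direct bookkeeping verification you have written out.
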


\begin{corollaire}
\label{corollaire:interversion}
Let $(A_i)_{1 \leq i \leq n}$ be a family of binary matrices with the same even number of rows. Assume that $i_0, \dots, i_s$ are distinct integers such that $A_{i_0} \compat \dots \compat A_{i_s}$ and let $(\widetilde{A}_{i_0}, \dots, \widetilde{A}_{i_s}) \in \Gamma(A_{i_0}, \dots, A_{i_s})$.  Then
\begin{align*}
\lvert \widetilde{A}^+_{i_0} \rvert &= \lvert A^+_{i_0} \rvert - 1,
&
\lvert \widetilde{A}^+_{i_s} \rvert &= \lvert A^+_{i_s} \rvert + 1,
\\
\lvert \widetilde{A}^-_{i_0} \rvert &= \lvert A^-_{i_0} \rvert + 1,
&
\lvert \widetilde{A}^-_{i_s} \rvert &= \lvert A^-_{i_s} \rvert - 1,
\end{align*}
and for all $t \in \{1, \dots, s-1\}$ we have
\begin{align*}
\lvert \widetilde{A}^+_{i_t} \rvert &= \lvert A^+_{i_t}\rvert,
\\
\lvert \widetilde{A}^-_{i_t} \rvert &= \lvert A^-_{i_t}\rvert.
\end{align*}
\end{corollaire}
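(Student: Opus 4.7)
The plan is to reduce the statement to Proposition~\ref{proposition:interversion} by tracking, for each $t \in \{0, \dots, s\}$, exactly how many elementary flips have been applied to $A_{i_t}$ in forming $\widetilde{A}_{i_t}$, and at which positions. From the unpacked definition of $\gamma_{((\ell_t, k_t, k'_t))_{1 \leq t \leq s-1}}$, the endpoint matrix $\widetilde{A}_{i_0}$ equals $\gamma^\pm_{\ell_1, k_1}(A_{i_0})$, the endpoint $\widetilde{A}_{i_s}$ equals $\gamma^\pm_{\ell_{s-1}, k'_{s-1}}(A_{i_s})$, and for a middle index $1 \leq t \leq s-1$ we have
\[
\widetilde{A}_{i_t} = \gamma^\pm_{\ell_t, k_t}\bigl(\gamma^\pm_{\ell_{t-1}, k'_{t-1}}(A_{i_t})\bigr),
\]
a composition of two $\gamma^\pm$-flips at positions that are distinct (this is precisely the observation preceding~\eqref{equation:perform}).

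For the two endpoints I would apply Proposition~\ref{proposition:interversion} directly to the adjacent pair: the pair $(A_{i_0}, A_{i_1})$ is compatible at $(\ell_1, k_1, k'_1)$, hence
\[
\lvert \widetilde{A}_{i_0}^+ \rvert = \lvert A_{i_0}^+ \rvert - 1, \quad \lvert \widetilde{A}_{i_0}^- \rvert = \lvert A_{i_0}^- \rvert + 1,
\]
and symmetrically for $A_{i_s}$ via the pair $(A_{i_{s-1}}, A_{i_s})$.

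For each middle index $t$, the key step is to observe that $A_{i_t}$ enters two consecutive compatible interchanges: as the ``right'' matrix in $A_{i_{t-1}} \compat_{\ell_{t-1}, k_{t-1}, k'_{t-1}} A_{i_t}$ and as the ``left'' matrix in $A_{i_t} \compat_{\ell_t, k_t, k'_t} A_{i_{t+1}}$. By Definition~\ref{definition:interchange}, in the first role the entry $(A_{i_t})^+_{\ell_{t-1}, k'_{t-1}}$ is $0$ and gets flipped to~$1$ (contributing $+1$ to $\lvert \cdot^+ \rvert$ and $-1$ to $\lvert \cdot^- \rvert$ by the analogous flip of the $^-$ entry), while in the second role $(A_{i_t})^+_{\ell_t, k_t} = 1$ is flipped to $0$ (contributing $-1$ to $\lvert \cdot^+ \rvert$ and $+1$ to $\lvert \cdot^- \rvert$). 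Since the two flipped positions are distinct, both contributions occur independently, and they cancel: $\lvert \widetilde{A}_{i_t}^+ \rvert = \lvert A_{i_t}^+ \rvert$ and $\lvert \widetilde{A}_{i_t}^- \rvert = \lvert A_{i_t}^- \rvert$.

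The only thing requiring any care is the bookkeeping that each middle matrix receives exactly the two flips identified above, no more and no fewer; this is immediate from the definition of $\gamma_{((\ell_i, k_i, k'_i))}$ once written out. No further ingredient is needed, and the proof is essentially a telescoping argument patterned on the three-term case already recorded at the end of Proposition~\ref{proposition:interversion}.
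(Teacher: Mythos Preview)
Your argument is correct and is exactly the approach implicit in the paper, which states the corollary without proof as an immediate consequence of Proposition~\ref{proposition:interversion} and the definition of $\Gamma(A_1, \dots, A_n)$. One minor bookkeeping slip: with $s+1$ matrices $A_{i_0}, \dots, A_{i_s}$ there are $s$ triples, so your index range should be (say) $0 \leq t \leq s-1$ or $1 \leq t \leq s$; as written, $\widetilde{A}_{i_s} = \gamma^\pm_{\ell_{s-1}, k'_{s-1}}(A_{i_s})$ and the later use of $\ell_{t-1}$ for the pair $(A_{i_{t-1}}, A_{i_t})$ are off by one from each other, but this does not affect the substance of the proof.
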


The following, easy to prove, lemma is very important in the proof of  Proposition~\ref{proposition:elt_base_canonique_bloc}.

\begin{lemme}
\label{lemme:interversion_bloc2}
Let $A$ and $B$ be  two binary matrices with the same even number of rows.
We assume that
\begin{align*}
\lvert R_\ell(A^+)\rvert  + \lvert R_\ell(B^+)\rvert  &= \lvert R_\ell(A^-)\rvert  + \lvert R_\ell(B^-)\rvert , \qquad \text{for all } \ell,
\\
\lvert A^+\rvert  &> \lvert A^-\rvert.
\end{align*}
Then
$A \compat B$.
\end{lemme}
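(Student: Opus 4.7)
The plan is to produce the required pattern row by row: first locate a single row where $A^+$ has more ones than $A^-$, then show that the constraint on row sums forces $B$ to have the opposite imbalance in that same row, which will furnish the columns $k$ and $k'$ we need.

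First I would unpack the hypothesis $\lvert A^+\rvert > \lvert A^-\rvert$. Writing $\lvert A^{\pm}\rvert = \sum_\ell \lvert R_\ell(A^{\pm})\rvert$, the strict inequality implies that there must be some index $\ell$ for which $\lvert R_\ell(A^+)\rvert > \lvert R_\ell(A^-)\rvert$. Fix such an $\ell$. Since $A^+$ and $A^-$ are binary and $R_\ell(A^+)$ has strictly more ones than $R_\ell(A^-)$, the set of columns where $A^+_{\ell k} = 1$ cannot be contained in the set of columns where $A^-_{\ell k} = 1$; hence there exists a column $k$ with $A^+_{\ell k} = 1$ and $A^-_{\ell k} = 0$.

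Next I would use the first hypothesis applied to this row: the equality
\[
\lvert R_\ell(A^+)\rvert + \lvert R_\ell(B^+)\rvert = \lvert R_\ell(A^-)\rvert + \lvert R_\ell(B^-)\rvert
\]
rearranges to
\[
\lvert R_\ell(B^-)\rvert - \lvert R_\ell(B^+)\rvert = \lvert R_\ell(A^+)\rvert - \lvert R_\ell(A^-)\rvert > 0.
\]
By the same binary counting argument as before, there must be a column $k'$ with $B^-_{\ell k'} = 1$ and $B^+_{\ell k'} = 0$.

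Combining the two, we have $A^+_{\ell k} = 1$, $A^-_{\ell k} = 0$, $B^+_{\ell k'} = 0$, $B^-_{\ell k'} = 1$, which is exactly the configuration of Definition~\ref{definition:interchange}. Thus $A \compat_{\ell, k, k'} B$, so in particular $A \compat B$. There is no real obstacle here: once one recognises that the hypothesis $\lvert A^+\rvert > \lvert A^-\rvert$ passes to some individual row, the rest is immediate from pigeonhole on binary rows, and the role of the row-sum identity is simply to transfer the imbalance from $A$ to $B$ within that row.
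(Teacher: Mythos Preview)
Your proof is correct and follows essentially the same route as the paper: pick a row $\ell$ where $A^+$ dominates $A^-$, use a pigeonhole/containment argument on binary entries to find $k$, transfer the imbalance to $B$ via the row-sum identity, and repeat to find $k'$. The only cosmetic difference is that the paper enumerates the four possible column pairs $\bigl(\begin{smallmatrix}A^+_{\ell k}\\A^-_{\ell k}\end{smallmatrix}\bigr)$ rather than phrasing it as set containment.
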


\begin{proof}
Since $\lvert A^+\rvert  > \lvert A^-\rvert $, there is some $\ell$ such that $\lvert R_\ell(A^+)\rvert  > \lvert R_\ell(A^-)\rvert $. Since the matrices have their entries in $\{0, 1\}$, for all $k$ we have
\[
\begin{pmatrix}
A^+_{\ell k} \\ A^-_{\ell k}
\end{pmatrix} \in \left\{ \begin{pmatrix}0 \\ 0\end{pmatrix}, \begin{pmatrix}
1 \\ 0
\end{pmatrix}, \begin{pmatrix}
1 \\ 1
\end{pmatrix},
\begin{pmatrix}
0 \\ 1
\end{pmatrix}\right\}.
\]
Thus, there is some $k$ such that $\begin{pmatrix}
A_{\ell k}^+ \\ A_{\ell k}^-
\end{pmatrix}
= \begin{pmatrix} 1 \\ 0 \end{pmatrix}$.
Moreover, we have
\[
\lvert R_\ell(B^+)\rvert  = \lvert R_\ell(B^-)\rvert  + \bigl(\lvert R_\ell(A^-)\rvert  - \lvert R_\ell(A^+)\rvert \bigr) < \lvert R_\ell(B^-)\rvert .
\]
Again, we deduce that there is some $k'$ such that $\begin{pmatrix} B^+_{\ell k'} \\ B^-_{\ell k'}\end{pmatrix} = \begin{pmatrix} 0 \\ 1 \end{pmatrix}$. Finally, we have
\begin{align*}
A^+_{\ell k} &= 1, & B^+_{\ell k'} &= 0,
\\
 A^-_{\ell k} &= 0, & B^-_{\ell k'} &= 1,
\end{align*}
thus $A \compat B$.
\end{proof}

Let us now give a generalisation of Lemma~\ref{lemme:interversion_bloc2} to an arbitrary number of matrices.

\begin{proposition}
\label{proposition:interversions_somme_blocs}
Let $(A_i)_{1 \leq i \leq n}$ be a family of binary matrices with the same even number of rows. We assume that
\begin{align*}
\sum_{i=1}^n \lvert R_\ell(A^+_i)\rvert  &= \sum_{i=1}^n \lvert R_\ell(A^-_i)\rvert , \qquad \text{for all } \ell,
\\
\lvert A^+_1\rvert  &> \lvert A^-_1\rvert ,
\\
\lvert A^+_i\rvert  &\geq \lvert A^-_i\rvert , \qquad \text{for all } i \in \{2, \dots, n-1\}.
\end{align*}
Then there exists a sequence $1 < i_1, \dots, i_{s-1} < n$ of distinct integers such that
\[
A_1 \compat A_{i_1} \compat A_{i_2} \compat \dots \compat A_{i_{s-1}} \compat  A_n.
\]
\end{proposition}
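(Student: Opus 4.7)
The plan is to phrase the conclusion as a reachability problem. Consider the directed graph $G$ on vertex set $\{1, \dots, n\}$ with an edge $i \to j$ whenever $A_i \compat A_j$, and let $I$ be the set of vertices reachable from $1$ in $G$ (including $1$ itself). A simple directed path from $1$ to $n$ in $G$ has all vertices distinct and its internal vertices lie in $\{2, \dots, n-1\}$, so it provides exactly the required sequence $i_1, \dots, i_{s-1}$. Hence the whole argument reduces to showing $n \in I$, which I will do by contradiction: assume $n \notin I$.

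Unpacking Definition~\ref{definition:interchange}, $A_i \not\compat A_j$ for every $(i, j) \in I \times I^c$ means that for every row $\ell$ one cannot simultaneously find a column of some $A_i$ ($i \in I$) whose $\ell$th row entry is $1$ in $A_i^+$ and $0$ in $A_i^-$, and a column of some $A_j$ ($j \in I^c$) whose $\ell$th row entry is $0$ in $A_j^+$ and $1$ in $A_j^-$. I partition the row indices into $L_0$, the set of rows in which no $A_i$ with $i \in I$ has a ``$\bigl(\begin{smallmatrix}1\\0\end{smallmatrix}\bigr)$-column'', and $L_1$, the set of rows in which no $A_j$ with $j \in I^c$ has a ``$\bigl(\begin{smallmatrix}0\\1\end{smallmatrix}\bigr)$-column''; the preceding observation gives $L_0 \cup L_1 = \{1, \dots, L\}$, where $L$ is the common number of rows.

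For $\ell \in L_0$ and $i \in I$, each $1$ in $R_\ell(A_i^+)$ is matched by a $1$ in the same column of $R_\ell(A_i^-)$, hence $\lvert R_\ell(A_i^+)\rvert \leq \lvert R_\ell(A_i^-)\rvert$; summing over $i \in I$ yields the key inequality $\sum_{i \in I} \lvert R_\ell(A_i^+)\rvert \leq \sum_{i \in I} \lvert R_\ell(A_i^-)\rvert$. For $\ell \in L_1$ the analogous argument applied to $I^c$ gives $\sum_{j \in I^c} \lvert R_\ell(A_j^-)\rvert \leq \sum_{j \in I^c} \lvert R_\ell(A_j^+)\rvert$, which, combined with the row-sum hypothesis rewritten as $\sum_{i \in I} (\lvert R_\ell(A_i^+)\rvert - \lvert R_\ell(A_i^-)\rvert) = \sum_{j \in I^c} (\lvert R_\ell(A_j^-)\rvert - \lvert R_\ell(A_j^+)\rvert)$, forces the same inequality on $I$. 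Thus $\sum_{i \in I} \lvert R_\ell(A_i^+)\rvert \leq \sum_{i \in I} \lvert R_\ell(A_i^-)\rvert$ holds for every row $\ell$.

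Summing over $\ell$ yields $\sum_{i \in I} \lvert A_i^+\rvert \leq \sum_{i \in I} \lvert A_i^-\rvert$. But $n \notin I$ forces $I \subseteq \{1, \dots, n-1\}$, so the hypotheses $\lvert A_1^+\rvert > \lvert A_1^-\rvert$ and $\lvert A_i^+\rvert \geq \lvert A_i^-\rvert$ for $i \in \{2, \dots, n-1\}$ give $\sum_{i \in I} (\lvert A_i^+\rvert - \lvert A_i^-\rvert) > 0$, a contradiction. The main obstacle is spotting the dichotomy $L_0 \cup L_1$ that globally encodes all the non-compatibilities and realising that in both cases the row-sum identity collapses to the same one-sided inequality on $I$; after that the conclusion is a straightforward sign check, followed by the standard extraction of a simple path from a directed walk.
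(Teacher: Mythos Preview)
Your proof is correct, and it takes a genuinely different route from the paper's. The paper proceeds by a breadth-first expansion: it sets $I_0 = \{1\}$, and at each stage concatenates the already-reached matrices into one block $\widehat{A}_s$ and the unreached ones into $B_s$, then applies Lemma~\ref{lemme:interversion_bloc2} to $\widehat{A}_s, B_s$ to produce a new non-empty layer $I_s$; termination follows because the layers are disjoint subsets of $\{1,\dots,n\}$. In particular, the paper relies on the two-matrix Lemma~\ref{lemme:interversion_bloc2} as a black box at every step of the induction.

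You instead take the reachable set $I$ once and for all and argue by contradiction. The crucial observation that $A_i \not\compat A_j$ for all $(i,j)\in I\times I^c$ forces, row by row, the dichotomy $L_0 \cup L_1$ is exactly the content of Lemma~\ref{lemme:interversion_bloc2} unpacked and applied to the aggregated pair $\bigl(\!\begin{smallmatrix}(A_i)_{i\in I}\end{smallmatrix},\begin{smallmatrix}(A_j)_{j\in I^c}\end{smallmatrix}\!\bigr)$, but you do it in a single pass rather than iteratively. What your approach buys is economy: no separate lemma, no induction on the layer index, and the sign contradiction at the end is immediate. What the paper's approach buys is modularity (Lemma~\ref{lemme:interversion_bloc2} is isolated and reused) and a slightly more explicit description of how the chain $A_1 \compat A_{i_1} \compat \cdots \compat A_n$ is actually built. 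Both proofs are essentially the same reachability argument viewed from opposite ends.
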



\begin{proof}
We consider the following binary matrices with an even number of rows:
\[
B_1 \coloneqq \begin{pmatrix}
A_2 & A_3 & \cdots & A_{n-1} & A_n
\end{pmatrix}.
\]
For each $\ell$ we have $\lvert R_\ell(B^+_1)\rvert  = \sum_{i = 2}^n \lvert R_\ell(A_i^+)\rvert$ and $\lvert R_\ell(B^-_1)\rvert  = \sum_{i = 2}^n \lvert R_\ell(A_i^-)\rvert$.  Thus,
\begin{align*}
\lvert R_\ell(A_1^+)\rvert  + \lvert R_\ell(B_1^+)\rvert 
&= \sum_{i = 1}^n \lvert R_\ell(A_i^+)\rvert 
\\
&= \sum_{i = 1}^n \lvert R_\ell(A^-_i)\rvert 
\\
&= \lvert R_\ell(A^-_1)\rvert  + \sum_{i = 2}^n \lvert R_\ell(A^-_i)\rvert 
\\
\lvert R_\ell(A^+_1)\rvert  + \lvert R_\ell(B^+_1)\rvert 
&= \lvert R_\ell(A^-_1)\rvert  + \lvert R_\ell(B^-_1)\rvert .
\end{align*}
Since $\lvert A^+_1\rvert  > \lvert A^-_1\rvert $, we can apply Lemma~\ref{lemme:interversion_bloc2} to the matrices $A$ and $B_1$. Hence, if we define $I_1$ as the set  of integers $i \in \{2, \dots, n\}$ such that $A_1 \compat A_i$, then $I_1$ is not empty. If $n \in I_1$ then the proof is over, and otherwise we start an induction. Assume that for some integer $s$  we have some pairwise disjoint non-empty subsets $I_0 \coloneqq \{1\}, I_1, \dots, I_{s-1}$  of $\{1, \dots, n-1\}$ such that for all $t \in \{1, \dots, s-1\}$ we have
\[
\text{for all } i_t \in I_t, \text{ there exists } i_{t-1} \in I_{t-1}  \text{ such that } A_{i_{t-1}} \compat A_{i_t}.
\] 
In the following, we write $i \notin I_0 \cup \dots \cup I_{s-1}$ to mean $i \in \{1, \dots, n\}\setminus \bigl(I_0 \cup \dots \cup I_{s-1}\bigr)$.
We define the two following binary matrices with the same even number of rows:
\begin{align*}
\widehat{A}_s &\coloneqq \Bigl(A_i\Bigr)_{i \in I_0 \cup \dots \cup I_{s-1}},
\\
B_s &\coloneqq \Bigl(A_i\Bigr)_{i \notin I_0 \cup \dots \cup I_{s-1}}.
\end{align*}
Note that the matrix $B_s$ is not empty since $n \in \{1, \dots, n\}\setminus \bigl(I_0 \cup \dots \cup I_{s-1}\bigr)$.
For all $\ell$ we have
\begin{align*}
\lvert R_\ell(\widehat{A}_s^+)\rvert  &= \sum_{i \in I_0 \cup \dots \cup I_{s-1}} \lvert R_\ell(A^+_i)\rvert,
\\
\lvert R_\ell(\widehat{A}_s^-)\rvert  &= \sum_{i \in I_0 \cup \dots \cup I_{s-1}} \lvert R_\ell(A^-_i)\rvert,
\\
\intertext{and}
\lvert R_\ell(B_s^+)\rvert  &= \sum_{i \notin I_0 \cup \dots \cup I_{s-1}} \lvert R_\ell(A^+_i)\rvert,
\\
\lvert R_\ell(B_s^-)\rvert &= \sum_{i \notin I_0 \cup \dots \cup I_{s-1}} \lvert R_\ell(A^-_i)\rvert.
\end{align*}
Thus,
\[
\lvert R_\ell(\widehat{A}^+_s)\rvert  + \lvert R_\ell(B^+_s)\rvert  = \sum_{i = 1}^n \lvert R_\ell(A^+_i)\rvert  = \sum_{i = 1}^n \lvert R_\ell(A^-_i)\rvert  = \lvert R_\ell(\widehat{A}^-_s)\rvert  + \lvert R_\ell(B^-_s)\rvert.
\]
Furthermore, since $\lvert A_i^+ \rvert \geq \lvert A_i^-\rvert$ for all $i \in I_1 \cup \dots \cup I_{s-1} \subseteq \{2, \dots,n-1\}$ and $\lvert A_1^+\rvert > \lvert A_1^-\rvert$ we obtain
\begin{align*}
\lvert \widehat{A}^+_s\rvert 
&= \sum_{i \in I_1 \cup \dots \cup I_{s-1}} \lvert A^+_i\rvert  + \lvert A^+_1\rvert 
\\
&\geq \sum_{i \in I_1 \cup \dots \cup I_{s-1}} \lvert A^-_i\rvert  + \lvert A^+_1\rvert 
\\
&\geq \lvert \widehat{A}^-_s\rvert  - \lvert A^-_1\rvert  + \lvert A^+_1\rvert 
\\
\lvert \widehat{A}^+_s\rvert  &> \lvert \widehat{A}^-_s\rvert.
\end{align*}
As a consequence, we can apply Lemma~\ref{lemme:interversion_bloc2} to the matrices $\widehat{A}_s$ and $B_s$. Hence, the set $I_s$ of integers $i \in \{1, \dots, n\} \setminus (I_0 \cup \dots \cup I_{s-1})$ such that $A_{\widehat{\imath}} \compat A_i$ for some $\widehat{\imath} \in I_0 \cup \dots \cup I_{s-1}$ is non-empty. Moreover, by construction such an integer $\widehat{\imath}$ is necessary in $I_{s-1}$. We stop here if $n \in I_s$, and otherwise we continue the induction with $I_0, I_1, \dots, I_s$.

Since the sets that we construct are non-empty, pairwise disjoint and included in $\{1, \dots, n\}$, there is some integer $s$ such that $n \in I_s$. By construction, for any $t \in \{1, \dots, s\}$ if $i_t \in I_t$ then there exists $i_{t-1} \in I_{t-1}$ such that $A_{i_{t-1}} \compat A_{i_t}$.
Hence, starting with $n \in I_s$, since the sets $(I_t)_{0 \leq t \leq s}$ are pairwise disjoint and $I_0 = \{1\}$, we can find a sequence $1  < i_1, \dots, i_{s-1} <  n$ of distinct integers such that $A_1 \compat A_{i_1} \compat \dots \compat A_{i_{s-1}} \compat A_n$.
\end{proof}

\begin{corollaire}
\label{corollaire:interversions_somme_blocs}
Let $(A_i)_{1 \leq i \leq n}$ be a family of matrices with the same even number of rows. We assume that
\begin{gather*}
\sum_{i=1}^n \lvert R_\ell(A^+_i)\rvert = \sum_{i=1}^n \lvert R_\ell(A^-_i)\rvert, \qquad \text{for all } \ell,
\\
\lvert A^+_{i_0} \rvert > \lvert A^-_{i_0}\rvert, \qquad \text{for some } i_0 \in \{1, \dots, n\}.
\end{gather*}
Then there exists a sequence of distinct integers $i_1, \dots, i_s$  distinct from $i_0$ such that
\[
A_{i_0} \compat A_{i_1} \compat A_{i_2} \compat \dots \compat A_{i_{s-1}} \compat A_{i_s},
\]
with $\lvert A^+_{i_s} \rvert < \lvert A^-_{i_s} \rvert$.
\end{corollaire}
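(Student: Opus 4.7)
The plan is to adapt the inductive construction in the proof of Proposition~\ref{proposition:interversions_somme_blocs}, with the difference that the endpoint of the compatibility chain is determined on the fly rather than fixed in advance. Summing the row-sum balance hypothesis over all $\ell$ yields $\sum_{i=1}^n \lvert A_i^+\rvert = \sum_{i=1}^n \lvert A_i^-\rvert$; combined with $\lvert A_{i_0}^+\rvert > \lvert A_{i_0}^-\rvert$, this forces the set $J \coloneqq \{j \in \{1, \dots, n\} : \lvert A_j^+\rvert < \lvert A_j^-\rvert\}$ to be non-empty, and clearly $i_0 \notin J$. The goal is thus to reach some element of $J$ from $i_0$ through a chain of $\compat$.

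Starting from $I_0 \coloneqq \{i_0\}$, I would construct pairwise disjoint non-empty subsets $I_0, I_1, I_2, \dots \subseteq \{1, \dots, n\}$ inductively while maintaining the invariant $(I_0 \cup \cdots \cup I_{s-1}) \cap J = \emptyset$. Given the sets up to step $s-1$, form $\widehat{A}_s \coloneqq (A_i)_{i \in I_0 \cup \cdots \cup I_{s-1}}$ and $B_s \coloneqq (A_i)_{i \notin I_0 \cup \cdots \cup I_{s-1}}$. The row-sum balance transfers verbatim to the pair $(\widehat{A}_s, B_s)$; $B_s$ is non-empty because $J$ is non-empty and disjoint from $I_0 \cup \cdots \cup I_{s-1}$ by the invariant; and $\lvert \widehat{A}_s^+\rvert > \lvert \widehat{A}_s^-\rvert$ because $i_0 \in I_0 \cup \cdots \cup I_{s-1}$ contributes strictly while every other index of this union lies outside $J$ and so contributes non-negatively. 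Lemma~\ref{lemme:interversion_bloc2} then yields $\widehat{A}_s \compat B_s$, and by exactly the same argument as in the end of the proof of Proposition~\ref{proposition:interversions_somme_blocs} this produces a non-empty $I_s \subseteq \{1, \dots, n\}\setminus (I_0 \cup \cdots \cup I_{s-1})$ such that each element of $I_s$ is compatible with some predecessor $\widehat\imath \in I_{s-1}$ (the fact that the predecessor is forced into $I_{s-1}$, rather than into an earlier $I_t$, is the same observation about the definition of the $I_t$'s).

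I would stop the recursion as soon as $I_s \cap J \neq \emptyset$. Termination is automatic because the $I_t$ are pairwise disjoint non-empty subsets of the finite set $\{1, \dots, n\}$; moreover the recursion cannot terminate for any other reason, since the only obstruction to performing step $s$ would be $B_s = \emptyset$, which would already force $J \subseteq I_0 \cup \cdots \cup I_{s-1}$ and contradict the invariant at an earlier step. Choosing any $i_s \in I_s \cap J$ and backtracking one predecessor at a time through $I_{s-1}, I_{s-2}, \dots, I_1$ then produces the required distinct indices $i_1, \dots, i_s$, all different from $i_0$, with $A_{i_0} \compat A_{i_1} \compat \cdots \compat A_{i_s}$ and $\lvert A_{i_s}^+\rvert < \lvert A_{i_s}^-\rvert$.

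The main obstacle is merely to check that weakening the hypothesis of Proposition~\ref{proposition:interversions_somme_blocs} (intermediates may now lie in $J$) does not destroy the key inequality $\lvert \widehat{A}_s^+\rvert > \lvert \widehat{A}_s^-\rvert$ used to feed Lemma~\ref{lemme:interversion_bloc2}; this is precisely why I maintain the invariant $(I_0 \cup \cdots \cup I_{s-1}) \cap J = \emptyset$ and stop the moment it would first be violated.
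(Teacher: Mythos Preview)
Your argument is correct, but the paper takes a shorter route. Rather than rerunning the inductive construction of Proposition~\ref{proposition:interversions_somme_blocs}, the paper \emph{reduces} to it: it reorders the family so that $j_1=i_0$ and $j_1,\dots,j_m$ are exactly the indices with $\lvert A_i^+\rvert\geq\lvert A_i^-\rvert$, then bundles the remaining matrices into a single block $A\coloneqq(A_{j_{m+1}}\;\cdots\;A_{j_n})$ and applies Proposition~\ref{proposition:interversions_somme_blocs} verbatim to the family $(A_{j_1},\dots,A_{j_m},A)$. The resulting chain ends at $A$, and the final relation $A_{i_{s-1}}\compat A$ unpacks to $A_{i_{s-1}}\compat A_{i_s}$ for some $i_s\in\{j_{m+1},\dots,j_n\}$, which by construction satisfies $\lvert A_{i_s}^+\rvert<\lvert A_{i_s}^-\rvert$. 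Your approach is a direct induction that avoids this bundling trick at the cost of essentially duplicating the proof of Proposition~\ref{proposition:interversions_somme_blocs}; the paper's approach buys brevity by using that proposition as a black box, while yours is more self-contained and makes the role of the set $J$ and the stopping rule explicit.
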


\begin{proof}
Let $m \in \{1, \dots, n-1\}$ be the number of $i \in \{1, \dots, n\}$ such that $\lvert A^+_i \rvert \geq \lvert A^-_i\rvert$. Let $(j_k)_{1 \leq k \leq n}$ be a reordering of $\{1, \dots, n\}$ with $j_1 = i_0$ such that
\begin{align*}
\lvert A^+_{j_k} \rvert &\geq \lvert A^-_{j_k}\rvert,
&
&\text{for all } k \in \{1, \dots, m\},
\\
\lvert A^+_{j_k} \rvert &< \lvert A^-_{j_k}\rvert,
&
&\text{for all } k \in \{m+1, \dots, n\}.
\end{align*}
 We define the following binary matrix with an even number of rows:
\[
A \coloneqq \begin{pmatrix}
A_{j_{m+1}} & \cdots & A_{j_n}
\end{pmatrix}.
\]
For all $\ell$ we have
\[
\sum_{k = 1}^m \lvert R_\ell(A_{j_k}^+) \rvert + \lvert R_\ell(A^+)\rvert = \sum_{k = 1}^m \lvert R_\ell(A^-_{j_k}) \rvert + \lvert R_\ell(A^-)\rvert. 
\] Hence, we can apply Proposition~\ref{proposition:interversions_somme_blocs} to the family $(A_{j_1}, \dots, A_{j_m}, A)$. We find a sequence $i_1, \dots, i_{s-1}$ of distinct elements of $\{j_2, \dots, j_m\}$ such that
\[
A_{j_1} = A_{i_0} \compat A_{i_1} \compat \dots \compat A_{i_{s-1}} \compat A.
\]
We conclude since $A_{i_{s-1}} \compat A$ implies that there exists $i_s \in \{j_{m+1}, \dots, j_n\}$ such that $A_{i_{s-1}} \compat A_{i_s}$.
\end{proof}

%

\subsection{Application to binary averaging}
\label{subsection:binary_averaging}

The following result is well-known.

\begin{lemme}
\label{lemme:elt_base_canonique}
Let $w_0, \dots, w_{n-1} \in \{0, \dots, p\}$. For all $i \in \{0, \dots, n-1\}$ we define $v_i \coloneqq \frac{w_i}{p}$ and we set $v \coloneqq (v_0, \dots, v_{n-1}) \in [0, 1]^n$. There exist some vectors $\epsilon^0, \dots, \epsilon^{p-1} \in \{0, 1\}^n$ such that
\[
v = \frac{1}{p}\sum_{j = 0}^{p-1} \epsilon^j.
\]
In particular,
\[
\frac{1}{p}\sum_{j = 0}^{p-1} \lvert \epsilon^j\rvert  = \frac{1}{p}\sum_{j = 0}^{p-1} \lVert\epsilon^j\rVert^2 = \lvert v\rvert .
\]
If in addition $\lvert v\rvert \in \mathbb{N}$ then for all $j \in \{0, \dots, p-1\}$ we can choose $\epsilon^j$ such that $\lvert \epsilon^j \rvert = \lVert \epsilon^j\rVert^2 = \lvert v\rvert$.
\end{lemme}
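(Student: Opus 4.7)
The plan is to give an explicit construction, translating the problem into binary matrix language: finding $\epsilon^0, \dots, \epsilon^{p-1} \in \{0,1\}^n$ with $v = \frac{1}{p}\sum_{j=0}^{p-1} \epsilon^j$ amounts to finding a $p \times n$ binary matrix $M$ whose $j$-th row is $\epsilon^j$ and whose $i$-th column has sum $w_i$. With this viewpoint, the claimed identity $\frac{1}{p}\sum_j \lvert \epsilon^j\rvert = \lvert v\rvert$ follows by summing the entries of $M$ in two ways, and $\lvert \epsilon^j\rvert = \lVert \epsilon^j \rVert^2$ holds trivially for binary vectors.

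For the first statement, I would set $\epsilon^j_i \coloneqq 1$ if $j < w_i$ and $0$ otherwise. Since $0 \leq w_i \leq p$, each column sum $\sum_{j=0}^{p-1} \epsilon^j_i$ equals $w_i$, hence $v = \frac{1}{p}\sum_j \epsilon^j$, and the norm/sum equalities follow from the observation above.

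For the refinement under the extra hypothesis $\lvert v\rvert \in \mathbb{N}$, I would replace the ``left-aligned'' placement by a cyclic one. Writing $N \coloneqq \lvert v \rvert$, so that $\sum_i w_i = pN$, I would define cumulative offsets $s_i \coloneqq \sum_{i' < i} w_{i'} \pmod p$ and set $\epsilon^j_i \coloneqq 1$ precisely when $j \in \{s_i, s_i + 1, \dots, s_i + w_i - 1\} \pmod p$. Because $w_i \leq p$, these $w_i$ residues modulo $p$ are pairwise distinct, so column $i$ of $M$ still has sum $w_i$ (and thus $\frac{1}{p}\sum_j \epsilon^j = v$ as before). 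Viewing the procedure globally, the row index advances cyclically through $\{0, \dots, p-1\}$ exactly $\sum_i w_i = pN$ times, so every row of $M$ receives exactly $N$ ones, which gives $\lvert \epsilon^j\rvert = N = \lvert v\rvert$ for every $j$, as required.

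There is no serious obstacle; the only point to check is that in the cyclic construction the $w_i$ ones in column $i$ do land in distinct rows, which is guaranteed precisely by the assumption $w_i \leq p$. Alternatively, one could deduce the refined statement from the Gale--Ryser theorem with row sums $(N, \dots, N)$ and column sums $(w_0, \dots, w_{n-1})$, but the explicit cyclic construction is completely self-contained and also provides a convenient normal form.
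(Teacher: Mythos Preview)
Your proof is correct and, for the refined statement, essentially identical to the paper's: the paper defines $W_i \coloneqq \{w_0 + \dots + w_{i-1} + 1, \dots, w_0 + \dots + w_i\}$ and sets $\epsilon^j_i = 1$ iff $W_i$ contains an integer congruent to $j$ modulo $p$, which is exactly your cyclic construction up to an index shift, and the row-sum count is the same partition-of-$\{1,\dots,pN\}$ argument you give. The only cosmetic difference is that the paper uses this single cyclic construction for both parts, whereas you first give the simpler left-aligned construction for the unrefined statement and then switch to the cyclic one; both approaches are fine, and the paper likewise notes the Gale--Ryser alternative.
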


The last result is equivalent to the existence of a binary $p \times n$ matrix  with row sums $(\lvert v\rvert, \dots, \lvert v \rvert)$ and column sums $(w_0, \dots, w_{n-1})$. By a general result of \cite{gale,ryser}, we know that such a matrix exists, since  the conjugate  $(p, \dots, p)$ (with $\lvert v \rvert$ terms) of the partition $(\lvert v\rvert, \dots, \lvert v\rvert)$ dominates the partition $\widetilde{w}$ for the usual dominance order on partitions, where $\widetilde{w}$ is the partition obtained by rearranging the entries of $w$ in decreasing order. However, for the convenience of the reader we give a simplified proof for the particular setting of Lemma~\ref{lemme:elt_base_canonique}.

\begin{proof}
For any $i \in \{0, \dots, n-1\}$, we define the set
\[
W_i \coloneqq \{w_0 + \dots + w_{i-1} +1, \dots, w_0 + \dots + w_i\}.
\]
For any $j \in \{0, \dots, p-1\}$, we consider the element $\epsilon^j \coloneqq (\epsilon^j_0, \dots, \epsilon^j_{n-1}) \in \{0, 1\}^n$ defined by
\[
\epsilon^j_i \coloneqq \begin{cases}
1 & \text{if } W_i \text{ contains an element of residue } j \text{ modulo } p,
\\
0 & \text{otherwise,}
\end{cases}
\]
for any $i \in \{0, \dots, n-1\}$.
Since $W_i$ has cardinality $w_i$ and is given by at most $p$ successive integers, the set of residues modulo $p$ of the elements of $W_i$ has also cardinality $w_i$. Hence, there are exactly $w_i$ integers $\epsilon^j_i$ for all $j \in \{0, \dots, p-1\}$ that are equal to $1$. The other are $0$, thus
\[
\sum_{j = 0}^{p-1} \epsilon^j_i = w_i.
\]
The $i$th component of $\frac{1}{p}\sum_{j = 0}^{p-1} \epsilon^j$ is thus $\frac{w_i}{p} = v_i$ and we obtain
\[
\frac{1}{p}\sum_{j = 0}^{p-1} \epsilon^j = v.
\]
Since $\lvert \cdot\rvert$ is additive, we deduce that $\frac{1}{p}\sum_{j =0}^{p-1} \lvert \epsilon^j\rvert =\lvert v\rvert$. Moreover, since $\epsilon^j \in \{0, 1\}^n$ we have $\lvert \epsilon^j\rvert = \lVert\epsilon^j\rVert^2$ thus $\frac{1}{p}\sum_{j =0}^{p-1} \lVert\epsilon^j\rVert^2 = \lvert v\rvert$.

Now assume  that $\lvert v\rvert \in \mathbb{N}$. There are in the set $\{1, \dots, \lvert v\rvert p = \lvert w\rvert \}$ exactly $\lvert v\rvert $ integers  of residue $j$ modulo $p$ for each $j \in \{0, \dots, p-1\}$. Since $\{W_i\}_{i \in \{0, \dots, n-1\}}$ is a partition of $\{1, \dots, \lvert w\rvert \}$, we deduce that
\[
\sum_{i = 0}^{n-1} \epsilon^j_i = \#\Bigl\{\text{elements of } \{1, \dots, \lvert w\rvert \} \text{ of residue } j \text{ modulo } p\Bigr\} = \lvert v\rvert,
\]
for all $j \in \{0, \dots, p-1\}$. Hence $\lvert \epsilon^j\rvert  = \lvert v\rvert $ and we conclude.
\end{proof}

We will use Corollary~\ref{corollaire:interversions_somme_blocs} of \textsection\ref{subsection:binary_matrices} to generalise Lemma~\ref{lemme:elt_base_canonique}: see Proposition~\ref{proposition:elt_base_canonique_bloc}. Let us first give an easy lemma.

\begin{lemme}
\label{lemme:ecart}
Let $a_0, \dots, a_{p-1}$ be integers of sum a multiple of $p$. The following integer:
\[
m \coloneqq \max\left\{ a_j - a_{j'} : j, j' \in \{0, \dots, p-1\}\right\} \in \mathbb{N},
\]
satisfies $m = 0$ or $m \geq 2$.
\end{lemme}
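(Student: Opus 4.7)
The plan is the following. Suppose by contradiction that $m = 1$. Since $m$ is defined as a maximum over all ordered pairs and includes the diagonal pairs $(j, j)$ which give $0$, we have $m \geq 0$ always, and the assumption $m = 1$ means that the set of values $\{a_0, \dots, a_{p-1}\}$ has diameter exactly $1$. Hence there exists an integer $c$ such that every $a_j$ belongs to $\{c, c+1\}$, and both values are attained (otherwise $m = 0$).

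Now I would simply count. Let $s \coloneqq \#\{j \in \{0, \dots, p-1\} : a_j = c+1\}$. Since both values $c$ and $c+1$ are attained, we have $1 \leq s \leq p-1$. Then
\[
\sum_{j = 0}^{p-1} a_j = (p-s)c + s(c+1) = pc + s.
\]
By hypothesis this sum is a multiple of $p$, hence $s$ must be a multiple of $p$. This contradicts $1 \leq s \leq p-1$, so the assumption $m = 1$ is impossible.

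There is no real obstacle here; the only thing worth being careful about is the base case where all the $a_j$'s are already equal, which is exactly the case $m = 0$, and the observation that $m \geq 0$ because $a_j - a_j = 0$ is one of the quantities whose max is taken. So the dichotomy is simply: either all $a_j$'s are equal (giving $m = 0$), or they take at least two distinct values, in which case the divisibility constraint on the sum forces the spread to be at least $2$.
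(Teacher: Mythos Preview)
Your proof is correct and follows essentially the same approach as the paper: both arguments observe that if the spread is at most $1$ then each $a_j$ equals $c + \epsilon_j$ with $\epsilon_j \in \{0,1\}$, and the divisibility of the sum by $p$ forces $\sum_j \epsilon_j \equiv 0 \pmod p$, hence all $\epsilon_j = 0$. The only cosmetic difference is that you frame it as a contradiction from $m = 1$, whereas the paper assumes $m \leq 1$ and deduces $m = 0$ directly.
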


\begin{proof}
Assume $m \leq 1$. Then, for all $j, j' \in \{0, \dots, p-1\}$ we have $\lvert a_j - a_{j'}\rvert  \leq 1$. If $j_0 \in \{0, \dots, p-1\}$ is such that $a_{j_0}$ is the minimum of $\{a_j\}_{j \in \{0, \dots, p-1\}}$ then for all $j \in \{0, \dots, p-1\}$, there exists $\epsilon_j \in \{0, 1\}$ such that $a_j = a_{j_0} + \epsilon_j$.  From the hypothesis, we know that $p a_{j_0} + \sum_{j = 0}^{p-1} \epsilon_j$ is a multiple of $p$, thus $\sum_{j = 0}^{p-1} \epsilon_j$ is a multiple of $p$. Since $\epsilon_{j_0} = 0$, we deduce that  $\epsilon_j = 0$ for all $j$. We conclude that $a_{j_0} = a_j$ for all $j \in \{0, \dots, p-1\}$ thus $m = 0$.
\end{proof}
%

We need to introduce some notation in order to state Proposition~\ref{proposition:elt_base_canonique_bloc}.
For any $\ell \in \{0, \dots, d-1\}$ and $i \in \{0, \dots, e-1\}$, let $w^{(\ell)}_i \in \{0, \dots, p\}$ and set $v^{(\ell)}_i \coloneqq \frac{w^{(\ell)}_i}{p}$. For each $\ell \in \{0, \dots, d-1\}$ we define
\[
v^{(\ell)} \coloneqq (v^{(\ell)}_0, \dots, v^{(\ell)}_{e-1}).
\]
We obtain a $d \times e$ matrix
\[
V \coloneqq \begin{pmatrix}v^{(0)}\\
\vdots\\ v^{(d-1)}\end{pmatrix}.
\]
We assume that for all $\ell \in \{0, \dots, d-1\}$ we have  $\lvert v^{(\ell)} \rvert \in \mathbb{N}$. Hence, for all $\ell \in \{0, \dots, d-1\}$ we can apply Lemma~\ref{lemme:elt_base_canonique} (with $n \coloneqq e$). We obtain some vectors $\epsilon^{j(\ell)} \in \{0, 1\}^e$ for all $j \in \{0, \dots, p-1\}$, such that
\begin{equation}
\label{equation:v_ell_moyenne_epsilon}
v^{(\ell)} = \frac{1}{p} \sum_{j = 0}^{p-1} \epsilon^{j(\ell)},
\end{equation}
and
\begin{equation}
\label{equation:longueur_epsilon_i_ell_v_ell}
\lvert \epsilon^{j(\ell)}\rvert = \lvert v^{(\ell)}\rvert.
\end{equation}
For all $j \in \{0, \dots, p-1\}$, define the following $d \times e$ matrix:
\begin{gather*}
E^j \coloneqq \begin{pmatrix} \epsilon^{j(0)} \\ \vdots \\ \epsilon^{j(d-1)}\end{pmatrix}.
\end{gather*}
Recall that $e$ is a multiple of $\eta$ (and $e = \eta p$). We write the matrix $V$ with $\eta$ blocks of the same size $V = \begin{pmatrix}
V^{[0]} & \cdots & V^{[\eta-1]}
\end{pmatrix}$, and we use the same block structure for the matrices $E^j = \begin{pmatrix}
E^{j[0]} & \cdots & E^{j[\eta-1]}
\end{pmatrix}$.
As a consequence of~\eqref{equation:v_ell_moyenne_epsilon}, we have
\begin{equation}
\label{equation:longueur_Vk_moyenne_Eik}
\lvert V^{[i]} \rvert = \frac{1}{p}\sum_{j = 0}^{p-1} \lvert E^{j[i]} \rvert,
\end{equation}
for all  $i \in \{0, \dots, \eta-1\}$.

\begin{proposition}
\label{proposition:elt_base_canonique_bloc}
We keep the previous notation. In addition to the hypotheses $\lvert v^{(\ell)}\rvert \in \mathbb{N}$ for all $\ell \in \{0, \dots, d-1\}$, assume that for all $i \in \{0, \dots,\eta-1\}$ we have $\lvert V^{[i]} \rvert \in \mathbb{N}$.
Then we can choose the vectors $\epsilon^{j(\ell)}$ for all $j \in \{0, \dots, p-1\}$ and $\ell \in \{0, \dots, d-1\}$ such that the previous properties~\eqref{equation:v_ell_moyenne_epsilon} and \eqref{equation:longueur_epsilon_i_ell_v_ell} still hold, together with
\begin{equation}
\label{equation:sum_blocks}
\lvert E^{j[i]} \rvert = \lvert V^{[i]}\rvert,
\end{equation}
for all $j \in \{0, \dots, p-1\}$ and $i \in \{0, \dots, \eta-1\}$.
\end{proposition}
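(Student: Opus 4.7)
The plan is to start from an arbitrary family $\{\epsilon^{j(\ell)}\}$ produced by Lemma~\ref{lemme:elt_base_canonique}---which automatically yields \eqref{equation:v_ell_moyenne_epsilon} and \eqref{equation:longueur_epsilon_i_ell_v_ell}---and then to progressively deform it, using chains of interchanges from Corollary~\ref{corollaire:interversions_somme_blocs}, until the extra block-sum equality \eqref{equation:sum_blocks} also holds. The procedure is controlled by the discrepancy
\[
D \coloneqq \sum_{j=0}^{p-1} \sum_{i=0}^{\eta-1} \bigl(\lvert E^{j[i]}\rvert - \lvert V^{[i]}\rvert\bigr)^2 \in \mathbb{N},
\]
which vanishes precisely when \eqref{equation:sum_blocks} is satisfied.

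Suppose $D > 0$. Since $\lvert V^{[i]}\rvert \in \mathbb{N}$ is, by \eqref{equation:longueur_Vk_moyenne_Eik}, the average of the integers $\lvert E^{j[i]}\rvert$ over $j$, an integer pigeonhole argument (in the spirit of Lemma~\ref{lemme:ecart}) gives a block $i_0$ and indices $j_1, j_2$ with $\lvert E^{j_1[i_0]}\rvert \geq \lvert V^{[i_0]}\rvert + 1$ and $\lvert E^{j_2[i_0]}\rvert \leq \lvert V^{[i_0]}\rvert - 1$; in particular $\lvert E^{j_1[i_0]}\rvert - \lvert E^{j_2[i_0]}\rvert \geq 2$. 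Form the pair $A^+ \coloneqq E^{j_1}$, $A^- \coloneqq E^{j_2}$ and split it column-wise into the $\eta$ blocks $A_1, \dots, A_\eta$ matching the decomposition of $V$. The row-sum hypothesis of Corollary~\ref{corollaire:interversions_somme_blocs} is immediate from \eqref{equation:longueur_epsilon_i_ell_v_ell} (each row sum of both $E^{j_1}$ and $E^{j_2}$ equals $\lvert v^{(\ell)}\rvert$), and the block inequality $\lvert A^+_{i_0}\rvert > \lvert A^-_{i_0}\rvert$ is the one just established. The corollary thus produces a chain $A_{i_0} \compat A_{i_1} \compat \cdots \compat A_{i_s}$ of distinct blocks with $\lvert E^{j_1[i_s]}\rvert < \lvert E^{j_2[i_s]}\rvert$.

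Performing this chain of interchanges modifies only $E^{j_1}$ and $E^{j_2}$. By Proposition~\ref{proposition:interversion}, each single interchange preserves $E^{j_1} + E^{j_2}$ entrywise---whence \eqref{equation:v_ell_moyenne_epsilon} is preserved---and preserves the row sums of $E^{j_1}$ and $E^{j_2}$ separately---whence \eqref{equation:longueur_epsilon_i_ell_v_ell} is preserved. By Corollary~\ref{corollaire:interversion}, the only block sums that change are those at the endpoints: $\lvert E^{j_1[i_0]}\rvert$ drops by $1$ and $\lvert E^{j_1[i_s]}\rvert$ rises by $1$, with opposite signs for $j_2$; the other block sums of $E^{j_1}$ and $E^{j_2}$ are preserved. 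A routine expansion of the four affected squares then gives
\[
\Delta D = 4 - 2\bigl(\lvert E^{j_1[i_0]}\rvert - \lvert E^{j_2[i_0]}\rvert\bigr) + 2\bigl(\lvert E^{j_1[i_s]}\rvert - \lvert E^{j_2[i_s]}\rvert\bigr) \leq 4 - 2\cdot 2 + 2\cdot(-1) = -2,
\]
using the gap $\geq 2$ at $i_0$ and the integer strict inequality $\leq -1$ at $i_s$. Hence $D$ strictly decreases (by at least $2$) at each step, and since $D \in \mathbb{N}$ the process terminates in finitely many steps with $D = 0$, i.e.\ with \eqref{equation:sum_blocks}.

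The main obstacle is that the endpoint $i_s$ of the chain is handed to us by Corollary~\ref{corollaire:interversions_somme_blocs} rather than freely chosen, so we have no direct control over how $\lvert E^{j_1[i_s]}\rvert$ compares to the target $\lvert V^{[i_s]}\rvert$ and the change at $i_s$ could in principle be adverse. This is circumvented by exploiting the integrality of $\lvert V^{[i_0]}\rvert$ (ensured by the hypothesis $\lvert V^{[i]}\rvert \in \mathbb{N}$) to force a gap of at least $2$ at the freely chosen end $i_0$, which always outweighs the at most $+2$ swing the endpoint $i_s$ can contribute to $\Delta D$.
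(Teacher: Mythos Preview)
Your argument is correct and takes a genuinely different route from the paper. Both proofs start from the output of Lemma~\ref{lemme:elt_base_canonique}, both invoke Corollary~\ref{corollaire:interversions_somme_blocs} to locate a chain of interchanges between two rows $E^{j_1}$ and $E^{j_2}$, and both verify that such interchanges preserve \eqref{equation:v_ell_moyenne_epsilon} and \eqref{equation:longueur_epsilon_i_ell_v_ell} while shifting the block sums only at the two endpoints of the chain. The difference lies in the termination measure. The paper tracks the pair $(\Delta, N)$ in lexicographic order, where $\Delta = \max_{i,j,j'}\bigl(\lvert E^{j[i]}\rvert - \lvert E^{j'[i]}\rvert\bigr)$ and $N$ counts the triples achieving this maximum; it then argues separately that either $\Delta$ drops or $\Delta$ stays fixed and $N$ drops, which requires a somewhat delicate case analysis involving $M, m, N_{\max}, N_{\min}$ and Lemma~\ref{lemme:ecart}. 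Your sum-of-squares discrepancy $D$ is a single scalar that strictly decreases by at least $2$ at every step, and the verification is a one-line computation; the integrality hypothesis $\lvert V^{[i]}\rvert \in \mathbb{N}$ enters more transparently, forcing the gap $\geq 2$ at the chosen block $i_0$, which absorbs the uncontrolled endpoint $i_s$. Your measure is cleaner and avoids Lemma~\ref{lemme:ecart} altogether; the paper's measure, on the other hand, gives slightly finer structural information about how the extremal block sums evolve, though this is not needed for the proposition itself.
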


\begin{exemple}
Take $p = 4$ and $d = 2$. With the following matrix:
\[
V \coloneqq \frac{1}{4}\left(\begin{array}{*{4}{c}|*{4}{c}}
1&2&2&1 & 2&3&0&1
\\
0&2&1&3 & 1&3&2&0
\end{array}\right) = \begin{pmatrix}
v^{(0)} \\ v^{(1)}
\end{pmatrix} = \left(\begin{array}{c|c}
V^{[0]} & V^{[1]}
\end{array}\right),
\]
we have $\lvert v^{(0)} \rvert = \lvert v^{(1)}\rvert = \lvert V^{[0]} \rvert = \lvert V^{[1]} \rvert = 3$. The vectors $\epsilon^{j(\ell)}$ constructed as in the proof of Lemma~\ref{lemme:elt_base_canonique} are the following:
\begin{align*}
\epsilon^{0(0)} &= (1, 0, 1, 0, 0, 1, 0, 0),
&
\epsilon^{0(1)} &= (0, 1, 0, 1, 0, 1, 0, 0),
\\
\epsilon^{1(0)} &= (0, 1, 0, 1, 0, 1, 0, 0),
&
\epsilon^{1(1)} &= (0, 1, 0, 1, 0, 1, 0, 0),
\\
\epsilon^{2(0)} &= (0, 1, 0, 0, 1, 1, 0, 0),
&
\epsilon^{2(1)} &= (0, 0, 1, 0, 1, 0, 1, 0),
\\
\epsilon^{3(0)} &= (0, 0, 1, 0, 1, 0, 0, 1),
&
\epsilon^{3(1)} &= (0, 0, 0, 1, 0, 1, 1, 0).
\end{align*}
Thus, we have
\begin{align*}
E^0 &= \left(\begin{array}{*{4}{c}|*{4}{c}}
\color{red}1&0&1&0 & \color{red}0&1&0&0
\\
0&1&0&1 & 0&1&0&0
\end{array}\right)
=
\begin{pmatrix}
\epsilon^{0(0)}
\\
\epsilon^{0(1)}
\end{pmatrix}
= \left(\begin{array}{c|c}
E^{0[0]} & E^{0[1]}
\end{array}\right),
\\
E^1 &= \left(\begin{array}{*{4}{c}|*{4}{c}}
0&\color{blue}1&0&1 & \color{blue}0&1&0&0
\\
0&1&0&1 & 0&1&0&0
\end{array}\right)
=
\begin{pmatrix}
\epsilon^{1(0)}
\\
\epsilon^{1(1)}
\end{pmatrix}
= \left(\begin{array}{c|c}
E^{1[0]} & E^{1[1]}
\end{array}\right),
\\
E^2 &= \left(\begin{array}{*{4}{c}|*{4}{c}}
\color{red}0&1&0&0 & \color{red}1&1&0&0
\\
0&0&1&0 & 1&0&1&0
\end{array}\right)
=
\begin{pmatrix}
\epsilon^{2(0)}
\\
\epsilon^{2(1)}
\end{pmatrix}
= \left(\begin{array}{c|c}
E^{2[0]} & E^{2[1]}
\end{array}\right),
\\
E^3 &= \left(\begin{array}{*{4}{c}|*{4}{c}}
0&\color{blue}0&1&0 & \color{blue}1&0&0&1
\\
0&0&0&1 & 0&1&1&0
\end{array}\right)
=
\begin{pmatrix}
\epsilon^{3(0)}
\\
\epsilon^{3(1)}
\end{pmatrix}
= \left(\begin{array}{c|c}
E^{3[0]} & E^{3[1]}
\end{array}\right).
\end{align*}
However, we have $\lvert E^{0[0]} \rvert = 4 \neq \lvert V^{[0]}\rvert$, thus these vectors $\epsilon^{j(\ell)}$ do not satisfy the condition~\eqref{equation:sum_blocks} of Proposition~\ref{proposition:elt_base_canonique_bloc}. 
Let us consider the two compatible submatrices indicated by the coloured entries. Define $A \coloneqq \begin{pmatrix}
E^{0[0]} \\ E^{2[0]}
\end{pmatrix}$ and $B \coloneqq \begin{pmatrix}
E^{0[1]} \\ E^{2[1]}
\end{pmatrix}$ (respectively $C \coloneqq \begin{pmatrix}
E^{1[0]} \\ E^{3[0]}
\end{pmatrix}$ and $D \coloneqq \begin{pmatrix}
E^{1[1]} \\ E^{3[1]}
\end{pmatrix}$) and set $(\widetilde{A}, \widetilde{B}) \coloneqq \gamma_{1,1,1}(A, B)$ (resp. $(\widetilde{C}, \widetilde{D}) \coloneqq \gamma_{1,2,1}(C, D)$).
We have
\begin{align*}
E^0 &= \left(\begin{array}{c|c}
A^+ & B^+
\end{array}\right),
\\
E^1 &= \left(\begin{array}{c|c}
C^+ & D^+
\end{array}\right),
\\
E^2 &= \left(\begin{array}{c|c}
A^- & B^-
\end{array}\right),
\\
E^3 &= \left(\begin{array}{c|c}
C^- & D^-
\end{array}\right),
\\
\intertext{and}
\widetilde{E}^0 &\coloneqq \left(\begin{array}{c|c}\widetilde{A}^+ & \widetilde{B}^+
\end{array}\right)
= \left(\begin{array}{*{4}{c}|*{4}{c}}
\color{red}{0}&0&1&0 & \color{red}{1}&1&0&0
\\
0&1&0&1 & 0&1&0&0
\end{array}\right),
\\
\widetilde{E}^1 &\coloneqq \left(\begin{array}{c|c}\widetilde{C}^+ & \widetilde{D}^+
\end{array}\right)= \left(\begin{array}{*{4}{c}|*{4}{c}}
0&\color{blue}{0}&0&1 & \color{blue}{1}&1&0&0
\\
0&1&0&1 & 0&1&0&0
\end{array}\right),
\\
\widetilde{E}^2 &\coloneqq \left(\begin{array}{c|c}\widetilde{A}^- & \widetilde{B}^-
\end{array}\right)= \left(\begin{array}{*{4}{c}|*{4}{c}}
\color{red}{1}&1&0&0 & \color{red}{0}&1&0&0
\\
0&0&1&0 & 1&0&1&0
\end{array}\right),
\\
\widetilde{E}^3 &\coloneqq \left(\begin{array}{c|c}\widetilde{C}^- & \widetilde{D}^-
\end{array}\right)= \left(\begin{array}{*{4}{c}|*{4}{c}}
0&\color{blue}{1}&1&0 & \color{blue}{0}&0&0&1
\\
0&0&0&1 & 0&1&1&0
\end{array}\right).
\end{align*}
The vectors $\widetilde{\epsilon}^{j(\ell)}$ defined for all $j \in \{0, \dots, 3\}$ and $\ell \in \{0, 1\}$ by $\widetilde{E}^j = \begin{pmatrix}
\widetilde{\epsilon}^{j(0)} \\ \widetilde{\epsilon}^{j(1)}
\end{pmatrix}$ satisfy \eqref{equation:v_ell_moyenne_epsilon} and \eqref{equation:longueur_epsilon_i_ell_v_ell}, together with the condition~\eqref{equation:sum_blocks} of Proposition~\ref{proposition:elt_base_canonique_bloc}. In general, the existence of such interchanges will be given by Corollary~\ref{corollaire:interversions_somme_blocs}.
\end{exemple}

The remaining part of this subsection is now devoted to the proof of Proposition~\ref{proposition:elt_base_canonique_bloc}.
First, note that the interchanges $\bigl(\begin{smallmatrix} 1&0\\0&1\end{smallmatrix}\bigr) \leftrightarrow \bigl(\begin{smallmatrix} 0&1\\1&0\end{smallmatrix}\bigr)$ that are compatible with the block decomposition
\begin{equation}
\label{equation:matrice_epsilon}
\begin{pmatrix}
E^0
\\
\vdots
\\
E^{p-1}
\end{pmatrix}
=
\left(
\begin{array}{c|c|c}
E^{0[0]} & \cdots & E^{0[\eta-1]}
\\
\hline
\vdots & \vdots & \vdots
\\
\hline
E^{(p-1)[0]} & \cdots & E^{(p-1)[\eta-1]}
\end{array}\right),
\end{equation}
do not affect properties~\eqref{equation:v_ell_moyenne_epsilon} and~\eqref{equation:longueur_epsilon_i_ell_v_ell}. However, these interchanges change the value of some $\lvert E^{j[i]} \rvert$, as described in Proposition~\ref{proposition:interversion}. Thus, it suffices to prove that there exists a sequence of compatible interchanges that modifies each $\lvert E^{j[i]}\rvert$ to $\lvert V^{[i]}\rvert$.
We endow $\mathbb{N}\times\mathbb{N}^*$ with the usual lexicographic order. We will use an induction on $(\Delta, N) \in \mathbb{N}\times\mathbb{N}^*$, where
\[
\Delta \coloneqq \max\left\{ \lvert E^{j[i]} \rvert - \lvert E^{j'[i]} \rvert : i \in \{0, \dots, \eta-1\}, j, j' \in \{0, \dots, p-1\} \right\} \in \mathbb{N},
\]
and
\[
N \coloneqq \#\left\{(i, j, j') \in \{0, \dots, \eta-1\} \times \{0, \dots, p-1\}^2 : \lvert E^{j[i]} \rvert - \lvert E^{j'[i]} \rvert = \Delta\right\} \in \mathbb{N}^*.
\]
Define
\begin{align*}
M &\coloneqq \max\left\{\lvert E^{j[i]} \rvert : i \in \{0, \dots, \eta-1\}, j \in \{0, \dots, p-1\}\right\},
\\
m &\coloneqq \min\left\{\lvert E^{j[i]} \rvert : i \in \{0, \dots, \eta-1\}, j \in \{0, \dots, p-1\}\right\},
\end{align*}
and
\begin{align*}
N_{\text{max}} &\coloneqq \#\left\{(i, j) \in \{0, \dots, \eta-1\} \times \{0, \dots, p-1\} : \lvert E^{j[i]}\rvert = M\right\},
\\
N_{\text{min}} &\coloneqq  \#\left\{(i, j) \in \{0, \dots, \eta-1\} \times \{0, \dots, p-1\} : \lvert E^{j[i]}\rvert = m\right\}.
\end{align*}
We have $\Delta = M - m$ and $N = N_{\text{max}} N_{\text{min}}$.
If $\Delta = 0$ then by \eqref{equation:longueur_Vk_moyenne_Eik} we have $\lvert E^{j[i]} \rvert = \lvert V^{[i]} \rvert$ for all $i, j$ so the proof is over.  Assume $\Delta \geq 1$  and let $i_0 \in \{0, \dots ,e-1\}$ and $j_0, j'_0 \in \{0, \dots, p-1\}$ such that $\lvert E^{j_0[i_0]} \rvert - \lvert E^{j'_0[i_0]} \rvert = \Delta$.
We now consider the matrix
\[
\begin{pmatrix}
E^{j_0} \\ E^{j'_0}
\end{pmatrix}
=
\begin{pmatrix}
E^{j_0[0]} & \cdots & E^{j_0[i_0]} & \cdots & E^{j_0[\eta-1]}
\\
E^{j'_0[0]} & \cdots & E^{j'_0[i_0]} & \cdots & E^{j'_0[\eta-1]}
\end{pmatrix},
\]
given by the $j_0$th and $j'_0$th block-rows of the matrix of \eqref{equation:matrice_epsilon}.
We consider the family $(A_i)_{0 \leq i \leq \eta-1}$ of matrices with the same even number of rows defined by
\[
A_i = \begin{pmatrix}
A^+_i \\ A^-_i
\end{pmatrix} \coloneqq \begin{pmatrix}
E^{j_0[i]} \\ E^{j'_0[i]}
\end{pmatrix},
\]
for all $i \in \{0, \dots, \eta-1\}$.
The hypotheses of Corollary~\ref{corollaire:interversions_somme_blocs} are satisfied, thanks to the definition of $i_0$ and \eqref{equation:longueur_epsilon_i_ell_v_ell} (note that $R_\ell(E^{j_0}) = \epsilon^{j_0(\ell)}$ and $R_\ell(E^{j'_0}) = \epsilon^{j'_0(\ell)}$). Hence, we can find a sequence of distinct integers $i_1, \dots, i_s$ distinct from $i_0$ with $\lvert A_{i_s}^+ \rvert < \lvert A_{i_s}^- \rvert$ and
\[
A_{i_0} \compat \cdots \compat A_{i_s}.
\]
Let $(\widetilde{A}_{i_0}, \dots,  \widetilde{A}_{i_s}) \in \Gamma(A_{i_0}, \dots, A_{i_s})$. By Corollary~\ref{corollaire:interversion}, we know that
\begin{equation}
\label{equation:proof_general_tilde_unchanged}
\begin{aligned}
\lvert \widetilde{A}^+_{i_t} \rvert &= \lvert A^+_{i_t}\rvert,
\\
\lvert \widetilde{A}^-_{i_t} \rvert &= \lvert A^-_{i_t} \rvert,
\end{aligned}
\end{equation}
for all $t \in \{1, \dots, s-1\}$. Moreover, we have
\begin{subequations}
\label{subequations:proof_general_tilde}
\begin{align}
\label{equation:proof_general_tilde_i0}
\lvert \widetilde{A}^+_{i_0} \rvert &= \lvert A^+_{i_0} \rvert - 1,
&
\lvert \widetilde{A}^-_{i_0} \rvert &= \lvert A^-_{i_0} \rvert + 1,
\\
\label{equation:proof_general_tilde_is}
\lvert \widetilde{A}^+_{i_s} \rvert &= \lvert A^+_{i_s} \rvert + 1,
&
\lvert \widetilde{A}^-_{i_s} \rvert &= \lvert A^-_{i_s} \rvert - 1.
\end{align}
\end{subequations}
We now want to evaluate the new values $\widetilde{\Delta}$ and $\widetilde{N}$ of $\Delta$ and $N$ that we obtain and prove that $(\widetilde{\Delta}, \widetilde{N})$ is strictly less than $(\Delta, N)$.
We have
\[
\widetilde{\Delta} = \max\left\{ \lvert \widetilde{E}^{j[i]} \rvert - \lvert \widetilde{E}^{j'[i]} \rvert : i \in \{0, \dots, \eta-1\}, j, j' \in \{0, \dots, p-1\} \right\} \in \mathbb{N},
\]
and
\[
\widetilde{N} = \#\left\{(i, j, j') \in \{0, \dots, \eta-1\} \times \{0, \dots, p-1\}^2 : \lvert \widetilde{E}^{j[i]} \rvert - \lvert \widetilde{E}^{j'[i]} \rvert = \widetilde{\Delta}\right\} \in \mathbb{N}^*,
\]
where
\[
\widetilde{E}^{j[i]} \coloneqq \begin{cases}
\widetilde{A}^+_{i_t} & \text{if } i = i_t \text{ for some } t \in \{0, \dots, s\} \text{ and } j = j_0,
\\
\widetilde{A}^-_{i_t} & \text{if } i = i_t \text{ for some } t \in \{0, \dots, s\} \text{ and } j = j'_0,
\\
E^{j[i]} & \text{otherwise}.
\end{cases}
\]
Moreover, with
\begin{align*}
\widetilde{M} &\coloneqq \max\left\{\lvert \widetilde{E}^{j[i]} \rvert : i \in \{0, \dots, \eta-1\}, j \in \{0, \dots, p-1\}\right\},
\\
\widetilde{m} &\coloneqq \min\left\{\lvert \widetilde{E}^{j[i]} \rvert : i \in \{0, \dots, \eta-1\}, j \in \{0, \dots, p-1\}\right\},
\end{align*}
and
\begin{align*}
\widetilde{N}_{\text{max}} &\coloneqq \#\left\{(i, j) \in \{0, \dots, \eta-1\} \times \{0, \dots, p-1\} : \lvert \widetilde{E}^{j[i]}\rvert = \widetilde{M}\right\},
\\
\widetilde{N}_{\text{min}} &\coloneqq  \#\left\{(i, j) \in \{0, \dots, \eta-1\} \times \{0, \dots, p-1\} : \lvert \widetilde{E}^{j[i]}\rvert = \widetilde{m}\right\},
\end{align*}
we have $\widetilde{\Delta} = \widetilde{M} - \widetilde{N}$ and $\widetilde{N} = \widetilde{N}_{\text{max}} \widetilde{N}_{\text{min}}$. Note that by~\eqref{equation:proof_general_tilde_unchanged}, for all $i \in \{0, \dots, \eta - 1\}$ and $j \in \{0, \dots, p-1\}$ we have
\begin{align}
\label{equation:proof_general_equality}
\lvert \widetilde{E}^{j[i]}\rvert &= \lvert E^{j[i]}\rvert,
&
&\text{if } i \notin \{i_0, i_s\} \text{ or } j \notin \{j_0, j'_0\}.
\end{align}
By the assumption  $\lvert V^{[i]} \rvert \in \mathbb{N}$ and \eqref{equation:longueur_Vk_moyenne_Eik}, thanks to Lemma~\ref{lemme:ecart} we know that $\Delta = \lvert A^+_{i_0} \rvert - \lvert A^-_{i_0} \rvert = M - m  \geq 2$.
Hence, by~\eqref{equation:proof_general_tilde_i0} we have
\begin{equation}
\label{equation:proof_general_tilde_i0_ineq}
m < \lvert \widetilde{A}_{i_0}^-\rvert \leq \lvert \widetilde{A}_{i_0}^+  \rvert < M.
\end{equation}
Furthermore, since $m \leq \lvert A_{i_s}^+ \rvert < \lvert A_{i_s}^- \rvert \leq M$, by~\eqref{equation:proof_general_tilde_is} we  have
\begin{subequations}
\label{subequations:proof_general_tilde_is_ineq}
\begin{align}
\label{equation:proof_general_tilde_is+_ineq}
m < \lvert \widetilde{A}_{i_s}^+  \rvert \leq \lvert A_{i_s}^- \rvert \leq M,
\\
\label{equation:proof_general_tilde_is-_ineq}
m \leq \lvert A_{i_s}^+ \rvert \leq \lvert \widetilde{A}_{i_s}^- \rvert < M.
\end{align}
\end{subequations}
Equations~\eqref{equation:proof_general_equality}, \eqref{equation:proof_general_tilde_i0_ineq} and \eqref{subequations:proof_general_tilde_is_ineq} prove that $\widetilde{M} \leq M$ and $\widetilde{m} \geq m$, thus $\widetilde{\Delta} \leq \Delta$. If $\widetilde{\Delta} < \Delta$ then $(\widetilde{\Delta}, \widetilde{N}) < (\Delta, N)$, thus we now assume that $\widetilde{\Delta} = \Delta$, that is, $\widetilde{M} = M$ and $\widetilde{m} = m$. By~\eqref{equation:proof_general_equality} we have
\begin{multline*}
N_{\text{max}} - \widetilde{N}_{\text{max}} = \#\left\{(i, j) \in \{i_0, i_s\} \times \{j_0, j'_0\} : \lvert E^{j[i]} \rvert = M\right\}
\\
- \#\left\{(i, j) \in \{i_0, i_s\} \times \{j_0, j'_0\} : \lvert \widetilde{E}^{j[i]} \rvert = M\right\}.
\end{multline*}
Thus,
\[
N_{\text{max}} - \widetilde{N}_{\text{max}} = 1 + \delta_{\lvert A_{i_s}^-\rvert, M} - \#\left\{(i, j) \in \{i_0, i_s\} \times \{j_0, j'_0\} : \lvert \widetilde{E}^{j[i]} \rvert = M\right\},
\]
where $\delta$ is the Kronecker symbol.
By~\eqref{equation:proof_general_tilde_i0_ineq} and \eqref{subequations:proof_general_tilde_is_ineq}, we obtain
\begin{equation}
\label{equation:proof_general_diff_N}
N_{\text{max}} - \widetilde{N}_{\text{max}}
=
1 + \delta_{\lvert A_{i_s}^-\rvert,  M} -  \delta_{\lvert \widetilde{A}_{i_s}^+\rvert, M}.
\end{equation}
By~\eqref{equation:proof_general_tilde_is+_ineq}, we know that
\[
\delta_{\lvert \widetilde{A}_{i_s}^+\rvert, M} \leq \delta_{\lvert A_{i_s}^-\rvert,  M},
\]
thus \eqref{equation:proof_general_diff_N} yields $N_{\text{max}} - \widetilde{N}_{\text{max}} \geq 1$. Similarly, we have $N_{\text{min}} - \widetilde{N}_{\text{min}} \geq 1$. Finally, we obtain $\widetilde{N} = \widetilde{N}_{\text{max}} \widetilde{N}_{\text{min}} < N_{\text{max}} N_{\text{min}} = N$ and thus $(\widetilde{\Delta}, \widetilde{N}) = (\Delta, \widetilde{N}) < (\Delta, N)$. By induction, this concludes the proof of Proposition~\ref{proposition:elt_base_canonique_bloc}.

\subsection{A few inequalities}
\label{subsection:inegalites}
%
%
%
%
%
%

We will prove some inequalities that we will use to prove  Theorem~\ref{theoreme:orbite_1}. The setting of the first one is reminiscent of Lemma~\ref{lemme:elt_base_canonique} and the following ones use convexity. Recall that $\lVert \cdot \rVert$ is the euclidean norm on $\mathbb{R}^n$ and denote by $\langle \cdot, \cdot\rangle$ the associated scalar product.

\begin{lemme}
\label{lemme:majoration_g_epsilon_i}
Let $n \in \mathbb{N}^*$ and $h : \mathbb{R}^n \to \mathbb{R}$ be a function such that $h - \frac{1}{2}\lVert \cdot \rVert ^2$ is affine.
Let $v \in \mathbb{R}^n$ and suppose that  $\epsilon^0, \dots, \epsilon^{p-1} \in \{0, 1\}^n$ satisfy $v = \frac{1}{p}\sum_{j = 0}^{p-1} \epsilon^j$ and $\lvert \epsilon^j \rvert = \lVert \epsilon^j \rVert^2 = \lvert v \rvert$ for all $j \in \{0, \dots, p-1\}$.
For any $a \in \mathbb{R}^n$ we have
\[
h(a + v) - \frac{1}{p}\sum_{j = 0}^{p-1} h(a + \epsilon^j) = \frac{\lVert v\rVert^2 - \lvert v\rvert }{2}.
\]
More specifically, there exists $j \in \{0, \dots, p-1\}$ (depending on $a$) such that
\[
h(a+\epsilon^j) \leq h(a+v) + \frac{\lvert v\rvert  - \lVert v\rVert^2}{2}.
\]
\end{lemme}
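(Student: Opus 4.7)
The plan is to exploit the very explicit form of $h$: by hypothesis there exist a linear form $L : \mathbb{R}^n \to \mathbb{R}$ and a constant $c \in \mathbb{R}$ with $h(x) = \tfrac{1}{2}\lVert x\rVert^2 + L(x) + c$. I would first prove the equality, from which the existence statement follows immediately by the elementary observation that in any finite family of real numbers at least one element is $\leq$ the arithmetic mean.

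For the equality, the idea is to expand both sides using the above formula and the identity $\lVert a+u\rVert^2 = \lVert a\rVert^2 + 2\langle a,u\rangle + \lVert u\rVert^2$. On the left I obtain
\[
h(a+v) = \tfrac{1}{2}\lVert a\rVert^2 + \langle a,v\rangle + \tfrac{1}{2}\lVert v\rVert^2 + L(a) + L(v) + c .
\]
On the right, since $\frac{1}{p}\sum_j \epsilon^j = v$, the terms $\tfrac{1}{2}\lVert a\rVert^2$, $\langle a, \tfrac{1}{p}\sum_j \epsilon^j\rangle = \langle a,v\rangle$, $L(a)$, $L(\tfrac{1}{p}\sum_j \epsilon^j) = L(v)$ and $c$ all match up, and the only remaining term is
\[
\frac{1}{p}\sum_{j=0}^{p-1} \tfrac{1}{2}\lVert \epsilon^j\rVert^2 = \tfrac{1}{2}\lvert v\rvert,
\]
using the hypothesis $\lVert \epsilon^j\rVert^2 = \lvert v\rvert$ for each $j$. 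Subtracting gives exactly $\tfrac{1}{2}(\lVert v\rVert^2 - \lvert v\rvert)$.

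For the second assertion, the equality just proved says that the arithmetic mean of the $p$ real numbers $h(a+\epsilon^j)$ equals $h(a+v) - \tfrac{1}{2}(\lVert v\rVert^2 - \lvert v\rvert) = h(a+v) + \tfrac{1}{2}(\lvert v\rvert - \lVert v\rVert^2)$, so at least one of them is bounded above by this mean, which is the claimed inequality. There is no genuine obstacle here: the hypothesis $\lvert\epsilon^j\rvert = \lVert\epsilon^j\rVert^2$ is automatic because $\epsilon^j \in \{0,1\}^n$, and the only substantive input is the other hypothesis $\lVert\epsilon^j\rVert^2 = \lvert v\rvert$, which is exactly what makes the quadratic part of $h$ contribute the constant $\tfrac{1}{2}\lvert v\rvert$ on average. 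The whole proof is a direct computation of about ten lines.
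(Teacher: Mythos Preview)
Your proof is correct and follows essentially the same approach as the paper: both expand $h$ to separate the affine part (which cancels because $v$ is the average of the $\epsilon^j$) from the quadratic part, then use $\lVert\epsilon^j\rVert^2 = \lvert v\rvert$ to evaluate the remaining term; the paper phrases the expansion as a Taylor formula around $a$ while you write out $h(x)=\tfrac12\lVert x\rVert^2 + L(x)+c$ explicitly, but the computation is the same. The second assertion is handled identically by the pigeonhole/mean argument.
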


\begin{proof}
Denote by $\Delta \coloneqq h(a+v) - \frac{1}{p} \sum_{j = 0}^{p-1} h(a+\epsilon^j)$ the left-hand side of the equality. Note that the Hessian matrix of the second partial derivatives of $h$ is the identity matrix. More precisely, since $h$ is a degree $2$ polynomial, the Taylor formula reads
\[
h(a+w) = h(a) + \langle \nabla h(a), w\rangle + \frac{1}{2} \lVert w \rVert^2, \qquad \text{for all } w \in \mathbb{R}^n,
\]
where $\nabla h(a)$ denotes the gradient of $h$ at $a$.
Since $v = \frac{1}{p}\sum_{j = 0}^{p-1} \epsilon^j$, the quantity that defines $\Delta$ vanishes at the affine level, hence
\[
\Delta
= \frac{1}{2}\left(\lVert v \rVert^2 - \frac{1}{p} \sum_{j = 0}^{p-1} \lVert \epsilon^j \rVert^2\right).
\]
We conclude since  $\lVert \epsilon^j\rVert^2 = \lvert v\rvert$. The second assertion is straightforward.
\end{proof}

The next inequalities involve convexity.  The first one is a particular case of a Jensen's inequality for convex functions. The reader may refer to \cite[Theorem 4]{merentes-nikodem}; we include a proof for convenience.

\begin{lemme}
\label{lemme:forte_convexite_general}
Let $n \in \mathbb{N}^*$ and $m \in \mathbb{R}$. Let $h : \mathbb{R}^n \to \mathbb{R}$ such that $h - \frac{m}{2}\lVert\cdot\rVert^2$ is convex. For any $x_0, \dots, x_{p-1} \in \mathbb{R}^n$ we have
\[
h(\overline{x}) \leq \frac{1}{p}\sum_{j = 0}^{p-1} h(x_j) - \frac{m}{2p} \sum_{j = 0}^{p-1} \lVert x_j - \overline{x}\rVert^2,
\]
where $\overline{x} \coloneqq \frac{1}{p}\sum_{j = 0}^{p-1} x_j$.
\end{lemme}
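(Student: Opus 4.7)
The plan is to reduce the inequality to the standard Jensen inequality by subtracting off the quadratic part, and then handle the quadratic correction via the classical variance identity.

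First I would set $g \coloneqq h - \frac{m}{2}\lVert\cdot\rVert^2$, which is convex by hypothesis. Applying the ordinary Jensen inequality to the uniform probability measure on $x_0, \dots, x_{p-1}$ yields
\[
g(\overline{x}) \leq \frac{1}{p}\sum_{j=0}^{p-1} g(x_j),
\]
which, after substituting back the definition of $g$ and rearranging, gives
\[
h(\overline{x}) \leq \frac{1}{p}\sum_{j=0}^{p-1} h(x_j) + \frac{m}{2}\left(\lVert \overline{x}\rVert^2 - \frac{1}{p}\sum_{j=0}^{p-1}\lVert x_j\rVert^2\right).
\]

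Next I would verify the polarisation/variance identity
\[
\frac{1}{p}\sum_{j=0}^{p-1} \lVert x_j - \overline{x}\rVert^2 = \frac{1}{p}\sum_{j=0}^{p-1} \lVert x_j\rVert^2 - \lVert \overline{x}\rVert^2,
\]
which is immediate by expanding the square, using bilinearity of the scalar product, and invoking the defining relation $\frac{1}{p}\sum_j x_j = \overline{x}$ so that the cross term equals $2\lVert \overline{x}\rVert^2$. Substituting this identity into the previous inequality produces exactly the stated bound.

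There is no real obstacle here: the proof is essentially a two-line manipulation once one notices that the stated inequality is equivalent to the ordinary Jensen inequality applied to the convex function $g = h - \frac{m}{2}\lVert\cdot\rVert^2$, with the extra $\frac{m}{2p}\sum_j \lVert x_j - \overline{x}\rVert^2$ term being precisely the correction provided by the variance identity. No assumption on the sign of $m$ is needed, since we are only using that $g$ is convex (not that $h$ itself is convex or $m$-strongly convex).
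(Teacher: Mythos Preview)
Your proof is correct and follows essentially the same approach as the paper: apply ordinary Jensen to the convex function $g = h - \frac{m}{2}\lVert\cdot\rVert^2$, then use the variance identity $\frac{1}{p}\sum_j \lVert x_j - \overline{x}\rVert^2 = \frac{1}{p}\sum_j \lVert x_j\rVert^2 - \lVert \overline{x}\rVert^2$ to rewrite the quadratic correction. The paper carries out the same computation, just with the variance identity expanded line by line rather than stated as a separate lemma.
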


\begin{proof}
Since $h - \frac{m}{2}\lVert \cdot\rVert^2$ is convex, we have
\[
h(\overline{x}) - \frac{m}{2}\lVert\overline{x}\rVert^2 \leq \frac{1}{p}\sum_{j = 0}^{p-1} h(x_j) - \frac{m}{2p}\sum_{j = 0}^{p-1} \lVert x_j\rVert^2.
\]
Thus,
\begin{align*}
h(\overline{x})
&\leq
\frac{1}{p}\sum_{j = 0}^{p-1} h(x_j) - \frac{m}{2p}\left[\sum_{j = 0}^{p-1}\lVert x_j\rVert^2 - p\lVert \overline{x}\rVert^2\right]
\\
&\leq
\frac{1}{p}\sum_{j = 0}^{p-1} h(x_j) - \frac{m}{2p}\left[\sum_{j = 0}^{p-1}\lVert x_j - \overline{x}\rVert^2 + 2\sum_{j = 0}^{p-1} \langle x_j, \overline{x}\rangle - 2p \lVert \overline{x}\rVert^2\right]
\\
&\leq
\frac{1}{p}\sum_{j = 0}^{p-1} h(x_j) - \frac{m}{2p}\left[\sum_{j = 0}^{p-1}\lVert x_j - \overline{x}\rVert^2 + 2p\langle \overline{x}, \overline{x}\rangle - 2p \lVert \overline{x}\rVert^2\right]
\\
&\leq
\frac{1}{p}\sum_{j = 0}^{p-1} h(x_j) - \frac{m}{2p}\sum_{j = 0}^{p-1}\lVert x_j - \overline{x}\rVert^2.
\end{align*}
\end{proof}

\begin{remarque}
The real number $m$ of Lemma~\ref{lemme:forte_convexite_general} is usually taken to be positive. In this case, the map $h$ is convex and we say that it is  \emph{$m$-strongly convex}. We have stated Lemma~\ref{lemme:forte_convexite_general} for a general $m$ since we will need it to be negative in the proof of Lemma~\ref{lemme:erreur_compensent}.
\end{remarque}

For any $x \in \mathbb{R}$, we denote by $\{x\} \in [0, 1[$ its fractional part. We have $\{x\} \coloneqq x - \lfloor x\rfloor$, where $\lfloor x\rfloor \in \mathbb{Z}$ is the greatest integer less than or equal to $x$.
\begin{lemme}
\label{lemme:erreur_compensent}
Let $x_0, \dots, x_{p-1} \in \mathbb{Z}$ be integers and let $\overline{x} \coloneqq \frac{1}{p}\sum_{j = 0}^{p-1} x_j$. With $v \coloneqq \{\overline{x}\}$ we have
\[
v - v^2 \leq \frac{1}{p}\sum_{j = 0}^{p-1} (x_j - \overline{x})^2.
\]
\end{lemme}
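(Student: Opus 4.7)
The plan is to reduce the inequality to an elementary statement about integers via translation by $\lfloor \overline{x}\rfloor$. Set $k \coloneqq \lfloor \overline{x}\rfloor$, so that $\overline{x} = k + v$ with $v = \{\overline{x}\} \in [0,1)$. Since $\sum_{j=0}^{p-1} x_j = p\overline{x}$ is an integer, the quantity $pv = p\overline{x} - pk$ is also an integer, and necessarily lies in $\{0, 1, \dots, p-1\}$.

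Next I would introduce the integers $m_j \coloneqq x_j - k \in \mathbb{Z}$, for which $\sum_{j=0}^{p-1} m_j = pv$ and $x_j - \overline{x} = m_j - v$. Expanding the squared deviations and using this identity gives
\[
\sum_{j=0}^{p-1} (x_j - \overline{x})^2 = \sum_{j=0}^{p-1} m_j^2 - 2v\sum_{j=0}^{p-1} m_j + p v^2 = \sum_{j=0}^{p-1} m_j^2 - p v^2.
\]
Consequently, after multiplying through by $p$ and adding $pv^2$ on both sides, the desired inequality $v - v^2 \leq \frac{1}{p}\sum_j (x_j - \overline{x})^2$ is equivalent to the purely integer statement
\[
pv \leq \sum_{j=0}^{p-1} m_j^2.
\]

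To finish, I would observe that for any integer $m$, the number $m(m-1)$ is a product of two consecutive integers, hence non-negative, so $m^2 \geq m$. Summing this inequality over~$j$ yields
\[
\sum_{j=0}^{p-1} m_j^2 \geq \sum_{j=0}^{p-1} m_j = pv,
\]
which closes the argument. I do not foresee any serious obstacle here: the only substantive step is the translation by $\lfloor \overline{x}\rfloor$, which converts the fractional-part expression $v - v^2$ on the left-hand side into an integer inequality that is immediate from $m^2 \geq m$ on $\mathbb{Z}$.
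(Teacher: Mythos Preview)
Your proof is correct and takes a genuinely different route from the paper. The paper proceeds by observing that the function $\phi(x) = \{x\} - \{x\}^2 + x^2$ is convex (being continuous and piecewise affine with non-decreasing slopes), and then applies its Jensen-type inequality (Lemma~\ref{lemme:forte_convexite_general}) with $m=-2$ and $h = \{\cdot\} - \{\cdot\}^2$; since the $x_j$ are integers, the terms $\{x_j\} - \{x_j\}^2$ vanish and the result follows. Your argument bypasses convexity entirely: after the translation by $\lfloor \overline{x}\rfloor$, the claim reduces to $\sum_j m_j^2 \geq \sum_j m_j$, which is immediate from $m(m-1)\geq 0$ on~$\mathbb{Z}$. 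Your approach is more elementary and self-contained, while the paper's approach has the virtue of reusing the strong-convexity lemma that is already needed elsewhere in Section~\ref{section:approx}, keeping the exposition unified around a single analytic tool.
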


\begin{proof}
Let us consider the function $\phi : \mathbb{R} \to \mathbb{R}$ defined by $x \mapsto \{x\} - \{x\}^2 + x^2$. It is continuous on $\mathbb{R}\setminus\mathbb{Z}$, and in fact continuous on $\mathbb{R}$ since $\lim_{x \to n^-} \phi(x) = \lim_{x \to n^+} \phi(x) = n^2$ for any $n \in \mathbb{Z}$.
Moreover,
\[
\phi(x)
= x - \lfloor x\rfloor - (x^2 - 2\lfloor x \rfloor x + \lfloor x \rfloor^2) + x^2
=
(1 + 2\lfloor x \rfloor) x - \lfloor x \rfloor(1+\lfloor x \rfloor).
\]
Thus, the function $\phi$ is affine on each interval  $\mathopen{[}n, n+1\mathclose{[}$ for $n \in \mathbb{Z}$, with slope $2n+1$. Hence, the function $\phi$ is continuous with non-decreasing left derivative thus $\phi$ is convex.
Applying Lemma~\ref{lemme:forte_convexite_general} with $n \coloneqq 1$, $m \coloneqq -2$ and $h \coloneqq \{ \cdot\} - \{\cdot\}^2$ we obtain
\[
v - v^2 \leq \frac{1}{p}\sum_{j = 0}^{p-1}\left(\{x_j\} - \{x_j\}^2\right) + \frac{1}{p}\sum_{j = 0}^{p-1} (x_j - \overline{x})^2.
\]
For any $j \in \{0, \dots, p-1\}$ we have $x_j \in \mathbb{Z}$ thus $\{x_j\} = 0$ and we conclude.
\end{proof}

\section{Proof of the main theorem}
\label{section:demo}

We are now ready to prove Theorem~\ref{theoreme:orbite_1}, which we repeat here for the convenience of the reader.

\begin{theoreme:orbite_1}
Let $\tuple{\lambda}$ be an $r$-partition and let $\alpha \coloneqq \alpha_\kappa(\tuple{\lambda}) \in Q^+$. Assume that $\kappa$ is compatible with  $(d, \eta, p)$.
If $\sigma \cdot \alpha = \alpha$ then there is an $r$-partition $\tuple{\mu} \in \P_\alpha^\kappa$ with $\prescript{\sigma}{}{\tuple{\mu}} = \tuple{\mu}$.
\end{theoreme:orbite_1}

Let $\tuple{\lambda}$ be an $r$-partition and assume that the multicharge $\kappa \in (\mathbb{Z}/e\mathbb{Z})^r$ is compatible with $(d, \eta, p)$. Recalling the reduction step Proposition~\ref{proposition:reduction}, we assume that $\tuple{\lambda}$ is an $e$-multicore.
 We define
\begin{align*}
\alpha &\coloneqq \alpha_\kappa(\tuple{\lambda}),
\\
x^{(k)} &\coloneqq x^{(k)}(\tuple{\lambda}),
& \text{for all } k &\in \{0, \dots, r-1\},
\\
n^i &\coloneqq n^i_\kappa(\tuple{\lambda}),
& \text{for all } i &\in \{0, \dots, e-1\}.
\end{align*}
In the whole section, we assume that $\sigma \cdot \alpha = \alpha$.
There will be four steps in the proof, each step corresponding to one subsection. First, we will give an expression of $n^0$ in terms of the abacus variables $x^{(0)}, \dots, x^{(r-1)}$, which takes into account the $\sigma$-stability of $\alpha$. We will then give a key lemma, followed by a naive (but useful) attempt to prove the theorem. Finally, we will use the results of Section~\ref{section:approx} to conclude the proof.

\subsection{Using shift invariance}
\label{subsection:using_shift_invariance}

In this subsection, we will write $n^0$ in terms of $x^{(k)}_i$ for $k \in \{0, \dots, r-1\}$ and $i \in \{0, \dots, e-1\}$ (Lemma~\ref{lemme:difference_ni_xi_multipart}). The  difference with the equality of Lemma~\ref{lemme:difference_ni_xi_multipart} is that $\alpha$ is now assumed to be $\sigma$-stable, which will allow us to make the expression symmetric.
The map $(\mathbb{R}^e)^r \to \mathbb{R}$ that we obtain will be later used to apply the convexity results of \textsection\ref{subsection:inegalites}.

 Recall from \textsection\ref{subsection:multipartitions} that we have some linear forms $L_0, \dots, L_{e-1}$ that satisfy~\eqref{equation:multipartitions-L_i}:
\[
n^0 = \sum_{k = 0}^{r-1} \left[\frac{1}{2}\bigl\lVert\tuplex{x}^{(k)}\bigr\rVert^2  - L_{-\kappa_k}\bigl(\tuplex{x}^{(k)}\bigr)\right].
\]
Since $\sigma \cdot \alpha = \alpha$, for all $j_0 \in \{0, \dots, p-1\}$ we have $n^0_\kappa(\tuple{\lambda}) = n^0_\kappa(\prescript{\sigma^{-j_0}}{}{\tuple{\lambda}})$ by Lemma~\ref{lemme:sigma_alpha_lambda}.
 We deduce that
\begin{align*}
n^0
&=
\sum_{k = 0}^{r-1} \left[\frac{1}{2}\bigl\lVert\tuplex{x}^{(k+j_0 d)}\bigr\rVert^2 - L_{-\kappa_k}\bigl(\tuplex{x}^{(k+j_0 d)}\bigr)\right]
\\
&=
\sum_{k = 0}^{r-1} \left[\frac{1}{2}\bigl\lVert\tuplex{x}^{(k)}\bigr\rVert^2 - L_{-\kappa_{k-j_0 d}}\bigl(\tuplex{x}^{(k)}\bigr)\right].
\end{align*}
Averaging on $j_0 \in \{0, \dots, p-1\}$, we obtain
\[
n^0 = \sum_{k = 0}^{r-1} \left[ \frac{1}{2}\bigl\lVert\tuplex{x}^{(k)}\bigr\rVert^2 - \widetilde{L}_{\overline{k}}\bigl(\tuplex{x}^{(k)}\bigr)\right],
\]
where $\widetilde{L}_{\overline{k}}$ is a linear form that depends only on the residue $\overline{k} \in \{0, \dots, d-1\}$ of $k$ modulo $d$. Now, if for $\ell \in \{0, \dots, d-1\}$ we consider the map defined on $\mathbb{R}^e$ by
\begin{equation}
\label{equation:def_g}
g_\ell : \tuplex{x} \mapsto \frac{1}{2}\lVert \tuplex{x}\rVert^2 - \widetilde{L}_\ell(\tuplex{x}),
\end{equation}
we have
\begin{equation}
\label{equation:def_f}
n^0 = \sum_{\ell = 0}^{d-1}\sum_{j = 0}^{p-1} g_\ell\bigl(\tuplex{x}^{(\ell + jd)}\bigr) \eqqcolon f\bigl(\tuplex{x}^{(0)}, \dots, \tuplex{x}^{(r-1)}\bigr).
\end{equation}
The map $f : (\mathbb{R}^e)^r \to \mathbb{R}$ is of the form $f = \frac{1}{2}\lVert\cdot\rVert^2 - L$ where $L$ is a linear form. Moreover, define
\begin{equation}
\label{equation:def_feg}
\begin{aligned}
f^{\langle p \rangle}\bigl(\tuplex{x}^{(0)}, \dots, \tuplex{x}^{(d-1)}\bigr)
&\coloneqq \sum_{\ell = 0}^{d-1} g_\ell\bigl(\tuplex{x}^{(\ell)}\bigr)
\\
&=
\frac{1}{p} f\bigl(\tuplex{x}^{(0)}, \dots, \tuplex{x}^{(d-1)}, \dots, \tuplex{x}^{(0)},\dots, \tuplex{x}^{(d-1)}\bigr),
\end{aligned}
\end{equation}
where, in the expression $f\bigl(\tuplex{x}^{(0)}, \dots, \tuplex{x}^{(d-1)}, \dots, \tuplex{x}^{(0)},\dots, \tuplex{x}^{(d-1)}\bigr)$ the sequence $\tuplex{x}^{(0)}, \dots, \tuplex{x}^{(d-1)}$ is repeated $p$ times. Like $f$, the map $f^{\langle p \rangle} : (\mathbb{R}^e)^d \to \mathbb{R}$ is of the form $\frac{1}{2}\lVert \cdot \rVert^2 - L^{\langle p \rangle}$, where $L^{\langle p \rangle}$ is a linear form. Note that for all $j \in \{0, \dots, p-1\}$ we have
\[
f^{\langle p \rangle}\bigl(\tuplex{x}^{(jd)}, \dots, \tuplex{x}^{(d - 1 + jd)}\bigr) = \sum_{\ell = 0}^{d-1} g_\ell\bigl(\tuplex{x}^{(\ell + jd)}\bigr),
\]
hence, by~\eqref{equation:def_f} we deduce that
\[
f\bigl(\tuplex{x}^{(0)}, \dots, \tuplex{x}^{(r-1)}\bigr) = \sum_{j = 0}^{p-1}  f^{\langle p \rangle}\bigl(\tuplex{x}^{(jd)}, \dots, \tuplex{x}^{(d - 1 + jd)}\bigr).
\]


\subsection{Key lemma}
\label{subsection:key_lemma}

Lemma~\ref{lemme:inegalite_implique_conj} that we will give in this subsection is the key to our proof of Theorem~\ref{theoreme:orbite_1}. Recall that $\alpha = \alpha_\kappa(\tuple{\lambda})$ satisfies $\sigma \cdot \alpha = \alpha$.
For any $i \in \{0, \dots, \eta - 1\}$, define
\[
\delta_i \coloneqq n^i - n^{i+1}.
\]
The $\sigma$-stability of $\alpha$ implies that $\delta_i = n^{i+j_0\eta} - n^{i+j_0\eta+1}$ for all $j_0 \in \{0, \dots, p-1\}$.
We deduce from Lemma~\ref{lemme:difference_ni_xi_multipart} and the compatibility of $\kappa$ with $(d, \eta, p)$ (cf. \eqref{equation:kappa_compatible}) that
\begin{equation}
\label{equation:delta}
\delta_i
=
\sum_{k = 0}^{r-1} x^{(k)}_{i+j_0\eta-\kappa_k}
=
\sum_{\ell = 0}^{d-1} \sum_{j = 0}^{p-1} x^{(\ell + jd)}_{i +(j_0- j)\eta - \kappa_\ell},
\end{equation}
for all $j_0 \in \{0, \dots, p-1\}$.

As noted in Remark~\ref{remark:add_rim_hooks}, the stuttering $r$-partition $\tuple{\mu}$ of Theorem~\ref{theoreme:orbite_1}, which satisfies  $\alpha_\kappa(\tuple{\mu}) = \alpha$, is not necessarily an $e$-multicore. The following lemma shows that, to prove Theorem~\ref{theoreme:orbite_1}, it suffices to find a stuttering $e$-multicore $\tuple{\nu}$ such that $\alpha_\kappa(\tuple{\nu}) = \alpha - h(\alpha_0 + \dots + \alpha_{e-1})$ for some $h \in \mathbb{N}$. 

\begin{lemme}
\label{lemme:inegalite_implique_conj}
Suppose that $\tuplex{z}^{(0)}, \dots, \tuplex{z}^{(d-1)} \in \mathbb{Z}^e_0$ are such that
\begin{equation}
\label{key_lemma:leq}
p f^{\langle p \rangle}\bigl(\tuplex{z}^{(0)}, \dots, \tuplex{z}^{(d-1)}\bigr)
\leq 
f\bigl(\tuplex{x}^{(0)}, \dots, \tuplex{x}^{(r-1)}\bigr),
\end{equation}
and
\begin{equation}
\label{key_lemma:eq}
\sum_{\ell = 0}^{d-1} \sum_{j = 0}^{p-1} z^{(\ell)}_{i - j\eta - \kappa_\ell} = \delta_i,
\end{equation}
for all  $i \in \{0, \dots, \eta-1\}$.
Then Theorem~\ref{theoreme:orbite_1} is true for the $e$-multicore $\tuple{\lambda}$: we can find an $r$-partition $\tuple\mu$ such that $\alpha_\kappa(\tuple\mu) = \alpha$ and $\prescript{\sigma}{}{\tuple{\mu}} = \tuple\mu$.
\end{lemme}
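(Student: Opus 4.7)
The plan is to extract from the data $\tuplex{z}^{(0)}, \dots, \tuplex{z}^{(d-1)}$ a stuttering $e$-multicore $\tuple{\nu}$ whose residue datum differs from $\alpha$ only by a non-negative multiple of $\alpha_0 + \dots + \alpha_{e-1}$, and then to recover the desired stuttering $r$-partition $\tuple{\mu} \in \P_\alpha^\kappa$ by wrapping $\eta$-rim hooks onto $\tuple{\nu}$.

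First, by Proposition~\ref{proposition:xi_somme_nulle_correspondent_abaque} each $\tuplex{z}^{(\ell)} \in \mathbb{Z}^e_0$ is the abacus variable of a unique $e$-core $\nu^{(\ell)}$, and I would form the $r$-partition $\tuple{\nu}$ by setting $\nu^{(\ell + jd)} \coloneqq \nu^{(\ell)}$ for all $\ell \in \{0, \dots, d-1\}$ and $j \in \{0, \dots, p-1\}$. By construction $\tuple{\nu}$ is an $e$-multicore with $\prescript{\sigma}{}{\tuple{\nu}} = \tuple{\nu}$, so by Lemma~\ref{lemme:sigma_alpha_lambda} the residue datum $\alpha_\kappa(\tuple{\nu})$ is $\sigma$-stable.

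Next, I would show $\alpha_\kappa(\tuple{\nu}) = \alpha - w(\alpha_0 + \cdots + \alpha_{e-1})$ for some $w \in \mathbb{N}$. Lemma~\ref{lemme:difference_ni_xi_multipart} together with the compatibility relation~\eqref{equation:kappa_compatible} yields
\[
n^i_\kappa(\tuple{\nu}) - n^{i+1}_\kappa(\tuple{\nu}) = \sum_{\ell = 0}^{d-1}\sum_{j = 0}^{p-1} z^{(\ell)}_{i - \kappa_\ell - j\eta},
\]
so hypothesis~\eqref{key_lemma:eq} gives $n^i_\kappa(\tuple{\nu}) - n^{i+1}_\kappa(\tuple{\nu}) = \delta_i = n^i - n^{i+1}$ for every $i \in \{0, \dots, \eta-1\}$. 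Since both $\alpha_\kappa(\tuple{\nu})$ and $\alpha$ are $\sigma$-stable, this propagates to all $i \in \mathbb{Z}/e\mathbb{Z}$, and hence $i \mapsto n^i_\kappa(\tuple{\nu}) - n^i$ is a constant integer, which I denote by $-w$. To identify the sign of $w$, applying the formula of~\textsection\ref{subsection:using_shift_invariance} to the stuttering multicore $\tuple{\nu}$ together with~\eqref{equation:def_feg} gives $n^0_\kappa(\tuple{\nu}) = p f^{\langle p \rangle}\bigl(\tuplex{z}^{(0)}, \dots, \tuplex{z}^{(d-1)}\bigr)$, while the same formula at $\tuple{\lambda}$ yields $n^0 = f\bigl(\tuplex{x}^{(0)}, \dots, \tuplex{x}^{(r-1)}\bigr)$; hypothesis~\eqref{key_lemma:leq} therefore forces $n^0_\kappa(\tuple{\nu}) \leq n^0$, i.e.\ $w \geq 0$.

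Finally, since any partition admits an addable $\eta$-rim hook (move the rightmost bead of its $\beta$-number $\eta$ steps to the right), I would wrap $w$ successive $\eta$-rim hooks onto $\nu^{(0)}$ to obtain a partition $\mu^{(0)}$. Setting $\mu^{(jd)} \coloneqq \mu^{(0)}$ for every $j \in \{0, \dots, p-1\}$ and leaving the other components unchanged, i.e.\ $\mu^{(\ell + jd)} \coloneqq \nu^{(\ell)}$ for $\ell \in \{1, \dots, d-1\}$, produces an $r$-partition $\tuple{\mu}$ that is stuttering by construction; $w$ iterated applications of Lemma~\ref{lemme:ajout_rimhook} with $\ell = 0$ then give $\alpha_\kappa(\tuple{\mu}) = \alpha_\kappa(\tuple{\nu}) + w(\alpha_0 + \cdots + \alpha_{e-1}) = \alpha$, concluding the proof. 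There is no real obstacle in this argument; the whole difficulty of Theorem~\ref{theoreme:orbite_1} has been deferred to the later sections, in which one has to produce the integral vectors $\tuplex{z}^{(\ell)}$ satisfying both hypotheses~\eqref{key_lemma:leq} and~\eqref{key_lemma:eq}.
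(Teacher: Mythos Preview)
Your argument is correct and follows essentially the same route as the paper: construct a stuttering $e$-multicore from the parameters $\tuplex{z}^{(\ell)}$, use Lemma~\ref{lemme:difference_ni_xi_multipart} together with hypothesis~\eqref{key_lemma:eq} to see that $\alpha_\kappa(\tuple{\nu})$ differs from $\alpha$ by a constant multiple of $\alpha_0+\dots+\alpha_{e-1}$, use hypothesis~\eqref{key_lemma:leq} to check the sign of that constant, and then wrap on $\eta$-rim hooks via Lemma~\ref{lemme:ajout_rimhook}. The paper's proof is organized identically, with your $\tuple{\nu}$ written as $\overline{\tuple{\mu}}$ and your $w$ written as $n^0-m^0$.
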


\begin{proof}
For any $\ell \in \{0, \dots, d-1\}$ and $j \in \{1, \dots, p-1\}$, define $z^{(\ell + jd)} \coloneqq z^{(\ell)} \in \mathbb{Z}^e_0$. For each $k \in \{0, \dots, r-1\}$, let $\overline{\mu}^{(k)}$ be the $e$-core of parameter $z^{(k)}$. We obtain an $e$-multicore $\overline{\tuple\mu} = (\overline{\mu}^{(0)}, \dots, \overline{\mu}^{(r-1)})$ that satisfies $\prescript{\sigma}{}{\overline{\tuple{\mu}}} = \overline{\tuple{\mu}}$. For any $i \in \{0, \dots, e-1\}$, we define  $m^i \coloneqq n^i_\kappa(\overline{\tuple{\mu}})$.
Since $\kappa$ is compatible with $(d, \eta, p)$, we have $\sum_{\ell = 0}^{d-1} \sum_{j = 0}^{p-1} z^{(\ell)}_{i - j\eta - \kappa_\ell} = \sum_{k = 0}^{r-1} z^{(k)}_{i - \kappa_k}$. By Lemma~\ref{lemme:difference_ni_xi_multipart} and the assumption~\eqref{key_lemma:eq}, we deduce that
\[
m^i - m^{i+1} = \delta_i,
\]
for all $i \in \{0, \dots, \eta-1\}$. Hence, for all $i \in \{0, \dots, \eta-1\}$ we have $m^i - m^{i+1} = n^i - n^{i+1}$ thus
\begin{equation}
\label{equation:m^i_n^i}
m^0 - m^i = n^0 - n^i.
\end{equation}
The above equality is also true for any $i \in \{0, \dots, e-1\}$ since  $n^i = n^{i+\eta}$ and $m^i = m^{i+\eta}$  (by Lemma~\ref{lemme:sigma_alpha_lambda}).
Recalling the definition of $f$ (respectively $f^{\langle p \rangle}$) given at \eqref{equation:def_f} (resp. \eqref{equation:def_feg}), the assumption~\eqref{key_lemma:leq} implies
\[
m^0 \leq n^0.
\]
Hence, as in the proof of Proposition~\ref{proposition:reduction} we can construct an $r$-partition $\tuple{\mu} = \bigl(\mu^{(0)}, \dots, \mu^{(r-1)}\bigr)$ such that $\prescript{\sigma}{}{\tuple{\mu}} = \tuple{\mu}$ and:
\begin{itemize}
\item the partition $\mu^{(0)}$ 
is obtained by adding $n^0 - m^0$ times an $\eta$-rim hook to $\overline{\mu}^{(0)}$;
\item we have $\mu^{(j)} = \overline{\mu}^{(j)}$ for all $j \in \{1, \dots, d-1\}$.
\end{itemize}
Finally, by Lemma~\ref{lemme:ajout_rimhook} and \eqref{equation:m^i_n^i} we obtain
\begin{align*}
\alpha_\kappa(\tuple{\mu}) &= \alpha_\kappa(\overline{\tuple{\mu}}) + \bigl(n^0 - m^0\bigr)(\alpha_0 + \cdots + \alpha_{e-1})
\\
&= \sum_{i = 0}^{e-1} m^i \alpha_i + \sum_{i = 0}^{e-1} \bigl(n^0-m^0\bigr) \alpha_i
\\
&= \sum_{i = 0}^{e-1} \bigl(n^0 + m^i-m^0\bigr) \alpha_i
\\
&= \sum_{i = 0}^{e-1} n^i \alpha_i
\\
&= \alpha,
\end{align*}
thus we conclude.
\end{proof}


\subsection{Naive attempt}
\label{subsection:tentative_naive}

We will use the convexity of the map $f : (\mathbb{R}^e)^r \to \mathbb{R}$ to obtain some parameters $\widetilde{\tuplex{z}}^{(0)}, \dots, \widetilde{\tuplex{z}}^{(d-1)}$ that almost satisfy  the conditions of Lemma~\ref{lemme:inegalite_implique_conj}. These parameters will not necessarily be integers: we will fix this in the next section.

\begin{proposition}
\label{proposition:inegalite_forte_convexite}
For any $\ell \in \{0, \dots, d-1\}$, we define
\[
\widetilde{\tuplex{z}}^{(\ell)} \coloneqq \frac{1}{p}\sum_{j = 0}^{p-1} \tuplex{x}^{(\ell + jd)} \in \frac{1}{p}\mathbb{Z}^e.
\]
We have
\[
p f^{\langle p \rangle}\bigl(\widetilde{\tuplex{z}}^{(0)}, \dots, \widetilde{\tuplex{z}}^{(d-1)}\bigr)
\leq
f\bigl(\tuplex{x}^{(0)}, \dots, \tuplex{x}^{(r-1)}\bigr) - \frac{1}{2 }\sum_{\ell = 0}^{d-1} \sum_{j = 0}^{p-1} \bigl\lVert\tuplex{x}^{(\ell+jd)} - \widetilde{z}^{(\ell)}\bigr\rVert^2.
\]
\end{proposition}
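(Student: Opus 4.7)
The plan is very short: the inequality is a direct consequence of applying the strongly convex Jensen inequality from Lemma~\ref{lemme:forte_convexite_general} componentwise to each $g_\ell$, and then summing over $\ell$.

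First I would note that for every $\ell \in \{0, \dots, d-1\}$, the map $g_\ell$ defined at~\eqref{equation:def_g} satisfies $g_\ell - \frac{1}{2}\lVert\cdot\rVert^2 = -\widetilde{L}_\ell$, which is linear, hence convex. Thus $g_\ell$ fits the hypothesis of Lemma~\ref{lemme:forte_convexite_general} with $n = e$ and $m = 1$. Applying that lemma to the $p$-tuple $\bigl(\tuplex{x}^{(\ell)}, \tuplex{x}^{(\ell+d)}, \dots, \tuplex{x}^{(\ell+(p-1)d)}\bigr)$, whose average is exactly $\widetilde{\tuplex{z}}^{(\ell)}$ by definition, yields
\[
g_\ell\bigl(\widetilde{\tuplex{z}}^{(\ell)}\bigr)
\leq
\frac{1}{p}\sum_{j=0}^{p-1} g_\ell\bigl(\tuplex{x}^{(\ell+jd)}\bigr)
-\frac{1}{2p}\sum_{j=0}^{p-1} \bigl\lVert \tuplex{x}^{(\ell+jd)} - \widetilde{\tuplex{z}}^{(\ell)}\bigr\rVert^2.
\]

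Next I would multiply by $p$ and sum over $\ell \in \{0, \dots, d-1\}$. By the definition~\eqref{equation:def_feg} of $f^{\langle p\rangle}$, the left-hand side becomes $p\, f^{\langle p \rangle}\bigl(\widetilde{\tuplex{z}}^{(0)}, \dots, \widetilde{\tuplex{z}}^{(d-1)}\bigr)$. The first term on the right becomes $\sum_{\ell=0}^{d-1}\sum_{j=0}^{p-1} g_\ell\bigl(\tuplex{x}^{(\ell+jd)}\bigr)$, which equals $f\bigl(\tuplex{x}^{(0)},\dots,\tuplex{x}^{(r-1)}\bigr)$ by~\eqref{equation:def_f}. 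The second term is, up to the sign, precisely $\frac{1}{2}\sum_{\ell=0}^{d-1}\sum_{j=0}^{p-1} \bigl\lVert \tuplex{x}^{(\ell+jd)} - \widetilde{\tuplex{z}}^{(\ell)}\bigr\rVert^2$. Putting these together gives the desired inequality.

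There is no real obstacle here: the proposition is essentially the statement that $f$ is the sum of $pd$ one-half-squared-norm-shaped functions, and Lemma~\ref{lemme:forte_convexite_general} is its tailor-made Jensen inequality. The only small thing to double-check is the bookkeeping of the double sum $\sum_{\ell,j}$ versus the indexing of the $r = dp$ components of $\tuple{\lambda}$, which follows from writing each $k \in \{0,\dots,r-1\}$ uniquely as $k = \ell + jd$ with $\ell \in \{0,\dots,d-1\}$ and $j \in \{0,\dots,p-1\}$.
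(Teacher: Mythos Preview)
Your proposal is correct and follows essentially the same approach as the paper: apply Lemma~\ref{lemme:forte_convexite_general} with $m=1$ to each $g_\ell$ (using that $g_\ell - \tfrac{1}{2}\lVert\cdot\rVert^2$ is linear, hence convex), then sum over $\ell$ and multiply by $p$, identifying the results via~\eqref{equation:def_f} and~\eqref{equation:def_feg}. The paper's proof is line-for-line the same argument.
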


\begin{proof}
Let $\ell \in \{0, \dots, d-1\}$ and let $k \in \{0, \dots, r-1\}$ be of residue  $\ell$ modulo $d$. Recall the definition of the map $g_\ell : \mathbb{R}^e \to \mathbb{R}$ given in~\eqref{equation:def_g}. The map $g_\ell - \frac{1}{2}\lVert\cdot\rVert^2$ is convex, thus by Lemma~\ref{lemme:forte_convexite_general} we deduce that
\[
g_\ell\bigl(\widetilde{\tuplex{z}}^{(\ell)}\bigr)
\leq \frac{1}{p}\sum_{j = 0}^{p-1} g_\ell\bigl(\tuplex{x}^{(\ell+jd)}\bigr) - \frac{1}{2p}\sum_{j = 0}^{p-1} \bigl\lVert\tuplex{x}^{(\ell+jd)}-\widetilde{z}^{(\ell)}\bigr\rVert^2.
\]
Summing over all $\ell \in \{0, \dots, d-1\}$, we obtain
\[
f^{\langle p \rangle}\bigl(\widetilde{\tuplex{z}}^{(0)}, \dots, \widetilde{\tuplex{z}}^{(d-1)}\bigr)
\leq 
\frac{1}{p} f\bigl(\tuplex{x}^{(0)}, \dots, \tuplex{x}^{(r-1)}\bigr)
 -  \frac{1}{2p}\sum_{\ell = 0}^{d-1}\sum_{j = 0}^{p-1} \bigl\lVert\tuplex{x}^{(\ell+jd)}-\widetilde{z}^{(\ell)}\bigr\rVert^2.
\]
Multiplying by $p$ gives the desired result.
\end{proof}

\begin{remarque}
The inequality of Proposition~\ref{proposition:inegalite_forte_convexite} is in fact an equality since $g_\ell - \frac{1}{2}\lVert \cdot \rVert^2$ is linear.
\end{remarque}

Let us now try to verify the hypotheses of Lemma~\ref{lemme:inegalite_implique_conj} with the parameters $\widetilde{\tuplex{z}}^{(0)}, \dots, \widetilde{\tuplex{z}}^{(d-1)} \in \frac{1}{p}\mathbb{Z}^e$ of Proposition~\ref{proposition:inegalite_forte_convexite}. First, for each $\ell \in \{0, \dots, d-1\}$ we have
\begin{equation}
\label{equation:ztilde_sum0}
\bigl\lvert \widetilde{\tuplex{z}}^{(\ell)}\bigr\rvert = \frac{1}{p}\sum_{j = 0}^{p-1} \bigl\lvert \tuplex{x}^{(\ell + jd)} \bigr\rvert = \frac{1}{p}\sum_{j = 0}^{p-1} 0 = 0.
\end{equation}
Moreover, since $\bigl\lVert\tuplex{x}^{(\ell+jd)} - \tuplex{x}^{(\ell+j'd)}\bigr\rVert \geq 0$ we deduce from the inequality of Proposition~\ref{proposition:inegalite_forte_convexite} that
\begin{equation}
\label{equation:naive_attempt-conv}
p f^{\langle p \rangle}\bigl(\widetilde{\tuplex{z}}^{(0)}, \dots, \widetilde{\tuplex{z}}^{(d-1)}\bigr) \leq f\bigl(\tuplex{x}^{(0)},\dots, \tuplex{x}^{(r-1)}\bigr).
\end{equation}
Finally, for each  $i \in \{0, \dots, \eta-1\}$ we have, using~\eqref{equation:delta},
\begin{align}
\sum_{\ell=0}^{d-1} \sum_{j = 0}^{p-1} \widetilde{z}^{(\ell)}_{i -j\eta - \kappa_\ell}
&=
\frac{1}{p}\sum_{\ell=0}^{d-1} \sum_{j = 0}^{p-1} \sum_{j' = 0}^{p-1} x^{(\ell+j'd)}_{i -j\eta - \kappa_\ell}
\notag
\\
&=
\frac{1}{p}\sum_{j = 0}^{p-1}\sum_{\ell=0}^{d-1} \sum_{j' = 0}^{p-1} x^{(\ell+j'd)}_{i +(\underbrace{\scriptstyle -j+j'}_{\eqqcolon j_0}-j')\eta - \kappa_\ell}
\notag
\\
&=
\frac{1}{p}\sum_{j_0 = 0}^{p-1} \left(\sum_{\ell=0}^{d-1} \sum_{j' = 0}^{p-1} x^{(\ell+j'd)}_{i +(j_0-j')\eta - \kappa_\ell}\right)
\notag
\\
&= \frac{1}{p}\sum_{j_0 = 0}^{p-1} \delta_i
\notag
\\
&=
\delta_i.
\label{equation:sum_ztilde_delta}
\end{align}
Hence, all hypotheses are satisfied but one: the parameters $\widetilde{z}^{(0)}, \dots, \widetilde{z}^{(d-1)} \in \frac{1}{p}\mathbb{Z}^e_0$ are not necessarily in $\mathbb{Z}^e_0$.

\subsection{Rectification of the parameters}
\label{subsection:rectification}

We will construct from  $\widetilde{\tuplex{z}}^{(0)}, \dots, \widetilde{\tuplex{z}}^{(d-1)} \in \frac{1}{p}\mathbb{Z}^e_0$ (defined in Proposition~\ref{proposition:inegalite_forte_convexite})  some elements $\tuplex{z}^{(0)}, \dots, \tuplex{z}^{(d-1)} \in \mathbb{Z}^e_0$ that satisfy all the assumptions of Lemma~\ref{lemme:inegalite_implique_conj}. To that end, we will approximate $\widetilde{\tuplex{z}}^{(0)}, \dots, \widetilde{\tuplex{z}}^{(d-1)}$ using Proposition~\ref{proposition:elt_base_canonique_bloc}, and we will control the value of $f\bigl(\tuplex{z}^{(0)}, \dots, \tuplex{z}^{(d-1)}\bigr)$ using Lemma~\ref{lemme:majoration_g_epsilon_i}.
The remaining of this subsection in now devoted to the proof of the following proposition.

\begin{proposition}
\label{proposition:elt_z}
There exist elements $\tuplex{z}^{(0)}, \dots, \tuplex{z}^{(d-1)} \in \mathbb{Z}^e_0$ such that
\[
\sum_{\ell = 0}^{d-1} \sum_{j = 0}^{p-1} z^{(\ell)}_{i-j\eta-\kappa_\ell} = \delta_i,
\]
for all $i \in \{0, \dots, \eta-1\}$ and
\[
f^{\langle p \rangle}\bigl(\tuplex{z}^{(0)}, \dots, \tuplex{z}^{(d-1)}\bigr)
\leq
f^{\langle p \rangle}\bigl(\widetilde{\tuplex{z}}^{(0)}, \dots,\widetilde{\tuplex{z}}^{(d-1)}\bigr)
+
\frac{1}{2p}\sum_{\ell = 0}^{d-1}\sum_{j = 0}^{p-1} \bigl\lVert \tuplex{x}^{(\ell+jd)} -\widetilde{z}^{(\ell)}\bigr\rVert^2.
\]
\end{proposition}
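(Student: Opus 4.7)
The strategy is to approximate $\widetilde{\tuplex{z}}^{(\ell)}$, which lies in $\frac{1}{p}\mathbb{Z}^e$, by a nearby integer vector using the binary averaging tool Proposition~\ref{proposition:elt_base_canonique_bloc}, and then to convert the approximation error into a bound on $f^{\langle p \rangle}$ via Lemma~\ref{lemme:majoration_g_epsilon_i}. The first step is to decompose, for each $\ell \in \{0, \dots, d-1\}$ and $k \in \{0, \dots, e-1\}$,
\[
\widetilde{z}^{(\ell)}_k = n^{(\ell)}_k + v^{(\ell)}_k, \qquad n^{(\ell)}_k \coloneqq \bigl\lfloor \widetilde{z}^{(\ell)}_k \bigr\rfloor \in \mathbb{Z}, \qquad v^{(\ell)}_k \coloneqq \bigl\{\widetilde{z}^{(\ell)}_k\bigr\} = \frac{w^{(\ell)}_k}{p},
\]
with $w^{(\ell)}_k \in \{0, \dots, p-1\}$, so that each row $v^{(\ell)}$ fits the setup of Proposition~\ref{proposition:elt_base_canonique_bloc}. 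Identity~\eqref{equation:ztilde_sum0} yields $\lvert v^{(\ell)}\rvert = -\lvert n^{(\ell)}\rvert \in \mathbb{N}$, which is the first hypothesis required.

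Applying Proposition~\ref{proposition:elt_base_canonique_bloc} naively would give a family $\epsilon^{j(\ell)}$ with $v^{(\ell)} = \frac{1}{p}\sum_j \epsilon^{j(\ell)}$ and $\lvert \epsilon^{j(\ell)} \rvert = \lvert v^{(\ell)} \rvert$, but the block-sum constraint needed for~\eqref{key_lemma:eq} involves, in each row $\ell$, the residue class $i - \kappa_\ell \pmod{\eta}$, which depends on $\ell$, whereas Proposition~\ref{proposition:elt_base_canonique_bloc} uses row-independent blocks. To fix this, I would first cyclically shift the rows by setting $u^{(\ell)}_k \coloneqq v^{(\ell)}_{k - \kappa_\ell}$ and then group the columns of the matrix $U$ with rows $u^{(\ell)}$ into $\eta$ blocks by residue modulo $\eta$ (equivalent up to a column relabelling to the contiguous blocks of Proposition~\ref{proposition:elt_base_canonique_bloc}). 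Using~\eqref{equation:sum_ztilde_delta} together with the decomposition $\widetilde{\tuplex{z}}^{(\ell)} = n^{(\ell)} + v^{(\ell)}$, one verifies
\[
\bigl\lvert U^{[i]} \bigr\rvert = \sum_{\ell = 0}^{d-1} \sum_{j' = 0}^{p-1} v^{(\ell)}_{i + j'\eta - \kappa_\ell} = \delta_i - \sum_{\ell = 0}^{d-1}\sum_{j = 0}^{p-1} n^{(\ell)}_{i - j\eta - \kappa_\ell} \in \mathbb{N},
\]
so the block-sum hypothesis is also met. The proposition then produces binary vectors $\widetilde{\epsilon}^{\,j(\ell)}$; undoing the shift via $\epsilon^{j(\ell)}_k \coloneqq \widetilde{\epsilon}^{\,j(\ell)}_{k + \kappa_\ell}$ yields a family satisfying $v^{(\ell)} = \frac{1}{p}\sum_j \epsilon^{j(\ell)}$, $\lvert \epsilon^{j(\ell)} \rvert = \lvert v^{(\ell)}\rvert$, and the key identity $\sum_{\ell, j} \epsilon^{j_0(\ell)}_{i - j\eta - \kappa_\ell} = \lvert U^{[i]}\rvert$ \emph{for every} $j_0 \in \{0, \dots, p-1\}$ (since this sum telescopes to $\lvert \widetilde{E}^{\,j_0[i]} \rvert = \lvert U^{[i]} \rvert$).

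To conclude, the equality part of Lemma~\ref{lemme:majoration_g_epsilon_i} applied to each row (with $h = g_\ell$, $a = n^{(\ell)}$, $v = v^{(\ell)}$) and summed over $\ell$ gives
\[
\frac{1}{p}\sum_{j = 0}^{p-1}\sum_{\ell = 0}^{d-1} g_\ell\bigl(n^{(\ell)} + \epsilon^{j(\ell)}\bigr) = f^{\langle p \rangle}\bigl(\widetilde{\tuplex{z}}^{(0)}, \dots, \widetilde{\tuplex{z}}^{(d-1)}\bigr) + \sum_{\ell = 0}^{d-1} \frac{\lvert v^{(\ell)}\rvert - \lVert v^{(\ell)}\rVert^2}{2},
\]
so that averaging over $j$ produces a $j_0 \in \{0, \dots, p-1\}$ for which the inner $\ell$-sum on the left is at most the right-hand side; I would then set $\tuplex{z}^{(\ell)} \coloneqq n^{(\ell)} + \epsilon^{j_0(\ell)}$. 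Since $\lvert \tuplex{z}^{(\ell)}\rvert = \lvert n^{(\ell)}\rvert + \lvert v^{(\ell)}\rvert = 0$, we have $\tuplex{z}^{(\ell)} \in \mathbb{Z}^e_0$, and condition~\eqref{key_lemma:eq} is just the identity established above. It remains to bound $\sum_\ell \bigl(\lvert v^{(\ell)}\rvert - \lVert v^{(\ell)}\rVert^2\bigr)$ by $\frac{1}{p}\sum_{\ell, j} \bigl\lVert \tuplex{x}^{(\ell + jd)} - \widetilde{\tuplex{z}}^{(\ell)} \bigr\rVert^2$, which follows by applying Lemma~\ref{lemme:erreur_compensent} entrywise to each family of integers $\bigl(x^{(\ell + jd)}_k\bigr)_{0 \leq j \leq p-1}$ whose mean is $\widetilde{z}^{(\ell)}_k$. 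The main obstacle in this plan is the row-dependent block alignment in the second paragraph: the shift by $\kappa_\ell$ on each row is essential to translate the output of Proposition~\ref{proposition:elt_base_canonique_bloc} into condition~\eqref{key_lemma:eq}, yet that proposition is phrased with a common, row-independent block structure.
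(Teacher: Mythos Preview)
Your proposal is correct and follows essentially the same route as the paper. The paper packages your two-step ``shift by $\kappa_\ell$ then regroup by residue mod $\eta$'' into a single per-row permutation $\pi_\ell(j+ip) \coloneqq i - j\eta - \kappa_\ell$ of $\{0,\dots,e-1\}$, and applies Lemma~\ref{lemme:majoration_g_epsilon_i} once to the resulting global map $\widetilde{f}^{\langle p\rangle}$ rather than row by row to each $g_\ell$; these are purely presentational differences. Your closing worry is unfounded: after the per-row cyclic shift the block structure on $U$ \emph{is} row-independent (residue classes mod $\eta$), so Proposition~\ref{proposition:elt_base_canonique_bloc} applies directly---this is exactly how the paper handles it via $\pi_\ell$.
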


Let $\ell \in \{0, \dots, d-1\}$. Since $\widetilde{z}^{(\ell)} \in \frac{1}{p}\mathbb{Z}^e$, we know that for any $i \in \{0, \dots, \eta-1\}$ and $j \in \{0, \dots, p-1\}$ there exist unique elements $m^{(\ell)}_{j + ip} \in \mathbb{Z}$ and $w^{(\ell)}_{j + ip} \in \{0, \dots, p-1\}$  such that 
\begin{equation}
\label{equation:relabelling}
\widetilde{z}^{(\ell)}_{i - j\eta-\kappa_\ell} = m^{(\ell)}_{j+ip} + \frac{w^{(\ell)}_{j+ip}}{p}.
\end{equation}
The fractional part of $\widetilde{z}^{(\ell)}_{i - j\eta - \kappa_\ell}$ is
\begin{equation}
\label{equation:def_v}
\bigl\{\widetilde{z}^{(\ell)}_{i-j\eta-\kappa_\ell}\bigr\} = \frac{w^{(\ell)}_{j+ip}}{p}   \eqqcolon v^{(\ell)}_{j+ip}.
\end{equation}
For each $\ell \in \{0, \dots, d-1\}$, we have two $e$-tuples $m^{(\ell)} \coloneqq \bigl(m^{(\ell)}_0, \dots, m^{(\ell)}_{e-1}\bigr)$ and $v^{(\ell)} \coloneqq \bigl(v^{(\ell)}_0, \dots, v^{(\ell)}_{e-1}\bigr)$.  Let $\pi_\ell$ be the permutation of $\{0, \dots, e-1\}$ defined by
\[
\pi_\ell(j + ip) \coloneqq i - j\eta - \kappa_\ell,
\]
for all $i \in \{0, \dots, \eta-1\}$ and $j \in \{0, \dots, p-1\}$. Permuting the components of $e$-tuples according to $\pi_0, \dots, \pi_{d-1}$, we obtain a map $\widetilde{f}^{\langle p \rangle} : (\mathbb{R}^e)^d \to \mathbb{R}$ that  satisfies
\[
\widetilde{f}^{\langle p \rangle}\bigl(m^{(0)} + v^{(0)}, \dots, m^{(d-1)} + v^{(d-1)}\bigr) = f^{\langle p \rangle}\bigl(\widetilde{z}^{(0)}, \dots, \widetilde{z}^{(d-1)}\bigr).
\]
To match with the setting of \textsection\ref{subsection:binary_averaging}, we define the two following $d \times e$ matrices:
\begin{align*}
M &= \begin{pmatrix}
m^{(0)}
\\
\vdots
\\
m^{(d-1)}
\end{pmatrix},
&
V &= \begin{pmatrix}
v^{(0)}
\\
\vdots
\\
v^{(d-1)}
\end{pmatrix},
\end{align*}
so that
\begin{equation}
\label{equation:ftildep_MV}
\widetilde{f}^{\langle p \rangle}\bigl(M + V\bigr) = f^{\langle p \rangle}\bigl(\widetilde{\tuplex{z}}^{(0)}, \dots, \widetilde{\tuplex{z}}^{(d-1)}\bigr).
\end{equation}
Like $f^{\langle p \rangle}$, the map $\widetilde{f}^{\langle p \rangle}$ defined on the $d \times e$ matrices is of the form $\frac{1}{2}\lVert \cdot \rVert^2 - \widetilde{L}$ where $\lVert \cdot \rVert^2$ is the sum of the squares of the entries and $\widetilde{L}$ is a linear form.
We now write the matrix $V$ blockwise in the same fashion as for  Proposition~\ref{proposition:elt_base_canonique_bloc}. That is, 
\begin{gather*}
V = \begin{pmatrix}
v^{(0)} \\ \vdots \\ v^{(d-1)}
\end{pmatrix}
=
\begin{pmatrix}
V^{[0]} & \cdots & V^{[\eta-1]}
\end{pmatrix},
\end{gather*}
where
\[
V^{[i]} = \begin{pmatrix}
v^{(0)}_{ip} & \cdots & v^{(0)}_{p-1 + ip}
\\
\vdots & \vdots & \vdots
\\
v^{(d-1)}_{ip} & \cdots & v^{(d-1)}_{p-1+ip}
\end{pmatrix},
\]
for any $i \in \{0, \dots, \eta-1\}$.
We now check that $V$ satisfies the assumptions of Proposition~\ref{proposition:elt_base_canonique_bloc}.
First, for any $\ell \in \{0, \dots, d-1\}$ the element $v^{(\ell)}$ satisfies $\bigl\lvert v^{(\ell)} \bigr\rvert \geq 0$ since its entries are non-negative. Furthermore,
\begin{align*}
\bigl\lvert v^{(\ell)} \bigr\rvert
&=
\sum_{i = 0}^{\eta-1} \sum_{j = 0}^{p-1} v^{(\ell)}_{j + ip}
\\
&=
\sum_{i = 0}^{\eta-1} \sum_{j = 0}^{p-1} \bigl(\widetilde{z}^{(\ell)}_{i - j\eta - \kappa_\ell} - m^{(\ell)}_{j + ip}\bigr) \qquad \text{(by } \eqref{equation:relabelling}, \eqref{equation:def_v})
\\
&=
\sum_{i = 0}^{e-1} \widetilde{z}^{(\ell)}_i - \sum_{i = 0}^{\eta-1} \sum_{j = 0}^{p-1} m^{(\ell)}_{j + ip}
\\
&=
\bigl\lvert \widetilde{\tuplex{z}}^{(\ell)}\bigr\rvert -\sum_{i = 0}^{\eta-1} \sum_{j = 0}^{p-1} m^{(\ell)}_{j + ip}.
\end{align*}
Hence, we have $\bigr\lvert v^{(\ell)} \bigr\rvert \in \mathbb{Z}$ since $\bigl\lvert \widetilde{\tuplex{z}}^{(\ell)} \bigr\rvert = 0$ and $m^{(\ell)}_{j + ip} \in \mathbb{Z}$, thus $\bigl\lvert v^{(\ell)} \bigr\rvert \in \mathbb{N}$.
The same argument shows that $\bigl\lvert V^{[i]} \bigr\rvert \in \mathbb{N}$ for any  $i \in \{0, \dots, \eta-1\}$ since
\begin{align*}
\bigl\lvert V^{[i]} \bigr\rvert
&=
\sum_{\ell = 0}^{d-1} \sum_{j = 0}^{p-1} v^{(\ell)}_{j + ip}
\\
&=
\sum_{\ell = 0}^{d-1} \sum_{j = 0}^{p-1} \widetilde{z}^{(\ell)}_{i - j\eta - \kappa_\ell} - \sum_{\ell = 0}^{d-1} \sum_{j = 0}^{p-1} m^{(\ell)}_{j + ip}
\\
&=
\delta_i - \sum_{\ell = 0}^{d-1} \sum_{j = 0}^{p-1} m^{(\ell)}_{j + ip}.
\end{align*}

Thus, we can apply Proposition~\ref{proposition:elt_base_canonique_bloc}. There exist vectors $\epsilon^{j(\ell)} \in \{0, 1\}^e$ for all $j \in \{0, \dots, p-1\}$ and $\ell \in \{0, \dots, d-1\}$ such that
\begin{align}
\frac{1}{p} \sum_{j = 0}^{p-1} \epsilon^{j(\ell)} &= v^{(\ell)},
\notag
\\
\label{equation:proof_epsilonjl_vl}
\bigl\lvert \epsilon^{j(\ell)} \bigr\rvert &= \bigl\lvert v^{(\ell)}\bigr\rvert,
\\
\label{equation:proof_Eji_Vi}
\bigl\lvert E^{j[i]} \bigr\rvert &= \bigl\lvert V^{[i]}\bigr\rvert, \qquad \text{for all } i \in \{0, \dots, \eta-1\}.
\end{align}
In the above equality, the matrices $E^{j[i]}$ for any $i \in \{0, \dots, \eta-1\}$ are defined by the same block decomposition as $V$:
\[
E^j \coloneqq \begin{pmatrix}
\epsilon^{j(0)}
\\
\vdots
\\
\epsilon^{j(d-1)}
\end{pmatrix} = \begin{pmatrix}
E^{j[0]} & \cdots & E^{j[\eta-1]}
\end{pmatrix},
\]
in particular each $E^{j[i]}$ has size $d \times p$.
The map $\widetilde{f}^{\langle p \rangle}$ and the matrices $V$ and $E^j$ for all $j \in \{0, \dots, p-1\}$ satisfy the assumptions of  Lemma~\ref{lemme:majoration_g_epsilon_i}. Hence, there exists  $j_0 \in \{0, \dots, p-1\}$ such that
\[
\widetilde{f}^{\langle p \rangle}(M + E^{j_0}) \leq \widetilde{f}^{\langle p \rangle}(M + V) + \frac{\lvert V \rvert - \lVert V\rVert^2}{2}.
\]
For each $\ell \in \{0, \dots, d-1\}$,  define the vector $\tuplex{z}^{(\ell)}$ by permuting the coordinates of  $m^{(\ell)} + \epsilon^{j_0(\ell)}$ via $\pi_\ell$. We have
\[
f^{\langle p \rangle}\bigl(\tuplex{z}^{(0)}, \dots, \tuplex{z}^{(d-1)}\bigr) = \widetilde{f}^{\langle p \rangle}(M + E^{j_0}),
\]
thus, recalling~\eqref{equation:ftildep_MV},
\begin{equation}
\label{equation:preuve_z_ineg_v}
f^{\langle p \rangle}\bigl(\tuplex{z}^{(0)}, \dots, \tuplex{z}^{(d-1)}\bigr) \leq f^{\langle p \rangle}\bigl(\widetilde{\tuplex{z}}^{(0)}, \dots, \widetilde{\tuplex{z}}^{(d-1)}\bigr) + \frac{\lvert V \rvert - \lVert V \rVert^2}{2}.
\end{equation}
We now check that $\tuplex{z}^{(0)}, \dots, \tuplex{z}^{(d-1)}$ have the properties described in Proposition~\ref{proposition:elt_z}. First, for any $\ell \in \{0, \dots, d-1\}$ the vector $z^{(\ell)}$ is a permutation of $m^{(\ell)} + \epsilon^{j_0(\ell)}$. Since $m^{(\ell)} \in \mathbb{Z}^e$ and $\epsilon^{j_0(\ell)} \in \{0, 1\}^e$, we have $z^{(\ell)} \in \mathbb{Z}^e$. Moreover,
\begin{align*}
\bigl\lvert \tuplex{z}^{(\ell)} \bigr\rvert
&=
\bigl\lvert m^{(\ell)} \bigr\rvert + \bigl\lvert \epsilon^{j_0(\ell)}\bigr\rvert
\\
&=
\bigl\lvert m^{(\ell)} \bigr\rvert + \bigl\lvert v^{(\ell)}\bigr\rvert & &\text{(by } \eqref{equation:proof_epsilonjl_vl})
\\
&=
\bigl\lvert \widetilde{\tuplex{z}}^{(\ell)}\bigr\rvert
\\
&=
0 &&\text{(by } \eqref{equation:ztilde_sum0}),
\end{align*}
thus $\tuplex{z}^{(\ell)} \in \mathbb{Z}^e_0$.
The equality condition of Proposition~\ref{proposition:elt_z}  is satisfied, since for any $i \in \{0, \dots, \eta-1\}$ we have
\begin{align*}
\sum_{\ell = 0}^{d-1} \sum_{j = 0}^{p-1} z^{(\ell)}_{i - j\eta - \kappa_\ell}
&=
\sum_{\ell = 0}^{d-1} \sum_{j = 0}^{p-1} \left[m^{(\ell)}_{j + ip} + \epsilon^{j_0(\ell)}_{j + ip}\right]
\\
&=
\sum_{\ell = 0}^{d-1} \sum_{j = 0}^{p-1} m^{(\ell)}_{j + ip} + \bigl\lvert E^{j_0[i]}\bigr\rvert
\\
&=
\sum_{\ell = 0}^{d-1} \sum_{j = 0}^{p-1} m^{(\ell)}_{j + ip} + \bigl\lvert V^{[i]}\bigr\rvert &&\text{(by } \eqref{equation:proof_Eji_Vi})
\\
&=
\sum_{\ell = 0}^{d-1} \sum_{j = 0}^{p-1} \left[m^{(\ell)}_{j + ip} + v^{(\ell)}_{j + ip}\right]
\\
&=
\sum_{\ell = 0}^{d-1} \sum_{j = 0}^{p-1} \widetilde{z}^{(\ell)}_{i - j\eta - \kappa_\ell}
\\
&=
\delta_i.
\end{align*}

It remains to prove that the value of $f^{\langle p \rangle}\bigl(z^{(0)}, \dots, z^{(d-1)}\bigr)$ does not grow too much. We have
\[
\frac{\lvert V \rvert - \lVert V \rVert^2}{2} =
\frac{1}{2}
\sum_{\ell = 0}^{d-1} \left[\bigl\lvert v^{(\ell)} \bigr\rvert - \bigl\lVert v^{(\ell)} \bigr\rVert^2\right]
= \frac{1}{2}\sum_{\ell = 0}^{d-1} \sum_{i = 0}^{\eta-1} \sum_{j = 0}^{p-1} \left[v^{(\ell)}_{j + ip} - \bigl(v^{(\ell)}_{j + ip}\bigr)^2\right].
\]
Recall the definition of the vectors $\widetilde{z}^{(\ell)}$ for any $\ell \in \{0, \dots, d-1\}$ given in Proposition~\ref{proposition:inegalite_forte_convexite}.
Since for all $i \in \{0, \dots, \eta-1\}$ and $j \in \{0, \dots, p-1\}$, each $v^{(\ell)}_{j + ip}$ is the fractional part of
\[
\widetilde{z}^{(\ell)}_{i - j\eta - \kappa_\ell} = \frac{1}{p} \sum_{j' = 0}^{p-1} x^{(\ell + j'd)}_{i - j\eta - \kappa_\ell},
\]
and since each $x^{(\ell + j'd)}_{i - j\eta - \kappa_\ell}$ is an integer, we can apply Lemma~\ref{lemme:erreur_compensent}. We obtain
\[
v^{(\ell)}_{j + ip} - \bigl(v^{(\ell)}_{j + ip}\bigr)^2 \leq \frac{1}{p}\sum_{j' = 0}^{p-1}  \left(x^{(\ell + j'd)}_{i - j\eta - \kappa_\ell} - \widetilde{z}^{(\ell)}_{i - j\eta - \kappa_\ell}\right)^2,
\]
for all $i \in \{0, \dots, \eta-1\}$ and $j \in \{0, \dots, p-1\}$, thus
\[
\bigl\lvert v^{(\ell)} \bigr\rvert - \bigl\lVert v^{(\ell)} \bigr\rVert^2
\leq
\frac{1}{p}
\sum_{j' = 0}^{p-1} \bigl\lVert \tuplex{x}^{(\ell + j'd)} - \widetilde{z}^{(\ell)}\bigr\rVert^2
\]
It follows from \eqref{equation:preuve_z_ineg_v} that
\begin{align*}
f^{\langle p \rangle}\bigl(\tuplex{z}^{(0)}, \dots, \tuplex{z}^{(d-1)}\bigr)
&\leq f^{\langle p \rangle}\bigl(\widetilde{\tuplex{z}}^{(0)}, \dots, \widetilde{\tuplex{z}}^{(d-1)}\bigr) + \frac{1}{2}\sum_{\ell = 0}^{d-1}  \left[\bigl\lvert v^{(\ell)} \bigr\rvert - \bigl\lVert v^{(\ell)}\bigr\rVert^2\right]
\\
&\leq
f^{\langle p \rangle}\bigl(\widetilde{\tuplex{z}}^{(0)}, \dots, \widetilde{\tuplex{z}}^{(d-1)}\bigr)
+
\frac{1}{2p}\sum_{\ell = 0}^{d-1}\sum_{j = 0}^{p-1} \bigl\lVert \tuplex{x}^{(\ell + jd)} - \widetilde{z}^{(\ell)}\bigr\rVert^2,
\end{align*}
as desired.

\subsection{Proof of the main theorem}
\label{subsection:proof_main_theorem}

We now conclude the proof of Theorem~\ref{theoreme:orbite_1}.
Let $\tuplex{z}^{(0)}, \dots, \tuplex{z}^{(d-1)} \in \mathbb{Z}^e_0$ be as in Proposition~\ref{proposition:elt_z}.  They satisfy
\begin{equation}
\label{equation:subsectionproof_delta}
\sum_{\ell = 0}^{d-1} \sum_{j = 0}^{p-1} z^{(\ell)}_{i-j\eta-\kappa_\ell} = \delta_i,
\end{equation}
for all $i \in \{0, \dots, \eta-1\}$ and
\[
f^{\langle p \rangle}\bigl(\tuplex{z}^{(0)}, \dots, \tuplex{z}^{(d-1)}\bigr)
\leq
f^{\langle p \rangle}\bigl(\widetilde{\tuplex{z}}^{(0)}, \dots,\widetilde{\tuplex{z}}^{(d-1)}\bigr)
+
\frac{1}{2p}\sum_{\ell = 0}^{d-1}\sum_{j = 0}^{p-1} \bigl\lVert \tuplex{x}^{(\ell+jd)} -\widetilde{z}^{(\ell)}\bigr\rVert^2.
\]
Since, by Proposition~\ref{proposition:inegalite_forte_convexite}, we have
\[
p f^{\langle p \rangle}\bigl(\widetilde{\tuplex{z}}^{(0)}, \dots, \widetilde{\tuplex{z}}^{(d-1)}\bigr)
\leq
f\bigl(\tuplex{x}^{(0)}, \dots, \tuplex{x}^{(r-1)}\bigr) - \frac{1}{2 }\sum_{\ell = 0}^{d-1} \sum_{j = 0}^{p-1} \bigl\lVert\tuplex{x}^{(\ell+jd)} - \widetilde{z}^{(\ell)}\bigr\rVert^2,
\]
we obtain
\begin{multline*}
pf^{\langle p \rangle}\bigl(\tuplex{z}^{(0)}, \dots, \tuplex{z}^{(d-1)}\bigr)
\leq
f\bigl(\tuplex{x}^{(0)}, \dots, \tuplex{x}^{(r-1)}\bigr)
- \frac{1}{2 }\sum_{\ell = 0}^{d-1} \sum_{j = 0}^{p-1} \bigl\lVert\tuplex{x}^{(\ell+jd)} - \widetilde{z}^{(\ell)}\bigr\rVert^2
\\
+
\frac{1}{2}\sum_{\ell = 0}^{d-1}\sum_{j = 0}^{p-1} \bigl\lVert \tuplex{x}^{(\ell+jd)} -\widetilde{z}^{(\ell)}\bigr\rVert^2,
\end{multline*}
thus
\[
pf^{\langle p \rangle}\bigl(\tuplex{z}^{(0)}, \dots, \tuplex{z}^{(d-1)}\bigr)
\leq
f\bigl(\tuplex{x}^{(0)}, \dots, \tuplex{x}^{(r-1)}\bigr).
\]
\begin{remarque}
The error term $\frac{1}{2}\sum_{\ell = 0}^{d-1}\sum_{j = 0}^{p-1} \bigl\lVert \tuplex{x}^{(\ell+jd)} -\widetilde{z}^{(\ell)}\bigr\rVert^2$ vanished thanks the strong convexity inequality of Proposition~\ref{proposition:inegalite_forte_convexite}, the ``basic'' convexity inequality~\eqref{equation:naive_attempt-conv} being not accurate enough.
\end{remarque}

The above inequality, together with~\eqref{equation:subsectionproof_delta}, prove that the elements $\tuplex{z}^{(0)}, \dots, \tuplex{z}^{(d-1)} \in \mathbb{Z}^e_0$ satisfy the hypotheses of Lemma~\ref{lemme:inegalite_implique_conj}. Hence, Theorem~\ref{theoreme:orbite_1} is proved for the $e$-multicore $\tuple{\lambda}$. Recalling the reduction step from $r$-partitions to $e$-multicores, Proposition~\ref{proposition:reduction}, we conclude that Theorem~\ref{theoreme:orbite_1} is true for any $r$-partition.

\section{Applications}
\label{section:applications}

We assume that the multicharge $\kappa$ is compatible with $(d, \eta, p)$ (cf. \eqref{equation:kappa_compatible} and \eqref{equation:forme_kappa}). We present two applications of Theorem~\ref{theoreme:orbite_1} and Corollary~\ref{corollaire:orbite_gnrl}.
First, we will recall the definition of cellular algebras, as introduced by Graham and Lehrer~\cite{graham-lehrer}. The algebra $\H_n^\kappa$ and its blocks $\H_\alpha^\kappa$ for $\alpha \in Q^\kappa_n$ are examples of cellular algebras. We are interested in the fixed point subalgebras $\H_{p, [\alpha]}^\kappa$ (respectively $\H_{p, n}^\kappa$) of $\H_{[\alpha]}^\kappa$ (resp. $\H_n^\kappa$) for the algebra homomorphism $\sigma$. We can easily give bases for these algebras (cf. Proposition~\ref{proposition:basis_stable}). In \textsection\ref{subsubsection:full_orbit}, we prove that if $\#[\alpha]  = p$ (resp. if $p$ and $n$ are coprime) then $\H_{p, [\alpha]}^\kappa$ (resp. $\H_{p, n}^\kappa$) is cellular. Otherwise, using Corollary~\ref{corollaire:orbite_gnrl} we show that if in addition $p$ is odd then none of these bases of $\H_{p, [\alpha]}^\kappa$ are \emph{adapted} cellular (see \textsection\ref{subsubsection:adapted_cellularity}). Finally, in \textsection\ref{subsection:restriction_specht}  we will study the restriction of \emph{Specht modules} of $\H_{[\alpha]}^\kappa$. 

\subsection{Cellular algebras}
\label{subsection:cellular_algebras}

Let $A$ be an associative unitary finite-dimensional $F$-algebra.
A \emph{cellular datum} for the algebra $A$ is a triple $(\Lambda, \T, c)$ such that:
\begin{itemize}
\item the element $\Lambda = (\Lambda, \geq)$ is a finite partially ordered set;
\item for any $\lambda \in \Lambda$ we have an indexing set $\T(\lambda)$ and distinct elements $c^\lambda_\mathfrak{st}$ for all $\mathfrak{s}, \mathfrak{t} \in \T(\lambda)$ such that
\[
\left\{c^\lambda_\mathfrak{st} : \lambda \in \Lambda, \mathfrak{s}, \mathfrak{t} \in \T(\lambda)\right\},
\]
is a basis of $A$ as an $F$-module;
\item for any $\lambda \in \Lambda, \mathfrak{t} \in \T(\lambda)$ and $a \in A$, there exist scalars $r_\mathfrak{tv}(a) \in F$ such that for all $\mathfrak{s} \in \T(\lambda)$,
\[
c^\lambda_\mathfrak{st}a = \sum_{\mathfrak{v} \in \T(\lambda)} r_\mathfrak{tv}(a) c^\lambda_\mathfrak{sv} \pmod {A^{> \lambda}},
\]
where $A^{> \lambda}$ is the $F$-module spanned by $\{c^\mu_\mathfrak{ab} : \mu > \lambda \text{ and } \mathfrak{a}, \mathfrak{b} \in \T(\mu)\}$;
\item the $F$-linear map $* : A \to A$ determined by ${(c^\lambda_\mathfrak{st})}^* \coloneqq c^\lambda_\mathfrak{ts}$ for all $\lambda \in \Lambda$ and $\mathfrak{s}, \mathfrak{t} \in \T(\lambda)$ is an anti-automorphism of the algebra $A$.
\end{itemize}
We say that $A$ is a \emph{cellular algebra} if it has a cellular datum. We say that a basis $\mathcal{B}$ of $A$ is \emph{cellular} if it coincides with $\{c^\lambda_\mathfrak{st} : \lambda \in \Lambda, \mathfrak{s}, \mathfrak{t} \in \T(\lambda)\}$ where $(\Lambda, \T, c)$ is a cellular datum for $A$.

\begin{remarque}
\label{remark:cellular_algebra_dimension}
If $(\Lambda, \T, c)$ is a cellular datum for $A$ then
\[
\dim A = \sum_{\lambda \in \Lambda} \#\T(\lambda)^2.
\]
\end{remarque}

\begin{lemme}
\label{lemma:cellular_algebra_number_fixed_points}
Let $(\Lambda, \T, c)$ be a cellular datum of $A$ and let $*$ be the corresponding anti-automorphism. The cardinality of
\[
\left\{c^\lambda_\mathfrak{st} : \lambda \in \Lambda, \mathfrak{s}, \mathfrak{t} \in \T(\lambda), {(c^\lambda_\mathfrak{st})}^* = c^\lambda_\mathfrak{st}\right\},
\]
is $\sum_{\lambda \in \Lambda} \#\T(\lambda)$.
\end{lemme}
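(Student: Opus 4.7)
The plan is to use directly the defining property of the anti-automorphism $*$ from the cellular datum. By definition, for each $\lambda \in \Lambda$ and each pair $\mathfrak{s}, \mathfrak{t} \in \T(\lambda)$ we have ${(c^\lambda_\mathfrak{st})}^* = c^\lambda_\mathfrak{ts}$, so the condition ${(c^\lambda_\mathfrak{st})}^* = c^\lambda_\mathfrak{st}$ becomes the equation $c^\lambda_\mathfrak{ts} = c^\lambda_\mathfrak{st}$ in $A$.

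The next step is to invoke the distinctness clause in the cellular datum: the family $\{c^\lambda_\mathfrak{st} : \lambda \in \Lambda, \mathfrak{s}, \mathfrak{t} \in \T(\lambda)\}$ consists of pairwise distinct elements (and in particular forms a basis, so there is no coincidence between different index triples). Hence the equation $c^\lambda_\mathfrak{ts} = c^\lambda_\mathfrak{st}$ forces $(\mathfrak{s},\mathfrak{t}) = (\mathfrak{t},\mathfrak{s})$, that is, $\mathfrak{s} = \mathfrak{t}$.

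Consequently, the set under consideration is exactly the set of ``diagonal'' cellular basis elements:
\[
\left\{c^\lambda_\mathfrak{st} : \lambda \in \Lambda,\, \mathfrak{s}, \mathfrak{t} \in \T(\lambda),\, {(c^\lambda_\mathfrak{st})}^* = c^\lambda_\mathfrak{st}\right\} = \left\{c^\lambda_\mathfrak{ss} : \lambda \in \Lambda,\, \mathfrak{s} \in \T(\lambda)\right\}.
\]
A final counting step, again using that all these elements are pairwise distinct, gives the cardinality $\sum_{\lambda \in \Lambda} \#\T(\lambda)$, as claimed. There is no real obstacle here; the argument is a direct unfolding of the definition of a cellular datum, and the only (trivial) point to mention is that one uses the distinctness of the basis elements to pass from the equation $c^\lambda_\mathfrak{ts} = c^\lambda_\mathfrak{st}$ to the index identity $\mathfrak{s} = \mathfrak{t}$.
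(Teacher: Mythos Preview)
Your proof is correct and follows exactly the same approach as the paper: use the defining relation ${(c^\lambda_\mathfrak{st})}^* = c^\lambda_\mathfrak{ts}$ together with the distinctness of the cellular basis elements to conclude that the fixed elements are precisely the diagonal ones $c^\lambda_\mathfrak{ss}$. The paper's proof is a one-line version of what you wrote.
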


\begin{proof}
Since ${(c^\lambda_\mathfrak{st})}^* = c^\lambda_\mathfrak{ts}$, we have ${(c^\lambda_\mathfrak{st})}^* = c^\lambda_\mathfrak{st}$ if and only if $\mathfrak{s} = \mathfrak{t}$.
\end{proof}

%

Assume that $(\Lambda, \T, c)$ is a cellular datum for $A$. By~\cite{graham-lehrer}, for each $\lambda \in \Lambda$ we have an $A$-module $\mathcal{S}^\lambda$, called \emph{cell module}, endowed with a certain bilinear form $b_\lambda$ whose radical is an $A$-module. Moreover, if $\mathcal{D}^\lambda$ denotes the quotient of $\mathcal{S}^\lambda$ by the radical of $b_\lambda$, the set $\{\mathcal{D}^\lambda : \lambda \in \Lambda, \mathcal{D}^\lambda \neq \{0\}\}$ is a complete family of non-isomorphic irreducible $A$-modules. We conclude with the following lemma (see \cite[(3.9.8)]{graham-lehrer} or \cite[Corollary 2.22]{mathas}).  

\begin{lemme}
\label{lemma:composition_factors}
For any $\lambda \in \Lambda$, all the composition factors of the cell module~$\mathcal{S}^\lambda$ belong to the same block of the algebra~$A$.
\end{lemme}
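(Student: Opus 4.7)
The plan is to prove the stronger statement that the cell module $\mathcal{S}^\lambda$ itself lies in a single block of $A$, from which the claim about composition factors is immediate. Writing $1 = \sum_B e_B$ as the decomposition of the identity into the primitive central idempotents of $A$ indexed by the blocks, the goal is to show that a unique $e_B$ acts as the identity on $\mathcal{S}^\lambda$ while all others act as zero; since each $e_B$ is central, this gives the desired direct sum decomposition $\mathcal{S}^\lambda = e_{B_\lambda}\mathcal{S}^\lambda$.

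The heart of the argument is to prove that every central element $z \in Z(A)$ acts on $\mathcal{S}^\lambda$ by a single scalar $\chi_\lambda(z) \in F$. I would work inside the quotient $A^{\geq \lambda}/A^{>\lambda}$, where $A^{\geq\lambda}$ is the $F$-span of $\{c^\mu_{\mathfrak{u}\mathfrak{v}} : \mu \geq \lambda\}$; this is a two-sided ideal by the cellular axioms, and $\mathcal{S}^\lambda$ embeds faithfully inside it as the subspace spanned by $\{c^\lambda_{\mathfrak{s}_0\mathfrak{t}}\}_{\mathfrak{t} \in \T(\lambda)}$ for any fixed $\mathfrak{s}_0$. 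The right-action cellular axiom and the left-action counterpart obtained by applying $*$ (which preserves $A^{>\lambda}$) read
\begin{align*}
c^\lambda_{\mathfrak{s}\mathfrak{t}} \cdot z &\equiv \sum_{\mathfrak{v} \in \T(\lambda)} r_{\mathfrak{t}\mathfrak{v}}(z)\, c^\lambda_{\mathfrak{s}\mathfrak{v}} \pmod{A^{>\lambda}},
\\
z \cdot c^\lambda_{\mathfrak{s}\mathfrak{t}} &\equiv \sum_{\mathfrak{u} \in \T(\lambda)} r_{\mathfrak{s}\mathfrak{u}}(z^*)\, c^\lambda_{\mathfrak{u}\mathfrak{t}} \pmod{A^{>\lambda}}.
\end{align*}
Centrality of $z$ forces these two expansions to be equal, and the $F$-linear independence of $\{c^\lambda_{\mathfrak{u}\mathfrak{v}}\}_{\mathfrak{u},\mathfrak{v}}$ modulo $A^{>\lambda}$ then pins down $r_{\mathfrak{t}\mathfrak{v}}(z) = 0$ whenever $\mathfrak{v} \neq \mathfrak{t}$, together with $r_{\mathfrak{t}\mathfrak{t}}(z) = r_{\mathfrak{s}\mathfrak{s}}(z^*)$ for all $\mathfrak{s},\mathfrak{t} \in \T(\lambda)$; this common value is the desired scalar $\chi_\lambda(z)$.

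Specialising to $z = e_B$ and using $e_B^2 = e_B$ yields $\chi_\lambda(e_B) \in \{0,1\}$, while $\sum_B \chi_\lambda(e_B) = \chi_\lambda(1) = 1$ pins down a unique block $B_\lambda$ with $e_{B_\lambda}\mathcal{S}^\lambda = \mathcal{S}^\lambda$; all composition factors of $\mathcal{S}^\lambda$ therefore lie in $B_\lambda$, as required. The main obstacle is really the bookkeeping step in which the two one-sided expansions of $z \cdot c^\lambda_{\mathfrak{s}\mathfrak{t}}$ are matched: one must carefully vary $\mathfrak{s}$ and $\mathfrak{t}$ to see that only the $(\mathfrak{u},\mathfrak{v}) = (\mathfrak{s},\mathfrak{t})$ diagonal terms survive and to extract a single scalar independent of the basis element; once this is in hand, everything else is a routine application of the cellular axioms.
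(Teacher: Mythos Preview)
Your proof is correct and is precisely the standard argument from Graham--Lehrer~\cite{graham-lehrer} and Mathas~\cite{mathas} that the paper cites rather than proves: one shows that every central element acts on $\mathcal{S}^\lambda$ by a scalar, and then specialises to the block idempotents. The bookkeeping you flag as the main obstacle is handled exactly as you describe, by comparing the left- and right-action expansions of $z\,c^\lambda_{\mathfrak{st}}$ and reading off coefficients.
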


Let $\lambda \in \Lambda$ and let $B$ be the block of~$A$ such that all the composition factors of~$\mathcal{S}^\lambda$ belong to~$B$. We say that~$\lambda$ \emph{belongs} to the block~$B$.

\subsection{Cellularity of the fixed point subalgebra}
\label{subsection:cellularity_fixed_points_subalgebra}

We will first give more definitions from combinatorics, and recall the existence of a particular cellular datum for $\H_n^\kappa$ and its blocks $\H_\alpha^\kappa$. Then, we will construct bases for the algebra $\H_{p, [\alpha]}^\kappa$ and study its cellularity. We will use the following notation:
\[
Q^\kappa_n \coloneqq \left\{\alpha \in Q^+ : \text{there exists } \tuple{\lambda} \in \P_n^\kappa \text{ such that } \alpha_\kappa(\tuple{\lambda}) = \alpha\right\}.
\]

\subsubsection{Tableaux}

Let $\tuple{\lambda} = \bigl(\lambda^{(0)}, \dots, \lambda^{(r-1)}\bigr)$ be an $r$-partition of $n$. Recall that we defined in \textsection\ref{subsection:partitions} and \textsection\ref{subsection:multipartitions} the Young diagram $\Y(\tuple{\lambda})$ of $\tuple{\lambda}$. A \emph{$\tuple{\lambda}$-tableau} is a bijection $\mathfrak{t} = \bigl(\mathfrak{t}^{(0)}, \dots, \mathfrak{t}^{(r-1)}\bigr) : \Y(\tuple{\lambda}) \to \{1, \dots, n\}$.
The \emph{$\kappa$-residue sequence} of a $\tuple{\lambda}$-tableau $\mathfrak{t}$ is the sequence
\[
\res_\kappa(\mathfrak{t}) \coloneqq \Bigl(\res_\kappa\bigl(\mathfrak{t}^{-1}(a)\bigr)\Bigr)_{a \in \{1, \dots, n\}}.
\]
A $\tuple{\lambda}$-tableau $\mathfrak{t} : \Y(\tuple{\lambda}) \to \{1, \dots, n\}$ is \emph{standard} if the value of $\mathfrak{t}$ increases along the rows and down the columns of $\Y(\tuple{\lambda})$. We denote by $\T(\tuple{\lambda})$ the set of standard $\tuple{\lambda}$-tableaux. 

\begin{exemple}
We take $r = p = 2$ and we consider the bipartition $\tuple{\lambda} \coloneqq \bigl((4,1), (1)\bigr)$. The map $\mathfrak{t} : \Y(\tuple{\lambda}) \to \{1, \dots, 6\}$ described by
\[
\ytableaushort{1546,2} \quad \ytableaushort{3}\, ,
\]
is a $\tuple{\lambda}$-tableau (we warn the reader that we represented in the same way the multiset of residues associated with a multipartition), but it is not standard. The tableau $\mathfrak{s} : \Y(\tuple{\lambda}) \to \{1, \dots, 6\}$ described by
\[
\ytableaushort{1456,2} \quad \ytableaushort{3}\, ,
\]
is standard. With $\kappa = (0, 2)$ and $e = 4 = 2\eta$, the residue sequence of $\mathfrak{s}$ is $\res_\kappa(\mathfrak{s}) = (0,3,2,1,2,3)$. 
\end{exemple}

Mimicking Definition~\ref{definition:shift_lambda}, we define the \emph{shift} of a $\tuple{\lambda}$-tableau $\mathfrak{t} = \bigl(\mathfrak{t}^{(0)}, \dots, \mathfrak{t}^{(r-1)}\bigr)$ by
\[
\prescript{\sigma}{}{\mathfrak{t}} \coloneqq \bigl(\mathfrak{t}^{(r-d)}, \dots, \mathfrak{t}^{(r-1)}, \mathfrak{t}^{(0)}, \dots, \mathfrak{t}^{(r-d+1)}\bigr),
\]
and we denote by $[\mathfrak{t}]$ the orbit of $\mathfrak{t}$ under the action of $\sigma$.
%
Note that $\prescript{\sigma}{}{\mathfrak{t}}$ is a $\prescript{\sigma}{}{\tuple{\lambda}}$-tableau, which is standard if $\mathfrak{t}$ is standard. In particular the set $\T[\tuple{\lambda}] \coloneqq \cup_{\tuple{\mu} \in [\tuple{\lambda}]} \T(\tuple{\mu})$ is stable under $\sigma$ and there is a well-defined equivalence relation $\sim$ on $\T[\tuple{\lambda}]$ generated by $\mathfrak{t} \sim \prescript{\sigma}{}{\mathfrak{t}}$. We write $\TT[\tuple{\lambda}] \coloneqq \T[\tuple{\lambda}] / {\sim}$ for the set of equivalence classes. 
Choose a lift $\phi : \TT[\tuple{\lambda}] \to \T[\tuple{\lambda}]$ of the canonical projection $\T[\tuple{\lambda}] \to \TT[\tuple{\lambda}]$. In other words, if $\mathfrak{t}$ is any standard $\tuple{\lambda}$-tableau then $\phi([\mathfrak{t}]) \in [\mathfrak{t}]$. For any $j \in \{0, \dots, p-1\}$, we define
\[
\T^\phi_j(\tuple{\lambda}) \coloneqq \left\{ \mathfrak{t} \in \T(\tuple{\lambda}) : \phi([\mathfrak{t}]) = \prescript{\sigma^j}{}{\mathfrak{t}}\right\}.
\]
Note that the set $\T^\phi_j(\tuple{\lambda})$ may be empty for some $j \in \{0, \dots, p-1\}$, but we have a partition $\T(\tuple{\lambda}
) = \sqcup_{j = 0}^{p-1} \T^\phi_j(\tuple{\lambda})$. Moreover:
\begin{equation}
\label{equation:tableaux:t_Tj}
\text{if } \mathfrak{t} \in \T^\phi_j(\tuple{\lambda}) \text{ then } \prescript{\sigma}{}{\mathfrak{t}} \in \T^\phi_{j-1}(\prescript{\sigma}{}{\tuple{\lambda}}).
\end{equation}
We have
\begin{equation}
\label{equation:card_T0crochet_Tcrochet}
\#\T_0^\phi[\tuple{\lambda}] = \frac{1}{p}\#\T[\tuple{\lambda}],
\end{equation}
where $\T_0^\phi[\tuple{\lambda}] \coloneqq \cup_{\tuple{\mu} \in [\tuple{\lambda}]} \T_0^\phi(\tuple{\mu}) = \bigl\{\mathfrak{t} \in \T[\tuple{\lambda}] : \phi([\mathfrak{t}]) = \mathfrak{t}\bigr\}$. In particular, the cardinality of $\T_0^\phi[\tuple{\lambda}]$ does not depend on $\phi$ and we may abuse notation by writing $\#\T_0[\tuple{\lambda}]$ instead of $\T_0^\phi[\tuple{\lambda}]$. Since $\#\T(\tuple{\lambda}) = \frac{1}{\#[\tuple{\lambda}]}\#\T[\tuple{\lambda}]$, we also deduce that
\begin{equation}
\label{equation:card_Tparenthese_T0crochet}
\#\T(\tuple{\lambda}) = \frac{p}{\#[\tuple{\lambda}]}\T_0^\phi[\tuple{\lambda}].
\end{equation}

\begin{exemple}
Recall that the multicharge $\kappa$ is compatible with $(d, \eta, p)$. For any $\mathfrak{t} \in \T[\tuple{\lambda}]$, the compatibility condition~\eqref{equation:forme_kappa} ensures that there exists a unique standard tableau $\widetilde{\phi}(\mathfrak{t}) \in [\mathfrak{t}]$ such that $1$ is in the image of the first $d$ components of $\widetilde{\phi}(\mathfrak{t})$, that is, such that there exists $c \in \{0, \dots, d-1\}$ with $\widetilde{\phi}(\mathfrak{t})\bigl((0,0,c)\bigr) = 1$. Note that when $d = 1$ (\textit{i.e.} when $r = p$), this condition is the same as $\res_\kappa\bigl(\widetilde{\phi}^{-1}(1)\bigr) = \kappa_0$. The map $\widetilde{\phi} : \T[\tuple{\lambda}] \to \T[\tuple{\lambda}]$ is constant on the equivalent classes of $\sim$. Thus, it factorises to a map $\phi : \TT[\tuple{\lambda}] \to \T[\tuple{\lambda}]$ that lifts the natural projection. In this case, for any $j \in \{0, \dots, p-1\}$ we have
\[
\T^\phi_j(\tuple{\lambda}) = \Bigl\{ \mathfrak{t} \in \T(\tuple{\lambda}) : \text{there exists } c \in \{(p-j)d, \dots, (p-j+1)d - 1\} \text{ such that } \mathfrak{t}\bigl((0,0,c)\bigr) = 1\Bigr\}.
\]
We will see in \textsection\ref{subsubsection:full_orbit} another example of a lift $\phi$ of the natural projection.
\end{exemple}

\begin{remarque}
Here, we chose $\phi$ to be a map $\TT[\tuple{\lambda}] \to \T[\tuple{\lambda}]$. If $\mathcal{P}$ is  any subset of $\P_n^\kappa/{\sim}$, the equivalence relation $\sim$ is also defined on $\cup_{[\tuple{\lambda}] \in \mathcal{P}} \T[\tuple{\lambda}]$ and the equivalence classes are in natural bijection with $\cup_{[\tuple{\lambda}] \in \mathcal{P}} \TT[\tuple{\lambda}]$. Thus, we can allow $\phi$ to be a lift $\cup_{[\tuple{\lambda}] \in \mathcal{P}} \TT[\tuple{\lambda}] \to \cup_{[\tuple{\lambda}] \in \mathcal{P}} \T[\tuple{\lambda}]$.
\end{remarque}

\subsubsection{Cellular datum for the Ariki--Koike algebra}
\label{subsubsection:cellularity_AK}

It is known that we can find a family
\begin{equation}
\label{equation:ungraded_cellular_basis}
\left\{c^{\tuple{\lambda}}_\mathfrak{st} : \tuple{\lambda} \in \P^\kappa_n \text{ and } \mathfrak{s}, \mathfrak{t} \in \T(\tuple{\lambda})\right\},
\end{equation}
that form a cellular basis of $\H^\kappa_n$  (cf. \cite{dipper-james-mathas}).
Now recall that we defined in~\eqref{equation:introduction-sigma} a particular algebra automorphism $\sigma : \H_n^\kappa \to \H_n^\kappa$ of order $p$. Let $\tuple{\eta}$ be the $n$-tuple $(\eta, \dots, \eta)$ considered as an element of $(\mathbb{Z}/e\mathbb{Z})^n$. By~\cite{brundan-kleshchev,rostam}, we know that the algebra $\H_n^\kappa$ is generated by some elements
\begin{align*}
&e(\tuple{i}), & &\text{for any } \tuple{i} \in (\mathbb{Z}/e\mathbb{Z})^n,
\\
&\psi_a, &&\text{for any } a \in \{1, \dots, n-1\},
\\
&y_a, &&\text{for any } a \in \{1, \dots, n\},
\end{align*}
the ``Khovanov--Lauda generators'', for which
\begin{subequations}
\label{equation:sigma_graded_generators}
\begin{align}
\label{equation:sigma_e(i)}
\sigma(e(\tuple{i})) &= e(\tuple{i} - \tuple{\eta}), & &\text{for any } \tuple{i} \in (\mathbb{Z}/e\mathbb{Z})^n,
\\
\sigma(\psi_a) &= \psi_a, & &\text{for any } a \in \{1, \dots, n-1\},
\\
\sigma(y_a) &= y_a, & &\text{for any } a \in \{1, \dots, n\}.
\end{align}
\end{subequations}
The elements $\{e(\tuple{i}) : \tuple{i} \in (\mathbb{Z}/e\mathbb{Z})^n\}$ form a complete system of orthogonal idempotents, that is,
\begin{subequations}
\begin{align}
\label{equation:e(i)_idempotent}
e(\tuple{i})^2 &= e(\tuple{i}), & &\text{for any } \tuple{i} \in (\mathbb{Z}/e\mathbb{Z})^n,
\\
\label{equation:e(i)e(j)}
e(\tuple{i})e(\tuple{j}) &= 0, &&\text{for any } \tuple{i} \neq \tuple{j} \in (\mathbb{Z}/e\mathbb{Z})^n,
\\
\label{equation:sum_e(i)}
\sum_{\tuple{i} \in (\mathbb{Z}/e\mathbb{Z})^n} e(\tuple{i}) &= 1.
\end{align}
\end{subequations}

Among the generators $e(\tuple{i})$ for any $\tuple{i} \in (\mathbb{Z}/e\mathbb{Z})^n$, we know exactly the ones that are non-zero (see~\cite[4.1. Lemma]{hu-mathas_graded}).

\begin{lemme}
\label{lemma:non_zero_idempotents}
For any $\tuple{i} \in (\mathbb{Z}/e\mathbb{Z})^n$, the idempotent $e(\tuple{i}) \in \H_n^\kappa$ is non-zero if and only if there exist $\tuple{\lambda} \in \P_n^\kappa$ and $\mathfrak{t} \in \T(\tuple{\lambda})$ such that $\tuple{i} = \res_\kappa(\mathfrak{t})$.
\end{lemme}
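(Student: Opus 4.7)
The plan is to use the graded cellular basis of $\H_n^\kappa$ constructed by Hu and Mathas~\cite{hu-mathas_graded}. Recall that there is a cellular basis $\bigl\{\psi^{\tuple{\lambda}}_{\mathfrak{s}\mathfrak{t}} : \tuple{\lambda} \in \P_n^\kappa,\ \mathfrak{s}, \mathfrak{t} \in \T(\tuple{\lambda})\bigr\}$ of $\H_n^\kappa$ whose interaction with the idempotents $e(\tuple{i})$ is given by
\[
e(\tuple{i})\,\psi^{\tuple{\lambda}}_{\mathfrak{s}\mathfrak{t}} = \delta_{\tuple{i},\res_\kappa(\mathfrak{s})}\,\psi^{\tuple{\lambda}}_{\mathfrak{s}\mathfrak{t}},
\qquad
\psi^{\tuple{\lambda}}_{\mathfrak{s}\mathfrak{t}}\,e(\tuple{i}) = \delta_{\tuple{i},\res_\kappa(\mathfrak{t})}\,\psi^{\tuple{\lambda}}_{\mathfrak{s}\mathfrak{t}},
\]
for every $\tuple{i} \in (\mathbb{Z}/e\mathbb{Z})^n$, $\tuple{\lambda} \in \P_n^\kappa$ and $\mathfrak{s}, \mathfrak{t} \in \T(\tuple{\lambda})$. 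This is the only non-trivial input; everything else is a formal consequence of \eqref{equation:e(i)_idempotent}--\eqref{equation:sum_e(i)}.

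For the ``if'' direction, suppose $\tuple{i} = \res_\kappa(\mathfrak{t})$ for some $\tuple{\lambda} \in \P_n^\kappa$ and $\mathfrak{t} \in \T(\tuple{\lambda})$. Then
\[
e(\tuple{i})\,\psi^{\tuple{\lambda}}_{\mathfrak{t}\mathfrak{t}} = \psi^{\tuple{\lambda}}_{\mathfrak{t}\mathfrak{t}} \neq 0,
\]
since $\psi^{\tuple{\lambda}}_{\mathfrak{t}\mathfrak{t}}$ is an element of a basis. Hence $e(\tuple{i}) \neq 0$.

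For the ``only if'' direction, fix $\tuple{i} \in (\mathbb{Z}/e\mathbb{Z})^n$ and expand $e(\tuple{i})$ in the Hu--Mathas basis:
\[
e(\tuple{i}) = \sum_{\tuple{\lambda},\mathfrak{s},\mathfrak{t}} r^{\tuple{\lambda}}_{\mathfrak{s}\mathfrak{t}}\,\psi^{\tuple{\lambda}}_{\mathfrak{s}\mathfrak{t}},
\]
for some scalars $r^{\tuple{\lambda}}_{\mathfrak{s}\mathfrak{t}} \in F$. Using \eqref{equation:e(i)_idempotent} and the left action formula above,
\[
e(\tuple{i}) = e(\tuple{i})^2 = \sum_{\tuple{\lambda},\mathfrak{s},\mathfrak{t}} r^{\tuple{\lambda}}_{\mathfrak{s}\mathfrak{t}}\,e(\tuple{i})\,\psi^{\tuple{\lambda}}_{\mathfrak{s}\mathfrak{t}} = \sum_{\tuple{\lambda},\mathfrak{s},\mathfrak{t}} \delta_{\tuple{i},\res_\kappa(\mathfrak{s})}\,r^{\tuple{\lambda}}_{\mathfrak{s}\mathfrak{t}}\,\psi^{\tuple{\lambda}}_{\mathfrak{s}\mathfrak{t}}.
\]
Comparing coefficients in the basis, any non-zero $r^{\tuple{\lambda}}_{\mathfrak{s}\mathfrak{t}}$ forces $\tuple{i} = \res_\kappa(\mathfrak{s})$ for the corresponding standard tableau $\mathfrak{s} \in \T(\tuple{\lambda})$. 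Thus, if no pair $(\tuple{\lambda},\mathfrak{t})$ with $\tuple{i} = \res_\kappa(\mathfrak{t})$ exists then every coefficient $r^{\tuple{\lambda}}_{\mathfrak{s}\mathfrak{t}}$ vanishes and $e(\tuple{i}) = 0$. Contrapositively, if $e(\tuple{i}) \neq 0$ then some such pair must exist, completing the proof. The only real ingredient to be cited from the literature is the existence of the Hu--Mathas basis together with its compatibility with the idempotents $e(\tuple{i})$; once that is available, the argument is purely formal manipulation of the orthogonality, completeness and idempotency relations \eqref{equation:e(i)_idempotent}--\eqref{equation:sum_e(i)}.
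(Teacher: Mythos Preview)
Your argument is correct. The paper itself does not prove this lemma but simply cites it from \cite[4.1.~Lemma]{hu-mathas_graded}, so there is no proof to compare against directly.

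That said, it is worth noting a mild logical point. In \cite{hu-mathas_graded} the result you are proving appears \emph{before} the construction of the graded cellular basis $\{\psi^{\tuple{\lambda}}_{\mathfrak{s}\mathfrak{t}}\}$; it is deduced there from the Brundan--Kleshchev isomorphism together with the seminormal form of the Ariki--Koike algebra, and the non-vanishing statement is then used as input in later sections. Your route instead takes the cellular basis and its compatibility~\eqref{equation:cellular_basis_idempotent} with the idempotents as a black box and reads the lemma off from that. This is perfectly valid once one grants the existence and properties of the basis, and it has the virtue of being a two-line formal argument; the trade-off is that you are invoking a considerably heavier result than the original proof does. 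In the context of the present paper this is harmless, since the cellular basis is already being used throughout \textsection\ref{subsection:cellularity_fixed_points_subalgebra}.
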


There is a well-defined algebra anti-automorphism $* : \H_n^\kappa \to \H_n^\kappa$, which we now fix, that is the identity on each Khovanov--Lauda generator (see~\cite[\textsection 5.1]{hu-mathas_graded}). We can find a cellular basis of $\H_n^\kappa$ of the form~\eqref{equation:ungraded_cellular_basis} such that the associated anti-automorphism is the map $*$, with the additional property
\begin{equation}
\label{equation:cellular_basis_idempotent}
c^{\tuple{\lambda}}_\mathfrak{st} \in e(\res_\kappa(\mathfrak{s})) \H_n^\kappa e(\res_\kappa(\mathfrak{t})),
\end{equation}
for all $\tuple{\lambda} \in \P_n^\kappa$ and $\mathfrak{s}, \mathfrak{t} \in \T(\tuple{\lambda})$ (see~\cite{hu-mathas_graded} and also~\cite{bowman}). Note that we recover the result of Lemma~\ref{lemma:non_zero_idempotents}.
We now fix such a cellular basis.

\begin{remarque}
\label{remark:application_AK_graded}
The cellular bases that are constructed in \cite{hu-mathas_graded,bowman} are \emph{graded} cellular bases: the algebra $\H_n^\kappa$ is $\mathbb{Z}$-graded (\cite{rouquier, brundan-kleshchev}) and there exists a map $\deg : \coprod_{\tuple{\lambda} \in \P^\kappa_n} \T(\tuple{\lambda})  \to \mathbb{Z}$ such that $c^{\tuple{\lambda}}_\mathfrak{st}$ is homogeneous of degree $\deg \mathfrak{s} + \deg \mathfrak{t}$. These graded cellular bases seem to be more adapted to $\sigma$ than the ungraded one of~\cite{dipper-james-mathas}: if $\H_n^\kappa$ is semisimple, we can prove that $\sigma$ permutes the elements of the graded basis but its action on the ungraded basis is more complicated.
\end{remarque}

The condition~\eqref{equation:cellular_basis_idempotent} allows us to give a more precise description of this cellular structure for $\H_n^\kappa$. 
 For any $\alpha \in Q^+$ with $\lvert\alpha\rvert = n$, denote by $I^\alpha$ the subset of $(\mathbb{Z}/e\mathbb{Z})^n$ given by the $n$-tuples $\tuple{i} \in (\mathbb{Z}/e\mathbb{Z})^n$ that have exactly $\alpha_i$ components equal to $i$ for any $i \in \{0, \dots, e-1\}$. The subalgebra
\[
\H^\kappa_\alpha \coloneqq \sum_{\tuple{i}, \tuple{j} \in I^\alpha} e(\tuple{i}) \H^\kappa_n e(\tuple{j}) \subseteq \H^\kappa_n,
\]
is a block of $\H^\kappa_n$ if $\alpha \in Q^\kappa_n$ and $\{0\}$ otherwise (see~\cite{lyle-mathas}). By~\eqref{equation:cellular_basis_idempotent}, when $\alpha \in Q^\kappa_n$ the algebra $\H_\alpha^\kappa$ is cellular, with cellular basis
\[
\left\{c^{\tuple{\lambda}}_\mathfrak{st} : \tuple{\lambda} \in \P^\kappa_\alpha \text{ and } \mathfrak{s}, \mathfrak{t} \in \T(\tuple{\lambda})\right\}
\]
(cf.~\cite[Corollary 5.12]{hu-mathas_graded}).

\subsubsection{Subalgebras of fixed points}
\label{subsubsection:subalgebras_fixed_points}

Recall from the introduction that we defined a subalgebra $\H_{p, n}^\kappa \subseteq \H_n^\kappa$ as the subalgebra of the fixed points of $\sigma : \H_n^\kappa \to \H_n^\kappa$. If $\mu : \H_n^\kappa \to \H_n^\kappa$ is the linear map defined by $\mu \coloneqq \sum_{j = 0}^{p-1} \sigma^j$, we have $\mu(\H^\kappa_n) = \H^\kappa_{p, n}$.

\begin{remarque}
We warn the reader that the map that we denoted by $\mu$ in \cite{rostam} is the map $\frac{1}{p}\mu$.
\end{remarque}

We now look at the blocks of $\H_n^\kappa$. Let $\alpha \in Q^\kappa_n$ and denote by $[\alpha]$ the orbit of $\alpha$ under the action of $\sigma$ (cf. Definition~\ref{definition:shift_alpha}). The subalgebra $\H_\alpha^\kappa \subseteq \H_n^\kappa$ is not necessarily stable under $\sigma$. Indeed, by~\eqref{equation:sigma_e(i)} we have
\begin{equation}
\label{equation:subalgebra-sigmaHalpha}
\sigma(\H_\alpha^\kappa) \subseteq \H_{\sigma \cdot\alpha}^\kappa.
\end{equation}
Hence, the smallest subalgebra of $\H_n^\kappa$ stable under $\sigma$ and containing $\H_\alpha^\kappa$ is
\[
\H_{[\alpha]}^\kappa \coloneqq \bigoplus_{\beta \in [\alpha]} \H_\beta^\kappa.
\]
Similarly, we define $\P_{[\alpha]}^\kappa \coloneqq \cup_{\beta \in [\alpha]} \P_\beta^\kappa$. Note that by Lemma~\ref{lemme:sigma_alpha_lambda} we have $[\tuple{\lambda}] \subseteq \P_{[\alpha]}^\kappa$. Hence, as for the tableaux, there is a well-defined equivalence relation $\sim$ on $\P_{[\alpha]}^\kappa$ generated by $\tuple{\lambda} \sim \prescript{\sigma}{}{\tuple{\lambda}}$. We write $\PP_{[\alpha]}^\kappa \coloneqq \P_{[\alpha]}^\kappa / {\sim}$ for the set of equivalence classes. As in \textsection\ref{subsubsection:cellularity_AK}, the algebra $\H_{[\alpha]}^\kappa$ is cellular, with cellular basis $\bigl\{c^{\tuple{\lambda}}_\mathfrak{st} : \tuple{\lambda} \in \P_{[\alpha]}^\kappa$ and $\mathfrak{s}, \mathfrak{t} \in \T(\tuple{\lambda})\bigr\}$.
As in the introduction, if $\H_{p, [\alpha]}^\kappa \subseteq \H_{[\alpha]}^\kappa$ denotes the subalgebra of fixed points of $\sigma$ then $\H_{p, [\alpha]}^\kappa = \mu(\H_{[\alpha]}^\kappa)$. 

\begin{proposition}
\label{proposition:basis_stable}
Let
\[
\phi : \bigcup_{[\tuple{\lambda}] \in \PP_{[\alpha]}^\kappa} \TT[\tuple{\lambda}] \longrightarrow \bigcup_{[\tuple{\lambda}] \in \PP_{[\alpha]}^\kappa} \T[\tuple{\lambda}],
\]
be a lift of the canonical projection. 
The family
\begin{equation}
\label{equation:basis_stable}
\left\{\mu(c^{\tuple{\lambda}}_\mathfrak{st}) : \tuple{\lambda} \in \P^\kappa_{[\alpha]}, \mathfrak{s} \in \T(\tuple\lambda), \mathfrak{t} \in \T^\phi_0(\tuple\lambda)\right\},
\end{equation}
is an $F$-basis of $\H^\kappa_{p, [\alpha]}$.
\end{proposition}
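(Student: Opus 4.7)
The plan rests on a key structural fact: the automorphism $\sigma$ permutes the cellular basis $\{c^{\tuple{\lambda}}_\mathfrak{st}\}$ of $\H_{[\alpha]}^\kappa$. Since $\sigma$ fixes each KLR generator $\psi_a$ and $y_a$ and only shifts residues on idempotents $e(\tuple{i})$ by \eqref{equation:sigma_graded_generators}, and since the graded cellular basis of \cite{hu-mathas_graded} or \cite{bowman} is built from these generators together with the source and target idempotents of \eqref{equation:cellular_basis_idempotent}, one verifies that
\[
\sigma(c^{\tuple{\lambda}}_\mathfrak{st}) = c^{\prescript{\sigma^{-1}}{}{\tuple{\lambda}}}_{\prescript{\sigma^{-1}}{}{\mathfrak{s}}, \prescript{\sigma^{-1}}{}{\mathfrak{t}}}
\]
for all $\tuple{\lambda} \in \P^\kappa_{[\alpha]}$ and $\mathfrak{s}, \mathfrak{t} \in \T(\tuple{\lambda})$. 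Combined with the elementary observation that $\prescript{\sigma^j}{}{\mathfrak{t}} \neq \mathfrak{t}$ for $0 < j < p$ (since the cyclic shift by $jd$ on the $r$ components moves every node, and the bijectivity of $\mathfrak{t}$ rules out identification of distinct nodes), this forces the $\sigma$-orbit of every triple $(\tuple{\lambda}, \mathfrak{s}, \mathfrak{t})$ to have size exactly $p$, and the constraint $\mathfrak{t} \in \T^\phi_0(\tuple{\lambda})$ picks out a unique representative per orbit.

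For spanning, note that $\H_{p, [\alpha]}^\kappa = \mu(\H_{[\alpha]}^\kappa)$ is $F$-spanned by $\{\mu(c^{\tuple{\lambda}}_\mathfrak{st})\}$ over all triples. The identity $\mu \circ \sigma = \mu$, immediate from $\sigma^p = \mathrm{id}$, combined with the permutation property yields $\mu(c^{\tuple{\lambda}}_\mathfrak{st}) = \mu(c^{\prescript{\sigma^j}{}{\tuple{\lambda}}}_{\prescript{\sigma^j}{}{\mathfrak{s}}, \prescript{\sigma^j}{}{\mathfrak{t}}})$ for any $j \in \{0,\dots,p-1\}$. Choosing $j$ so that $\prescript{\sigma^j}{}{\mathfrak{t}} \in \T^\phi_0(\prescript{\sigma^j}{}{\tuple{\lambda}})$ reduces every generator of $\H_{p, [\alpha]}^\kappa$ to the restricted family.

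For linear independence, expand
\[
\mu(c^{\tuple{\lambda}}_\mathfrak{st}) = \sum_{j=0}^{p-1} c^{\prescript{\sigma^{-j}}{}{\tuple{\lambda}}}_{\prescript{\sigma^{-j}}{}{\mathfrak{s}}, \prescript{\sigma^{-j}}{}{\mathfrak{t}}},
\]
a sum of $p$ \emph{distinct} cellular basis elements (one per element of the $\sigma$-orbit of the triple). Distinct triples in the restricted index set lie in distinct orbits, so the expansions involve pairwise disjoint subsets of the cellular basis; a relation $\sum r^{\tuple{\lambda}}_\mathfrak{st} \mu(c^{\tuple{\lambda}}_\mathfrak{st}) = 0$ therefore forces each $r^{\tuple{\lambda}}_\mathfrak{st} = 0$. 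Alternatively, a short dimension count suffices: the restricted index set has cardinality $\frac{1}{p}\dim \H_{[\alpha]}^\kappa$, which matches $\dim \H_{p, [\alpha]}^\kappa$ via the trace formula, since $\sigma^j$ fixes no cellular basis element for $0 < j < p$ and $p \in F^\times$.

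The main technical obstacle is precisely the permutation identity for $\sigma$ on the cellular basis. As emphasised in Remark~\ref{remark:application_AK_graded}, this behaves cleanly for the graded cellular basis---where it reduces to the explicit formulas \eqref{equation:sigma_graded_generators}---but is more delicate for ungraded cellular bases; the argument above therefore implicitly requires working with the graded basis of \cite{hu-mathas_graded} or \cite{bowman}, and a careful compatibility check of the shift direction and residue conventions.
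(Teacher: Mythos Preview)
Your argument hinges on the permutation identity
\[
\sigma(c^{\tuple{\lambda}}_\mathfrak{st}) = c^{\prescript{\sigma^{-1}}{}{\tuple{\lambda}}}_{\prescript{\sigma^{-1}}{}{\mathfrak{s}},\,\prescript{\sigma^{-1}}{}{\mathfrak{t}}},
\]
which you flag yourself as the ``main technical obstacle''. This is a genuine gap: the paper does \emph{not} establish this identity in general. Remark~\ref{remark:application_AK_graded} only claims that $\sigma$ permutes the graded cellular basis when $\H_n^\kappa$ is semisimple; in the modular case the action of $\sigma$ on the graded basis elements is left open. The construction of $c^{\tuple{\lambda}}_\mathfrak{st}$ in \cite{hu-mathas_graded} involves a choice of initial tableau $\mathfrak{t}^{\tuple{\lambda}}$ and a monomial $y^{\tuple{\lambda}}$ depending on $\tuple{\lambda}$ and on the multicharge, and checking that all these choices are compatible with the shift is not the one-line verification you suggest from~\eqref{equation:sigma_graded_generators}. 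Granted the identity, the rest of your argument (orbits of size $p$, disjoint expansions, spanning via $\mu\circ\sigma=\mu$) is fine.

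The paper sidesteps this problem entirely, and the comparison is instructive. Rather than tracking $\sigma$ on individual basis elements, it uses only the much weaker idempotent property~\eqref{equation:cellular_basis_idempotent}. Define $e_j^\phi \coloneqq \sum_{\tuple{\lambda}}\sum_{\mathfrak{t}\in\T_j^\phi(\tuple{\lambda})} e(\res_\kappa(\mathfrak{t}))$; then $\{c^{\tuple{\lambda}}_\mathfrak{st}:\mathfrak{t}\in\T_0^\phi(\tuple{\lambda})\}$ is automatically a basis of $\H_{[\alpha]}^\kappa e_0^\phi$ by~\eqref{equation:cellular_basis_idempotent}, with no need to know what $\sigma$ does to it. Since $\sigma(e_j^\phi)=e_{j+1}^\phi$ (this follows from $\res_\kappa(\prescript{\sigma}{}{\mathfrak{t}})=\res_\kappa(\mathfrak{t})+\tuple{\eta}$ and~\eqref{equation:sigma_e(i)}) and $\sum_j e_j^\phi = 1$, the family $\{\sigma^j(c^{\tuple{\lambda}}_\mathfrak{st}):j,\mathfrak{s},\mathfrak{t}\in\T_0^\phi\}$ is a basis of $\H_{[\alpha]}^\kappa=\bigoplus_j\H_{[\alpha]}^\kappa e_j^\phi$, and the claim for $\mu$ follows. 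The point is that the images $\sigma^j(c^{\tuple{\lambda}}_\mathfrak{st})$ need not be cellular basis elements themselves; they only need to land in the right idempotent-summand, which is guaranteed by~\eqref{equation:cellular_basis_idempotent} and~\eqref{equation:sigma_e(i)} alone.
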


\begin{proof}
It suffices to prove that the family
\[
\left\{\sigma^j(c^{\tuple{\lambda}}_\mathfrak{st}) : j \in \{0, \dots, p-1\}, \tuple{\lambda} \in \P^\kappa_{[\alpha]}, \mathfrak{s} \in \T(\tuple{\lambda}), \mathfrak{t} \in \T_0^\phi(\tuple{\lambda})\right\},
\]
is an $F$-basis of $\H^\kappa_{[\alpha]}$. For any $j \in \{0, \dots, p-1\}$, define the idempotent
\[
e_j^\phi \coloneqq \sum_{\tuple{\lambda} \in \P^\kappa_{[\alpha]}} \sum_{\mathfrak{t} \in \T_j^\phi(\tuple{\lambda})} e\bigl(\res_\kappa(\mathfrak{t})\bigr).
\]
The family $\bigl\{c^{\tuple{\lambda}}_\mathfrak{st} : \tuple{\lambda} \in \P^\kappa_{[\alpha]}, \mathfrak{s} \in \T(\tuple{\lambda}), \mathfrak{t} \in \T_0^\phi(\tuple{\lambda})\bigr\}$ is an $F$-basis of $\H^\kappa_{[\alpha]} e_0^\phi$. Since $\kappa$ is compatible with $(d, \eta, p)$, for any $\tuple{\lambda} \in \P_{[\alpha]}^\kappa$ and any $\tuple{\lambda}$-tableau $\mathfrak{t}$ we have
\[
\res_\kappa\bigl(\prescript{\sigma}{}{\mathfrak{t}}\bigr) = \res_\kappa(\mathfrak{t}) + \tuple{\eta}.
\]
Using~\eqref{equation:tableaux:t_Tj}, we deduce that $\sigma(e_j^\phi) = e_{j+1}^\phi$ for all $j \in \{0, \dots, p-1\}$. Hence the family $\bigl\{\sigma^j(c^{\tuple{\lambda}}_\mathfrak{st}) : \tuple{\lambda} \in \P^\kappa_{[\alpha]}, \mathfrak{s} \in \T(\tuple{\lambda}), \mathfrak{t} \in \T_0^\phi(\tuple{\lambda})\bigr\}$ is an $F$-basis of $\H^\kappa_{[\alpha]} e_j$. By~\eqref{equation:sum_e(i)} and Lemma~\ref{lemma:non_zero_idempotents} we have $\sum_{j = 0}^{p-1} e_j^\phi = 1$ thus $\H_{[\alpha]}^\kappa = \oplus_{j = 0}^{p-1} \H_{[\alpha]}^\kappa e_j^\phi$ and we conclude.
\end{proof}

\begin{remarque}
Recall from Remark~\ref{remark:application_AK_graded} that the algebra $\H_{[\alpha]}^\kappa$ is $\mathbb{Z}$-graded. By~\cite{rostam}, the algebra $\H_{p, [\alpha]}^\kappa$ is also $\mathbb{Z}$-graded and the basis~\eqref{equation:basis_stable} is homogeneous.
\end{remarque}

We will prove the following partial alternative:
\begin{itemize}
\item if $\#[\alpha] = p$, the family~\eqref{equation:basis_stable} is a (graded) cellular basis of  $\H_{p, [\alpha]}^\kappa$, for a particular choice of lift $\phi$ (\textsection\ref{subsubsection:full_orbit});
\item if $\#[\alpha] < p$ and $p$ is odd, for any lift $\phi$ the family~\eqref{equation:basis_stable} is not an \emph{adapted} cellular basis of $\H_{p, [\alpha]}^\kappa$, in the sense of Definition~\ref{definition:*-cellularity} (\textsection\ref{subsubsection:adapted_cellularity}).
\end{itemize}

\subsubsection{Cellular basis in the full orbit case}
\label{subsubsection:full_orbit}
Let $\alpha \in Q^\kappa_n$ and assume that $\#[\alpha] = p$. By Lemma~\ref{lemme:sigma_alpha_lambda}, given $\tuple{\lambda} \in \P_{[\alpha]}^\kappa$ we know that for any $\mathfrak{t} \in \T(\tuple{\lambda})$ there is a unique standard tableau $\mathfrak{t}_\alpha \in [\mathfrak{t}]$ whose underlying $r$-partition is in $\P_\alpha^\kappa$. We have in fact $\mathfrak{t}_\alpha \in \T(\tuple{\lambda}_\alpha)$, where $\tuple{\lambda}_\alpha$ is the unique element of $[\tuple{\lambda}]$ that is in $\P_\alpha^\kappa$. We obtain a map
\[
\begin{array}{c|rcl}
\phi : &\displaystyle\bigcup_{[\tuple{\lambda}] \in \PP_{[\alpha]}^\kappa} \TT[\tuple{\lambda}] &\longrightarrow& \displaystyle\bigcup_{[\tuple{\lambda}] \in \PP_{[\alpha]}^\kappa} \T[\tuple{\lambda}]
\\
& \mathfrak{t} & \longmapsto & \mathfrak{t}_\alpha
\end{array},
\]
that lifts the natural projection. For any $\tuple{\lambda} \in \P_{[\alpha]}^\kappa$, we have
\[
\T_0^\phi(\tuple{\lambda}) = 
\begin{cases}
\T(\tuple{\lambda}), & \text{if } \tuple{\lambda} \in \P_\alpha^\kappa,
\\
\emptyset, & \text{otherwise}.
\end{cases}
\]
The basis~\eqref{equation:basis_stable} of $\H_{p, [\alpha]}^\kappa$ that we obtain is thus
\begin{equation}
\label{equation:basis_stable_orbite_pleine}
\left\{\mu(c^{\tuple{\lambda}}_\mathfrak{st}) : \tuple{\lambda} \in \P^\kappa_\alpha \text{ and } \mathfrak{s}, \mathfrak{t} \in \T(\tuple\lambda)\right\}.
\end{equation}
For any $\tuple{\lambda} \in \P_\alpha^\kappa$ and $\mathfrak{s}, \mathfrak{t} \in \T(\tuple{\lambda})$, we set $d_\mathfrak{st}^{\tuple{\lambda}} \coloneqq \mu(c^{\tuple{\lambda}}_\mathfrak{st})$.
Recall that $(\P_\alpha^\kappa, \T, c)$ is a cellular datum for $\H_\alpha^\kappa$.

\begin{proposition}
\label{proposition:cellularity_full_orbit}
Recall that $\#[\alpha] = p$. The triple $(\P_\alpha^\kappa, \T, d)$ is a cellular datum for $\H_{p, [\alpha]}^\kappa$.
\end{proposition}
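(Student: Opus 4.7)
The plan is to verify directly the four axioms of a cellular datum for $(\P_\alpha^\kappa, \T, d)$, where most of the work is already packaged in what precedes. Axiom~1 is immediate: $\P_\alpha^\kappa$ inherits the dominance order from $\P_n^\kappa$. Axiom~2 (basis) is exactly the content of Proposition~\ref{proposition:basis_stable} applied to the particular lift $\phi$ described just before the statement, since the hypothesis $\#[\alpha]=p$ forces $\T_0^\phi(\tuple{\lambda}) = \T(\tuple{\lambda})$ for $\tuple{\lambda} \in \P_\alpha^\kappa$ and $\T_0^\phi(\tuple{\lambda}) = \emptyset$ otherwise; this produces exactly the family~\eqref{equation:basis_stable_orbite_pleine}, and in particular all the $d_\mathfrak{st}^{\tuple{\lambda}}$ are distinct. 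Axiom~4 will follow from the fact that $\sigma$ and $*$ commute: on the Khovanov--Lauda generators, $*$ is the identity, $\sigma$ fixes each $\psi_a$ and $y_a$, and $\sigma(*(e(\tuple{i}))) = e(\tuple{i}-\tuple{\eta}) = *(\sigma(e(\tuple{i})))$ by~\eqref{equation:sigma_e(i)}, so $\sigma \circ * = * \circ \sigma$ on all of $\H_n^\kappa$ and hence $\mu \circ * = * \circ \mu$. Therefore ${(d_\mathfrak{st}^{\tuple{\lambda}})}^* = \mu({(c_\mathfrak{st}^{\tuple{\lambda}})}^*) = \mu(c_\mathfrak{ts}^{\tuple{\lambda}}) = d_\mathfrak{ts}^{\tuple{\lambda}}$, and the restriction of $*$ to $\H_{p,[\alpha]}^\kappa$ is an anti-automorphism.

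The substantive axiom is~3. Let $\tuple{\lambda} \in \P_\alpha^\kappa$, $\mathfrak{s}, \mathfrak{t} \in \T(\tuple{\lambda})$ and $a \in \H_{p,[\alpha]}^\kappa$. The $\sigma$-invariance of $a$ gives $\sigma^j(c_\mathfrak{st}^{\tuple{\lambda}} a) = \sigma^j(c_\mathfrak{st}^{\tuple{\lambda}}) \cdot a$, so summing over $j$ yields $d_\mathfrak{st}^{\tuple{\lambda}} \cdot a = \mu(c_\mathfrak{st}^{\tuple{\lambda}} a)$. Writing $a = \sum_{\beta \in [\alpha]} a_\beta$ along the block decomposition $\H_{[\alpha]}^\kappa = \oplus_\beta \H_\beta^\kappa$ and using $c_\mathfrak{st}^{\tuple{\lambda}} \in \H_\alpha^\kappa$ together with the orthogonality of blocks, we get $c_\mathfrak{st}^{\tuple{\lambda}} a = c_\mathfrak{st}^{\tuple{\lambda}} a_\alpha \in \H_\alpha^\kappa$. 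Applying the cellular structure of the block $\H_\alpha^\kappa$ (which is inherited from that of $\H_n^\kappa$ as recalled in~\textsection\ref{subsubsection:cellularity_AK}), there exist scalars $r_\mathfrak{tv}(a_\alpha) \in F$ depending only on $\mathfrak{t}$ and $a_\alpha$ such that
\[
c_\mathfrak{st}^{\tuple{\lambda}} a_\alpha = \sum_{\mathfrak{v} \in \T(\tuple{\lambda})} r_\mathfrak{tv}(a_\alpha)\, c_\mathfrak{sv}^{\tuple{\lambda}} \pmod{(\H_\alpha^\kappa)^{>\tuple{\lambda}}},
\]
and the higher order terms are $F$-linear combinations of $c_\mathfrak{ab}^{\tuple{\mu}}$ with $\tuple{\mu} \in \P_\alpha^\kappa$ and $\tuple{\mu} > \tuple{\lambda}$. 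Applying $\mu$ termwise turns each such $c_\mathfrak{ab}^{\tuple{\mu}}$ into $d_\mathfrak{ab}^{\tuple{\mu}}$, so defining $R_\mathfrak{tv}(a) \coloneqq r_\mathfrak{tv}(a_\alpha)$ we obtain $d_\mathfrak{st}^{\tuple{\lambda}} a \equiv \sum_\mathfrak{v} R_\mathfrak{tv}(a) d_\mathfrak{sv}^{\tuple{\lambda}}$ modulo the span of the $d_\mathfrak{ab}^{\tuple{\mu}}$ with $\tuple{\mu} > \tuple{\lambda}$ in $\P_\alpha^\kappa$, with $R_\mathfrak{tv}(a)$ independent of $\mathfrak{s}$.

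The only place the hypothesis $\#[\alpha] = p$ plays a role is in axiom~2, via the existence of the map $\phi$ selecting the unique representative of each orbit lying in $\P_\alpha^\kappa$. The potential obstacle in axiom~3 — namely, that $\mu$ applied to higher-order cellular terms could produce elements outside the filtration of $\H_{p,[\alpha]}^\kappa$ indexed by $\P_\alpha^\kappa$ — is neatly sidestepped by the observation that multiplication by any $a \in \H_{p,[\alpha]}^\kappa$ collapses onto its $\alpha$-component when acting on $\H_\alpha^\kappa$, keeping all higher-order terms inside $\P_\alpha^\kappa$ throughout.
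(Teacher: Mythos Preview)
Your proof is correct and rests on the same two ingredients as the paper's: that $\sigma$ (hence $\mu$) commutes with $*$, and that multiplication collapses to the $\alpha$-block. The paper packages these more tightly: rather than checking the axioms one by one, it observes that the restriction $\mu|_{\H_\alpha^\kappa} : \H_\alpha^\kappa \to \H_{p,[\alpha]}^\kappa$ is an \emph{algebra isomorphism} (multiplicativity follows because $h\sigma^j(h') = 0$ for $h,h' \in \H_\alpha^\kappa$ and $j \neq 0$, which is exactly where $\#[\alpha]=p$ enters) commuting with $*$, so the cellular datum of $\H_\alpha^\kappa$ transports wholesale. Your axiom-3 computation $d_\mathfrak{st}^{\tuple{\lambda}} a = \mu(c_\mathfrak{st}^{\tuple{\lambda}} a_\alpha)$ is precisely this multiplicativity unpacked, and your remark that $\#[\alpha]=p$ is ``only'' needed for axiom~2 is a slightly different bookkeeping of the same fact: in the paper's framing, the hypothesis is what makes $\mu$ multiplicative, while in yours it is what makes the image of $\mu$ on the cellular basis of $\H_\alpha^\kappa$ a basis of $\H_{p,[\alpha]}^\kappa$ rather than a proper spanning set.
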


\begin{proof}
It suffices to prove that $\mu$ commutes with $*$ and induces an algebra isomorphism between $\H_\alpha^\kappa$ and $\H_{p, [\alpha]}^\kappa$.
The first point is clear: indeed, since $*$ fixes each Khovanov--Lauda generator and by the action of $\sigma$ on these generators (cf.~\eqref{equation:sigma_graded_generators}) we know that $*$ and $\sigma$ commute.
Now, the restriction of $\mu$ to $\H_\alpha^\kappa$ is an algebra homomorphism. Indeed, for any $j \in \{1, \dots, p-1\}$ we have $\alpha \neq \sigma^j \cdot \alpha$ since $\#[\alpha] = p$, hence for any $h, h' \in \H_\alpha^\kappa$ we have $h\sigma^j(h') = 0$ (recall~\eqref{equation:e(i)e(j)} and~\eqref{equation:subalgebra-sigmaHalpha}). We conclude since by \eqref{equation:basis_stable_orbite_pleine}, $\mu$ sends a basis of $\H_\alpha^\kappa$ onto a basis of $\H_{p, [\alpha]}^\kappa$.
\end{proof}

\begin{corollaire}
If $p$ and $n$ are coprime then the algebra $\H_{p, n}^\kappa$ is cellular.
\end{corollaire}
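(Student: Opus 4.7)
The plan is to reduce the corollary to the previous proposition via the block decomposition, using Corollary~\ref{corollaire:orbite_gnrl} together with Proposition~\ref{proposition:introduction_minmax} to rule out orbits of size less than~$p$.

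First I would write down the decomposition of $\H_{p,n}^\kappa$ as a direct sum of the subalgebras $\H_{p,[\alpha]}^\kappa$. Indeed, $\H_n^\kappa = \bigoplus_{\alpha \in Q_n^\kappa} \H_\alpha^\kappa$, and grouping blocks by $\sigma$-orbits gives the $\sigma$-stable decomposition $\H_n^\kappa = \bigoplus_{[\alpha]} \H_{[\alpha]}^\kappa$ where $[\alpha]$ runs over the orbits in $Q_n^\kappa$ under $\sigma_{\eta,p}$. Since taking $\sigma$-fixed points commutes with direct sums of $\sigma$-stable summands,
\[
\H_{p,n}^\kappa = \bigoplus_{[\alpha] \in Q_n^\kappa / \sigma} \H_{p,[\alpha]}^\kappa.
\]
Since a direct sum of cellular algebras is cellular (concatenate the cellular data and take the disjoint union of the posets with no relations between the summands), it suffices to prove that each $\H_{p,[\alpha]}^\kappa$ is cellular. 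By Proposition~\ref{proposition:cellularity_full_orbit}, this holds whenever $\#[\alpha] = p$, so the whole corollary reduces to showing that every orbit $[\alpha]$ with $\alpha \in Q_n^\kappa$ has cardinality exactly~$p$.

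The main (and only nontrivial) step is this orbit-size claim, and it is where the hypothesis $\gcd(p,n) = 1$ enters. Suppose for contradiction that $\#[\alpha] = p' < p$ for some $\alpha \in Q_n^\kappa$; then $p'$ divides $p$. Pick any $\tuple{\lambda} \in \P_\alpha^\kappa$, which exists by definition of $Q_n^\kappa$. By Corollary~\ref{corollaire:orbite_gnrl}, the set $\{\#[\tuple{\mu}] : \tuple{\mu} \in \P_\alpha^\kappa\}$ has smallest element $\#[\alpha] = p'$, so there exists an $r$-partition $\tuple{\mu}$ of~$n$ with $\#[\tuple{\mu}] = p'$. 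But Proposition~\ref{proposition:introduction_minmax} asserts that the minimum of $\{\#[\tuple{\mu}] : \tuple{\mu}\text{ is an }r\text{-partition of }n\}$ equals $\frac{p}{\gcd(p,n)} = p$ under our coprimality assumption, forcing $p' \geq p$, contradicting $p' < p$.

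The hard part is really conceptual rather than technical: the corollary depends essentially on Theorem~\ref{theorem:introduction_min_block} (through Corollary~\ref{corollaire:orbite_gnrl}), because it is not \emph{a priori} obvious that the block orbit $[\alpha]$ cannot be strictly smaller than every orbit of an $r$-partition in $\P_\alpha^\kappa$. Without this equality of minima, one could imagine a block indexed by a very symmetric $\alpha$ whose orbit has size $p' < p$ but which happens to contain no $r$-partition of orbit size $p'$; Corollary~\ref{corollaire:orbite_gnrl} excludes precisely this pathology. Once it is invoked, the remainder of the argument is formal: the coprimality hypothesis forces every orbit to be free, Proposition~\ref{proposition:cellularity_full_orbit} then gives cellularity block-by-block, and the direct sum over orbits yields a cellular datum for $\H_{p,n}^\kappa$.
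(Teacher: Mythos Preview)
Your proof is correct, but it takes a considerably heavier route than the paper's, and the moral you draw in the final paragraph is mistaken. You invoke Corollary~\ref{corollaire:orbite_gnrl} (and hence the full strength of the main theorem) together with Proposition~\ref{proposition:introduction_minmax} to rule out orbits of size less than~$p$. The paper instead proves this directly and elementarily, working only with the coordinates of~$\beta$: if $\#[\beta] = p'$ with $p' \mid p$, write $d \coloneqq p/p'$ and observe that $\sigma^{p'}$-invariance forces
\[
\beta = \sum_{i=0}^{p'\eta-1} \beta_i\bigl(\alpha_i + \alpha_{p'\eta+i} + \dots + \alpha_{(d-1)p'\eta+i}\bigr),
\]
so that $n = |\beta| = d\sum_i \beta_i$. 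Thus $d \mid n$, and since also $d \mid p$ and $\gcd(p,n)=1$, we get $d=1$, i.e.\ $p'=p$. No appeal to Theorem~\ref{theoreme:orbite_1} or Corollary~\ref{corollaire:orbite_gnrl} is needed.

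So your assertion that ``the corollary depends essentially on Theorem~\ref{theorem:introduction_min_block}'' is wrong: the paper's proof shows precisely that it does not. What your approach buys is a uniform template---reduce every orbit question on~$Q^+$ to one on multipartitions via Corollary~\ref{corollaire:orbite_gnrl}---but what the paper's approach buys is independence from the hard theorem and a two-line divisibility argument. The remaining steps (the decomposition \eqref{equation:full_orbit_dec_Hpn_Hpbeta} and the fact that a direct sum of cellular algebras with orthogonal units is cellular) are the same in both proofs.
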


\begin{proof}
Let us first prove that $\#[\beta] = p$ for all $\beta \in Q^+$ with $\lvert\beta\rvert = n$. If $\#[\beta] = p'$ then $p'$ divides $p$ and we can write
\[
\beta = \sum_{i = 0}^{p'\eta-1} \beta_i\bigl(\alpha_i + \alpha_{p'\eta + i} + \dots + \alpha_{(d-1)p'\eta + i}\bigr),
\]
where $d \coloneqq\frac{p}{p'}$ and $\beta_0, \dots, \beta_{p'\eta-1} \in \mathbb{N}$. We deduce that
\[
n = \lvert \beta \rvert = d \sum_{i = 0}^{p'\eta-1} \beta_i,
\]
hence $d$ divides $n$. But $d$ also divides $p$ thus $d = 1$ and $p' = p$ as desired. 
Hence, each subalgebra appearing in the following decomposition
\begin{equation}
\label{equation:full_orbit_dec_Hpn_Hpbeta}
\H_{p, n}^\kappa = \bigoplus_{[\beta] \in \mathfrak{Q}^\kappa_n} \H_{p, [\beta]}^\kappa,
\end{equation}
is cellular by Proposition~\ref{proposition:cellularity_full_orbit}, where $\mathfrak{Q}^\kappa_n$ is the quotient of $Q^\kappa_n$ by the equivalence relation $\sim$ generated by $\beta \sim \sigma\cdot\beta$ for all $\beta \in Q^\kappa_n$. We now easily check that $\H_{p, n}^\kappa$ is cellular, using the following fact:  for any $[\beta] \neq [\beta'] \in \mathfrak{Q}^\kappa_n$ we have $hh' = 0$ for all $h \in \H_{[\beta]}^\kappa$ and $h' \in \H_{[\beta']}^\kappa$ (cf. \eqref{equation:e(i)e(j)}).
\end{proof}

\subsubsection{Adapted cellularity}
\label{subsubsection:adapted_cellularity}

Let $\alpha \in Q^\kappa_n$  and let $\phi$ be as in Proposition~\ref{proposition:basis_stable}. By \eqref{equation:card_Tparenthese_T0crochet}, we have
\begin{align*}
\dim \H^\kappa_{p, [\alpha]}
&=
\sum_{\tuple{\lambda} \in \P_{[\alpha]}^\kappa} \bigl(\#\T(\tuple{\lambda})\bigr)\bigl(\#\T_0^\phi(\tuple{\lambda})\bigr)
\\
&=
\sum_{\tuple{\lambda} \in \P_{[\alpha]}^\kappa} \frac{p}{\#[\tuple{\lambda}]}\bigl(\#\T_0^\phi[\tuple{\lambda}]\bigr)\bigl(\#\T_0^\phi(\tuple{\lambda})\bigr)
\\
&=
\sum_{[\tuple{\lambda}] \in \PP_{[\alpha]}^\kappa} \frac{p}{\#[\tuple{\lambda}]}\bigl(\#\T_0^\phi[\tuple{\lambda}]\bigr) \sum_{\tuple{\mu} \in [\tuple{\lambda}]}\#\T_0^\phi(\tuple{\mu})
\\
&=
\sum_{[\tuple{\lambda}] \in \PP^\kappa_{[\alpha]}} \frac{p}{\#[\tuple{\lambda}]}\bigl(\#\T_0^\phi[\tuple{\lambda}]\bigr)^2.
\end{align*}
Recalling that $\#\T_0^\phi[\tuple{\lambda}]$ does not depend on $\phi$, we obtain
\begin{equation}
\label{equation:sum_dim_Hpn}
\dim \H_{p, [\alpha]}^\kappa = \sum_{[\tuple{\lambda}] \in \PP^\kappa_{[\alpha]}} \frac{p}{\#[\tuple{\lambda}]}\bigl(\#\T_0[\tuple{\lambda}]\bigr)^2.
\end{equation}

\begin{remarque}
With~\eqref{equation:card_Tparenthese_T0crochet} and Remark~\ref{remark:cellular_algebra_dimension} we obtain the equality $\dim\H_{p,[\alpha]}^\kappa = \frac{1}{p}\dim \H_{[\alpha]}^\kappa$.
\end{remarque}


Suppose that there exists a cellular datum $(\Lambda, \T, c)$ for $\H_{p, [\alpha]}^\kappa$. Remark~\ref{remark:cellular_algebra_dimension} and~\eqref{equation:sum_dim_Hpn} give two ways to write $\dim \H_{p, [\alpha]}^\kappa$ as a sum of squares:
\[
\dim \H_{p, [\alpha]}^\kappa
= \sum_{\lambda \in \Lambda} \#\T(\lambda)^2
= \sum_{[\tuple{\lambda}] \in \PP^\kappa_{[\alpha]}} \frac{p}{\#[\tuple{\lambda}]}\bigl(\#\T_0[\tuple{\lambda}]\bigr)^2. 
\]
These two sums have the same terms up to reordering if and only if for all $[\tuple{\lambda}] \in \PP_{[\alpha]}^\kappa$, there exist  $\lambda_{[\tuple{\lambda}], 1}, \dots, \lambda_{[\tuple{\lambda}], \frac{p}{\#[\tuple{\lambda}]}} \in \Lambda$ such that
\begin{subequations}
\label{subequations:adapted_cellularity}
\begin{align}
\#\T(\lambda_{[\tuple{\lambda}], j}) &= \#\T_0[\tuple{\lambda}],
&&
\text{for all } j \in \Bigl\{1, \dots, \frac{p}{\#[\tuple{\lambda}]}\Bigr\},
\end{align}
and
\begin{equation}
\left\{\lambda_{[\tuple{\lambda}], j} : [\tuple{\lambda}] \in \PP_{[\alpha]}^\kappa \text{ and } j \in \Bigl\{1, \dots, \frac{p}{\#[\tuple{\lambda}]}\Bigr\}\right\} = \Lambda.
\end{equation}
\end{subequations}
Recall that the anti-automorphism $* : \H_n^\kappa \to \H_n^\kappa$ was fixed in \textsection\ref{subsubsection:cellularity_AK}. 
\begin{definition}
\label{definition:*-cellularity}
Suppose that $(\Lambda, \T, c)$ is a cellular datum for $\H_{p, [\alpha]}^\kappa$. We say that $(\Lambda, \T, c)$ is an \emph{adapted} cellular datum if for all $[\tuple{\lambda}] \in \PP_n^\kappa$, there exist $\lambda_{[\tuple{\lambda}], 1}, \dots, \lambda_{[\tuple{\lambda}], \frac{p}{\#[\tuple{\lambda}]}} \in \Lambda$ such that the conditions~\eqref{subequations:adapted_cellularity} are satisfied, together with $\bigl(c^\lambda_\mathfrak{st}\bigr)^* = c^\lambda_\mathfrak{ts}$ for all $\lambda \in \Lambda$ and $\mathfrak{s}, \mathfrak{t} \in \T(\lambda)$. 
\end{definition}

We say that a basis $\mathcal{B}$ of $\H_{p, [\alpha]}^\kappa$ is \emph{adapted cellular} if there exists an adapted cellular datum $(\Lambda, \T, c)$ for $\H_{p, [\alpha]}^\kappa$ such that $\mathcal{B}$ coincides with $\bigl\{c^\lambda_\mathfrak{st} : \lambda \in \Lambda$ and $\mathfrak{s}, \mathfrak{t} \in \T(\lambda)\bigr\}$.

\begin{lemme}
\label{lemma:mu_star}
Let $\tuple{\lambda} \in \P^\kappa_n$ and $\mathfrak{s}, \mathfrak{t} \in \T(\tuple{\lambda})$. Then ${\mu(c^{\tuple{\lambda}}_\mathfrak{st})}^* = \mu(c^{\tuple{\lambda}}_\mathfrak{st})$ if and only if
\begin{align*}
\mathfrak{s} &= \mathfrak{t},  & &\text{if } p \text{ is odd},
\\
\mathfrak{s} &= \mathfrak{t} \text{ or } \sigma^{p/2}(c^{\tuple{\lambda}}_\mathfrak{st}) = c^{\tuple{\lambda}}_\mathfrak{ts}, &&\text{if } p \text{ is even}.
\end{align*}
\end{lemme}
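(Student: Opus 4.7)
The plan is first to convert the $*$-fixed condition into an equality of two $\mu$-images, then to handle the easy direction by inspection, and finally to use the triviality of tableau stabilisers to pin down the hard direction.

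The first step would be to establish $\sigma \circ * = * \circ \sigma$ on $\H_n^\kappa$. Since both maps are determined by their action on the Khovanov--Lauda generators, and $*$ fixes each generator while $\sigma$ acts as in~\eqref{equation:sigma_graded_generators}, I would check the commutation generator by generator and then propagate it to $\H_n^\kappa$ using that $\sigma$ is an algebra homomorphism and $*$ an anti-automorphism. This gives $\mu \circ * = * \circ \mu$, and hence $\mu(c^{\tuple{\lambda}}_\mathfrak{st})^* = \mu(c^{\tuple{\lambda}}_\mathfrak{ts})$, reducing the lemma to the question of when $\mu(c^{\tuple{\lambda}}_\mathfrak{st}) = \mu(c^{\tuple{\lambda}}_\mathfrak{ts})$. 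For the easy direction, $\mathfrak{s} = \mathfrak{t}$ is tautological, and the condition $\sigma^{p/2}(c^{\tuple{\lambda}}_\mathfrak{st}) = c^{\tuple{\lambda}}_\mathfrak{ts}$ yields the equality after applying $\mu$ and invoking $\mu \circ \sigma^k = \mu$ for all $k$ (cyclic reindexing of $\mu = \sum_j \sigma^j$).

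For the converse, assuming $\mu(c^{\tuple{\lambda}}_\mathfrak{st}) = \mu(c^{\tuple{\lambda}}_\mathfrak{ts})$ with $\mathfrak{s} \neq \mathfrak{t}$, I would expand both sides in the cellular basis using the compatibility $\sigma(c^{\tuple{\nu}}_{\mathfrak{u},\mathfrak{v}}) = c^{\prescript{\sigma^{-1}}{}{\tuple{\nu}}}_{\prescript{\sigma^{-1}}{}{\mathfrak{u}},\prescript{\sigma^{-1}}{}{\mathfrak{v}}}$, the natural guess consistent with the idempotent shift $\sigma(e(\tuple{i})) = e(\tuple{i} - \tuple{\eta})$ together with~\eqref{equation:cellular_basis_idempotent} and with the basis of Proposition~\ref{proposition:basis_stable}. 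The key combinatorial observation to deploy is that for nonempty $\tuple{\lambda}$ the stabiliser of any $\mathfrak{s} \in \T(\tuple{\lambda})$ in $\langle \sigma \rangle$ is trivial: a nontrivial relation $\prescript{\sigma^k}{}{\mathfrak{s}} = \mathfrak{s}$ would force $\mathfrak{s}^{(c)} = \mathfrak{s}^{(c-kd)}$ as set maps, impossible for distinct components since the tableau values must fill $\{1, \dots, n\}$ bijectively.

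With these ingredients, both sums $\sum_j \sigma^j(c^{\tuple{\lambda}}_\mathfrak{st})$ and $\sum_j \sigma^j(c^{\tuple{\lambda}}_\mathfrak{ts})$ consist of $p$ distinct cellular basis elements, indexed respectively by the $\sigma$-orbits of $(\tuple{\lambda}, \mathfrak{s}, \mathfrak{t})$ and $(\tuple{\lambda}, \mathfrak{t}, \mathfrak{s})$; their equality forces these orbits to coincide, producing $k \in \{0, \dots, p-1\}$ with $\prescript{\sigma^k}{}{\tuple{\lambda}} = \tuple{\lambda}$, $\prescript{\sigma^k}{}{\mathfrak{s}} = \mathfrak{t}$ and $\prescript{\sigma^k}{}{\mathfrak{t}} = \mathfrak{s}$. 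Iterating yields $\prescript{\sigma^{2k}}{}{\mathfrak{s}} = \mathfrak{s}$, so the triviality of the stabiliser forces $p \mid 2k$; since $\mathfrak{s} \neq \mathfrak{t}$ rules out $k = 0$, we obtain $k = p/2$, which forces $p$ to be even and gives exactly $\sigma^{p/2}(c^{\tuple{\lambda}}_\mathfrak{st}) = c^{\tuple{\lambda}}_\mathfrak{ts}$; in the odd case no such $k$ can exist, so only $\mathfrak{s} = \mathfrak{t}$ survives. The main obstacle will be justifying the compatibility of $\sigma$ with the cellular basis: it is strongly suggested by~\eqref{equation:cellular_basis_idempotent} and Proposition~\ref{proposition:basis_stable}, but a rigorous argument requires inspecting the explicit construction in~\cite{hu-mathas_graded,bowman}.
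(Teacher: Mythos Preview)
Your overall architecture matches the paper's: reduce to $\mu(c^{\tuple{\lambda}}_\mathfrak{st}) = \mu(c^{\tuple{\lambda}}_\mathfrak{ts})$ via $\sigma \circ * = * \circ \sigma$, then extract a single $j$ with $\sigma^j(c^{\tuple{\lambda}}_\mathfrak{st}) = c^{\tuple{\lambda}}_\mathfrak{ts}$, and finally square to force $2j \equiv 0 \pmod p$. The substantive difference is in how you justify the extraction step.

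You rely on the formula $\sigma(c^{\tuple{\nu}}_{\mathfrak{uv}}) = c^{\prescript{\sigma^{-1}}{}{\tuple{\nu}}}_{\prescript{\sigma^{-1}}{}{\mathfrak{u}},\prescript{\sigma^{-1}}{}{\mathfrak{v}}}$, and you correctly flag this as the main obstacle. In fact the paper never claims or uses this; Remark~\ref{remark:application_AK_graded} even hints that the interaction of $\sigma$ with the basis is delicate in general. The paper bypasses the issue entirely using only~\eqref{equation:cellular_basis_idempotent} and the action of $\sigma$ on the idempotents~\eqref{equation:sigma_e(i)}. Concretely: from $\sum_j \sigma^j(c^{\tuple{\lambda}}_\mathfrak{ts}) = \sum_j \sigma^j(c^{\tuple{\lambda}}_\mathfrak{st})$, multiply on the left by $e(\res_\kappa(\mathfrak{t}))$. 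Since $\sigma^j(c^{\tuple{\lambda}}_\mathfrak{ts}) \in e(\res_\kappa(\mathfrak{t}) - j\tuple{\eta})\H_n^\kappa$ and $\tuple{\eta}$ has order $p$, the left side collapses to the single term $c^{\tuple{\lambda}}_\mathfrak{ts}$; on the right side at most one $j$ survives, namely the one (if any) with $\res_\kappa(\mathfrak{s}) - j\tuple{\eta} = \res_\kappa(\mathfrak{t})$. This yields $c^{\tuple{\lambda}}_\mathfrak{ts} = \sigma^j(c^{\tuple{\lambda}}_\mathfrak{st})$ directly, without ever asserting that $\sigma$ permutes the basis. The same idempotent trick then turns $c^{\tuple{\lambda}}_\mathfrak{ts} = \sigma^{2j}(c^{\tuple{\lambda}}_\mathfrak{ts})$ into $2j\tuple{\eta} = 0$, hence $2j \in \{0,p\}$; your tableau-stabiliser argument is a valid alternative for this last step, but it is not needed once you have the idempotent projection in hand.
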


\begin{proof}
Since $\mu$ and $*$ commute, we have
\[
{\mu(c^{\tuple{\lambda}}_\mathfrak{st})}^*
=
\mu(c^{\tuple{\lambda}}_\mathfrak{ts})
=
\sum_{j = 0}^{p-1} \sigma^j(c^{\tuple{\lambda}}_\mathfrak{ts}).
\]
Thus, if ${\mu(c^{\tuple{\lambda}}_\mathfrak{st})}^* = \mu(c^{\tuple{\lambda}}_\mathfrak{st})$ then
\[
\sum_{j = 0}^{p-1} \sigma^j(c^{\tuple{\lambda}}_\mathfrak{ts}) = \sum_{j = 0}^{p-1} \sigma^j(c^{\tuple{\lambda}}_\mathfrak{st}).
\]
By~\eqref{equation:sigma_e(i)}, \eqref{equation:e(i)_idempotent}, \eqref{equation:e(i)e(j)} and~\eqref{equation:cellular_basis_idempotent}, we deduce that there exists $j \in \{0, \dots, p-1\}$ such that
\begin{equation}
\label{equation:c_st_sigma}
c^{\tuple{\lambda}}_\mathfrak{ts} = \sigma^j(c^{\tuple{\lambda}}_\mathfrak{st}).
\end{equation}
Since $\sigma$ and $*$ commute, we obtain
\[
c^{\tuple{\lambda}}_\mathfrak{st} = \sigma^j(c^{\tuple{\lambda}}_\mathfrak{ts}),
\]
thus,
\[
\sigma^j(c^{\tuple{\lambda}}_\mathfrak{st}) = \sigma^{2j}(c^{\tuple{\lambda}}_\mathfrak{ts}).
\]
Combining with~\eqref{equation:c_st_sigma}, we obtain
\[
c^{\tuple{\lambda}}_\mathfrak{ts} = \sigma^{2j}(c^{\tuple{\lambda}}_\mathfrak{ts}).
\]
By~\eqref{equation:sigma_e(i)}, \eqref{equation:e(i)_idempotent} and~\eqref{equation:e(i)e(j)} and since $\tuple{\eta} \in (\mathbb{Z}/e\mathbb{Z})^n$ has order $p$, this equality implies that $2j \in \{0, p\}$.  If $p$ is odd then $j = 0$ and~\eqref{equation:c_st_sigma} yield $c^{\tuple{\lambda}}_\mathfrak{ts} = c^{\tuple{\lambda}}_\mathfrak{st}$ thus $\mathfrak{s} = \mathfrak{t}$. If $p$ is even then $j \in \{0, \frac{p}{2}\}$ and similarly we conclude using~\eqref{equation:c_st_sigma}. The converse is straightforward.
\end{proof}

Given the result of \textsection\ref{subsubsection:full_orbit}, it seems natural to look for a cellular basis for $\H_{p, [\alpha]}^\kappa$ of the form~\eqref{equation:basis_stable}. The following proposition uses Corollary~\ref{corollaire:orbite_gnrl} to give a partial answer to this problem.

\begin{proposition}
\label{proposition:[alpha]<p_adapted_cellular}
If $\#[\alpha] < p$ and $p$ is odd then the basis~\eqref{equation:basis_stable}
\[
\left\{\mu(c^{\tuple{\lambda}}_\mathfrak{st}) : \tuple{\lambda} \in \P^\kappa_{[\alpha]}, \mathfrak{s} \in \T(\tuple\lambda), \mathfrak{t} \in \T^\phi_0(\tuple\lambda)\right\},
\]
of $\H_{p, [\alpha]}^\kappa$ is not adapted cellular.
\end{proposition}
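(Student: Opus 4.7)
The plan is to count the number of $*$-fixed elements in the basis~\eqref{equation:basis_stable} in two different ways, under the assumption that this basis is adapted cellular, and then to use Corollary~\ref{corollaire:orbite_gnrl} to reach a contradiction. Assume the basis is adapted cellular with cellular datum $(\Lambda, \T, c)$. By Lemma~\ref{lemma:cellular_algebra_number_fixed_points}, the number of $*$-fixed basis elements is $\sum_{\lambda \in \Lambda} \#\T(\lambda)$; by the adapted conditions~\eqref{subequations:adapted_cellularity}, this equals
\[
\sum_{[\tuple{\lambda}] \in \PP_{[\alpha]}^\kappa} \frac{p}{\#[\tuple{\lambda}]}\, \#\T_0[\tuple{\lambda}].
\]

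On the other hand, since $p$ is odd, Lemma~\ref{lemma:mu_star} ensures that a basis element $\mu(c^{\tuple{\lambda}}_\mathfrak{st})$ (with $\mathfrak{s} \in \T(\tuple\lambda)$ and $\mathfrak{t} \in \T^\phi_0(\tuple\lambda)$) is $*$-fixed if and only if $\mathfrak{s} = \mathfrak{t}$. Hence the number of $*$-fixed basis elements equals
\[
\sum_{\tuple{\lambda} \in \P_{[\alpha]}^\kappa} \#\T_0^\phi(\tuple{\lambda}) = \sum_{[\tuple{\lambda}] \in \PP_{[\alpha]}^\kappa} \sum_{\tuple{\mu} \in [\tuple{\lambda}]} \#\T_0^\phi(\tuple{\mu}) = \sum_{[\tuple{\lambda}] \in \PP_{[\alpha]}^\kappa} \#\T_0[\tuple{\lambda}],
\]
where the last equality uses the disjoint-union definition of $\T_0^\phi[\tuple{\lambda}]$ and that its cardinality is independent of $\phi$.

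Equating the two expressions yields
\[
\sum_{[\tuple{\lambda}] \in \PP_{[\alpha]}^\kappa} \left(\frac{p}{\#[\tuple{\lambda}]} - 1\right)\#\T_0[\tuple{\lambda}] = 0.
\]
Each summand is non-negative since $\#[\tuple{\lambda}] \leq p$, and $\#\T_0[\tuple{\lambda}] \geq 1$ for every class (standard $\tuple{\lambda}$-tableaux always exist). So equality forces $\#[\tuple{\lambda}] = p$ for all $\tuple{\lambda} \in \P_{[\alpha]}^\kappa$. But the hypothesis $\#[\alpha] < p$ together with Corollary~\ref{corollaire:orbite_gnrl} produces some $\tuple{\mu} \in \P_\alpha^\kappa \subseteq \P_{[\alpha]}^\kappa$ with $\#[\tuple{\mu}] = \#[\alpha] < p$, a contradiction.

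The only step requiring care is the very first counting identity: one must check that the adapted conditions do force $\sum_{\lambda \in \Lambda} \#\T(\lambda)$ to rearrange into $\sum_{[\tuple{\lambda}]} \frac{p}{\#[\tuple{\lambda}]}\#\T_0[\tuple{\lambda}]$, which follows immediately from the definition in~\eqref{subequations:adapted_cellularity}. The genuine input is Corollary~\ref{corollaire:orbite_gnrl}, which ensures a class of small orbit size actually occurs in $\P_{[\alpha]}^\kappa$; everything else is bookkeeping around Lemma~\ref{lemma:mu_star} and the oddness of $p$ (which is what prevents the parasitic second case of that lemma from occurring).
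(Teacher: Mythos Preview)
Your proof is correct and follows essentially the same approach as the paper: both count the $*$-fixed basis elements in two ways (via Lemma~\ref{lemma:cellular_algebra_number_fixed_points} with the adapted condition, and directly via Lemma~\ref{lemma:mu_star} with $p$ odd), then invoke Corollary~\ref{corollaire:orbite_gnrl} to produce an orbit of size $<p$ and obtain a contradiction. The only cosmetic difference is that you phrase the contradiction as forcing $\#[\tuple{\lambda}]=p$ for all classes, whereas the paper writes it as a strict inequality; you are also slightly more explicit in noting $\#\T_0[\tuple{\lambda}]\geq 1$.
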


\begin{proof}
Let $N$ be the cardinality of
\[
\left\{\mu(c^{\tuple{\lambda}}_\mathfrak{st}) :  \tuple{\lambda} \in \P_{[\alpha]}^\kappa, \mathfrak{s} \in \T(\tuple{\lambda}), \mathfrak{t} \in \T_0^\phi(\tuple{\lambda}), {\mu(c^{\tuple{\lambda}}_\mathfrak{st})}^* = \mu(c^{\tuple{\lambda}}_\mathfrak{st})\right\}.
\]
Assume that the basis~\eqref{equation:basis_stable} is adapted cellular with associated cellular datum $(\Lambda, \T, c)$. Lemma~\ref{lemma:cellular_algebra_number_fixed_points} yields, with the notation of Definition~\ref{definition:*-cellularity},
\begin{align*}
N
&=
\sum_{\lambda \in \Lambda} \#\T(\lambda)
\\
&=
\sum_{[\tuple{\lambda}] \in \PP_{[\alpha]}^\kappa} \sum_{j = 1}^{\frac{p}{\#[\tuple{\lambda}]}} \#\T(\lambda_{[\tuple{\lambda}], j})
\\
&=
\sum_{[\tuple{\lambda}] \in \PP_{[\alpha]}^\kappa} \sum_{j = 1}^{\frac{p}{\#[\tuple{\lambda}]}} \#\T_0[\tuple{\lambda}]
\\
&=
\sum_{[\tuple{\lambda}] \in \PP_{[\alpha]}^\kappa} \frac{p}{\#[\tuple{\lambda}]} \#\T_0[\tuple{\lambda}].
\end{align*}
We have $\frac{p}{\#[\tuple{\lambda}]} \geq 1$ for all $[\tuple{\lambda}] \in \PP_{[\alpha]}^\kappa$. Moreover,  since $\#[\alpha] < p$ we know by Corollary~\ref{corollaire:orbite_gnrl} that there exists $[\tuple{\lambda}] \in \PP_{[\alpha]}^\kappa$ such that $\frac{p}{\#[\tuple{\lambda}]} > 1$. Thus, we obtain
\begin{equation}
\label{equation:proof_[alpha]<p}
N > \sum_{[\tuple{\lambda}] \in \PP_{[\alpha]}^\kappa}  \#\T_0[\tuple{\lambda}].
\end{equation}
But now $p$ is odd, thus by Lemma~\ref{lemma:mu_star} we know that
\[
\mu{(c^{\tuple{\lambda}}_\mathfrak{st})}^* = c^{\tuple{\lambda}}_\mathfrak{st} \iff \mathfrak{s} = \mathfrak{t},
\]
for all $\tuple{\lambda} \in \P_{[\alpha]}^\kappa$, $\mathfrak{s} \in \T(\tuple{\lambda})$ and $\mathfrak{t} \in \T_0^\phi(\tuple{\lambda})$. Hence, the only elements of the basis~\eqref{equation:basis_stable} that are fixed by the $*$ anti-automorphism are the $\mu(c^{\tuple{\lambda}}_\mathfrak{ss})$ for all  $\tuple{\lambda} \in \P_{[\alpha]}^\kappa$ and $\mathfrak{s} \in \T_0^\phi(\tuple{\lambda})$. We obtain
\[
N = \sum_{\tuple{\lambda} \in \P_{[\alpha]}^\kappa}  \#\T_0^\phi(\tuple{\lambda})
=  \sum_{\tuple{\lambda} \in \P_{[\alpha]}^\kappa}  \#\T_0(\tuple{\lambda})
= \sum_{[\tuple{\lambda}] \in \PP_{[\alpha]}^\kappa}  \#\T_0[\tuple{\lambda}],
\]
which contradicts~\eqref{equation:proof_[alpha]<p}.
\end{proof}

\begin{remarque}
We can also define an adapted cellularity for $\H_{p, n}^\kappa$, similarly to Definition~\ref{definition:*-cellularity}. Using Proposition~\ref{proposition:introduction_minmax}, we can show that if $p$ and $n$ are not coprime and $p$ is odd, then the basis of $\H_{p, n}^\kappa$ that we obtain from~\eqref{equation:basis_stable} and~\eqref{equation:full_orbit_dec_Hpn_Hpbeta} is not adapted cellular. Note that, under these conditions, there can exist an $\alpha \in Q^\kappa_n$ with $\#[\alpha]  = p$, so that the subalgebra $\H_{p, [\alpha]}^\kappa$ is cellular (cf. \textsection\ref{subsubsection:full_orbit}). This explains why we are dealing with $\H_{p, [\alpha]}^\kappa$ and not only with $\H_{p, n}^\kappa$.
\end{remarque}

\subsection{Restriction of Specht modules}
\label{subsection:restriction_specht}

Since we have a cellular datum $(\P_n^\kappa, \T, c)$ for the algebra $\H_n^\kappa$, we have a collection of cell modules $\{\mathcal{S}^{\tuple{\lambda}} : \tuple{\lambda} \in \P_n^\kappa\}$. In this case, the cell modules are called \emph{Specht modules}.
The algebra $\H_{p, n}^\kappa$ is not known to be cellular in general, but Hu and Mathas~\cite{hu-mathas_decomposition} defined what they also called \emph{Specht modules} for $\H_{p, n}^\kappa$. It is a family
\[
\left\{
\mathcal{S}^{\tuple{\lambda}}_j : j \in \{0, \dots, \frac{p}{\#[\tuple{\lambda}]} - 1\}\right\},
\]
of $\H_{p, n}^\kappa$-modules with
\begin{equation}
\label{equation:application_specht}
\left.\mathcal{S}^{\tuple{\lambda}}\right\downarrow^{\H_n^\kappa}_{\H_{p, n}^\kappa} \simeq \mathcal{S}^{\tuple{\lambda}}_0 \oplus \dots \oplus \mathcal{S}^{\tuple{\lambda}}_{\frac{p}{\#[\tuple{\lambda}]} - 1},
\end{equation}
for any $\tuple{\lambda} \in \P_n^\kappa$, where $\left.\mathcal{S}^{\tuple{\lambda}}\right\downarrow^{\H_n^\kappa}_{\H_{p, n}^\kappa}$ denotes the restriction of the $\H_n^\kappa$-module $\mathcal{S}^{\tuple{\lambda}}$ to an $\H_{p, n}^\kappa$-module. For any $\tuple{\lambda} \in \P_n^\kappa$ and $j, j' \in \{0, \dots, \frac{p}{\#[\tuple{\lambda}]} - 1\}$, the $\H_{p, n}^\kappa$-modules $\mathcal{S}^{\tuple{\lambda}}_j$ and $\mathcal{S}^{\tuple{\lambda}}_{j'}$ are isomorphic up to a twist of the action of $\H_{p, n}^\kappa$. The purpose of the name ``Specht module'' is that each irreducible $\H_{p, n}^\kappa$-module is isomorphic to the head of a $\mathcal{S}^{\tuple{\lambda}}_j$.

By Proposition~\ref{proposition:introduction_minmax}, we know that the maximal number of summands in~\eqref{equation:application_specht} is $\gcd(p, n)$ when we restrict a Specht module of $\H_n^\kappa$ and that this bound is reached. Our result Corollary~\ref{corollaire:orbite_gnrl} refines this result.

\begin{proposition}
For any $\alpha \in Q^\kappa_n$, the maximal number of summands in~\eqref{equation:application_specht} is $\frac{p}{\#[\alpha]}$ and this bound is reached, when we restrict a Specht module $\mathcal{S}^{\tuple{\lambda}}$ with $\tuple{\lambda} \in \P_{[\alpha]}^\kappa$, that is, when we restrict a Specht module of $\H_{[\alpha]}^\kappa$. 
\end{proposition}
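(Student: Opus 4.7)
The plan is to read the number of summands directly from formula~\eqref{equation:application_specht}: for any $\tuple{\lambda} \in \P_n^\kappa$, the restriction $\left.\mathcal{S}^{\tuple{\lambda}}\right\downarrow^{\H_n^\kappa}_{\H_{p, n}^\kappa}$ decomposes into $\frac{p}{\#[\tuple{\lambda}]}$ summands, so the question reduces to bounding the orbit cardinality $\#[\tuple{\lambda}]$ from below (to get an upper bound on the number of summands) and then exhibiting an $\tuple{\lambda}$ saturating this lower bound.

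First I would establish the upper bound. Let $\tuple{\lambda} \in \P_{[\alpha]}^\kappa$, so $\alpha_\kappa(\tuple{\lambda}) \in [\alpha]$ and in particular $[\alpha_\kappa(\tuple{\lambda})] = [\alpha]$. By Lemma~\ref{lemme:sigma_alpha_lambda}, the map $\sigma_{d,p}$ on $\tuple{\lambda}$ corresponds to $\sigma_{\eta,p}$ on $\alpha_\kappa(\tuple{\lambda})$; hence $\sigma$ acts on $[\tuple{\lambda}]$ and on $[\alpha_\kappa(\tuple{\lambda})] = [\alpha]$ compatibly, and the projection $\tuple{\mu} \mapsto \alpha_\kappa(\tuple{\mu})$ sends $[\tuple{\lambda}]$ onto $[\alpha]$. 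This forces $\#[\tuple{\lambda}] \geq \#[\alpha]$, and therefore
\[
\frac{p}{\#[\tuple{\lambda}]} \leq \frac{p}{\#[\alpha]},
\]
which is the desired upper bound on the number of summands in~\eqref{equation:application_specht}.

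To see that the bound is reached, I would invoke Corollary~\ref{corollaire:orbite_gnrl} with the integer $j$ chosen so that $\sigma^j \cdot \alpha = \alpha$ and the orbit $[\alpha]$ has exactly $\#[\alpha]$ elements: it produces an $r$-partition $\tuple{\mu}$ with $\alpha_\kappa(\tuple{\mu}) = \alpha$ (so $\tuple{\mu} \in \P_\alpha^\kappa \subseteq \P_{[\alpha]}^\kappa$) and with $\#[\tuple{\mu}] \leq \#[\alpha]$. Combined with the lower bound $\#[\tuple{\mu}] \geq \#[\alpha]$ from the previous paragraph, we get $\#[\tuple{\mu}] = \#[\alpha]$, and substituting into~\eqref{equation:application_specht} gives a restriction of $\mathcal{S}^{\tuple{\mu}}$ with exactly $\frac{p}{\#[\alpha]}$ summands.

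There is no real obstacle here beyond correctly reading off the combinatorial content: the heavy lifting has already been done in Corollary~\ref{corollaire:orbite_gnrl}, which is precisely the statement that the minimum of $\#[\tuple{\lambda}]$ over $\tuple{\lambda} \in \P_\alpha^\kappa$ equals $\#[\alpha]$. The only subtlety worth stating explicitly is the compatibility provided by Lemma~\ref{lemme:sigma_alpha_lambda}, which guarantees that the orbit of any $\tuple{\lambda} \in \P_{[\alpha]}^\kappa$ surjects onto the orbit of $\alpha$ and thus cannot be smaller; without this observation the upper bound would not be automatic.
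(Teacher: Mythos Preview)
Your proposal is correct and matches the paper's approach: the proposition is stated without an explicit proof precisely because it follows immediately from Corollary~\ref{corollaire:orbite_gnrl} together with the lower bound $\#[\tuple{\lambda}] \geq \#[\alpha]$ (from Lemma~\ref{lemme:sigma_alpha_lambda}), exactly as you have spelled out.
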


\paragraph{Acknowledgements} I am thankful to Maria Chlouveraki and Nicolas Jacon for their numerous corrections and suggestions about the presentation of the paper. I also thank an anonymous referee for pointing out the reference~\cite{fayers_weight} for Proposition~\ref{proposition:residu_0_chapeau}.

\end{document}